\begin{document}

\newcommand{\To}{\longrightarrow}
\newcommand{\h}{\mathcal{H}}
\newcommand{\s}{\mathcal{S}}
\newcommand{\A}{\mathcal{A}}
\newcommand{\K}{\mathcal{K}}
\newcommand{\B}{\mathcal{B}}
\newcommand{\W}{\mathcal{W}}
\newcommand{\M}{\mathcal{M}}
\newcommand{\Lom}{\mathcal{L}}
\newcommand{\T}{\mathcal{T}}
\newcommand{\F}{\mathcal{F}}

\newtheorem{definition}{Definition}[section]
\newtheorem{defn}[definition]{Definition}
\newtheorem{lem}[definition]{Lemma}
\newtheorem{prop}[definition]{Proposition}
\newtheorem{thm}[definition]{Theorem}
\newtheorem{cor}[definition]{Corollary}
\newtheorem{cors}[definition]{Corollaries}
\newtheorem{example}[definition]{Example}
\newtheorem{examples}[definition]{Examples}
\newtheorem{rems}[definition]{Remarks}
\newtheorem{rem}[definition]{Remark}
\newtheorem{notations}[definition]{Notations}
\theoremstyle{remark}
\theoremstyle{remark}
\theoremstyle{remark}

\theoremstyle{notations}
\theoremstyle{remark}
\theoremstyle{remark}
\theoremstyle{remark}
\newtheorem{dgram}[definition]{Diagram}
\theoremstyle{remark}
\newtheorem{fact}[definition]{Fact}
\theoremstyle{remark}
\newtheorem{illust}[definition]{Illustration}
\theoremstyle{remark}
\theoremstyle{definition}
\newtheorem{question}[definition]{Question}
\theoremstyle{definition}
\newtheorem{conj}[definition]{Conjecture}

\title{\textbf{SS-Injective Modules and Rings}}

\author{\textbf{Adel Salim Tayyah} \\ Department of Mathematics, College of Computer Science and \\Information Technology, Al-Qadisiyah University,  Al-Qadisiyah, Iraq \\Email: adils9888@gmail.com  \\ \\\textbf{Akeel Ramadan Mehdi}\\ Department of Mathematics, College of Education, \\Al-Qadisiyah University, P. O. Box 88, Al-Qadisiyah, Iraq \\Email: akeel\_math@yahoo.com}

\date{\today}

\maketitle

\begin{abstract} \,We introduce and investigate ss-injectivity as a  generalization of both soc-injectivity and small injectivity. A module $M$ is said to be  ss-$N$-injective \,(where $N$ is a module) \,if \, every \, $R$-homomorphism from a semisimple small submodule of $N$ into $M$ extends to $N$. A module $M$  is said to be ss-injective (resp. strongly ss-injective),  if $M$ is ss-$R$-injective (resp. ss-$N$-injective for every right $R$-module $N$).
 Some characterizations and properties of (strongly) ss-injective  modules and rings are given. Some results of Amin, Yuosif and Zeyada on soc-injectivity are extended to ss-injectivity.  Also, we provide some new characterizations of universally mininjective rings, quasi-Frobenius rings, Artinian rings and semisimple rings.
\end{abstract}

$\vphantom{}$

\noindent \textbf{Key words and phrases:} Small injective rings (modules); soc-injective rings (modules); SS-Injective rings (modules); Perfect rings; quasi-Frobenius rings.

$\vphantom{}$

\noindent \textbf{2010 Mathematics Subject Classification:} Primary: 16D50, 16D60, 16D80 ; Secondary: 16P20, 16P40, 16L60 .

$\vphantom{}$

\noindent \textbf{$\ast$} The results of this paper will be part of a MSc thesis of the first  author, under the supervision of the second author at the University of Al-Qadisiyah.

$\vphantom{}$

\section{Introduction}
Throughout this paper, $R$ is an associative ring with identity, and all modules are unitary $R$-modules. For a right $R$-module $M$, we write soc$(M)$, $J(M)$,  $Z(M)$,  $Z_{2}(M)$, $E(M)$ and End$(M)$ for  the socle, the Jacobson radical, the singular submodule,   the second singular submodule, the injective hull and the endomorphism ring of $M$, respectively. Also, we use $S_{r}$, $S_{\ell}$, $Z_{r}$, $Z_{\ell}$, $Z_{2}^{r}$ and $J$ to indicate the right socle, the left socle, the right singular ideal, the left singular ideal, the right second singular ideal, and the Jacobson radical of $R$, respectively. For a submodule $N$ of $M$, we write $N\subseteq^{ess}M$, $N\ll M$, $N\subseteq^{\oplus}M$, and $N\subseteq^{max}M$ to indicate that $N$ is an essential submodule, a small submodule, a direct summand, and a maximal submodule of $M$, respectively. If $X$ is a subset of
a right $R$-module $M$, the right (resp. left) annihilator of $X$ in $R$  is denoted by $r_{R}(X)$ (resp. $l_{R}(X)$). If $M=R$, we write $r_{R}(X)=r(X)$ and
$l_{R}(X)=l(X)$.

Let $M$ and $N$ be right $R$-modules, $M$ is called soc-$N$-injective if every $R$-homomorphism from the soc$(N)$ into $M$ extends to $N$.
A right $R$-module $M$ is called soc-injective, if $M$ is soc-$R$-injective. A right $R$-module $M$ is called strongly soc-injective, if $M$ is soc-$N$-injective for all right $R$-module $N$ \cite{2AmYoZe05}

Recall that a right $R$-module $M$ is called mininjective \cite{14NiYo97} (resp. small injective \cite{19ThQu09}, principally small injective \cite{20Xia11}) if every $R$-homomorphism from any simple (resp. small, principally small) right ideal to $M$ extend to $R$. A ring is called right mininjective (resp. small injective, principally small injective) ring, if it is right mininjective (resp. small injective, principally small injective) as right $R$-module. A ring $R$ is called right Kasch if every simple right $R$-module embeds in $R$ (see for example \cite{15NiYu03}. Recall that a ring $R$ is called semilocal if $R/J$ is a  semisimple \cite{11Lom99}. Also, a ring  $R$ is said to be right perfect if every right $R$-module has a projective cover. Recall that a ring $R$ is said to be quasi-Frobenius (or $QF$) ring if it is right (or left) artinian and right (or left) self-injective; or equivalently, every injective right $R$-module is projective.

In this paper, we introduce and investigate the notions of ss-injective and strongly ss-injective modules and rings. Examples are given to show that the (strong) ss-injectivity is distinct from that of mininjectivity, principally small injectivity, small injectivity, simple J-injectivity, and (strong) soc-injectivity. Some characterizations and properties of (strongly) ss-injective  modules and rings are given.

W. K. Nicholson and M. F. Yousif in \cite{14NiYo97} introduced the notion of universally mininjective ring, a ring $R$  is called right universally mininjective if $S_{r}\cap J=0$. In Section 2, we show that $R$  is a right universally mininjective ring if and only if  every simple right $R$-module is ss-injective. We also prove that if $M$  is a projective right $R$-module, then every quotient of an ss-$M$-injective right $R$-module is ss-$M$-injective
if and only if  every sum of two ss-$M$-injective submodules of a right $R$-module is ss-$M$-injective if and only if Soc$(M)\cap J(M)$ is projective. Also, some results are given in terms of ss-injectivity modules. For example, every simple singular right $R$-module is ss-injective implies that $S_{r}$ projective and $r(a)\subseteq^{\oplus}R_{R}$ for all $a\in S_{r}\cap J$, and if $M$  is a finitely generated right $R$-module, then Soc$(M)\cap J(M)$ is finitely generated if and only if every direct sum of ss-$M$-injective right $R$-modules is ss-$M$-injective if and only if every direct sum of \emph{$\mathbb{N}$} copies of ss-$M$-injective right $R$-module is ss-$M$-injective.

In Section 3, we show that a right $R$-module $M$  is strongly ss-injective if and only if every small submodule $A$  of a right $R$-module $N$, every $R$-homomorphism $\alpha:A\longrightarrow M$ with $\alpha(A)$ semisimple extends to $N$. In particular, $R$  is semiprimitive if every simple right $R$-module is strongly ss-injective, but not conversely. We also prove that if $R$  is a right perfect ring, then a right $R$-module $M$  is strongly
soc-injective  if and only if $M$  is strongly ss-injective. A results (\cite[Theorem 3.6 and Proposition 3.7]{2AmYoZe05})   are extended. We prove that a ring  $R$  is right artinian if and only if every direct sum of strongly ss-injective right $R$-modules is injective, and $R$
is $QF$  ring if and only if every strongly ss-injective right $R$-module is projective.

In Section 4, we extend the results (\cite[Proposition 4.6 and Theorem 4.12]{2AmYoZe05})  from a soc-injective ring to an ss-injective
ring (see Proposition~\ref{Proposition:(4.14)}  and Corollary~\ref{Corollary:(4.18)}).

In Section 5, we show that a ring $R$  is $QF$  if and only if $R$  is strongly ss-injective and right noetherian with essential right socle if and only if $R$  is strongly ss-injective, $l(J^{2})$ is countable generated left ideal, $S_{r}\subseteq^{ess}R_{R}$, and the chain $r(x_{1})\subseteq r(x_{2}x_{1})\subseteq...\subseteq r(x_{n}x_{n-1}...x_{1})\subseteq...$ terminates for every infinite sequence $x_{1},x_{2},...$ in $R$  (see Theorem~\ref{Theorem:(5.10)}   and Theorem~\ref{Theorem:(5.12)}). Finally, we prove that a ring $R$  is $QF$  if and only if $R$  is strongly left and right ss-injective, left Kasch, and $J$  is left $t$-nilpotent (see Theorem~\ref{Theorem:(5.16)}), extending a result of  I. Amin, M. Yousif and N. Zeyada \cite[Proposition 5.8]{2AmYoZe05}  on strongly soc-injective rings.

General background materials can be found in \cite{3AnFu74}, \cite{9Kas82}  and \cite{10Lam99}.

\section{SS-Injective Modules}

\begin{defn}\label{Definition:(2.1)(a)}
Let \textit{N} be a right \textit{R}-module. A right \textit{R}-module
\textit{M} is said to be ss-\textit{N}-injective, if for any semisimple
small submodule \textit{K} of \textit{N}, any right \textit{R}-homomorphism
\textit{\textcolor{black}{$f:K{\color{black}\longrightarrow}M$}}\textit{
}extends to \textit{N}. A module \textit{M} is said to be ss-\textit{quasi}-injective
if \textit{M} is ss-\textit{M}-injective. \textit{M} is said to be
ss-injective if \textit{M} is ss-\textit{R}-injective. A ring \textit{R}
is said to be right ss-injective if the right \textit{R}-module $\mathit{{\color{black}R}_{{\color{black}R}}}$
is ss-injective.
\end{defn}

\begin{defn}\label{Definition:(2.1(b))}
A right $R$-module $M$  is said to be strongly ss-injective if $M$ is ss-$N$-injective, for all right $R$-module $N$. A ring $R$ is said to be strongly right ss-injective if the right $R$-module $R_{R}$ is strongly ss-injective.
\end{defn}

\begin{example}\label{example:(2.2)}
\noindent \emph{(1) Every soc-injective  module is ss-injective, but
not conversely (see Example~\ref{Example:(5.8)}).}

\noindent\emph{(2) Every  small injective module is ss-injective, but
not conversely (see Example~\ref{Example:(5.6)}).}

\noindent\emph{(3)  Every $\mathbb{Z}$-module is ss-injective. In fact, if $M$ is a $\mathbb{Z}$-module, then $M$ is small injective (by \cite[Theorem 2.8]{19ThQu09} and hence it is ss-injective.}

\noindent\emph{(4)  The two classes of principally small injective rings and ss-injective
rings are different (see \cite[Example 5.2]{15NiYu03}, Example~\ref{Example:(4.4)} and Example~\ref{Example:(5.6)}).}

\noindent\emph{(5)  Every strongly soc-injective module is strongly ss-injective,
but not conversely (see Example~\ref{Example:(5.8)}).}

\noindent\emph{(6)  Every strongly ss-injective module is ss-injective, but not conversely
(see Example~\ref{Example:(5.7)}).}
\end{example}

\begin{thm}\label{Theorem:(2.3)} The following statements hold:

\noindent (1) Let  $N$  be a right $R$-module and let ${\color{black}\left\{ M_{i}:i\in I\right\} }$ be a family of right $R$-modules. Then the direct product $\prod_{{\scriptstyle {\scriptscriptstyle i\in I}}}M_{i}$ is ss-$N$-injective if and only if each $\mathit{M_{i}}$ is ss-$N$-injective, for all $\mathit{i\in I}$.

\noindent (2) Let $M$, $N$  and  $K$ be right $R$-modules with $K{\color{black}\subseteq}N$. If $M$  is ss-$N$-injective, then $M$  is ss-$K$-injective.

\noindent (3) Let $M$, $N$  and $K$  be right $R$-modules with ${\color{black}M{\color{black}\cong}N}$. If $M$   is ss-$K$-injective, then $N$  is ss-$K$-injective.

\noindent (4) Let $M$, $N$  and $K$ be right $R$-modules with ${\color{black}K{\color{black}\cong}N}$. If $M$ is ss-$K$-injective, then $M$  is ss-$N$-injective.

\noindent (5) Let $M$, $N$  and $K$  be right $R$-modules with $N$  is a direct summand of $M$. If $M$   is ss-$K$-injective, then  $N$   is ss-$K$-injective.
\end{thm}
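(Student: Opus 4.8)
The plan is to verify each of the five statements using the standard technique for injectivity-type properties: given a semisimple small submodule of the relevant module and a homomorphism into the target, produce an extension by unwinding definitions and invoking the hypothesis. The only non-routine points are confirming that the various submodules produced along the way are \emph{semisimple} and \emph{small}, since ss-injectivity only speaks about submodules with both properties; this is where I expect the bookkeeping to lie, though none of it is deep.

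For (1), given a semisimple small submodule $K\subseteq N$ and $f:K\to\prod_{i\in I}M_i$, compose with the canonical projections $\pi_i$ to get $f_i=\pi_i f:K\to M_i$; if each $M_i$ is ss-$N$-injective, each $f_i$ extends to $g_i:N\to M_i$, and the map $g:N\to\prod_{i\in I}M_i$ determined by the $g_i$ extends $f$. Conversely, if the product is ss-$N$-injective, use that each $M_i$ is (isomorphic to) a direct summand of the product together with part (5): a split monomorphism $M_i\hookrightarrow\prod M_j$ and retraction let one transport an extension back down. So (5) should be proved before invoking it here, or one argues the converse directly: given $h:K\to M_i$, push forward along the inclusion $M_i\hookrightarrow\prod M_j$, extend, then compose with $\pi_i$.

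For (2), if $K\subseteq N$ and $L$ is a semisimple small submodule of $K$, the main point is that $L$ is then also small in $N$: a submodule small in a submodule is small in the whole module (if $L+X=N$ then intersecting with $K$ gives $L+(X\cap K)=K$, so $X\cap K=K$, i.e. $K\subseteq X$, and then $X=N$). Semisimplicity of $L$ is intrinsic, so $L$ is a semisimple small submodule of $N$; any $f:L\to M$ extends to $N$ by hypothesis, and restricting that extension to $K$ gives the desired extension. Parts (3) and (4) are transport-of-structure arguments along the given isomorphisms: for (3) let $\theta:M\to N$ be an isomorphism and given $f:L\to N$ with $L$ semisimple small in $K$, extend $\theta^{-1}f$ to $g:K\to M$ and take $\theta g$; for (4) let $\varphi:K\to N$ be an isomorphism, note $\varphi$ carries semisimple small submodules of $K$ to semisimple small submodules of $N$ and conversely (isomorphisms preserve both smallness and semisimplicity), so given a semisimple small $L\subseteq N$ and $f:L\to M$, pull back to $\varphi^{-1}(L)\subseteq K$, extend $f\varphi$ to $K$, and compose with $\varphi^{-1}$.

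For (5), write $M=N\oplus N'$ with inclusion $\iota:N\to M$ and projection $p:M\to N$ satisfying $p\iota=1_N$. Given a semisimple small submodule $L$ of $K$ and $f:L\to N$, the composite $\iota f:L\to M$ extends, by ss-$K$-injectivity of $M$, to $g:K\to M$; then $pg:K\to N$ restricts on $L$ to $p\iota f=f$, so $pg$ is the required extension. The main obstacle throughout is simply making sure the "semisimple and small" hypothesis is preserved under passing to submodules and under isomorphism; once the lemma that smallness is inherited by and transferred along the relevant maps is in hand, every part is a one-line diagram chase. I would state that smallness lemma explicitly at the top of the proof and then dispatch the five parts in the order (2), (5), (3), (4), (1).
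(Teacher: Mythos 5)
Your proposal is correct, and it fills in exactly the standard diagram-chasing details that the paper omits (its proof of this theorem is simply ``Clear''). The one point that genuinely needs checking --- that a semisimple small submodule of $K$ remains semisimple and small in $N$ when $K\subseteq N$, and that isomorphisms and split projections preserve these properties --- is handled correctly via the modular-law argument, so nothing further is needed.
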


\begin{proof} Clear.
\end{proof}

\begin{cor}\label{Corollary:(2.4)}
\noindent(1) If $N$  is a right $R$-module, then a finite direct sum of ss-$N$-injective modules is again ss-$N$-injective. Moreover, a finite direct sum of ss-injective (resp. strongly ss-injective) modules is again ss-injective (resp. strongly ss-injective).

\noindent(2) A direct summand of an ss-quasi-injective (resp. ss-injective,
strongly ss-injective) module is again ss-quasi-injective
(resp. ss-injective, strongly ss-injective).
\end{cor}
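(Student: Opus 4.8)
The plan is to deduce Corollary~\ref{Corollary:(2.4)} from Theorem~\ref{Theorem:(2.3)} by routine manipulations, handling the two parts in turn.

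For part (1), I would first treat the finite direct sum $\bigoplus_{i=1}^{n} M_i$ of ss-$N$-injective modules. Since the index set is finite, $\bigoplus_{i=1}^{n} M_i = \prod_{i=1}^{n} M_i$, so Theorem~\ref{Theorem:(2.3)}(1) applies directly: each $M_i$ being ss-$N$-injective yields that the (finite) direct product, hence the finite direct sum, is ss-$N$-injective. Specializing $N = R_R$ gives the statement for ss-injective modules. For the ``strongly ss-injective'' clause, I would observe that $\bigoplus_{i=1}^n M_i$ is strongly ss-injective means it is ss-$N$-injective for \emph{every} right $R$-module $N$; fixing an arbitrary $N$ and using that each $M_i$ is ss-$N$-injective (because each is strongly ss-injective), the finite-direct-product case of Theorem~\ref{Theorem:(2.3)}(1) again gives ss-$N$-injectivity of the sum. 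Since $N$ was arbitrary, the sum is strongly ss-injective.

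For part (2), suppose $N$ is a direct summand of $M$, where $M$ is ss-quasi-injective, i.e. ss-$M$-injective. By Theorem~\ref{Theorem:(2.3)}(5) with $K = M$, $N$ is ss-$M$-injective. Now I need $N$ to be ss-$N$-injective; since $N \subseteq M$ (as a summand), Theorem~\ref{Theorem:(2.3)}(2) applied with the ambient module $M$ and submodule $K = N$ gives that ss-$M$-injectivity of $N$ implies ss-$N$-injectivity of $N$, i.e. $N$ is ss-quasi-injective. The ss-injective case is immediate from Theorem~\ref{Theorem:(2.3)}(5) with $K = R_R$. For the strongly ss-injective case, fix an arbitrary right $R$-module $K$; since $M$ is ss-$K$-injective and $N$ is a direct summand of $M$, Theorem~\ref{Theorem:(2.3)}(5) gives that $N$ is ss-$K$-injective, and as $K$ was arbitrary, $N$ is strongly ss-injective.

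I do not anticipate a genuine obstacle here, since everything reduces to the already-granted Theorem~\ref{Theorem:(2.3)}. The only point requiring a little care is the ss-quasi-injective claim in part (2): one must chain two different items of Theorem~\ref{Theorem:(2.3)} (part (5) to pass from $M$ to its summand $N$ while keeping the test module $M$, then part (2) to shrink the test module from $M$ down to $N$), rather than expecting a single item to do it. Everything else is a direct invocation with the appropriate specialization of the test module ($N = R_R$ for ss-injective, ``for all $N$'' for strongly ss-injective).
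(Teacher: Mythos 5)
Your proof is correct and follows essentially the same route as the paper: part (1) is Theorem~\ref{Theorem:(2.3)}(1) with a finite index set, and part (2) is Theorem~\ref{Theorem:(2.3)}(5). Your extra observation that the ss-quasi-injective case also needs Theorem~\ref{Theorem:(2.3)}(2) to shrink the test module from $M$ to the summand $N$ is a valid refinement of a detail the paper's one-line proof leaves implicit.
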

\begin{proof}
(1) By taking the index $I$ to be a finite set and applying Theorem~\ref{Theorem:(2.3)}(1).

(2) This follows from Theorem~\ref{Theorem:(2.3)}(5).
\end{proof}

\begin{lem}\label{Lemma:(2.5)}
Every ss-injective right $R$-module is right mininjective.
\end{lem}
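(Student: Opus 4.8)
The plan is to show that any $R$-homomorphism from a simple right ideal of $R$ into $M$ extends to $R$, which is exactly the definition of right mininjectivity. Let $T$ be a simple right ideal of $R$ and let $f\colon T\longrightarrow M$ be an $R$-homomorphism. The key observation is a case split on whether $T$ is contained in the Jacobson radical $J$ or not.

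If $T\subseteq J$, then I claim $T$ is a semisimple small submodule of $R_R$: it is simple (hence semisimple) by assumption, and it is small in $R_R$ because any simple submodule of $J$ is small in $R$ (indeed every submodule of $J$ that is finitely generated, in particular cyclic, is small — this is the standard fact that $J$ is the intersection of maximal right ideals, so a cyclic submodule of $J$ can never be a complement to a proper submodule). Hence by ss-injectivity of $R_R$, the map $f\colon T\longrightarrow M$ extends to $R$.

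If $T\not\subseteq J$, then $T$ is not small in $R_R$, so $T\not\ll R$; since $T$ is simple, $T\cap J=0$, and a standard argument shows $T$ is a direct summand of $R_R$ (a simple right ideal not contained in $J$ is generated by an idempotent — this is part of the proof that $S_r\cap J$ measures the obstruction, cf. Nicholson--Yousif). Write $R = T\oplus L$ for some right ideal $L$. Then the projection $\pi\colon R\longrightarrow T$ along $L$ composed with $f$ gives $f\circ\pi\colon R\longrightarrow M$, which restricts to $f$ on $T$. So $f$ extends in this case too, without even using ss-injectivity.

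In both cases $f$ extends to $R$, so $M$ is right mininjective. The main obstacle — really the only point requiring care — is justifying that a simple right ideal contained in $J$ is small in $R_R$, and that a simple right ideal not contained in $J$ is a direct summand; both are standard facts about the Jacobson radical, but they should be invoked explicitly. Note that if one only wants the ring-theoretic statement, it may be cleaner to just observe: every simple right ideal $T$ satisfies either $T\ll R$ (then use ss-injectivity directly, as $T$ is semisimple and small) or $T\subseteq^{\oplus}R_R$ (then extension is automatic), which covers all simple right ideals and hence gives mininjectivity.
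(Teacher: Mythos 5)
Your proof is correct and takes essentially the same route as the paper: the paper invokes Brauer's lemma (a simple right ideal is either nilpotent, hence contained in $J$ and therefore a semisimple small right ideal, or a direct summand of $R_R$), which is exactly your dichotomy on whether $T\subseteq J$. In both arguments ss-injectivity is used only in the small case, and the direct-summand case is handled trivially by the projection; your write-up is, if anything, slightly more explicit about that second case than the paper's.
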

\begin{proof}
Let $I$ be a simple right ideal of $R$. By \cite[Lemma 3.8]{16Pas04} we have that either $I$ is nilpotent or a direct summand of $R$. If $I$ is a nilpotent, then $I\subseteq J$ by \cite[Corollary 6.2.8]{6Bla11}  and hence $I$ is a semisimple small right ideal of $R$. Thus every ss-injective right $R$-module is right mininjective.
\end{proof}

It easy to prove the following proposition.

\begin{prop}\label{Proposition:(2.6)} Let $N$  be a right $R$-module. If $J(N)$ is a small submodule of $N$, then  a right $R$-module   $M$  is ss-$N$-injective if and only if any $R$-homomorphism $f:soc(N)\cap J(N){\color{black}\longrightarrow M}$  extends to $N$.
\end{prop}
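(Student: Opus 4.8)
The plan is to prove both implications directly from Definition~\ref{Definition:(2.1)(a)}, using the hypothesis that $J(N)\ll N$ to control which submodules are both semisimple and small. The key observation is that a semisimple submodule $K$ of $N$ is small in $N$ if and only if $K\subseteq \operatorname{soc}(N)\cap J(N)$: indeed, every small submodule of $N$ is contained in $J(N)$ (this is standard), so a semisimple small submodule lies in $\operatorname{soc}(N)\cap J(N)$; conversely, since $\operatorname{soc}(N)\cap J(N)\subseteq J(N)\ll N$, any submodule of $\operatorname{soc}(N)\cap J(N)$ is semisimple (being a submodule of the socle) and small (being a submodule of a small submodule). In particular $\operatorname{soc}(N)\cap J(N)$ is itself a semisimple small submodule of $N$.

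For the forward direction, suppose $M$ is ss-$N$-injective and let $f:\operatorname{soc}(N)\cap J(N)\longrightarrow M$ be any $R$-homomorphism. By the observation above, $K:=\operatorname{soc}(N)\cap J(N)$ is a semisimple small submodule of $N$, so by Definition~\ref{Definition:(2.1)(a)} the map $f$ extends to $N$, as required.

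For the converse, suppose every $R$-homomorphism $\operatorname{soc}(N)\cap J(N)\longrightarrow M$ extends to $N$, and let $K$ be an arbitrary semisimple small submodule of $N$ with a homomorphism $g:K\longrightarrow M$. Since $K\ll N$ we have $K\subseteq J(N)$, and since $K$ is semisimple $K\subseteq\operatorname{soc}(N)$; hence $K\subseteq\operatorname{soc}(N)\cap J(N)$. Now $K$ is a direct summand of $\operatorname{soc}(N)$ because $\operatorname{soc}(N)$ is semisimple; write $\operatorname{soc}(N)=K\oplus L$ for some submodule $L$, and set $L':=L\cap J(N)$, so that $\operatorname{soc}(N)\cap J(N)=K\oplus L'$ (using that $K\subseteq J(N)$ together with the modular law). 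Then $g$ extends to a homomorphism $\bar g:\operatorname{soc}(N)\cap J(N)\longrightarrow M$ by setting $\bar g=g$ on $K$ and $\bar g=0$ on $L'$. By hypothesis $\bar g$ extends to $N$, and this extension also extends $g$; hence $M$ is ss-$N$-injective.

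The only mild subtlety — and the step I would be most careful about — is the claim that every small submodule of $N$ is contained in $J(N)$, and the decomposition $\operatorname{soc}(N)\cap J(N)=K\oplus L'$; both are routine (the first is a standard fact about the radical, the second follows from semisimplicity of the socle plus the modular law since $K\subseteq J(N)$), so no real obstacle arises. This is presumably why the authors flag the proposition as easy.
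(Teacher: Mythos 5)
Your proof is correct; the paper itself omits the argument entirely (it only remarks that the proposition is ``easy to prove''), and what you have written is precisely the routine argument the authors intend: under $J(N)\ll N$ the semisimple small submodules of $N$ are exactly the submodules of $\operatorname{soc}(N)\cap J(N)$, which is itself semisimple and small, and every such submodule is a direct summand of it. The only cosmetic simplification available is that you can obtain the decomposition $\operatorname{soc}(N)\cap J(N)=K\oplus L'$ directly from semisimplicity of $\operatorname{soc}(N)\cap J(N)$ itself, without the detour through $\operatorname{soc}(N)$ and the modular law.
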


\begin{prop}\label{Proposition:(2.8)}
Let $N$  be a right $R$-module and  $\left\{ {\color{black}A_{i}:i=1,2,...,n}\right\} $ be a family of finitely generated right $R$-modules. Then $N$ is ss-$\overset{{\scriptscriptstyle n}}{\underset{{\scriptscriptstyle i=1}}{\bigoplus}}A_{i}$-injective if and only if $N$  is ss-$\mathit{A_{i}}$-injective, for all $\mathit{i}=1,2,...,n$.
\end{prop}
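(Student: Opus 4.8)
The plan is to prove the two directions separately, with the "only if" part following quickly from the general theory already established and the "if" part carrying the real content. For the "only if" direction, suppose $N$ is ss-$\bigoplus_{i=1}^n A_i$-injective. Each $A_i$ is (isomorphic to) a direct summand of $\bigoplus_{i=1}^n A_i$, and in particular a submodule; so by Theorem~\ref{Theorem:(2.3)}(2) (restriction to a submodule) — or directly by parts (2) and (4) together — $N$ is ss-$A_i$-injective for each $i$. This needs no finiteness or finite-generation hypothesis at all.

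For the "if" direction, the natural approach is induction on $n$, so the whole matter reduces to the case $n=2$: if $N$ is ss-$A_1$-injective and ss-$A_2$-injective, then $N$ is ss-$(A_1\oplus A_2)$-injective. Here is where I would use the hypothesis. Let $A = A_1 \oplus A_2$, let $K$ be a semisimple small submodule of $A$, and let $f\colon K \to N$ be an $R$-homomorphism. The key point is that $A_1$ and $A_2$ are finitely generated, hence (together) $A$ is finitely generated, so $J(A)$ is small in $A$; moreover one checks $J(A) = J(A_1)\oplus J(A_2)$ and that a small submodule of a finite direct sum of f.g. modules projects into the radical of each summand. Thus $K \subseteq J(A) = J(A_1) \oplus J(A_2)$, and writing $\pi_i\colon A \to A_i$ for the projections, we get $\pi_i(K) \subseteq J(A_i)$. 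Since $K$ is semisimple, $\pi_i(K)$ is a semisimple submodule of $A_i$; and a semisimple submodule contained in the radical of a module is small in that module (a semisimple submodule $S$ with $S + L = A_i$ forces, by semisimplicity, a complement inside $S$, i.e. $S = S' \oplus (S\cap L)$ with $S' \cap L = 0$ and $S' \oplus L = A_i$, but $S' \subseteq J(A_i)$ is then a summand in the radical, hence $0$). So $\pi_i(K)$ is a semisimple small submodule of $A_i$.

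Now I would extend $f$ one summand at a time. Consider $K \cap A_1$: it is a semisimple small submodule of $A_1$ (submodule of $\pi_1(K) \cap A_1 = K\cap A_1$, using $A_1 \subseteq A$ directly), so $f|_{K\cap A_1}$ extends, by ss-$A_1$-injectivity, to some $g_1\colon A_1 \to N$. Replacing $f$ by $f - g_1\circ\pi_1|_K$ (which still maps $K$ into $N$ and still has semisimple small image-domain, and now kills $K \cap A_1$), we may assume $K \cap A_1 = 0$, so $\pi_2|_K$ is injective on $K$ and $K \cong \pi_2(K) \subseteq A_2$. Then $f\circ(\pi_2|_K)^{-1}\colon \pi_2(K) \to N$ is a homomorphism from a semisimple small submodule of $A_2$, which extends by ss-$A_2$-injectivity to $g_2\colon A_2 \to N$; and $g_1\circ\pi_1 + g_2\circ\pi_2\colon A \to N$ restricts to $f$ on $K$ (after adding back $g_1$). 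This completes the $n=2$ case, and the induction then finishes the general case since a finite direct sum of finitely generated modules is finitely generated.

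The main obstacle I anticipate is purely bookkeeping: verifying cleanly that a small submodule of $A_1\oplus\cdots\oplus A_n$ has each projection inside $J(A_i)$, and that semisimple-inside-the-radical implies small. Both are standard but must be stated carefully, since this is exactly where the finite-generation hypothesis (ensuring $J(A)$ small, via Proposition~\ref{Proposition:(2.6)}'s hypothesis) does its work; without it the inclusion $K \subseteq J(A)$ could fail. The "peeling off one summand at a time" argument is routine once these two facts are in hand.
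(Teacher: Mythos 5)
Your proof is correct in substance, but it takes a different route from the paper's for the $(\Leftarrow)$ direction. The paper never works with an arbitrary semisimple small submodule $K$ of $\bigoplus_{i=1}^{n}A_{i}$: it first invokes the preradical identity $\mathrm{soc}(\bigoplus_{i}A_{i})\cap J(\bigoplus_{i}A_{i})=\bigoplus_{i}(\mathrm{soc}(A_{i})\cap J(A_{i}))$ and (implicitly, via Proposition~\ref{Proposition:(2.6)}, which is where the finite-generation hypothesis enters by making $J(\bigoplus_{i}A_{i})$ small) reduces to extending a single map $f$ defined on $\mathrm{soc}\cap J$ of the whole direct sum; since that particular submodule splits compatibly along the $A_{i}$, each restriction extends to some $h_{j}\colon A_{j}\to N$ by hypothesis, and the $h_{j}$ are glued into one map by the universal property of the coproduct -- no correction terms and no induction on $n$. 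You instead keep $K$ arbitrary, observe $K\subseteq J(A_{1})\oplus J(A_{2})$ so each $\pi_{i}(K)$ is a semisimple small submodule of $A_{i}$ (note this part needs no finite generation: $K\ll A$ already forces $K\subseteq J(A)$, and the image of a small submodule under any homomorphism is small), and then peel off one summand at a time. Both arguments are valid; the paper's is shorter and handles all $n$ at once, while yours is more self-contained and makes explicit why an arbitrary $K$, which need not decompose as $\bigoplus_{i}(K\cap A_{i})$, causes no difficulty. One step of yours should be reworded: after replacing $f$ by $f-g_{1}\circ\pi_{1}|_{K}$ you cannot literally ``assume $K\cap A_{1}=0$,'' since $K$ is a fixed submodule and has not changed; what is true is that the modified map vanishes on $K\cap A_{1}=\ker(\pi_{2}|_{K})$ and therefore induces a well-defined homomorphism $\pi_{2}(K)\to N$, and it is this induced map that you extend by ss-$A_{2}$-injectivity. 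With that rephrasing the final verification that $\left(g_{1}\circ\pi_{1}+g_{2}\circ\pi_{2}\right)|_{K}=f$ goes through exactly as you indicate.
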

\begin{proof}
($\Rightarrow$) This follows from Theorem~\ref{Theorem:(2.3)}((2),(4)).

$\!\!\!\!\!\!\!\!$($\Leftarrow$) By \cite[Proposition (I.4.1) and Proposition (I.1.2)]{5BiKeNe82} we have soc($\overset{{\scriptscriptstyle n}}{\underset{{\scriptscriptstyle i=1}}{\bigoplus}}A_{i})\cap J(\overset{{\scriptscriptstyle n}}{\underset{{\scriptscriptstyle i=1}}{\bigoplus}}A_{i})=($soc$\,\cap \,J)(\overset{{\scriptscriptstyle n}}{\underset{{\scriptscriptstyle i=1}}{\bigoplus}}A_{i})$ $=\mathit{\overset{{\scriptscriptstyle n}}{\underset{{\scriptscriptstyle i=1}}{\bigoplus}}}($soc$\,\cap\, J)(\mathit{A_{i}})=\overset{{\scriptscriptstyle n}}{\underset{{\scriptscriptstyle i=1}}{\bigoplus}}($soc$(\mathit{A_{i}})\,\cap\,J(\mathit{A_{i}}))$.
For $j=1,2,...,n$,  consider the following diagram:

\[
\xymatrix{
K_{j}=soc(\mathit{A_{j}})\,\cap\,J(\mathit{A_{j}}) \,\ar[d]_{i_{K_{j}}}\ar@{^{(}->}[r]^{\qquad\qquad i_{2}} \ar[r] & A_{j} \ar[d]^{i_{A_{j}}} \\
\overset{{\scriptscriptstyle n}}{\underset{{\scriptscriptstyle i=1}}{\bigoplus}}(soc(\mathit{A_{i}})\,\cap\,J(\mathit{A_{i}})) \,\ar[d]_{f}\ar@{^{(}->}[r]^{\qquad\qquad i_{1}} \ar[r] &{\overset{{\scriptscriptstyle n}}{\underset{{\scriptscriptstyle i=1}}{\bigoplus}}\mathit{A_{i}}}\\N
}
\]

\noindent where  \,$i_{1}$, \,$i_{2}$ \, are inclusion maps and \,  $i_{K_{j}}$, \, $i_{A_{j}}$ \, are injection maps. \, By hypothesis, \, there exists an $R$-homomorphism $h_{j}:\mathit{A_{j}\longrightarrow N}$ such that $\mathit{h_{j}\circ i_{2}=f\circ i_{K_{j}}}$,
also there exists exactly one homomorphism $h:\overset{{\scriptscriptstyle n}}{\underset{{\scriptscriptstyle i=1}}{\bigoplus}}A_{i}\longrightarrow N$
satisfying $\mathit{h_{j}}=h\circ i_{A_{j}}$ by \cite[Theorem 4.1.6(2)]{9Kas82}. Thus $\mathit{f\circ i_{K_{j}}}=h_{j}\circ i_{2}=h\circ i_{A_{j}}\circ i_{2}=h\circ i_{1}\circ i_{K_{j}}$ for all $\mathit{j}=1,2,...,n$. Let $(\mathit{a_{1},a_{2},}...,a_{n})$$\in\overset{{\scriptscriptstyle n}}{\underset{{\scriptscriptstyle i=1}}{\bigoplus}}($soc$(A_{i})\cap J(A_{i}))$,
thus $\mathit{a_{j}\in}$soc$(A_{j})\cap J(A_{j})$, for all $\mathit{i}=1,2,...,n$
and, $f(\mathit{a_{1},a_{2},}...,a_{n})=f(\mathit{i_{K_{1}}}(a_{1}))+$$f(\mathit{i_{K_{2}}}(a_{2}))+...+$$f(\mathit{i_{K_{n}}}(a_{n}))=(\mathit{h\circ i_{1}})(\mathit{a_{1},a_{2},}...,a_{n})$. Thus $\mathit{f=h\circ i_{1}}$ and the proof is complete.
\end{proof}

\begin{cor}\label{Corollary:(2.10)} Let $M$
be a right $R$-module and $1=\mathit{e_{1}}+e_{2}+...+e_{n}$ in $R$ such that $\mathit{e_{i}}$ are orthogonal idempotent. Then $M$ is ss-injective if and only if $M$ is ss-$\mathit{e_{i}R}$-injective for every $\mathit{i=1,2,}...,n$.

\noindent (2) For idempotents $e$ and $f$ of $R$. If $\mathit{eR\cong f}R$ and $M$   is ss-$\mathit{eR}$-injective, then $M$  is ss-$\mathit{fR}$-injective.
\end{cor}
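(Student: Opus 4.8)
The plan is to obtain both parts as immediate consequences of Proposition~\ref{Proposition:(2.8)} and Theorem~\ref{Theorem:(2.3)}, with essentially no new work.

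For the first assertion, I would begin by recording the standard fact that a decomposition $1 = e_{1} + e_{2} + \cdots + e_{n}$ of the identity into orthogonal idempotents induces a direct sum decomposition $R_{R} = e_{1}R \oplus e_{2}R \oplus \cdots \oplus e_{n}R$ of the right regular module. Each summand $e_{i}R$ is cyclic, hence finitely generated, so the family $\{e_{i}R : i = 1, 2, \ldots, n\}$ satisfies the hypotheses of Proposition~\ref{Proposition:(2.8)}. Applying that proposition with the module being tested for injectivity taken to be $M$ and with $A_{i} = e_{i}R$, we get that $M$ is ss-$\bigl(\bigoplus_{i=1}^{n} e_{i}R\bigr)$-injective if and only if $M$ is ss-$e_{i}R$-injective for every $i = 1, 2, \ldots, n$. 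Since $\bigoplus_{i=1}^{n} e_{i}R = R_{R}$, the left-hand condition is exactly ss-injectivity of $M$, and the equivalence follows.

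For the second assertion, I would simply invoke Theorem~\ref{Theorem:(2.3)}(4): if $K$ and $N$ are right $R$-modules with $K \cong N$ and $M$ is ss-$K$-injective, then $M$ is ss-$N$-injective. Taking $K = eR$ and $N = fR$, the hypothesis $eR \cong fR$ gives $K \cong N$, and $M$ being ss-$eR$-injective gives ss-$K$-injectivity; hence $M$ is ss-$fR$-injective, as desired.

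There is no real obstacle in this argument; the only point that deserves a moment's attention is verifying that Proposition~\ref{Proposition:(2.8)} genuinely applies, i.e.\ that each $e_{i}R$ is finitely generated (being cyclic) and that $R_{R}$ decomposes as the claimed finite direct sum of the $e_{i}R$ — both of which are routine. Everything else is a direct citation of the results established earlier in this section.
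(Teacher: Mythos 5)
Your proof is correct and follows essentially the same route as the paper: part (1) via the decomposition $R_{R}=\bigoplus_{i=1}^{n}e_{i}R$ together with Proposition~\ref{Proposition:(2.8)}, and part (2) by direct citation of Theorem~\ref{Theorem:(2.3)}(4). Your explicit check that each $e_{i}R$ is cyclic, hence finitely generated, is a small detail the paper leaves implicit but is exactly the right thing to verify.
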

\begin{proof}
\noindent (1) From \cite[Corollary 7.3]{3AnFu74}, we have $\mathit{R=\overset{{\scriptscriptstyle n}}{\underset{{\scriptscriptstyle i=1}}{\bigoplus}}}e_{i}R$, thus it follows from Proposition~\ref{Proposition:(2.8)}  that $M$ is ss-injective if and only if $M$  is ss-$\mathit{e_{i}}R$-injective for all 1$\leq i\leq n$.

(2) This follows from Theorem~\ref{Theorem:(2.3)}(4).
\end{proof}

\begin{prop}\label{Proposition:(2.9)}  A right $R$-module $M$ is ss-injective if and only if $M$ is ss-$P$-injective, for every finitely generated projective right $R$-module $P$.
\end{prop}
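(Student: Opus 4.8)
The plan is to establish the two implications separately, with the ``if'' direction being immediate. Since $R_{R}$ is itself a finitely generated projective right $R$-module, the hypothesis that $M$ is ss-$P$-injective for every finitely generated projective $P$ specializes, by taking $P=R_{R}$, to the statement that $M$ is ss-$R_{R}$-injective, which is exactly the definition of $M$ being ss-injective.

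For the ``only if'' direction, I would start from an arbitrary finitely generated projective right $R$-module $P$ and use the standard fact that $P$ is a direct summand of a finitely generated free module, say $P\oplus Q\cong R^{n}=\bigoplus_{i=1}^{n}R$ for some $n\in\mathbb{N}$ and some right $R$-module $Q$. Then I would invoke Proposition~\ref{Proposition:(2.8)} with each $A_{i}=R$ (every $R$ being finitely generated as a right $R$-module): since $M$ is ss-$R$-injective by hypothesis, it is ss-$\bigoplus_{i=1}^{n}R$-injective, i.e.\ ss-$R^{n}$-injective.

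Finally, viewing $P$ as a submodule of $R^{n}$ through the above decomposition, I would apply Theorem~\ref{Theorem:(2.3)}(2), which transfers ss-$N$-injectivity to ss-$K$-injectivity whenever $K\subseteq N$; taking $N=R^{n}$ and $K=P$ gives that $M$ is ss-$P$-injective. Since $P$ was an arbitrary finitely generated projective right $R$-module, this completes the argument.

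As for the main obstacle: there is essentially none, since the proof is a routine combination of the already-established transfer results. The only point that requires a little care is making sure the ambient free module $R^{n}$ is finitely generated, so that Proposition~\ref{Proposition:(2.8)} genuinely applies (its statement concerns finite direct sums of finitely generated modules); this is guaranteed precisely because $P$ is finitely generated.
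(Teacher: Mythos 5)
Your proof is correct and follows essentially the same route as the paper's: both directions rely on $R_{R}$ being finitely generated projective for the easy implication, and on Proposition~\ref{Proposition:(2.8)} plus the fact that $P$ is a direct summand of some $R^{n}$, transferred via Theorem~\ref{Theorem:(2.3)}, for the converse. The only cosmetic difference is that the paper explicitly invokes Theorem~\ref{Theorem:(2.3)}(4) alongside (2) to handle the isomorphism $P\oplus Q\cong R^{n}$, which you absorb by identifying $P$ with its image in $R^{n}$.
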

\begin{proof} ($\Rightarrow$) Let $M$  be an ss-injective $R$-module, thus it follows from Proposition~\ref{Proposition:(2.8)} that $M$ is ss-$\mathit{R^{n}}$-injective for any $\mathit{n\in\mathbb{\mathbb{Z^{\dotplus}}}}$. Let $P$  be a finitely generated projective $R$-module, thus by \cite[Corollary 5.5]{1AdWe92}, we have that $P$ is a direct summand of a module isomorphic to $\mathit{R^{m}}$ for some $\mathit{m\in\mathbb{Z^{\dotplus}}}$. Since $M$  is ss-$\mathit{R^{m}}$-injective, thus $M$
is ss-$P$-injective by Theorem~\ref{Theorem:(2.3)}((2),(4)).

($\Leftarrow$) By the fact that $R$  is projective.
\end{proof}

\begin{prop}\label{Proposition:(2.11)} The following statements are equivalent for a right $R$-module $M$.

\noindent(1) Every right $R$-module is ss-$M$-injective.

\noindent(2) Every simple submodule of $M$ is ss-$M$-injective.

\noindent(3) \emph{soc}$(M)\cap J(M)=0$.
\end{prop}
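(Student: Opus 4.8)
The plan is to prove the cycle $(1)\Rightarrow(2)\Rightarrow(3)\Rightarrow(1)$, using Proposition~\ref{Proposition:(2.6)} and the fact that $\mathrm{soc}(M)\cap J(M)$ is a semisimple small submodule of $M$ whenever $M$ is finitely generated — but since $M$ here is arbitrary, I should instead work directly from the definition of ss-$M$-injectivity with small semisimple submodules, rather than through $\mathrm{soc}(M)\cap J(M)$ as a single submodule. Let me set $T=\mathrm{soc}(M)\cap J(M)$; note that every \emph{cyclic} (hence simple) submodule of $\mathrm{soc}(M)$ contained in $J(M)$ is small in $M$, so $T$ is a sum (a directed union of finite partial sums) of simple small submodules.

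The implication $(1)\Rightarrow(2)$ is trivial. For $(2)\Rightarrow(3)$: suppose $T\neq 0$ and pick a simple submodule $K\subseteq T$. Then $K$ is a simple small submodule of $M$ which is, by hypothesis, ss-$M$-injective. Apply ss-$M$-injectivity of $K$ to the identity map $\mathrm{id}_K\colon K\to K$, viewed as a homomorphism from the semisimple small submodule $K$ of $M$ into the module $K$: it extends to some $g\colon M\to K$. Then $g$ splits the inclusion $K\hookrightarrow M$, so $K$ is a direct summand of $M$. But a direct summand contained in $J(M)$ must be zero (a nonzero summand cannot be small, and anything inside $J(M)$ is small — or directly, if $M=K\oplus K'$ with $K\subseteq J(M)$ then $K\subseteq J(M)=J(K)\oplus J(K')\subseteq K'$, forcing $K=0$). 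This contradicts $K\neq 0$, so $T=0$.

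For $(3)\Rightarrow(1)$: let $T=\mathrm{soc}(M)\cap J(M)=0$. Let $N$ be any right $R$-module, $K$ a semisimple small submodule of $M$, and $f\colon K\to N$ any homomorphism — wait, the roles are reversed here: I need every right $R$-module to be ss-$M$-injective, so $M$ is the source. So let $N$ be arbitrary, $K$ a semisimple small submodule of $M$, and $f\colon K\to N$. Since $K\subseteq \mathrm{soc}(M)$ and $K\ll M$ gives $K\subseteq J(M)$ (a small submodule lies in the radical when... actually $K\ll M$ does not in general force $K\subseteq J(M)$, but $K\ll M$ together with $K$ being a sum of simple submodules: each simple summand $S$ of $K$ is small in $M$, and a simple small submodule of $M$ lies in $J(M)$ since $J(M)$ is the sum of all small submodules — hold on, $J(M)$ is the intersection of maximal submodules, equivalently the sum of all small submodules when $M$ has a maximal submodule; in general $J(M)\supseteq$ every small submodule is false too). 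The clean statement: every small submodule of $M$ is contained in $J(M)$ is actually a standard fact (\emph{Rad}$(M)=\sum\{L : L\ll M\}$ when it is the sum of small submodules, and any small $L$ satisfies $L\subseteq \mathrm{Rad}(M)$ indeed because for any maximal $U$, if $L\not\subseteq U$ then $L+U=M$ so $U=M$, contradiction). Thus $K\subseteq \mathrm{soc}(M)\cap J(M)=T=0$, so $K=0$ and $f$ extends trivially (by the zero map). Hence every right $R$-module is ss-$M$-injective.

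The main subtlety — really the only one — is the direction of the injectivity in statement (1): "every right $R$-module is ss-$M$-injective" makes $M$ the \emph{test} module (the source of partial maps), so the condition is about maps \emph{out of} small semisimple submodules of $M$, and the whole proposition collapses once one observes that such submodules must lie in $\mathrm{soc}(M)\cap J(M)$. I should state carefully the lemma-level fact that every small submodule of a module is contained in its radical, cite it or prove the one-line argument, and make sure the extension in $(2)\Rightarrow(3)$ is applied with the correct codomain (namely $K$ itself, to get a splitting). Everything else is immediate.
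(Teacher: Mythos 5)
Your argument is correct and follows essentially the same route as the paper: for $(2)\Rightarrow(3)$ you use ss-$M$-injectivity of a simple small submodule $K\subseteq \mathrm{soc}(M)\cap J(M)$ to split the inclusion $K\hookrightarrow M$, then note a small direct summand must be zero, exactly as in the paper's proof (which phrases it with the decomposition $\mathrm{soc}(M)\cap J(M)=\bigoplus_i x_iR$); and your $(3)\Rightarrow(1)$, observing that any semisimple small submodule of $M$ lies in $\mathrm{soc}(M)\cap J(M)=0$, is what the paper dismisses as obvious. The only cosmetic slip is calling a cyclic submodule of $\mathrm{soc}(M)$ ``hence simple'' (it is only a finite sum of simples), but this plays no role since you work with a genuinely simple $K$.
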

\begin{proof}
(1) $\Rightarrow$ (2) and (3) $\Rightarrow$ (1) are obvious.

(2) $\Rightarrow$ (3) Assume that soc$(M)\cap J(M)\neq 0$, thus soc$(M)\cap J(M)=\underset{i\in I}{\bigoplus}x_{i}R$ where $x_{i}R$ is a simple small submodule of $M$, for each $i\in I$. Therefore,  $x_{i}R$ is ss-$M$-injective for each $i\in I$ by hypothesis. For any $i\in I$, the inclusion map from $x_{i}R$ to $M$ is split, so we have that $x_{i}R$ is a direct summand of $M$.   Since $x_{i}R$ is small submodule of $M$, thus $x_{i}R=0$ and hence $x_{i}=0$ for all $i\in I$ and this a contradiction.

\end{proof}

\begin{lem}\label{Lemma:(2.13)} Let $M$  be an ss-quasi-injective right $R$-module and $S=\emph{End}(M_{R})$, then the following statements hold:

\noindent(1) $l_{M}r_{R}(m)=Sm$ for all $m\in$ \emph{soc}$(M)\cap J(M)$.

\noindent(2) $r_{R}(m)\subseteq r_{R}(n)$, where $m\in$ \emph{soc}$(M)\cap J(M)$, $n\in M$ implies $Sn\subseteq Sm$.

\noindent(3) $l_{S}(mR\cap r_{M}(\alpha))=l_{S}(m)+S\alpha$, where $m\in $ \emph{soc}$(M)\cap J(M)$, $\alpha\in S$.

\noindent(4) If $kR$ is a simple submodule of $M$, then $Sk$ is a simple left $S$-module, for all $k\in J(M)$. Moreover, \emph{soc}$(M)\cap J(M)\subseteq $\,\,\emph{soc}$(_{S}M)$.

\noindent(5) \emph{soc}$(M)\cap J(M)\subseteq r_{M}(J($$_{S}S))$.

\noindent(6) $l_{S}(A\cap B)=l_{S}(A)+l_{S}(B)$, for every semisimple small right submodules $A$  and $B$  of $M$.
\end{lem}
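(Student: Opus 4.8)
The plan is to prove the six parts largely in order, using ss-$M$-injectivity of $M$ in the form: every $R$-homomorphism from a semisimple small submodule of $M$ into $M$ extends to an endomorphism in $S$. Note first that any cyclic submodule $mR$ with $m\in\operatorname{soc}(M)\cap J(M)$ is a semisimple small submodule of $M$ (it is contained in the socle, hence semisimple, and contained in $J(M)$, hence small), so ss-$M$-injectivity applies to maps defined on such submodules and on finite sums of them.

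For (1): the inclusion $Sm\subseteq l_Mr_R(m)$ is automatic. For the reverse, take $n\in l_Mr_R(m)$, so $r_R(m)\subseteq r_R(n)$; then the assignment $mr\mapsto nr$ is a well-defined $R$-homomorphism $mR\to M$, and since $mR$ is a semisimple small submodule, ss-$M$-injectivity extends it to some $\alpha\in S$ with $\alpha(m)=n$, whence $n=\alpha m\in Sm$. Part (2) is the same argument stopping one line earlier: $r_R(m)\subseteq r_R(n)$ gives a well-defined map $mR\to nR\subseteq M$ extending to $\alpha\in S$ with $n=\alpha m$, so $Sn\subseteq Sm$. For (4): if $kR$ is simple and $k\in J(M)$, then $kR$ is a simple small submodule; given $0\neq \beta k\in Sk$, simplicity of $kR$ forces $r_R(k)=r_R(\beta k)$ (the kernel of $r\mapsto \beta k r$ is a right ideal between $r_R(k)$ and $R$ not equal to $R$), so by (2) (with roles of $m,n$ as $\beta k, k$) we get $Sk\subseteq S\beta k$, i.e. $Sk=S\beta k$; thus $Sk$ is a simple left $S$-module. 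Since $\operatorname{soc}(M)\cap J(M)$ is a direct sum of such simple $kR$'s, each $Sk$ is simple, so $\operatorname{soc}(M)\cap J(M)\subseteq\operatorname{soc}({}_SM)$. Part (5) then follows: by (4) each $m\in\operatorname{soc}(M)\cap J(M)$ lies in a simple left $S$-submodule $Sm$, which is annihilated by $J({}_SS)$ (the Jacobson radical annihilates every simple module), so $J({}_SS)m=0$, i.e. $m\in r_M(J({}_SS))$.

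Parts (3) and (6) are the annihilator-manipulation steps and are where the real work lies. For (3), the inclusion $l_S(m)+S\alpha\subseteq l_S(mR\cap r_M(\alpha))$ is routine: if $\gamma\in l_S(m)$ then $\gamma$ kills $mR$, and if we write an element of $mR\cap r_M(\alpha)$ as $mr$ with $\alpha(mr)=0$, then $(\beta\alpha)(mr)=0$ for any $\beta$. For the reverse inclusion, take $\gamma\in l_S(mR\cap r_M(\alpha))$. The idea is to consider the map on the submodule $\alpha(mR)$ of $M$: since $\alpha(mR)\cong mR/(mR\cap r_M(\alpha))$ and $\gamma$ kills $mR\cap r_M(\alpha)$, the rule $\alpha(mr)\mapsto \gamma(mr)$ is a well-defined $R$-homomorphism $\alpha(mR)\to M$. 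Now $\alpha(mR)$ is a homomorphic image of $mR$, hence a semisimple small submodule of $M$ (semisimple as a quotient of a semisimple module; small because homomorphic images of small submodules under endomorphisms land in $J(M)$, as $\alpha(J(M))\subseteq J(M)$). So ss-$M$-injectivity extends it to some $\delta\in S$ with $\delta\alpha(mr)=\gamma(mr)$ for all $r$, in particular $\delta\alpha m=\gamma m$, giving $(\gamma-\delta\alpha)\in l_S(m)$ and therefore $\gamma=(\gamma-\delta\alpha)+\delta\alpha\in l_S(m)+S\alpha$. For (6): $l_S(A)+l_S(B)\subseteq l_S(A\cap B)$ is clear; for the reverse, given $\gamma\in l_S(A\cap B)$, consider the map $A+B\to M$. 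Since $\gamma(A\cap B)=0$, the rule $a+b\mapsto \gamma(a)$ for $a\in A$, $b\in B$ is well-defined only if we are careful—instead, the cleaner route is: define $\varphi:A+B\to M$ by $\varphi(a+b)=\gamma(b)$; this is well-defined because if $a+b=a'+b'$ then $b-b'=a'-a\in A\cap B$ so $\gamma(b-b')=0$. Here $A+B$ is a sum of two semisimple small submodules, hence itself a semisimple small submodule of $M$, so ss-$M$-injectivity extends $\varphi$ to $\delta\in S$. Then $\delta|_B=\gamma|_B$, so $\gamma-\delta\in l_S(B)$, and $\delta|_A=0$ so $\delta\in l_S(A)$; thus $\gamma=(\gamma-\delta)+\delta\in l_S(B)+l_S(A)$.

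The main obstacle is getting the well-definedness and the ``semisimple small'' hypotheses exactly right at each application of ss-$M$-injectivity: one must check in (3) that $\alpha(mR)$ (an image, not a submodule of the original cyclic) is again semisimple and small, and in (6) that the sum $A+B$ is small in $M$ — both follow from standard facts ($J(M)$ is closed under sums and under images by endomorphisms, a sum of two small submodules is small, quotients of semisimple modules are semisimple), but they are the crux of why ss-injectivity (rather than full injectivity) suffices here. Everything else is bookkeeping with annihilators.
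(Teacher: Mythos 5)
Your proof is correct and, for parts (1)--(4) and (6), follows essentially the same route as the paper: extend the relevant map from a semisimple small (cyclic or sum of cyclic) submodule to an endomorphism and read off the annihilator identity; your part (3) unwinds the extension directly where the paper routes it through part (2), but that is the same computation. The only genuine divergence is part (5): you deduce it from (4) via the general fact that $J({}_{S}S)$ annihilates $\mathrm{soc}({}_{S}M)$, whereas the paper argues directly, showing that $\alpha(m)\neq 0$ for $\alpha\in J(S)$ would force $r_{R}(\alpha(m))=r_{R}(m)$, hence $m=\beta\alpha(m)$ for some $\beta\in S$, and then invoking quasi-regularity of $\beta\alpha$ to get the contradiction $m=0$; your derivation is shorter and cleaner, at the cost of leaning on (4) rather than being self-contained. (One small wording correction: for general $m\in\mathrm{soc}(M)\cap J(M)$ the module $Sm$ need not itself be simple; what (4) gives is $m\in\mathrm{soc}({}_{S}M)$, a sum of simples, which is all your argument for (5) needs.)
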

\begin{proof}
(1) Let $n\in l_{M}r_{R}(m)$, thus $r_{R}(m)\subseteq r_{R}(n)$.
Now, let $\gamma:mR\longrightarrow M$ is given by $\gamma(mr)=nr$,
thus $\gamma$ is a well define $R$-homomorphism. By hypothesis, there exists an endomorphism $\beta$ of $M$
such that $\beta_{|mR}=\gamma$. Therefore, $n=\gamma(m)=\beta(m)\in Sm$,
that is $l_{M}r_{R}(m)\subseteq Sm$. The inverse inclusion is clear.

(2) Let $n\in M$ and $m\in$ soc$(M)\cap J(M)$. Since
$r_{R}(m)\subseteq r_{R}(n)$, then $n\in l_{M}r_{R}(m)$. By (1),
we have $n\in Sm$ as desired.

(3) If $f\in l_{S}(m)+S\alpha$, then $f=f_{1}+f_{2}$
such that $f_{1}(m)=0$ and $f_{2}=g\alpha$, for some $g\in S$.
For all $n\in mR\cap r_{M}(\alpha)$, we have $n=mr$ and $\alpha(n)=0$
for some $r\in R$. Since $f_{1}(n)=f_{1}(mr)=f_{1}(m)r=0$ and $f_{2}(n)=g(\alpha(n))=g(0)=0$,
thus $f\in l_{S}(mR\cap r_{M}(\alpha))$ and this implies that $l_{S}(m)+S\alpha\subseteq l_{S}(mR\cap r_{M}(\alpha))$.
Now, we will prove that the other inclusion. Let $g\in l_{S}(mR\cap r_{M}(\alpha)).$ If
$r\in r_{R}(\alpha(m))$, then $\alpha(mr)=0$, so $mr\in mR\cap r_{M}(\alpha)$
which yields $r_{R}(\alpha(m))\subseteq r_{R}(g(m))$. Since $m\in$ soc$(M)\cap J(M)$,
thus $\alpha(m)\in$ soc$(M)\cap J(M)$. By (2), we have that $g(m)=\gamma\alpha(m)$
for some $\gamma\in S$. Therefore, $g-\gamma\alpha\in l_{S}(m)$
which leads to $g\in l_{S}(m)+S\alpha$. Thus $l_{S}(mR\cap r_{M}(\alpha))=l_{S}(m)+S\alpha$.

(4) To prove $Sk$ is simple left $S$-module,
we need only show that $Sk$ is cyclic for any nonzero element in
it. If $0\neq\alpha(k)\in Sk$, then $\alpha:kR\longrightarrow\alpha(kR)$
is an $R$-isomorphism.
Since $\alpha\in S$, then $\alpha(kR)\ll M$. Since $M$
is ss-quasi-injective, thus $\alpha^{-1}:\alpha(kR)\longrightarrow kR$ has an extension
$\beta\in S$ and hence $\beta(\alpha(k))=\alpha^{-1}(\alpha(k))=k$,
so $k\in S\alpha k$ which leads to $Sk=S\alpha k$. Therefore $Sk$
is a simple left $S$-module and this leads to soc$(M)\cap J(M)\subseteq$ soc$(_{S}M)$.

(5) If $mR$ is simple and small submodule of $M$,
then $m\neq0$. We claim that $\alpha(m)=0$ for all $\alpha\in J(S)$,
thus $mR\subseteq r_{M}(J(S))$. Otherwise, $\alpha(m)\neq0$ for
some $\alpha\in J(S)$. Thus $\alpha:mR\longrightarrow\alpha(mR)$
is an $R$-isomorphism.
Now, we need prove that $r_{R}(\alpha(m))=r_{R}(m)$. Let $r\in r_{R}(m)$,
so $\alpha(m)r=\alpha(mr)=\alpha(0)=0$ which leads to $r_{R}(m)\subseteq r_{R}(\alpha(m))$.
The other inclusion, if $r\in r_{R}(\alpha(m))$, then $\alpha(mr)=0$,
that is $mr\in$ ker$(\alpha)=0$, so $r\in r_{R}(m)$. Hence $r_{R}(\alpha(m))=r_{R}(m)$.
Since $m,\alpha(m)\in$ soc$(M)\cap J(M)$, thus $S\alpha m=Sm$ (by(2))
and this implies that $m=\beta\alpha(m)$ for some $\beta\in S$,
so $(1-\beta\alpha)(m)=0$. Since $\alpha\in J(S)$, then the element
$\beta\alpha$ is quasi-regular by \cite[Theorem 15.3]{3AnFu74}. Thus $1-\beta\alpha$
is invertible and hence $m=0$ which is a contradiction. This shows
that soc$(M)\cap J(M)\subseteq r_{M}(J(S))$.

(6) Let $\alpha\in l_{S}(A\cap B)$ and consider
$f:A+B\longrightarrow M$ is given by $f(a+b)=\alpha(a),$ for all
$a\in A$ and $b\in B$. Since $M$ is ss-quasi-injective,
thus there exists $\beta\in S$ such that $f(a+b)=\beta(a+b).$ Thus
$\beta(a+b)=\alpha(a)$, so $(\alpha-\beta)(a)=\beta(b)$ which yields
$\alpha-\beta\in l_{S}(A)$. Therefore, $\alpha=\alpha-\beta+\beta\in l_{S}(A)+l_{S}(B)$
and this implies that $l_{S}(A\cap B)\subseteq l_{S}(A)+l_{S}(B)$.
The other inclusion is trivial and the proof is complete.
\end{proof}

\begin{rem}\label{Remark:(2.14)} \emph{Let $M$  be a right $R$-module, then $D(S)=\{
\alpha\in S=$ End$(M)\mid  r_{M}(\alpha)\cap mR\neq 0$ for each $
0\neq m\in $ soc$(M)\cap J(M)\}$ is a left ideal in $S$.}
\end{rem}
\begin{proof}
This is obvious.
\end{proof}

\begin{prop}\label{Proposition:(2.15)} Let $M$  be an ss-quasi-injective right $R$-module. Then $r_{M}(\alpha)\varsubsetneqq r_{M}(\alpha-\alpha\gamma\alpha)$, for all $\alpha\notin D(S)$ and for some $\gamma\in S$.
\end{prop}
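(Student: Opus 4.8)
The plan is to imitate the classical argument that in a quasi-injective module every endomorphism not lying in the "singular-type" ideal $D(S)$ is, in a suitable sense, von Neumann regular on a relevant piece. Fix $\alpha \in S = \mathrm{End}(M_R)$ with $\alpha \notin D(S)$. By the definition of $D(S)$ in Remark~\ref{Remark:(2.14)}, there exists a nonzero $m \in \mathrm{soc}(M)\cap J(M)$ with $r_M(\alpha)\cap mR = 0$; equivalently, the restriction $\alpha|_{mR}\colon mR \longrightarrow M$ is injective. Since $m \in \mathrm{soc}(M)\cap J(M)$, the cyclic submodule $mR$ is a semisimple small submodule of $M$, and $\alpha(mR) \cong mR$ is again a semisimple small submodule of $M$ (small because $\alpha(mR)\subseteq \alpha(J(M))\subseteq J(M)\ll M$ when $J(M)\ll M$, or more simply because the homomorphic image of a small submodule under an endomorphism is small).

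First I would invert $\alpha$ on this piece: let $\theta\colon \alpha(mR)\longrightarrow mR$ be the inverse $R$-isomorphism of $\alpha|_{mR}$. Because $M$ is ss-quasi-injective and $\alpha(mR)$ is a semisimple small submodule of $M$, the map $\theta$ extends to some $\gamma \in S$, so that $\gamma\alpha(m) = m$, and in fact $\gamma\alpha$ restricts to the identity on $mR$. Then for the chosen $\gamma$ one computes $(\alpha - \alpha\gamma\alpha)(m) = \alpha(m) - \alpha(\gamma\alpha(m)) = \alpha(m) - \alpha(m) = 0$, so $m \in r_M(\alpha - \alpha\gamma\alpha)$ while $m \notin r_M(\alpha)$ (as $\alpha(m)\neq 0$, since $\alpha|_{mR}$ is injective and $m\neq 0$). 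This already gives a witness separating the two annihilators.

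It remains to upgrade "$m$ lies in one but not the other" to the strict inclusion $r_M(\alpha)\subsetneq r_M(\alpha-\alpha\gamma\alpha)$. For the inclusion $r_M(\alpha)\subseteq r_M(\alpha-\alpha\gamma\alpha)$, note that if $x\in M$ with $\alpha(x)=0$ then $(\alpha-\alpha\gamma\alpha)(x) = \alpha(x)-\alpha\gamma\alpha(x) = 0 - \alpha\gamma(0) = 0$, so $x \in r_M(\alpha-\alpha\gamma\alpha)$; this holds for the chosen $\gamma$ (indeed for any $\gamma$). Strictness is exactly the observation of the previous paragraph: $m$ belongs to the right-hand annihilator but not the left-hand one. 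Hence $r_M(\alpha)\varsubsetneqq r_M(\alpha-\alpha\gamma\alpha)$ for this $\gamma$, which is what the proposition asserts.

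The only genuinely delicate point is the smallness of $\alpha(mR)$, needed to feed it into ss-quasi-injectivity: this uses that $\alpha$ is an endomorphism of $M$ and that $mR \ll M$, since the image of a small submodule under an endomorphism of $M$ is small in $M$ (if $\alpha(mR) + N = M$ then $mR + \alpha^{-1}(N)\supseteq$ enough to force $N = M$ by the standard small-submodule manipulation, or directly: $\alpha(mR)\ll M$ because $mR\ll M$ and $\alpha(M)\subseteq M$). Semisimplicity of $\alpha(mR)$ is immediate from $\alpha(mR)\cong mR$ and $mR\subseteq \mathrm{soc}(M)$. Everything else is the routine annihilator bookkeeping sketched above, essentially a transcription of Lemma~\ref{Lemma:(2.13)}(2)–(4) reasoning into this setting.
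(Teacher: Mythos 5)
Your proof is correct and follows essentially the same route as the paper: pick $0\neq m\in\mathrm{soc}(M)\cap J(M)$ with $r_{M}(\alpha)\cap mR=0$, produce $\gamma\in S$ with $\gamma\alpha(m)=m$, and observe that $m$ witnesses the strictness. The only cosmetic difference is that the paper obtains $\gamma$ by citing Lemma~\ref{Lemma:(2.13)}(2) (via $r_{R}(\alpha(m))=r_{R}(m)$, hence $Sm=S\alpha m$), whereas you inline the equivalent argument by extending the inverse isomorphism $\alpha(mR)\to mR$ directly, which is exactly how that lemma is proved.
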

\begin{proof}
For all $\alpha\notin D(S)$. By hypothesis, we can find $0\neq m\in$ soc$(M)\cap J(M)$ such that $r_{M}(\alpha)\cap mR=0$.
Clearly, $r_{R}(\alpha(m))=r_{R}(m)$, so $Sm=S\alpha m$ by Lemma~\ref{Lemma:(2.13)}(2). Thus $m=\gamma\alpha m$ for some $\gamma\in S$ and this
implies that $(\alpha-\alpha\gamma\alpha)m=0$. Therefore, $m\in r_{M}(\alpha-\alpha\gamma\alpha)$, but $m\notin r_{M}(\alpha)$ and hence the inclusion is strictly.
\end{proof}

\begin{prop}\label{Proposition:(2.16)}
Let $M$  be an ss-quasi-injective right $R$-module, then the set $\{\alpha\in S=\emph{End}(M)\mid 1-\beta\alpha$ is monomorphism for all
$\beta\in S\}$ is contained in $D(S)$. Moreover, $J($$_{S}S)\subseteq D(S)$.
\end{prop}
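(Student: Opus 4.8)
The plan is to prove the two assertions in turn, reducing the second to the first together with the fact (already noted in Remark~\ref{Remark:(2.14)}) that $D(S)$ is a left ideal of $S$. For the first assertion, let $\alpha\in S$ have the property that $1-\beta\alpha$ is a monomorphism for every $\beta\in S$, and suppose toward a contradiction that $\alpha\notin D(S)$. Then by Proposition~\ref{Proposition:(2.15)} there is some $\gamma\in S$ with $r_{M}(\alpha)\varsubsetneqq r_{M}(\alpha-\alpha\gamma\alpha)$; in particular there is $0\neq m\in M$ with $(\alpha-\alpha\gamma\alpha)(m)=0$, i.e. $\alpha(m-\gamma\alpha(m))=0$, equivalently $\alpha\bigl((1-\gamma\alpha)(m)\bigr)=0$. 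Actually, looking more closely at the proof of Proposition~\ref{Proposition:(2.15)}, the witness $m$ satisfies $m=\gamma\alpha m$, so $(1-\gamma\alpha)(m)=0$ with $m\neq 0$, which directly contradicts the hypothesis that $1-\gamma\alpha$ is a monomorphism. Hence $\alpha\in D(S)$, proving the containment.

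For the "moreover" part, I would take $\alpha\in J({}_{S}S)$ and show $1-\beta\alpha$ is a monomorphism for every $\beta\in S$; the first part then gives $\alpha\in D(S)$. Since $\alpha\in J(S)$ and $J(S)$ is a two-sided ideal, $\beta\alpha\in J(S)$ for all $\beta\in S$, so $\beta\alpha$ is quasi-regular by \cite[Theorem 15.3]{3AnFu74}; thus $1-\beta\alpha$ is a unit in $S$, in particular a monomorphism. This shows $J({}_{S}S)\subseteq\{\alpha\in S\mid 1-\beta\alpha\text{ is monic for all }\beta\in S\}\subseteq D(S)$.

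I do not anticipate a serious obstacle here: the argument is essentially an unpacking of Proposition~\ref{Proposition:(2.15)} plus the standard characterization of the Jacobson radical via quasi-regularity. The one point that needs care is making sure that the witnessing element $m$ produced in Proposition~\ref{Proposition:(2.15)} genuinely lies in $\ker(1-\gamma\alpha)$ and is nonzero — that is, that the strict inclusion $r_{M}(\alpha)\varsubsetneqq r_{M}(\alpha-\alpha\gamma\alpha)$ is being used with the same $m$ for which $m=\gamma\alpha m$ holds. Since that $m$ is exactly the one extracted from $\alpha\notin D(S)$ in the proof of Proposition~\ref{Proposition:(2.15)}, this is fine, but I would state it explicitly to keep the contradiction clean. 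The rest is routine and I would keep it brief.
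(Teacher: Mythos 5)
Your proof is correct and follows essentially the same route as the paper: both argue by contraposition, extract from $\alpha\notin D(S)$ an element $0\neq m\in\operatorname{soc}(M)\cap J(M)$ with $r_{M}(\alpha)\cap mR=0$, use Lemma~\ref{Lemma:(2.13)}(2) to get $m=\gamma\alpha m$ for some $\gamma\in S$ (so $1-\gamma\alpha$ is not monic), and then handle $J({}_{S}S)$ via quasi-regularity. The only difference is cosmetic: the paper inlines this short computation rather than citing the proof of Proposition~\ref{Proposition:(2.15)}, and you are right that the \emph{statement} of that proposition alone would not suffice (it only yields $\alpha\bigl((1-\gamma\alpha)(m)\bigr)=0$), so if you keep the reference you should, as you note, make the fact $m=\gamma\alpha m$ explicit rather than leaning on another proof's internals.
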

\begin{proof}
Let $\alpha\notin D(S)$, then there exists $0\neq m\in$ soc$(M)\cap J(M)$
such that $r_{M}(\alpha)\cap mR=0$. If $r\in r_{R}(\alpha(m))$,
then $\alpha(mr)=0$ and so $mr\in r_{M}(\alpha)$. Since $r_{M}(\alpha)\cap mR=0$.
Thus $r\in r_{R}(m)$ and hence $r_{R}(\alpha(m))\subseteq r_{R}(m)$,
so $Sm\subseteq S\alpha m$ by Lemma~\ref{Lemma:(2.13)}(2). Therefore, $m\in$ ker$(1-\gamma\alpha)$
for some $\gamma\in S$. Since $m\neq0$, thus $1-\gamma\alpha$ is
not monomorphism and hence the inclusion holds. Now, let $\alpha\in J($$_{S}S)$
we have $\beta\alpha$ is a quasi-regular element by \cite[Theorem 15.3]{3AnFu74}
and hence $1-\beta\alpha$ is isomorphism for all $\beta\in S$, which
completes the proof.
\end{proof}

\begin{thm}\label{Theorem:(2.17)}\textup{(ss-Baer's condition)}  The following statements are equivalent for a ring $R$.

\noindent (1) $M$ is an ss-injective right $R$-module.

\noindent (2) If $S_{r}\cap J=A\oplus B$ and $\alpha:A\longrightarrow M$
is an $R$-homomorphism, then there exists $m\in M$ such that $\alpha(a)=ma$ for all $a\in A$
and $mB=0$.

\noindent (3) If $S_{r}\cap J=A\oplus B$, and $\alpha:A\longrightarrow M$ is an $R$-homomorphism, then there exists $m\in M$ such that $\alpha(a)=ma$, for all $a\in A$
and $mB=0$.
\end{thm}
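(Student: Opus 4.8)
The plan is to prove the equivalence of (1), (2), (3) in Theorem~\ref{Theorem:(2.17)} as a module-theoretic ``Baer criterion'' adapted to semisimple small right ideals. Note first that (2) and (3) are literally the same statement as printed, so the real content is the equivalence of (1) with the annihilator/element condition phrased in terms of $S_r\cap J$; I will treat (2) and (3) as one condition and remark on this. The key structural fact I would use up front is that $S_r\cap J = \mathrm{soc}(R_R)\cap J(R_R)$ is precisely the sum of all simple small right ideals of $R$, so it is a semisimple submodule of $R_R$ and every simple summand of it is small in $R_R$; consequently every finitely generated (hence every) submodule of $S_r\cap J$ that splits off is again a semisimple small right ideal. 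This is the analogue of Proposition~\ref{Proposition:(2.6)} in the case $N=R$, where $J(R)=J$ need not be small but $S_r\cap J$ is automatically small and semisimple, so ss-injectivity of $M$ is governed exactly by extendability of maps out of $S_r\cap J$.

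For (1) $\Rightarrow$ (2): suppose $M$ is ss-injective and $S_r\cap J = A\oplus B$ with $\alpha\colon A\to M$ given. Then $A$ is a semisimple small right ideal of $R$ (a direct summand of the semisimple small right ideal $S_r\cap J$), so by Definition~\ref{Definition:(2.1)(a)} applied with $N=R_R$ the map $\alpha$ extends to an $R$-homomorphism $\widehat\alpha\colon R_R\to M$; setting $m=\widehat\alpha(1)$ gives $\alpha(a)=\widehat\alpha(a)=\widehat\alpha(1)a=ma$ for all $a\in A$, and $\widehat\alpha(b)=mb$ for all $b\in B$. The only remaining point is $mB=0$: the extension produced by the definition is merely \emph{some} extension of $\alpha$ and need not kill $B$, so I must replace $\widehat\alpha$ by a better one. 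The fix is to extend instead the map $A\oplus B\to M$ that is $\alpha$ on $A$ and $0$ on $B$ (this is again an $R$-homomorphism from the semisimple small right ideal $S_r\cap J$ into $M$, legitimate since $\alpha(A)$ need not be semisimple but the \emph{domain} $S_r\cap J$ is semisimple small, which is exactly what Definition~\ref{Definition:(2.1)(a)} requires); then the resulting $m$ satisfies $ma=\alpha(a)$ on $A$ and $mb=0$ on $B$ simultaneously.

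For (2) $\Rightarrow$ (1): let $K$ be an arbitrary semisimple small right ideal of $R$ and $f\colon K\to M$ an $R$-homomorphism; I must extend $f$ to $R_R$. Since $K$ is semisimple and small, $K\subseteq S_r$ and $K\subseteq J$, so $K\subseteq S_r\cap J$; because $K$ is semisimple it is a direct summand of the semisimple module $S_r\cap J$, say $S_r\cap J = K\oplus B$ for some $B$. Apply (2) with $A=K$ and $\alpha=f$ to get $m\in M$ with $f(a)=ma$ for all $a\in K$. Then the left multiplication $R_R\to M$, $r\mapsto mr$, is an $R$-homomorphism extending $f$, so $M$ is ss-$R$-injective, i.e.\ ss-injective. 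The one subtlety to check is that $S_r\cap J$ really is semisimple as a right $R$-module: $S_r=\mathrm{soc}(R_R)$ is semisimple by definition, and any submodule of a semisimple module is semisimple, so $S_r\cap J$ is semisimple, hence the direct-sum decomposition $S_r\cap J=K\oplus B$ exists.

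I expect the main (minor) obstacle to be the bookkeeping in (1) $\Rightarrow$ (2), namely realizing that one should not extend $\alpha$ itself but its extension-by-zero on $B$, together with making explicit why $\alpha\colon A\to M$ qualifies under Definition~\ref{Definition:(2.1)(a)} even when $\alpha(A)$ is not semisimple --- the hypothesis there constrains only the domain, not the image, and $A$ and $A\oplus B=S_r\cap J$ are both semisimple small right ideals of $R$. Everything else is routine: the existence of complements from semisimplicity, the containment $K\subseteq S_r\cap J$ from ``semisimple $+$ small,'' and the standard identification of $R$-homomorphisms $R_R\to M$ with left multiplications by elements of $M$. I would also add a sentence noting that (2) and (3) coincide as stated, so the theorem really asserts the single equivalence (1) $\Leftrightarrow$ (2).
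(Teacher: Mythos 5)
Your proposal is correct and follows essentially the same route as the paper: the paper's map $\gamma\colon S_r\cap J\to M$, $\gamma(a+b)=\alpha(a)$, is exactly your ``extension of $\alpha$ by zero on $B$,'' and the converse direction likewise extends $f$ on a semisimple small right ideal $K$ by realizing $K$ as a direct summand of $S_r\cap J$ and using left multiplication by $m$. Your added remarks (that $S_r\cap J$ is semisimple and small because $S_r$ is semisimple and $J\ll R_R$, and that (2) and (3) coincide as printed) only make explicit what the paper leaves implicit.
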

\begin{proof}
(1)$\Rightarrow$(2) Define $\gamma:S_{r}\cap J\longrightarrow M$ by $\gamma(a+b)=\alpha(a)$ for all $a\in A,b\in B$. By hypothesis,
there is a right $R$-homomorphism $\beta:\mathit{R\longrightarrow M}$ is an extension of $\gamma$, so if $\mathit{m=\beta}(1)$, then $\mathit{\alpha}(a)=\gamma(a)=\beta(a)=\beta(1)a=ma$, for all $\mathit{a\in A}$. Moreover, $mb=\beta(b)=\gamma(b)=\alpha(0)=0$
for all $b\in B$, so $mB=0$.

(2)$\Rightarrow$(1) Let $\alpha:I\rightarrow M$
be any right $R$-homomorphism, where $I$  is any semisimple small right ideal in $R$. By (2), there exists $m\in M$ such that $\alpha(a)=ma$
for all $a\in I$. Define $\beta:R_{R}\longrightarrow M$ by $\beta(r)=mr$
for all $r\in R$, thus $\beta$ extends $\alpha$.

(2)$\Leftrightarrow$(3) Clear.
\end{proof}

 A ring $R$ is called right universally mininjective ring if it satisfies the condition $\mathit{S}_{r}\cap J=0$ (see for example \cite{14NiYo97}). In the next results, we give new characterizations of universally mininjective ring in terms of ss-injectivity and soc-injectivity.

\begin{cor}\label{Corollary:(2.18)}  The following are equivalent for a ring $R$.

\noindent (1) $R$ is right universally mininjective.

\noindent (2) $R$  is right mininjective and every quotient of a soc-injective right
$R$-module is soc-injective.

\noindent (3) $R$  is right mininjective and every quotient of an injective right $R$-module is soc-injective.

\noindent (4) $R$  is right mininjective and every semisimple submodule of a projective
right $R$-module is projective.

\noindent (5) Every right $R$-module is ss-injective.

\noindent (6) Every simple   right ideal is ss-injective.

\end{cor}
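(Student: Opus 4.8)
The plan is to prove the cycle of implications $(1)\Rightarrow(5)\Rightarrow(6)\Rightarrow(1)$, which handles the ss-injectivity characterizations, together with $(1)\Rightarrow(4)\Rightarrow(3)\Rightarrow(2)\Rightarrow(1)$ for the soc-injective side; several of these arrows are nearly immediate and the real content sits in a couple of places. Recall that $R$ is right universally mininjective exactly when $S_{r}\cap J=0$, so I would keep reformulating each statement in terms of that single ideal.

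First I would do $(1)\Rightarrow(5)$: if $S_{r}\cap J=0$, then by Proposition~\ref{Proposition:(2.6)} applied with $N=R$ (note $J=J(R_R)$ is small in $R$ since $R$ has identity), a module $M$ is ss-injective iff every $R$-homomorphism $f:S_{r}\cap J\to M$ extends to $R$; but $S_{r}\cap J=0$, so the condition is vacuous and every $M$ is ss-injective. More generally one can cite Proposition~\ref{Proposition:(2.11)} with $M=R$: $\mathrm{soc}(R_R)\cap J(R_R)=S_r\cap J=0$ forces every right $R$-module to be ss-$R$-injective, i.e.\ ss-injective. The implication $(5)\Rightarrow(6)$ is trivial. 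For $(6)\Rightarrow(1)$ I would argue as in the proof of Proposition~\ref{Proposition:(2.11)}: suppose $S_r\cap J\neq 0$; then $S_r\cap J$ contains a simple submodule $xR$ which, being inside $J$, is a small right ideal, hence a semisimple small right ideal. By (6), $xR$ is ss-injective, so in particular the identity map $xR\to xR$ extends to $R\to xR$, which splits the inclusion $xR\hookrightarrow R$; thus $xR$ is a direct summand of $R_R$. A small direct summand is zero, so $x=0$, a contradiction. Hence $S_r\cap J=0$.

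For the soc-injective chain: $(1)\Rightarrow(4)$ uses that under $S_r\cap J=0$, a semisimple submodule $L$ of a projective module $P$ satisfies $L\cap JP=L\cap \mathrm{rad}(P)=0$ (using $JP=\mathrm{rad}(P)$ for projective $P$, and that $L$ semisimple meeting the radical trivially forces... actually more carefully: any simple summand $sR$ of $L$ either lies in $\mathrm{rad}(P)$ or is a direct summand, and if $sR\subseteq\mathrm{rad}(P)\ll P$ one would want $sR$ projective anyway); so each simple summand of $L$ splits off $P$ and is projective, whence $L$ is a direct sum of projectives, hence projective. Mininjectivity of $R$ is inherited since $(1)$ gives $S_r\cap J=0$, which is one of the standard equivalent forms of universal mininjectivity and in particular implies right mininjectivity. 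For $(4)\Rightarrow(3)$: given $E$ injective and a homomorphism $f:\mathrm{soc}(E/K)\to M$... hmm, I would instead follow Amin--Yousif--Zeyada's argument that "every semisimple submodule of a projective is projective" plus mininjectivity yields that quotients of injectives are soc-injective; $(3)\Rightarrow(2)$ is immediate since injective modules are soc-injective and every soc-injective module is a quotient of an injective (its injective hull). Finally $(2)\Rightarrow(1)$: by Lemma~\ref{Lemma:(2.5)} right mininjectivity is in hand; I would take a simple small right ideal $xR\subseteq J$ (if $S_r\cap J\neq0$), embed $R$ in its injective hull $E$, and use that $E/l$ being soc-injective for a suitable submodule forces a splitting that again makes $xR$ a summand of something it is small in, a contradiction.

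The main obstacle I anticipate is the soc-injective side, specifically $(1)\Rightarrow(4)$ and $(2)\Rightarrow(1)$, where one must carefully handle simple submodules of projectives that could a priori sit inside the radical: the key point to nail down is that $S_r\cap J=0$ implies no nonzero semisimple submodule of $R$ (hence of a free, hence of a projective) lies in the radical, so every such simple submodule splits off and is projective. For the ss-injective side, the only subtlety is making sure Proposition~\ref{Proposition:(2.6)} applies, which needs $J$ small in $R$ --- true because $1\notin J$ for any proper submodule argument / Nakayama. I expect the write-up to be short once these points are isolated; most arrows are "obvious" in the sense already used repeatedly in Section 2.
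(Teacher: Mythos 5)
Your treatment of the ss-injective half is correct and is exactly the paper's: the equivalence of (1), (5) and (6) is Proposition~\ref{Proposition:(2.11)} applied to $M=R_{R}$, and your direct argument for (6)$\Rightarrow$(1) (a simple small right ideal that is ss-injective splits off $R_{R}$, hence is zero) reproduces the proof of that proposition. The paper, however, does not reprove the soc-injective half at all: it disposes of (1)$\Leftrightarrow$(2)$\Leftrightarrow$(3)$\Leftrightarrow$(4) by citing \cite[Lemma 5.1]{14NiYo97} and \cite[Corollary 2.9]{2AmYoZe05}, whereas you attempt direct arguments, and that is where the gaps sit.

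The concrete error is in your step (3)$\Rightarrow$(2): you justify it by asserting that every soc-injective module is a quotient of an injective module, namely of its injective hull. That is false --- a module embeds in $E(M)$ but is not in general a homomorphic image of it (for instance $\mathbb{Z}$ is not a quotient of $\mathbb{Q}$ as a $\mathbb{Z}$-module). The direction that really is immediate is (2)$\Rightarrow$(3), since injective modules are soc-injective; to get from (3) back to (2) one must pass through the projectivity condition, exactly as in the paper's Theorem~\ref{Theorem:(2.20)}, where the nontrivial loop runs (3)$\Rightarrow$(7)$\Rightarrow$(1). So your cycle $(1)\Rightarrow(4)\Rightarrow(3)\Rightarrow(2)\Rightarrow(1)$ is broken at the (3)$\Rightarrow$(2) link. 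In addition, (1)$\Rightarrow$(4) and (2)$\Rightarrow$(1) are left as admitted sketches: for (1)$\Rightarrow$(4) a semisimple submodule of a projective module does not visibly split off just because it meets the radical trivially --- one first uses Brauer's lemma to see that $S_{r}\cap J=0$ forces every simple right ideal to be a direct summand of $R_{R}$, so that $S_{r}$ is projective, and then transfers this to arbitrary projectives as in \cite[Corollary 2.9]{2AmYoZe05}; and your (2)$\Rightarrow$(1) paragraph does not yet contain an argument. Either complete these steps or, as the paper does, simply cite \cite[Lemma 5.1]{14NiYo97} and \cite[Corollary 2.9]{2AmYoZe05} for the equivalence of (1)--(4).
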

\begin{proof}
(1)$\Leftrightarrow$(2)$\Leftrightarrow$(3)$\Leftrightarrow$(4) By \cite[Lemma 5.1]{14NiYo97} and \cite[Corollary 2.9]{2AmYoZe05}.

(1)$\Leftrightarrow$(5)$\Leftrightarrow$(6) By Proposition~\ref{Proposition:(2.11)}.
\end{proof}

\begin{thm}\label{Theorem:(2.20)} If $M$  is a projective right $R$-module. Then the following statements are equivalent.

\noindent (1) Every quotient of an ss-$M$-injective right $R$-module is ss-$M$-injective.

\noindent (2) Every quotient of a \emph{soc}-$M$-injective right $R$-module is ss-$M$-injective.

\noindent (3) Every quotient of an injective right $R$-module is ss-$M$-injective.

\noindent (4) Every sum of two ss-$M$-injective submodules of a right $R$-module is ss-$M$-injective.

\noindent (5) Every sum of two \emph{soc}-$M$-injective submodules of
a right $R$-module is ss-$M$-injective.

\noindent (6) Every sum of two injective submodules of a right $R$-module
is ss-$M$-injective.

\noindent (7) Every semisimple small submodule of $M$ is projective.

\noindent (8) Every simple small submodule of $M$ is projective.

\noindent (9) $\emph{soc}(M)\cap J(M)$ is projective.
\end{thm}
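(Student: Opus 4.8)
The plan is to prove this as a cycle of implications, grouping the nine statements into three natural families. The ``quotient'' conditions (1)--(3), the ``sum'' conditions (4)--(6), and the ``projectivity'' conditions (7)--(9). First I would observe that $\operatorname{soc}(M)\cap J(M)=\bigoplus_{i\in I}x_iR$ with each $x_iR$ simple and small in $M$, so that (7)$\Leftrightarrow$(8)$\Leftrightarrow$(9) is immediate: a direct sum of modules is projective iff each summand is, and a semisimple small submodule of $M$ is a direct sum of simple small submodules. This disposes of one third of the equivalences cheaply.

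Next I would run the implications (3)$\Rightarrow$(2)$\Rightarrow$(1) and (6)$\Rightarrow$(5)$\Rightarrow$(4), which are trivial since every injective module is soc-$M$-injective and every soc-$M$-injective module is ss-$M$-injective (the latter because $\operatorname{soc}(M)\cap J(M)\subseteq\operatorname{soc}(M)$, so any map from the former extends once it extends from the latter). To close the ``sum'' family back to the ``quotient'' family, for (4)$\Rightarrow$(1): given an ss-$M$-injective module $N$ and a surjection $N\to N/L$, embed $N$ in its injective hull $E(N)$ and consider inside $E(N)\oplus N$ (or a pushout) the two submodules $N$ and a copy arranged so that their sum surjects onto $N/L$ with both summands ss-$M$-injective; the standard trick here is the one used for quotients of injectives, writing $N/L$ as a sum of two copies of injective-ish modules glued along $L$. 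The same device with ``injective'' in place of ``ss-$M$-injective'' gives (6) from general principles, so (1)$\Rightarrow\cdots$ actually the cleanest route is (1)$\Rightarrow$(9), (9)$\Rightarrow$(6), and then the trivial chain (6)$\Rightarrow$(5)$\Rightarrow$(4)$\Rightarrow$(1), together with (6)$\Rightarrow$(3)$\Rightarrow$(2)$\Rightarrow$(1).

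The two substantive arrows are therefore (1)$\Rightarrow$(9) and (9)$\Rightarrow$(6). For (1)$\Rightarrow$(9): let $K=\operatorname{soc}(M)\cap J(M)$ and let $\pi:F\to K$ be an epimorphism with $F$ free; I want $\pi$ to split. Embed $F$ in an injective $E$; then $E$ is ss-$M$-injective (injectives are), and the pushout of $F\hookrightarrow E$ along $\pi:F\to K$ is a quotient of $E\oplus K$ hmm -- more precisely, form $E/\ker\pi$, which is a quotient of the injective $E$, hence ss-$M$-injective by (1) (since (3) will follow, but to avoid circularity I should get ss-$M$-injectivity of $E/\ker\pi$ directly from (1) applied to the injective $E$ viewed as ss-$M$-injective). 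Now $K=F/\ker\pi\hookrightarrow E/\ker\pi$ is a semisimple small submodule of the ss-$M$-injective module $E/\ker\pi$; using projectivity of $M$ and the ss-injectivity, the inclusion $K\hookrightarrow E/\ker\pi$ together with the identity-type map exhibits $K$ as a direct summand of $E/\ker\pi$, and a diagram chase then splits $\pi$. For (9)$\Rightarrow$(6): given injective $E_1,E_2\subseteq N$, a semisimple small submodule $A\subseteq M$, and $f:A\to E_1+E_2$, lift using that $A$ is projective by (9) (here is where projectivity of $A$ enters) to split $f$ through $E_1\oplus E_2$, then push forward along $E_1\oplus E_2\to E_1+E_2$ and use injectivity of the $E_i$ to extend to $M$.

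The main obstacle I expect is the bookkeeping in (1)$\Rightarrow$(9): one must extract the splitting of a free cover of $\operatorname{soc}(M)\cap J(M)$ from the hypothesis ``quotients of ss-$M$-injectives are ss-$M$-injective'' without accidentally assuming the very projectivity one is proving, and the role of $M$ being projective (needed so that ss-$M$-injectivity of $E/\ker\pi$ actually forces the relevant inclusion to split, cf. the mechanism in Proposition~\ref{Proposition:(2.11)}) has to be invoked carefully. Everything else is either a one-line implication or a routine pushout/pullback diagram.
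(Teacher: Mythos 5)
Your architecture is broken because the ``trivial'' implications are oriented backwards. Since every injective module is soc-$M$-injective and every soc-$M$-injective module is ss-$M$-injective, condition (1) quantifies over the \emph{largest} class of modules and is therefore the strongest of the three quotient conditions: the one-line chain is $(1)\Rightarrow(2)\Rightarrow(3)$, not $(3)\Rightarrow(2)\Rightarrow(1)$ as you assert; likewise $(4)\Rightarrow(5)\Rightarrow(6)$ is trivial and $(6)\Rightarrow(5)\Rightarrow(4)$ is not. Consequently your proposed cycle $(1)\Rightarrow(9)\Rightarrow(6)\Rightarrow(5)\Rightarrow(4)\Rightarrow(1)$, together with $(6)\Rightarrow(3)\Rightarrow(2)\Rightarrow(1)$, does not close by easy arguments, and conditions (2)--(6) are never genuinely tied back to (1) and (7)--(9). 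The arrows the paper actually needs are: $(6)\Rightarrow(3)$ via $\bar{Q}=(E\oplus E)/\{(n,n)\mid n\in N\}=H_1+H_2$ with $H_i\cong E$ and $H_1\cap H_2\cong N$, where the \emph{injectivity} of $H_1$ is what splits $\bar{Q}=H_1\oplus A$ with $A\cong E/N$ (this is also exactly why your earlier idea of running the same gluing trick with ss-$M$-injective modules to get $(4)\Rightarrow(1)$ directly would fail: an ss-$M$-injective submodule need not be a direct summand); $(3)\Rightarrow(7)$, where for $h:E\to N\to 0$ with $E$ injective and $f:K\to N$ one extends $f$ to $g:M\to N$ by ss-$M$-injectivity of the quotient $N$ and then lifts $g$ through $h$ using projectivity of $M$; $(7)\Rightarrow(1)$ by lifting along the given epimorphism using projectivity of $K$ and then extending by ss-$M$-injectivity; and $(1)\Rightarrow(4)$ because $N_1+N_2$ is a homomorphic image of $N_1\oplus N_2$.

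Your two substantive arrows are individually salvageable but imprecise. The $(9)\Rightarrow(6)$ sketch (lift $f:A\to E_1+E_2$ to $E_1\oplus E_2$ by projectivity of $A$, extend to $M$ by injectivity of $E_1\oplus E_2$, push forward) is correct. In $(1)\Rightarrow(9)$, however, the claim that ss-$M$-injectivity of $E/\ker\pi$ ``exhibits $K$ as a direct summand of $E/\ker\pi$'' is not what the hypothesis gives you: $K$ is a semisimple small submodule of $M$, so what you get is that the map $K\to K=F/\ker\pi\subseteq E/\ker\pi$ extends to $\varphi:M\to E/\ker\pi$, which then lifts to $\tilde{\varphi}:M\to E$ by projectivity of $M$; since $\tilde{\varphi}(K)\subseteq F$ and $\pi\circ\tilde{\varphi}|_K=\mathrm{id}_K$, the cover splits. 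Your handling of $(7)\Leftrightarrow(8)\Leftrightarrow(9)$ is fine and matches the paper.
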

\begin{proof} (1)$\Rightarrow$(2)$\Rightarrow$(3), (4)$\Rightarrow$(5)$\Rightarrow$(6)
and (9)$\Rightarrow$(7)$\Rightarrow$(8) are obvious.

(8)$\Rightarrow$(9) Since soc$(M)\cap J(M)$ is a direct sum of simple submodules of $M$ and since every simple in $J(M)$ is small in $M$, thus soc$(M)\cap J(M)$ is projective.

(3)$\Rightarrow$(7) Consider the following diagram:
\[
\xymatrix{
0\,\ar[r] &K \,\ar[d]_{f}\ar@{^{(}->}[r]^{i} \ar[r] & M  \\
E \,\ar[r]^{h}&N \, \ar[r] &0
}
\]

\noindent where $E$  and $N$  are right $R$-modules, $K$  is a semisimple small submodule of $M$, $h$  is a right $R$-epimorphism
and \, $f$  \, is a right$R$-homomorphism. We can assume that $E$  is injective (see, e.g. \cite[Proposition 5.2.10]{6Bla11}). Since $N$
is ss-$M$-injective, thus $f$  can be extended to an $R$-homomorphism $\mathit{g}:M\longrightarrow N$. By projectivity of $M$,
thus $g$ can be lifted to an $R$-homomorphism $\tilde{g}:M\longrightarrow E$ such that $\mathit{h}\circ\tilde{g}=g$.
Define $\tilde{f}:K\longrightarrow E$ is the restriction of $\tilde{g}$
over $K$. Clearly, $\mathit{h}\circ\tilde{f}=f$ and this implies that $K$  is projective.

(7)$\Rightarrow$(1) Let $N$  and $L$  be right $R$-modules with $h:N\longrightarrow L$ is an $R$-epimorphism and $N$  is ss-$M$-injective. Let   $K$ be any  semisimple small submodule of $M$ and let $f:K\longrightarrow L$ be any left $R$-homomorphism. By hypothesis $K$  is projective, thus $f$  can be lifted to $R$-homomorphism $\mathit{g}:K\longrightarrow N$ such that $\mathit{h}\circ g=f$. Since $N$  is ss-$M$-injective, thus there exists an $R$-homomorphism
$\tilde{g}:M\longrightarrow N$ such that $\tilde{g}\circ i=g$. Put $\beta=h\circ\tilde{g}:M\longrightarrow L$. Thus $\beta\circ i=h\circ\tilde{g}\circ i=h\circ g=f$. Hence $L$ is an ss-$M$-injective right $R$-module.

(1)$\Rightarrow$(4) Let $N_{1}$ and $N_{2}$ be two ss-$M$-injective submodules of a right $R$-module
$N$. Thus $N_{1}+N_{2}$ is a homomorphic image of the direct sum $N_{1}\oplus N_{2}$. Since $N_{1}\oplus N_{2}$ is ss-$M$-injective, thus $N_{1}+N_{2}$ is ss-$M$-injective by hypothesis.

(6)$\Rightarrow$(3) Let $E$  be an injective right $R$-module with submodule $N$. Let $Q=E\oplus E$, \, $K=\{(n,n)\mid n\in N\}$, $\bar{Q}=Q/K$, $H_{1}=\{y+K\in\bar{Q}\mid  y\in E\oplus 0\}$, $H_{2}=\{y+K\in\bar{Q}\mid  y\in 0\oplus E\}$. Then $\bar{Q}=H_{1}+H_{2}$. Since $(E\oplus 0)\cap K=0$ and $(0\oplus E)\cap K=0$, thus $E\cong H_{i}$, $i=1,2$. Since $H_{1}\cap H_{2}=\{y+K\in\bar{Q}\mid y\in N\oplus0\}$=
$\{y+K\in\bar{Q}\mid y\in 0\oplus N\}$, thus $H_{1}\cap H_{2}\cong N$ under $y\mapsto y+K$ for all $y\in N\oplus0$.
By hypothesis, $\bar{Q}$ is ss-$M$-injective. Since $H_{1}$ is injective, thus $\bar{Q}=H_{1}\oplus A$ for some submodule $A$ of $\bar{Q}$, so $A\cong(H_{1}+H_{2})/H_{1}\cong H_{2}/H_{1}\cap H_{2}\cong E/N$. By Theorem~\ref{Theorem:(2.3)}(5), $E/N$ is ss-$M$-injective.
\end{proof}

\begin{cor}\label{Corollary:(2.21)} The following statements are equivalent.

\noindent (1) Every quotient of an ss-injective right $R$-module is ss-injective.

\noindent (2) Every quotient of a soc-injective right $R$-module is ss-injective.

\noindent (3) Every quotient of a small injective right $R$-module is ss-injective.

\noindent (4) Every quotient of an injective right $R$-module is ss-injective.

\noindent (5) Every sum of two ss-injective submodules of any right $R$-module is ss-injective.

\noindent (6) Every sum of two soc-injective submodules of any right $R$-module is ss-injective.

\noindent (7) Every sum of two small injective submodules of any right $R$-module is ss-injective.

\noindent (8) Every sum of two injective submodules of any right $R$-module is ss-injective.

\noindent (9) Every semisimple small submodule of  any projective right $R$-module  is projective.

\noindent (10) Every semisimple small submodule of any finitely generated projective right $R$-module  is projective.

\noindent (11) Every semisimple small submodule of  $\mathit{R}_{R}$  is projective.

\noindent (12) Every simple small submodule of   $\mathit{R}_{R}$ is projective.

\noindent (13)  $\mathit{S}_{r}\cap J$  is
projective.

\noindent (14) $S_{r}$  is projective.
\end{cor}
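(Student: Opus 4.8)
The plan is to reduce the whole list to Theorem~\ref{Theorem:(2.20)} applied with the projective module $M = R_R$, and then splice in the five statements that are not of the exact shape handled there. First observe that ``ss-injective'' means ss-$R$-injective, ``soc-injective'' means soc-$R$-injective, and $R_R$ is projective; moreover $\mathrm{soc}(R_R)\cap J(R_R) = S_r\cap J$. Hence, putting $M = R$ in Theorem~\ref{Theorem:(2.20)} immediately gives that statements (1), (2), (4), (5), (6), (8), (11), (12) and (13) of the present corollary are pairwise equivalent, since they are literally the statements (1)--(9) of that theorem specialised to $M = R$. It remains to insert (3), (7), (9), (10) and (14) into this chain.

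For (3) and (7) I would use the obvious inclusions of classes: every injective module is small injective, and every small injective module is ss-injective (Example~\ref{example:(2.2)}(2)). Then (1)$\Rightarrow$(3)$\Rightarrow$(4): a small injective module is ss-injective, so (1) forces all its quotients to be ss-injective, which is (3); and an injective module is small injective, so (3) forces all its quotients to be ss-injective, which is (4). Symmetrically, (5)$\Rightarrow$(7)$\Rightarrow$(8), using that two small injective (resp. injective) submodules of a module are in particular two ss-injective (resp. small injective) submodules. Since (1), (4), (5), (8) already lie in the cluster above, this pins down (3) and (7).

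For (14): clearly (14)$\Rightarrow$(13), because $S_r\cap J$ is a direct summand of the semisimple module $S_r$, hence projective if $S_r$ is. Conversely, assume (13) and write $S_r = (S_r\cap J)\oplus C$ with $C$ semisimple. Each simple summand of $C$ is a simple right ideal that is not contained in $J$ (otherwise it would be a nonzero simple submodule of $C\cap(S_r\cap J) = 0$), hence not nilpotent, hence a direct summand of $R_R$ by the dichotomy used in Lemma~\ref{Lemma:(2.5)} (\cite[Lemma 3.8]{16Pas04}); so $C$ is projective, and with (13) this yields (14).

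Finally, for (9) and (10): trivially (9)$\Rightarrow$(10)$\Rightarrow$(11), so it suffices to deduce (9) from (11). Given a projective module $P$ and a semisimple $K\ll P$, embed $P$ as a direct summand of a free module $F = R^{(\Lambda)}$; since any small submodule lies in the radical, $K\subseteq J(P) = P\cap J(F)\subseteq J^{(\Lambda)}$. Writing $K = \bigoplus_i T_i$ with each $T_i$ simple, each $T_i$ is cyclic, hence supported on a finite subset of $\Lambda$, and a nonzero coordinate projection carries $T_i$ isomorphically onto a simple right ideal of $R$ that is contained in $J$, i.e. a semisimple small submodule of $R_R$, which is projective by (11); thus $K$ is projective. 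I expect the only mildly delicate points to be this last radical/socle bookkeeping (that $K\subseteq J(P)$, that $J(P) = P\cap J(F)$, and that the coordinate image of $T_i$ is small in $R_R$) together with the nilpotent-or-summand dichotomy invoked in the (13)$\Rightarrow$(14) step; every other implication is a direct quotation of Theorem~\ref{Theorem:(2.20)}.
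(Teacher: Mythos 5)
Your proof is correct, and its skeleton coincides with the paper's: both reduce the cluster (1), (2), (4), (5), (6), (8), (11), (12), (13) to Theorem~\ref{Theorem:(2.20)} with $M=R_{R}$, splice in (3) and (7) via the chains (1)$\Rightarrow$(3)$\Rightarrow$(4) and (5)$\Rightarrow$(7)$\Rightarrow$(8), and prove (13)$\Rightarrow$(14) by the same decomposition $S_{r}=(S_{r}\cap J)\oplus C$ with the summands of $C$ being direct summands of $R_{R}$ by the nilpotent-or-summand dichotomy of \cite[Lemma 3.8]{16Pas04} (which you, unlike the paper, spell out). The one place you genuinely diverge is how (9) and (10) are attached to the cycle: the paper runs (9)$\Rightarrow$(10)$\Rightarrow$(13) as clear and closes with (14)$\Rightarrow$(9) by citing \cite[Corollary 2.9]{2AmYoZe05}, whereas you run (9)$\Rightarrow$(10)$\Rightarrow$(11) and prove (11)$\Rightarrow$(9) directly, embedding the projective $P$ into a free module $F=R^{(\Lambda)}$, using $K\subseteq J(P)=P\cap J(F)\subseteq J^{(\Lambda)}$, and projecting each simple summand $T_{i}$ of $K$ isomorphically onto a simple right ideal inside $J$ (hence a semisimple small right ideal, since $J\ll R_{R}$, so projective by (11)). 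Your bookkeeping here is sound: the radical commutes with direct sums, small submodules lie in the radical, and a nonzero coordinate projection restricted to a simple module is either zero or injective. What your route buys is self-containment — it replaces an external appeal to the soc-injective paper with an elementary argument — at the cost of a slightly longer proof; the two closures of the cycle are otherwise interchangeable.
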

\begin{proof}
The equivalence of (1), (2), (4), (5), (6), (8), (11), (12) and (13) is from Theorem~\ref{Theorem:(2.20)}.

(1)$\Rightarrow$(3)$\Rightarrow$(4), (5)$\Rightarrow$(7)$\Rightarrow$(8) and (9)$\Rightarrow$(10)$\Rightarrow$(13) are clear.

(14)${\color{black}\Rightarrow}$(9) By \cite[Corollary 2.9]{2AmYoZe05}.

(13)$\Rightarrow$(14) Let  $S_{r}=(S_{r}\cap J)\oplus A$, where $A=\underset{{\scriptscriptstyle i\in I}}{{\displaystyle {\scriptstyle {\textstyle \bigoplus}}}}S_{i}$ and $S_{i}$ is a right simple and summand of $R_{R}$ for all $i\in I$. Thus $A$ is projective, but $S_{r}\cap J$ is projective, so it follows
that $S_{r}$ is projective.
\end{proof}

\begin{thm}\label{Theorem:(2.22)}
If every simple singular right $R$-module is ss-injective, then $r(a)\subseteq^{\oplus}R_{R}$ for every $a\in S_{r}\cap J$ and $S_{r}$ is projective.
\end{thm}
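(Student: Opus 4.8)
The plan is to work with a single element $a \in S_r \cap J$ and show directly that $r(a)$ is a direct summand of $R_R$; the statement about $S_r$ being projective will then follow from Corollary~\ref{Corollary:(2.21)} (specifically the equivalence of (13) and (14)), since $aR \cong R/r(a)$ will be projective once $r(a)$ is a summand, and hence every simple summand of $S_r \cap J$ — and thus $S_r \cap J$ itself — is projective. So the heart of the matter is: given $a \in S_r \cap J$, produce an idempotent $e$ with $r(a) = eR$. If $a = 0$ this is trivial, so assume $a \neq 0$; then $aR$ is a (nonzero) semisimple small right ideal, being contained in $S_r \cap J$.

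First I would consider the cyclic module $R/r(a) \cong aR$. Since $a \in J$, the ideal $r(a)$ is contained in no... actually the relevant point is that $aR \subseteq J$, so $aR$ is small in $R_R$, and $aR \subseteq S_r$, so $aR$ is semisimple; hence $R/r(a)$ is a semisimple module. Now pick a simple summand: write $aR = \bigoplus_{j} T_j$ with each $T_j$ simple. For the summand argument it suffices to treat each simple factor, so I would reduce to showing that for a simple right ideal $T \cong R/K$ with $T \subseteq S_r \cap J$, the annihilator $K$ of the corresponding generator is a summand — equivalently, that $R/K$ is projective. The module $R/K$ is simple; I claim it is \emph{singular}. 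Indeed, if $R/K$ were not singular then $K$ would not be essential in $R_R$, so $K$ would contain... more carefully: a simple module is either singular or projective (this is a standard fact — a simple module $R/K$ is projective iff $K$ is a direct summand iff $K$ is not essential, and if $K$ is essential then $R/K$ is singular). So either $R/K$ is already projective — and then $K = r(\text{generator})$ is a summand and we are done for that factor — or $R/K$ is simple singular, hence ss-injective by hypothesis.

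In the ss-injective case I derive a contradiction with smallness, mimicking the argument in Proposition~\ref{Proposition:(2.11)}, (2)$\Rightarrow$(3). Let $T = xR$ be our simple small right ideal with $R/r(x) \cong T$ singular and ss-injective. Consider the identity map $T \to T$: since $T$ is a semisimple small right ideal of $R$ and $T$ (being the ss-injective module in question, or rather isomorphic to it) is ss-injective, actually the clean way is: the inclusion $\iota : T \hookrightarrow R_R$ — view $T$ as a semisimple small submodule of the module $R_R$ — and the identity $\mathrm{id}_T : T \to T$; since $T$ is ss-injective as a module, $\mathrm{id}_T$ extends along $\iota$ to a map $g : R_R \to T$, which splits $\iota$. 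Hence $T$ is a direct summand of $R_R$; but $T$ is small in $R_R$, forcing $T = 0$, a contradiction. Therefore the singular case cannot occur, every simple summand $T_j$ of $aR$ has projective annihilator, and assembling these (as in the proof of (13)$\Rightarrow$(14) in Corollary~\ref{Corollary:(2.21)}, using that a finite — indeed any — direct sum decomposition of a semisimple module into summands of $R_R$ makes the annihilator of the generator a summand), we conclude $r(a) \subseteq^{\oplus} R_R$. Then $aR$ is projective for every $a \in S_r \cap J$, so $S_r \cap J = \bigoplus_i x_i R$ is a direct sum of projectives, hence projective; by Corollary~\ref{Corollary:(2.21)} ((13)$\Leftrightarrow$(14)) $S_r$ is projective.

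The step I expect to require the most care is the passage from "each simple constituent $T_j$ of $aR$ has annihilator a summand" to "$r(a)$ itself is a summand of $R_R$" — one must handle the decomposition $aR = \bigoplus_j T_j$ and line up the annihilators correctly (the issue being that $r(a)$ is the intersection of the annihilators of the generators of the $T_j$, not literally one of them). The cleanest route is probably to argue directly that every simple submodule of $S_r \cap J$ that is \emph{not} a summand must be singular-and-ss-injective-and-small, hence zero — a contradiction — so in fact every simple submodule of $S_r \cap J$ is a summand of $R_R$, whence $S_r \cap J$ is semisimple with each homogeneous piece projective, giving projectivity of $S_r \cap J$; and then $r(a)$ for $a \in S_r\cap J$ is a summand because $R/r(a) \cong aR \subseteq S_r \cap J$ is a projective semisimple module, so the surjection $R_R \to aR$ splits.
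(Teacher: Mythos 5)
Your final argument is correct, but it takes a genuinely different route from the paper's. The paper fixes $a\in S_{r}\cap J$, sets $A=RaR+r(a)$, takes a complement $B$ with $A\oplus B\subseteq^{ess}R_{R}$, and shows $A\oplus B=R$ by enclosing $A\oplus B$ in a maximal (necessarily essential) right ideal $I$ and applying the hypothesis to the simple singular module $R/I$ via the map $aR\to R/I$, $ar\mapsto r+I$; the smallness of $RaR$ then forces $r(a)\oplus B=R$, and the projectivity of $aR\cong eR$, of $S_{r}\cap J$, and of $S_{r}$ is deduced afterwards. You instead apply the hypothesis to the simple right ideals $xR\subseteq S_{r}\cap J$ themselves: either $r(x)$ is non-essential, hence (by maximality) a direct summand, so $xR\cong R/r(x)$ is projective; or $R/r(x)$ is simple singular, hence ss-injective, and extending $\mathrm{id}_{xR}$ along the inclusion $xR\hookrightarrow R_{R}$ splits $xR$ off as a summand, contradicting $xR\ll R_{R}$ --- exactly the mechanism of Proposition~\ref{Proposition:(2.11)}. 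You thus obtain projectivity of $S_{r}\cap J$ first and only then conclude $r(a)\subseteq^{\oplus}R_{R}$ by splitting the epimorphism $R_{R}\to aR$; the paper runs that implication in the opposite direction. Your route buys a shorter argument that avoids the auxiliary ideal $RaR$ (and the need to observe that $RaR$, being a semisimple right ideal inside $J$, is small), at the cost of invoking the singular-or-projective dichotomy for simple modules. Two minor points: your first sketch (assembling $r(a)$ from the annihilators of the simple constituents of $aR$) is indeed the awkward path and you rightly abandon it; and in your last paragraph the phrase ``every simple submodule of $S_{r}\cap J$ is a summand of $R_{R}$'' should read ``is projective'' (equivalently, its annihilator is a summand), since a nonzero small submodule can never literally be a summand --- that impossibility is precisely the contradiction you exploit.
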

\begin{proof} Let $a\in S_{r}\cap J$ and let $A=RaR+r(a)$. Thus there exists a right ideal $B$ of $R$ such that $A\oplus B\subseteq^{ess}R_{R}$. Suppose that $A\oplus B\neq R_{R}$, thus we choose $I\subseteq^{max}R_{R}$ such that $A\oplus B\subseteq I$ and so $I\subseteq^{ess}R_{R}$. By hypothesis, $R/I$ is a right ss-injective. Consider the map $\alpha:aR\longrightarrow R/I$ is given by $\alpha(ar)=r+I$ which is a well-define $R$-homomorphism. Thus there exists $c\in R$ such that $1+I=ca+I$ and hence $1-ca\in I$. But $ca\in RaR\subseteq I$ which leads to $1\in I$, a contradiction.
Thus $A\oplus B=R$ and hence $RaR+(r(a)\oplus B)=R$. Since $RaR\ll R_{R}$, thus $r(a)\subseteq^{\oplus}R_{R}$. Put $r(a)=(1-e)R$, for some
$e^{2}=e\in R$, so it follows that $ax=aex$ \,\,for all $x\in R$ and hence $aR=aeR$. Let $\gamma:eR\longrightarrow aeR$ be defined by $\gamma(er)=aer$ for all $r\in R$. Then $\gamma$ is a well-defined $R$-epimorphism. Clearly, ker$(\gamma)=eR\cap r(a)$. Hence $\gamma$ is an isomorphism and so $aR$ is projective. Since $S_{r}\cap J$ is a direct sum of simple small right ideals, thus $S_{r}\cap J$ is projective and it follows from Corollary~\ref{Corollary:(2.21)}   that $S_{r}$ is projective.
\end{proof}

\begin{cor}\label{Corollary:(2.23)} The following statements are equivalent for a ring $R$.

\noindent  (1) $R$ is right mininjective and every simple singular right $R$-module is ss-injective.

\noindent  (2) $R$ is right universally mininjective.
\end{cor}
\begin{proof} By Theorem~\ref{Theorem:(2.22)}   and \cite[Lemma 5.1]{14NiYo97}.
\end{proof}

Recall that a ring $R$  is called zero insertive, if $aRb=0$ for each $a,b\in R$ with $ab=0$ (see \cite{19ThQu09}). Note that if $R$
is zero insertive ring, then $RaR+r(a)\subseteq^{ess}R_{R}$ for every $a\in R$ (see \cite[Lemma 2.11]{19ThQu09}).

\begin{prop}\label{Proposition:(2.24)}   Let \, $R$ \, be   a zero insertive ring.  If every   simple singular right \, $R$-module is ss-injective, then $R$ is right universally mininjective.
\end{prop}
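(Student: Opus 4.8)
The plan is to combine Theorem~\ref{Theorem:(2.22)} with the zero-insertive hypothesis to upgrade ``$r(a)\subseteq^{\oplus}R_R$ for all $a\in S_r\cap J$'' into ``$S_r\cap J=0$'', which is exactly right universal mininjectivity. First I would invoke Theorem~\ref{Theorem:(2.22)}: since every simple singular right $R$-module is ss-injective, we get that $r(a)\subseteq^{\oplus}R_R$ for every $a\in S_r\cap J$ (and $S_r$ is projective, though the latter is not what we need here). So for a fixed $a\in S_r\cap J$ we may write $r(a)=(1-e)R$ with $e^2=e\in R$, hence $aR\cong eR$ as in the proof of Theorem~\ref{Theorem:(2.22)}, and in particular $ae=a$, i.e. $a(1-e)=0$.

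The key step is to exploit zero insertivity. From $a(1-e)=0$ and the zero-insertive property we get $aR(1-e)=0$, so in particular $eaR(1-e)=0$; more to the point, I want to show $e=0$ or derive $a=0$ directly. Here is the cleaner route: since $aR\cong eR$ is projective and is a direct summand sitting inside $S_r\cap J\subseteq J$, the summand $eR$ is a small submodule of $R_R$ (being contained in $J$). But a direct summand that is small must be zero: if $eR\subseteq^{\oplus}R_R$ and $eR\ll R_R$, then writing $R=eR\oplus(1-e)R$ and using smallness of $eR$ gives $R=(1-e)R$, hence $eR=0$. Therefore $aR=aeR=0$, so $a=0$. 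Since $a\in S_r\cap J$ was arbitrary, $S_r\cap J=0$, i.e. $R$ is right universally mininjective.

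Alternatively, one can avoid the isomorphism entirely: $a\in J$ forces $aR\ll R_R$, while $r(a)\subseteq^{\oplus}R_R$ together with the natural epimorphism $R/r(a)\cong aR$ shows $aR$ is (isomorphic to) a direct summand of $R_R$; a small direct summand of $R_R$ is zero, so $aR=0$. In fact for this argument the zero-insertive hypothesis is only needed insofar as it is already built into the proof of Theorem~\ref{Theorem:(2.22)} via $RaR+r(a)\subseteq^{ess}R_R$ (guaranteed by \cite[Lemma 2.11]{19ThQu09}); the step in Theorem~\ref{Theorem:(2.22)} that chooses a maximal $I\supseteq A\oplus B$ and derives a contradiction already used essentiality, so the present proposition is essentially a corollary once one observes that a small summand vanishes.

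The main obstacle, such as it is, is purely bookkeeping: making sure that the summand obtained ($r(a)$ as a summand, equivalently $aR\cong eR$ as a summand) really does lie inside $J$ so that smallness applies, and then citing the elementary fact that a small direct summand of a module is zero. I expect no genuine difficulty; the substance was already done in Theorem~\ref{Theorem:(2.22)}, and the zero-insertive hypothesis is what makes that theorem's hypothesis ``$RaR+r(a)\subseteq^{ess}R_R$'' automatic, so the proof will be short.
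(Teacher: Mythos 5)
Your proposal has a genuine gap, and it sits exactly at the step you dismiss as ``bookkeeping.'' The principle you rely on --- that $aR\ll R_R$ together with $aR\cong eR\subseteq^{\oplus}R_R$ forces $aR=0$ --- is false, because smallness is a property of the embedding, not of the isomorphism class of the module. A small \emph{direct summand} is indeed zero, but $aR$ is only \emph{isomorphic} to the summand $eR$; it is not itself a summand. Concretely, in $R=T_2(k)$ (upper triangular $2\times 2$ matrices over a field), $a=e_{12}$ lies in $S_r\cap J$, the right ideal $e_{12}R=J$ is a nonzero simple small right ideal, $r(e_{12})=e_{11}R=(1-e_{22})R$ is a direct summand, and $e_{12}R\cong e_{22}R\subseteq^{\oplus}R_R$. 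So the conclusion of Theorem~\ref{Theorem:(2.22)} is perfectly compatible with $a\neq 0$, and no amount of massaging ``$r(a)\subseteq^{\oplus}R_R$'' alone will yield $S_r\cap J=0$. Your first route makes the error explicit when you assert that ``the summand $eR$ is a small submodule of $R_R$ (being contained in $J$)'': it is $aR$ that lies in $J$, not $eR$; an idempotent in $J$ is automatically $0$, so if $eR\subseteq J$ there would be nothing to prove.

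The second misconception is your claim that zero insertivity is ``already built into the proof of Theorem~\ref{Theorem:(2.22)}.'' It is not: that theorem assumes nothing about zero insertivity and manufactures essentiality by adjoining a complement $B$ with $(RaR+r(a))\oplus B\subseteq^{ess}R_R$, which is why its conclusion is only $r(a)\oplus B=R$, i.e.\ $r(a)\subseteq^{\oplus}R_R$, rather than $r(a)=R$. The actual role of the zero-insertive hypothesis in Proposition~\ref{Proposition:(2.24)} is to force $B=0$: by \cite[Lemma 2.11]{19ThQu09}, $RaR+r(a)$ is \emph{itself} essential in $R_R$, so if it were proper it would lie in an essential maximal right ideal $I$, $R/I$ would be simple singular hence ss-injective, and the map $ar\mapsto r+I$ would extend to give $1-ca\in I$ with $ca\in RaR\subseteq I$, a contradiction. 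Hence $RaR+r(a)=R$, and since $RaR\subseteq J$ is small this gives $r(a)=R$, i.e.\ $a=0$. That is the paper's proof; it reruns the argument of Theorem~\ref{Theorem:(2.22)} with the complement eliminated rather than quoting that theorem's (strictly weaker) conclusion. To repair your write-up you should do the same, not cite Theorem~\ref{Theorem:(2.22)} as a black box.
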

\begin{proof} Let $a\in S_{r}\cap J$. We claim that $RaR+r(a)=R$, thus $r(a)=R$ (since $RaR\ll R$), so $a=0$ and this means that
$S_{r}\cap J=0$. Otherwise, if $RaR+r(a)\subsetneqq R$, then there exists a maximal right ideal $I$  of $R$ such that $RaR+r(a)\subseteq I$. Since $I\subseteq^{ess}R_{R}$, thus $R/I$ is ss-injective by hypothesis. Consider $\alpha:aR\longrightarrow R/I$ is given by $\alpha(ar)=r+I$ \, for all $r\in R$ which is a well-defined $R$-homomorphism. Thus $1+I=ca+I$ for some $c\in R$. Since $ca\in RaR\subseteq I$, thus $1\in I$ and this a contradicts with a maximality of $I$, so we must have $RaR+r(a)=R$ and this completes the proof.
\end{proof}

\begin{thm}\label{Theorem:(2.25)}
If $M$  is a finitely generated right $R$-module, then the following statements are equivalent.

\noindent  (1) \emph{soc}$(M)\cap J(M)$ is a Noetherian $R$-module.

\noindent(2) \emph{soc}$(M)\cap J(M)$ is finitely generated.

\noindent (3) Any direct sum of ss-$M$-injective right $R$-modules is ss-$M$-injective.

\noindent (4) Any direct sum of \emph{soc}-$M$-injective right $R$-modules is ss-$M$-injective.

\noindent(5) Any direct sum of injective right $R$-modules is ss-$M$-injective.

\noindent (6)  $\mathit{K}^{(S)}$ is ss-$M$-injective for every injective right $R$-module
$K$  and for any index set $S$.

\noindent (7) $\mathit{K}^{(\mathbb{N})}$ is ss-$M$-injective for every injective right $R$-module $K$.
\end{thm}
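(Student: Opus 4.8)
The plan is to prove the cycle of implications $(1)\Rightarrow(2)$ trivially, then $(2)\Rightarrow(3)$ as the main content, then $(3)\Rightarrow(4)\Rightarrow(5)\Rightarrow(6)\Rightarrow(7)$ (mostly trivial, since every injective module is soc-$M$-injective hence the restricted classes are narrower), and finally close the loop with $(7)\Rightarrow(2)$, which is the classical Faith-type argument. Throughout, the key structural fact is Proposition~\ref{Proposition:(2.6)}: when $M$ is finitely generated, $J(M)$ is small in $M$, so ss-$M$-injectivity of a module $X$ is equivalent to saying every $R$-homomorphism $\mathrm{soc}(M)\cap J(M)\longrightarrow X$ extends to $M$. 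This reduces everything to the single submodule $T:=\mathrm{soc}(M)\cap J(M)$, which is semisimple.

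For $(1)\Rightarrow(2)$: a Noetherian module is finitely generated. For $(2)\Rightarrow(3)$: let $T=\mathrm{soc}(M)\cap J(M)$ be finitely generated and semisimple, hence $T=T_1\oplus\cdots\oplus T_k$ a finite direct sum of simple modules. Let $\{X_\lambda\}_{\lambda\in\Lambda}$ be ss-$M$-injective and let $f:T\longrightarrow\bigoplus_\lambda X_\lambda$ be any $R$-homomorphism. Since $T$ is finitely generated, $f(T)$ is finitely generated, so $f(T)$ lies in a finite subsum $\bigoplus_{\lambda\in F}X_\lambda$ with $F$ finite; by Corollary~\ref{Corollary:(2.4)}(1) this finite direct sum is ss-$M$-injective, so $f$ (viewed as a map into the finite subsum) extends to $M$, and composing with the inclusion $\bigoplus_{\lambda\in F}X_\lambda\hookrightarrow\bigoplus_{\lambda\in\Lambda}X_\lambda$ gives the required extension. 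Then invoke Proposition~\ref{Proposition:(2.6)} to conclude ss-$M$-injectivity of the full direct sum.

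The implications $(3)\Rightarrow(4)\Rightarrow(5)\Rightarrow(6)\Rightarrow(7)$ are immediate: every injective module is soc-$M$-injective (extension from $\mathrm{soc}(M)$, a fortiori from a small semisimple submodule) hence ss-$M$-injective, so each class listed is contained in the previous one; $(6)\Rightarrow(7)$ is the special case $S=\mathbb{N}$. The only real work left is $(7)\Rightarrow(2)$, which I expect to be the main obstacle. I would argue by contraposition: suppose $T=\mathrm{soc}(M)\cap J(M)$ is not finitely generated. Being semisimple, it contains an infinite direct sum $\bigoplus_{n\in\mathbb{N}}T_n$ of nonzero simple submodules. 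For each $n$ pick the injective hull $E(T_n)$ and set $K=\bigoplus_n E(T_n)$ — but to fit the form $K^{(\mathbb{N})}$ with $K$ a single injective module, instead take $K=\prod_n E(T_n)$ (which is injective) or, more cleanly, $K=E\bigl(\bigoplus_n T_n\bigr)$ and consider $K^{(\mathbb{N})}$; the point is to build, using the projections $T\to T_n\hookrightarrow E(T_n)$ assembled diagonally, a homomorphism $g:\bigoplus_n T_n\longrightarrow K^{(\mathbb{N})}$ whose image is not contained in any finite subsum, hence (since $M$ is finitely generated, so any extension $\tilde g:M\to K^{(\mathbb{N})}$ would have finitely generated image landing in a finite subsum) $g$ cannot extend to $\bigoplus_n T_n$ viewed inside $M$, let alone to $M$. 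One must take care to route $g$ through a genuine map defined on all of $T=\mathrm{soc}(M)\cap J(M)$ (extend by zero off $\bigoplus_n T_n$ using semisimplicity), and to ensure $K^{(\mathbb{N})}$ really accommodates the infinitely-many nonzero coordinates — this bookkeeping with the index set, and verifying that an extension to the finitely generated module $M$ forces finiteness of support, is the delicate part; once that contradiction is reached, Proposition~\ref{Proposition:(2.6)} again converts the failure of the extension property into the failure of ss-$M$-injectivity of $K^{(\mathbb{N})}$, contradicting $(7)$.
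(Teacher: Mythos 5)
Your proof is essentially sound, but it closes the loop by a genuinely different route than the paper. The paper's cycle is $(1)\Rightarrow(2)\Rightarrow(3)\Rightarrow\cdots\Rightarrow(7)\Rightarrow(1)$: for $(7)\Rightarrow(1)$ it takes an arbitrary ascending chain $N_1\subseteq N_2\subseteq\cdots$ in $T=\mathrm{soc}(M)\cap J(M)$, sets $E_i=E(M/N_i)$ and $K=\prod_{j}E_j$, observes that $K^{(\mathbb{N})}$ is ss-$M$-injective by hypothesis and hence so is its direct summand $\bigoplus_i E_i$, extends the map $m\mapsto (m+N_i)_i$ on $U=\bigcup_i N_i$ to $M$, and uses finite generation of $M$ to force the image into a finite subsum, whence $U=N_i$ eventually. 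Your $(7)\Rightarrow(2)$ is instead a contrapositive Bass--Papp-type argument: an infinite independent family of simples $T_n\subseteq T$, the diagonal map into $\bigoplus_n E(T_n)\subseteq K^{(\mathbb{N})}$ with $K=\prod_n E(T_n)$, and the same finite-subsum obstruction. The bookkeeping you flag as delicate does work out: $g(T_n)$ is supported exactly at outer coordinate $n$, extending by zero off $\bigoplus_n T_n$ is legitimate because $T$ is semisimple, and any extension to the finitely generated module $M$ has image in a finite subsum, which is the desired contradiction. The two closing arguments are of comparable difficulty; yours proves termination of generation, the paper's proves termination of chains, and for a semisimple module these coincide. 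Your reduction of everything to the single submodule $T$ via Proposition~\ref{Proposition:(2.6)} is a mild streamlining of the paper's $(2)\Rightarrow(3)$, which works with an arbitrary semisimple small submodule $N\subseteq T$; both are fine.

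There is one structural omission: your implication graph is $(1)\Rightarrow(2)$ together with the cycle $(2)\Rightarrow(3)\Rightarrow\cdots\Rightarrow(7)\Rightarrow(2)$, so nothing in your plan ever establishes $(1)$ from the other statements. This is trivially repaired --- $T$ is semisimple, so if it is finitely generated it is a finite direct sum of simple modules, hence of finite length, hence Noetherian --- but the line $(2)\Rightarrow(1)$ must be said explicitly for the seven statements to be equivalent.
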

\begin{proof}
\textcolor{black}{(1)$\Rightarrow$(2) and (3)$\Rightarrow$(4)$\Rightarrow$(5)$\Rightarrow$(6)$\Rightarrow$(7)
Clear.}

\textcolor{black}{(2)$\Rightarrow$(3) Let $\mathit{E}=\underset{{\scriptscriptstyle i\in I}}{\bigoplus}M_{i}$
be a direct sum of ss-}\textit{\textcolor{black}{M}}\textcolor{black}{-injective
right }\textit{\textcolor{black}{R}}\textcolor{black}{-modules and
$\mathit{f}:N\longrightarrow E$ be a right }\textit{\textcolor{black}{R}}\textcolor{black}{-homomorphism,
where }\textit{\textcolor{black}{N}}\textcolor{black}{{} is a semisimple
small submodule of }\textit{\textcolor{black}{M}}\textcolor{black}{.
Since soc$(M)\cap J(M)$ is finitely generated, thus}\textit{\textcolor{black}{{}
N}}\textcolor{black}{{} is finitely generated and hence $\mathit{f}(N)\subseteq\underset{{\scriptscriptstyle j\in I_{1}}}{\bigoplus}M_{j}$,
for some finite subset }\textit{\textcolor{black}{$I_{1}$}}\textcolor{black}{{}
of }\textit{\textcolor{black}{I}}\textcolor{black}{. Since a finite
direct sums of ss-}\textit{\textcolor{black}{M}}\textcolor{black}{-injective
right }\textit{\textcolor{black}{R}}\textcolor{black}{-modules is
ss-}\textit{\textcolor{black}{M}}\textcolor{black}{-injective, thus
$\underset{{\scriptscriptstyle j\in I_{1}}}{\bigoplus}M_{j}$ is ss-}\textit{\textcolor{black}{M}}\textcolor{black}{-injective
and hence }\textit{\textcolor{black}{f}}\textcolor{black}{{} can be
extended to an }\textit{\textcolor{black}{R}}\textcolor{black}{-homomorphism
$\mathit{g}:M\longrightarrow E$. Thus }\textit{\textcolor{black}{E}}\textcolor{black}{{}
is ss-}\textit{\textcolor{black}{M}}\textcolor{black}{-injective. }

\textcolor{black}{(7)$\Rightarrow$(1) Let $\mathit{N}_{1}\subseteq N_{2}\subseteq...$
be a chain of submodules of soc$(M)\cap J(M)$. For each $\mathit{i}\geq1$,
let $\mathit{E}_{i}=E(M/N_{i})$,   $\mathit{E}=\underset{{\scriptscriptstyle i=1}}{\overset{{\scriptscriptstyle \infty}}{\bigoplus}}E_{i}$ and $\mathit{M_{i}}=\underset{{\scriptscriptstyle j=1}}{\overset{{\scriptscriptstyle \infty}}{\prod}}E_{j}=E_{i}\oplus(\underset{{\scriptscriptstyle \underset{j\neq i}{j=1}}}{\overset{{\scriptscriptstyle \infty}}{\prod}}E_{j})$,
then $\mathit{M_{i}}$ is injective. By hypothesis, $\mathit{\underset{{\scriptscriptstyle i=1}}{\overset{{\scriptscriptstyle \infty}}{\bigoplus}}M_{i}}=(\underset{{\scriptscriptstyle i=1}}{\overset{{\scriptscriptstyle \infty}}{\bigoplus}}E_{i})\oplus(\underset{{\scriptscriptstyle i=1}}{\overset{{\scriptscriptstyle \infty}}{\bigoplus}}\underset{{\scriptscriptstyle \underset{j\neq i}{j=1}}}{\overset{{\scriptscriptstyle \infty}}{\prod}}E_{j})$
is ss-}\textit{\textcolor{black}{M}}\textcolor{black}{-injective, so
it follows from Theorem~\ref{Theorem:(2.3)}(5)   that }\textit{\textcolor{black}{E}}\textcolor{black}{{}
it self is ss-}\textit{\textcolor{black}{M}}\textcolor{black}{-injective.
Define $\mathit{f}:U=\overset{{\scriptscriptstyle \infty}}{\underset{{\scriptscriptstyle i=1}}{\bigcup}}N_{i}\longrightarrow E$
by $\mathit{f}(m)=(m+N_{i})_{i}$. It is clear that }\textit{\textcolor{black}{f}}\textcolor{black}{{}
is a well defined }\textit{\textcolor{black}{R}}\textcolor{black}{-homomorphism. Since
}\textit{\textcolor{black}{M}}\textcolor{black}{{} is finitely generated,
thus soc$(M)\cap J(M)$ is a semisimple small submodule of
}\textit{\textcolor{black}{M}}\textcolor{black}{{} and hence $\overset{{\scriptscriptstyle \infty}}{\underset{{\scriptscriptstyle i=1}}{\bigcup}}N_{i}$
is a semisimple small submodule of }\textit{\textcolor{black}{M}}\textcolor{black}{,
so }\textit{\textcolor{black}{f}}\textcolor{black}{{} can be extended
to a right }\textit{\textcolor{black}{R}}\textcolor{black}{-homomorphism
$\mathit{g}:M\longrightarrow E$. Since }\textit{\textcolor{black}{M}}\textcolor{black}{{}
is finitely generated, we have $\mathit{g}(M)\subseteq\overset{{\scriptscriptstyle n}}{\underset{{\scriptscriptstyle i=1}}{\bigoplus}}E(M/N_{i})$
for some }\textit{\textcolor{black}{n}}\textcolor{black}{{} and hence
$\mathit{f}(\underset{{\scriptscriptstyle i=1}}{\overset{{\scriptscriptstyle \infty}}{\bigcup}}N_{i})\subseteq\underset{{\scriptscriptstyle i=1}}{\overset{{\scriptscriptstyle n}}{\bigoplus}}E(M/N_{i})$.
Since $\mathit{\pi_{i}}f(x)=\pi_{i}(x+N_{j})_{{\scriptscriptstyle j\geq1}}=x+N_{i}$,
for all $\mathit{x\in}U$ and $\mathit{i}\geq1$, where $\mathit{\pi_{i}}:\underset{{\scriptscriptstyle j\geq1}}{\bigoplus}E(M/N_{j})\longrightarrow E(M/N_{i})$
be the projection map, thus $\mathit{\pi_{i}}f(U)=U/N_{i}$ for all
$\mathit{i}\geq1$. Since $\mathit{f}(U)\subseteq\underset{{\scriptscriptstyle i=1}}{\overset{{\scriptscriptstyle n}}{\bigoplus}}E(M/N_{i})$, thus $\mathit{U}/N_{i}=\pi_{i}f(U)=0$, for all $\mathit{i}\geq n+1$,
so $\mathit{U}=N_{i}$ for all $\mathit{i}\geq n+1$ and hence the
chain $\mathit{N}_{1}\subseteq N_{2}\subseteq...$ terminates at $\mathit{N}_{n+1}$.
Thus soc$(M)\cap J(M)$ is a  Noetherian $R$-module.}
\end{proof}

\begin{cor}\label{Corollary:(2.26)} If $N$ is a finitely generated right $R$-module, then the following statements are equivalent.

\noindent (1)   \emph{soc}$(N)\cap J(N)$ is finitely generated.

\noindent (2)  $\mathit{M}^{(S)}$ is ss-$N$-injective for every \emph{soc}-$N$-injective right $R$-module $M$
and for any index set $S$.

\noindent (3)  $\mathit{M}^{(S)}$ is ss-$N$-injective for every ss-$N$-injective right $R$-module $M$
and for any index set $S$.

\noindent (4)  $\mathit{M}^{(\mathbb{N})}$
is ss-$N$-injective for every \emph{soc}-$N$-injective right $R$-module $M$.

\noindent (5)  $\mathit{M}^{(\mathbb{N})}$ is ss-$N$-injective for
every ss-$N$-injective
right $R$-module $M$.
\end{cor}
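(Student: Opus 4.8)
The plan is to deduce this corollary from Theorem~\ref{Theorem:(2.25)} by the same device already used to pass between the various module-indexed statements there, namely by recognizing that ss-$N$-injectivity only depends on $\mathrm{soc}(N)\cap J(N)$ (via Proposition~\ref{Proposition:(2.6)}, since $N$ finitely generated forces $J(N)$ small in $N$, so that $\mathrm{soc}(N)\cap J(N)$ is itself a semisimple small submodule of $N$). The implications $(2)\Rightarrow(3)\Rightarrow(5)$ and $(2)\Rightarrow(4)\Rightarrow(5)$ are immediate from the inclusion of classes ``soc-$N$-injective $\subseteq$ ss-$N$-injective'' together with the observation that $M^{(\mathbb N)}$ is a direct summand of $M^{(S)}$ for any infinite $S$ and is handled by the finite case otherwise; and $(3)\Rightarrow(2)$ (resp.\ $(5)\Rightarrow(4)$) holds because every soc-$N$-injective module is ss-$N$-injective, hence satisfies the stronger hypothesis. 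So the real content is $(1)\Leftrightarrow$ (any one of the others).

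First I would prove $(1)\Rightarrow(2)$. Assume $\mathrm{soc}(N)\cap J(N)$ is finitely generated and let $M$ be soc-$N$-injective. Every injective module is soc-$N$-injective, but conversely $M$ embeds in its injective hull $E(M)$, and the point is that a direct sum $M^{(S)}$ sits inside $E(M)^{(S)}$; given an $R$-homomorphism $f\colon K\to M^{(S)}$ from a semisimple small submodule $K$ of $N$, finite generation of $K$ (which follows from finite generation of $\mathrm{soc}(N)\cap J(N)$, as $K\subseteq \mathrm{soc}(N)\cap J(N)$) forces $f(K)$ to lie in a finite subsum $M^{(S_0)}$, which is soc-$N$-injective by Corollary~\ref{Corollary:(2.4)}(1), so $f$ extends to $N\to M^{(S_0)}\subseteq M^{(S)}$. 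This is the same argument as $(2)\Rightarrow(3)$ in Theorem~\ref{Theorem:(2.25)}, only with ``soc-$N$-injective'' in place of ``ss-$M$-injective'' and with the extension landing in $M^{(S)}$ rather than being constructed inside $N$; no new ideas are needed.

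The main obstacle, and the step I would do most carefully, is $(4)\Rightarrow(1)$ (equivalently $(5)\Rightarrow(1)$). I would run the argument of $(7)\Rightarrow(1)$ in Theorem~\ref{Theorem:(2.25)} almost verbatim: given a chain $N_1\subseteq N_2\subseteq\cdots$ in $\mathrm{soc}(N)\cap J(N)$, set $E_i=E(N/N_i)$ and $M_i=\prod_{j=1}^\infty E_j$, each of which is injective hence soc-$N$-injective; then $\bigoplus_i M_i$ contains $\bigoplus_i E_i=:E$ as a direct summand, and hypothesis $(4)$ applied to the soc-$N$-injective module $\bigoplus_i M_i$ (taking the index set $\mathbb N$) makes $E$ ss-$N$-injective by Theorem~\ref{Theorem:(2.3)}(5). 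The map $f\colon U=\bigcup_i N_i\to E$, $f(m)=(m+N_i)_i$, is well defined because $U$ is a semisimple small submodule of the finitely generated $N$; it extends to $g\colon N\to E$, and finite generation of $N$ forces $g(N)$, hence $f(U)$, into a finite subsum $\bigoplus_{i=1}^n E_i$. Composing with the projection $\pi_i$ gives $U/N_i=\pi_i f(U)=0$ for $i>n$, so the chain stabilizes and $\mathrm{soc}(N)\cap J(N)$ is Noetherian, hence (being semisimple) finitely generated. The only subtlety is checking that the required instances of ``soc-$N$-injective'' in hypotheses $(2)$ and $(4)$ are fed injective modules (which they are, by construction of the $E_i$ and $M_i$), so that these weaker hypotheses suffice; with that observed, the cycle of implications closes.
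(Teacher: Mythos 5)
Your proposal is essentially the paper's own route: the paper proves Corollary~\ref{Corollary:(2.26)} simply by citing Theorem~\ref{Theorem:(2.25)}, and the two substantive steps you spell out -- the finite-trace argument for $(1)\Rightarrow(2)$ and the $E(N/N_i)$, $M_i=\prod_j E_j$ construction for $(4)\Rightarrow(1)$ -- are exactly the arguments of $(2)\Rightarrow(3)$ and $(7)\Rightarrow(1)$ there, correctly adapted.

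One directional slip should be fixed before this is airtight: you assert that $(2)\Rightarrow(3)$ and $(4)\Rightarrow(5)$ are ``immediate from the inclusion of classes soc-$N$-injective $\subseteq$ ss-$N$-injective.'' That inclusion gives the \emph{opposite} implications, $(3)\Rightarrow(2)$ and $(5)\Rightarrow(4)$ (which you also, correctly, list): a statement quantified over the larger class of ss-$N$-injective modules is the stronger one, so it cannot follow formally from the statement about the smaller class. The repair costs nothing: your proof of $(1)\Rightarrow(2)$ uses only that a finite subsum $M^{(S_0)}$ is ss-$N$-injective, so the identical argument gives $(1)\Rightarrow(3)$ directly; the cycle then closes as $(1)\Rightarrow(3)\Rightarrow(2)\Rightarrow(4)\Rightarrow(1)$ together with $(3)\Rightarrow(5)\Rightarrow(4)$, with no need for $(2)\Rightarrow(3)$ or $(4)\Rightarrow(5)$ as separate steps. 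Everything else, including the observation that hypothesis $(4)$ need only be fed the injective module $\prod_j E_j$, is sound.
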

\begin{proof}
By Theorem~\ref{Theorem:(2.25)}.
\end{proof}

\begin{cor}\label{Corollary:(2.27)} The following statements are equivalent.

\noindent (1)   $\mathit{S}_{r}\cap J$ is  finitely generated.

\noindent (2) Any direct sum of ss-injective right $R$-modules is ss-injective.

\noindent (3) Any direct sum of soc-injective right $R$-modules is ss-injective.

\noindent (4) Any direct sum of small injective right $R$-modules is ss-injective.

\noindent (5) Any direct sum of injective right $R$-modules is ss-injective.

\noindent (6)  $\mathit{M}^{(S)}$ is ss-injective for every injective right $R$-module $M$  and for any index set $S$.

\noindent (7)  $\mathit{M}^{(S)}$ is ss-injective for every soc-injective right $R$-module $M$  and for any index set $S$.

\noindent (8) $\mathit{M}^{(S)}$ is ss-injective for every small injective right $R$-module $M$ and for any index set $S$.

\noindent (9)  $\mathit{M}^{(S)}$ is ss-injective for every ss-injective right $R$-module $M$ and for any index set $S$.

\noindent (10) $\mathit{M}^{(\mathbb{N})}$ is ss-injective for every injective right $R$-module $M$.

\noindent (11)  $\mathit{M}^{(\mathbb{N})}$ is ss-injective for every \emph{soc}-injective right  $R$-module $M$.

\noindent (12)  $\mathit{M}^{(\mathbb{N})}$ is ss-injective for every small injective right $R$-module $M$.

\noindent (13)  $\mathit{M}^{(\mathbb{N})}$ is ss-injective for every ss-injective right $R$-module $M$.
\end{cor}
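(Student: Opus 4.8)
The plan is to reduce Corollary~\ref{Corollary:(2.27)} entirely to Theorem~\ref{Theorem:(2.25)} (and its Corollary~\ref{Corollary:(2.26)}) applied with the module $N = R_R$, together with the bookkeeping already used in Corollary~\ref{Corollary:(2.21)}. Since $R_R$ is finitely generated, Theorem~\ref{Theorem:(2.25)} with $M = R_R$ gives at once the equivalence of: ``$\mathrm{soc}(R_R)\cap J(R_R) = S_r\cap J$ is finitely generated'', ``any direct sum of ss-injective right $R$-modules is ss-injective'' (this is (3)$\Leftrightarrow$(2) there, noting ss-$R_R$-injective means ss-injective), ``any direct sum of soc-injective right $R$-modules is ss-injective'' (their (4)), ``any direct sum of injective right $R$-modules is ss-injective'' (their (5)), and statements (6), (7), (10), (11) here (their (6), (7) and Corollary~\ref{Corollary:(2.26)}). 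So the only genuinely new items to insert into the chain are the ones mentioning \emph{small injective} modules, namely (4), (8), and (12).

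For those, I would use the implications that are immediate from the inclusions among the classes: every injective module is small injective, and every small injective module is ss-injective (Example~\ref{example:(2.2)}(2)). Hence a direct sum of injectives is a special case of a direct sum of small injectives, which is a special case of a direct sum of ss-injectives; this gives (2)$\Rightarrow$(4)$\Rightarrow$(5) and similarly (6)$\Rightarrow$(8)$\Rightarrow$(10), (11)$\Rightarrow$(12) after noting every small injective is ss-injective so (9)$\Rightarrow$(8) as well — actually the cleanest routing is: from (1) we already have (via Theorem~\ref{Theorem:(2.25)}) that every direct sum of \emph{ss-injective} modules is ss-injective, which trivially forces every direct sum of small injective modules to be ss-injective since small injectives are ss-injective; that is (1)$\Rightarrow$(9)$\Rightarrow$(8), and (8)$\Rightarrow$(5) because injectives are small injective. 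So (4), (8), (12) get sandwiched between already-equivalent statements with no extra work. The analogous remark inserts (13) and (9) likewise.

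Concretely, the write-up would read roughly: ``Apply Theorem~\ref{Theorem:(2.25)} and Corollary~\ref{Corollary:(2.26)} with $N=R_R$ to obtain the equivalence of (1), (2), (3), (5), (6), (7), (9), (10), (11), (13). The implications (2)$\Rightarrow$(4)$\Rightarrow$(5), (6)$\Rightarrow$(8)$\Rightarrow$(10), (7)$\Rightarrow$(12) are clear since every injective right $R$-module is small injective; and (9)$\Rightarrow$(8), (13)$\Rightarrow$(12), (4)$\Rightarrow$(5) are clear since every small injective right $R$-module is ss-injective (Example~\ref{example:(2.2)}(2)). Hence all thirteen statements are equivalent.'' I do not anticipate a real obstacle here: everything is a matter of threading the small-injective statements into the established equivalence chain using the two trivial class inclusions, so the proof is essentially a one-paragraph citation of Theorem~\ref{Theorem:(2.25)} plus the observation that $\text{injective} \subseteq \text{small injective} \subseteq \text{ss-injective}$. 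The only point demanding any care is making sure each of (4), (8), (12), (13) is both implied by and implies some member of the core chain, which the sandwich above supplies.
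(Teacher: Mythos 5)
Your proposal is correct and takes essentially the same approach as the paper, whose entire proof is the one-line citation ``By applying Theorem~\ref{Theorem:(2.25)} and Corollary~\ref{Corollary:(2.26)}''; your explicit threading of the small-injective statements via the inclusions $\{\text{injective}\}\subseteq\{\text{small injective}\}\subseteq\{\text{ss-injective}\}$ is exactly the bookkeeping the paper leaves implicit there (and carries out explicitly in the analogous Corollary~\ref{Corollary:(2.21)}). One small cleanup: the implications (6)$\Rightarrow$(8) and (7)$\Rightarrow$(12) in your write-up point the wrong way (a statement quantified over the smaller class does not yield one over the larger class, and soc-injective is not comparable to small injective), but both are redundant since you also record (9)$\Rightarrow$(8) and (13)$\Rightarrow$(12); you should just add the equally trivial (12)$\Rightarrow$(10) to close the sandwich on (12).
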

\begin{proof}
By applying Theorem~\ref{Theorem:(2.25)} and Corollary~\ref{Corollary:(2.26)}.
\end{proof}

\begin{rem}\label{Remark:(2.28)} \emph{Let $M$   be a right $R$-module. We denote that $r_{u}(N)=\{a\in S_{r}\cap J \mid Na=0\}$ and $l_{M}(K)=\{
m\in M \mid mK=0\}$ where $N\subseteq M$ and $K\subseteq S_{r}\cap J$. Clearly, $r_{u}(N)\subseteq(S_{r}\cap J)_{R}$
and $l_{M}(K)\subseteq$ $ _{s}{M}$, where $S=End(M_{R})$ and we have the
following: }

\noindent\textcolor{black}{(1) $N\subseteq l_{M}r_{u}(N)$ \emph{for all} $N\subseteq M$.}

\noindent\textcolor{black}{(2) $K\subseteq r_{u}l_{M}(K)$ \emph{for all} $K\subseteq S_{r}\cap J$.}

\noindent\textcolor{black}{(3) $r_{u}l_{M}r_{u}(N)=r_{u}(N)$ \emph{for all} $N\subseteq M$.}

\noindent\textcolor{black}{(4) $l_{M}r_{u}l_{M}(K)=l_{M}(K)$ \emph{for all} $K\subseteq S_{r}\cap J$.}
\end{rem}
\begin{proof}
This is clear
\end{proof}

\begin{lem}\label{Lemma:(2.29)} The following statements are equivalent for a right $R$-module $M$:

\noindent (1) $R$  satisfies the ACC for right ideals of form $r_{u}(N)$, where $N\subseteq M$.

\noindent (2) $R$  satisfies the DCC for $l_{M}(K)$, where $K\subseteq S_{r}\cap J$.

\noindent (3) For each semisimple small right ideal $I$ there exists a finitely
generated right ideal $K\subseteq I$ such that $l_{M}(I)=l_{M}(K)$.
\end{lem}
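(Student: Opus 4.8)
The plan is to establish the cycle $(1)\Rightarrow(3)\Rightarrow(2)\Rightarrow(1)$, using throughout the Galois-type correspondence between right ideals of the form $r_{u}(N)$ and left $S$-submodules of the form $l_{M}(K)$ recorded in Remark~\ref{Remark:(2.28)} (especially the "round-trip" identities $r_{u}l_{M}r_{u}=r_{u}$ and $l_{M}r_{u}l_{M}=l_{M}$, which make $r_{u}$ and $l_{M}$ mutually inverse, order-reversing bijections between the two posets involved). The only subtlety I anticipate is that $l_{M}$ reverses inclusions, so an ascending chain on one side corresponds to a descending chain on the other; I will need to be careful that the chain conditions are paired correctly, and that the objects being chained stay inside the correct posets ($r_{u}(N)\subseteq S_{r}\cap J$ always, and I may replace any $N$ by $l_{M}r_{u}(N)$ without changing $r_{u}(N)$).

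For $(1)\Rightarrow(3)$: fix a semisimple small right ideal $I\subseteq S_{r}\cap J$. Consider the (nonempty) family of right ideals $r_{u}(F)$ where $F$ ranges over finitely generated sub-right-ideals $F\subseteq I$; equivalently over $F = x_{1}R+\dots+x_{k}R$ with $x_{j}\in I$. Each such $F$ is itself a semisimple small right ideal, so $r_{u}(F)$ is of the admissible form. By the ACC in (1), this family has a maximal element $r_{u}(K)$ with $K\subseteq I$ finitely generated. I claim $l_{M}(I)=l_{M}(K)$: the inclusion $l_{M}(I)\subseteq l_{M}(K)$ is immediate from $K\subseteq I$; conversely, for any $x\in I$, the ideal $K+xR$ is finitely generated and contained in $I$, so $r_{u}(K+xR)\supseteq r_{u}(K)$ gives, by maximality, $r_{u}(K+xR)=r_{u}(K)$, whence (applying $l_{M}$ and using the round-trip identity $l_{M}r_{u}l_{M}=l_{M}$, or more directly $l_{M}(K)=l_{M}r_{u}l_{M}(K)\subseteq \dots$) one gets $l_{M}(K)\subseteq l_{M}(K+xR)\subseteq l_{M}(xR)$, so every element of $l_{M}(K)$ annihilates $x$; since $x\in I$ was arbitrary, $l_{M}(K)\subseteq l_{M}(I)$.

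For $(3)\Rightarrow(2)$: let $l_{M}(K_{1})\supseteq l_{M}(K_{2})\supseteq\cdots$ be a descending chain with $K_{n}\subseteq S_{r}\cap J$; replacing $K_{n}$ by $K_{1}+\dots+K_{n}$ (which only enlarges the $K_{n}$ and hence can only shrink the $l_{M}(K_n)$ — I will check this does not destroy the chain, arranging $K_n$ increasing with the same $l_M$'s via $l_M(K_1+\cdots+K_n)=\bigcap_{j\le n}l_M(K_j)=l_M(K_n)$) I may assume $K_{1}\subseteq K_{2}\subseteq\cdots$, all inside the semisimple small right ideal $I:=\bigcup_{n}K_{n}$ of $S_{r}\cap J$. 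By (3) there is a finitely generated $K\subseteq I$ with $l_{M}(I)=l_{M}(K)$; since $K$ is finitely generated it lies in some $K_{N}$, so $l_{M}(K_{N})\subseteq l_{M}(K)=l_{M}(I)\subseteq l_{M}(K_{N})$, forcing equality from stage $N$ on, so the chain stabilizes. Finally $(2)\Rightarrow(1)$ is the order-reversing dual: given an ascending chain $r_{u}(N_{1})\subseteq r_{u}(N_{2})\subseteq\cdots$, apply $l_{M}$ to get a descending chain $l_{M}r_{u}(N_{1})\supseteq l_{M}r_{u}(N_{2})\supseteq\cdots$ of submodules of the required form; by (2) it stabilizes, and applying $r_{u}$ and the identity $r_{u}l_{M}r_{u}=r_{u}$ recovers stabilization of the original chain. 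The main obstacle is bookkeeping the inclusion-reversal correctly and justifying the reduction to monotone chains in $(3)\Rightarrow(2)$; everything else is a formal consequence of Remark~\ref{Remark:(2.28)}.
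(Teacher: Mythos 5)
Your steps $(3)\Rightarrow(2)$ and $(2)\Rightarrow(1)$ are correct and close in substance to the paper's argument (the paper declares $(1)\Leftrightarrow(2)$ clear and proves $(2)\Rightarrow(3)$ and $(3)\Rightarrow(1)$); in particular your reduction of an arbitrary descending chain to an increasing chain $K_1\subseteq K_2\subseteq\cdots$ via $l_M(K_1+\cdots+K_n)=\bigcap_{j\le n}l_M(K_j)=l_M(K_n)$ is fine, and $I=\bigcup_n K_n$ is indeed a semisimple small right ideal, since any right ideal contained in $J$ is small in $R_R$. The problem is the step $(1)\Rightarrow(3)$, on which your whole cycle depends.

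There you form ``the family of right ideals $r_u(F)$ where $F$ ranges over finitely generated sub-right-ideals of $I$'' and invoke the ACC of (1) to extract a maximal member. But $r_u$ is defined only on subsets $N\subseteq M$ (Remark~\ref{Remark:(2.28)}), whereas $F$ is a right ideal of $R$ contained in $S_r\cap J$, not a submodule of the arbitrary module $M$; so $r_u(F)$ is not an object of the form $r_u(N)$ with $N\subseteq M$, and hypothesis (1) says nothing about this family. Moreover, even if one formally extends $r_u$ to right ideals as an annihilator operator, the inclusion you use, $r_u(K+xR)\supseteq r_u(K)$, is backwards: annihilators reverse inclusions, so $K\subseteq K+xR$ gives $r_u(K+xR)\subseteq r_u(K)$, and maximality of $r_u(K)$ then yields nothing. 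Both defects disappear if you replace $r_u(F)$ by $r_u(l_M(F))$: these are genuinely of the form $r_u(N)$ with $N=l_M(F)\subseteq M$, the double composition is inclusion-preserving in $F$, so (1) supplies a maximal element $r_u(l_M(K))$ and forces $r_u(l_M(K+xR))=r_u(l_M(K))$ for every $x\in I$; applying $l_M$ and the identity $l_Mr_ul_M=l_M$ gives $l_M(K+xR)=l_M(K)$, hence $l_M(K)x=0$ for all $x\in I$ and $l_M(K)=l_M(I)$ as you intended. With that repair your proof closes; it is then essentially the paper's proof of $(2)\Rightarrow(3)$ (which picks a minimal element of the set of all $l_M(A)$ with $A$ finitely generated and contained in $I$) transported through the correspondence $(1)\Leftrightarrow(2)$.
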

\begin{proof} (1)$\Leftrightarrow$(2) Clear.

(2)$\Rightarrow$(3) Consider $\Omega=\{
l_{M}(A) \mid A$ is finitely generated right ideal and $A\subseteq I$ $\}$ which
is non empty set because $M\in\Omega$. Now, let $K$
be a finitely generated right ideal of $R$ and contained in $I$.
such that $l_{M}(K)$ is minimal in $\Omega$. Put $B=K+xR$, where
$x\in I$. Thus $B$ is a finitely generated right ideal contained
in $I$ and $l_{M}(B)\subseteq l_{M}(K)$. But since $l_{M}(K)$ is
minimal in $\Omega$, thus $l_{M}(B)=l_{M}(K)$ which yields $l_{M}(K)x=0$
for all $x\in I$. Therefore, $l_{M}(K)I=0$ and hence $l_{M}(K)\subseteq l_{M}(I)$.
But $l_{M}(I)\subseteq l_{M}(K)$, so $l_{M}(I)=l_{M}(K)$.

(3)$\Rightarrow$(1) Suppose that $r_{u}(M_{1})\subseteq r_{u}(M_{2})\subseteq...\subseteq r_{u}(M_{n})\subseteq...$,
where $M_{i}\subseteq M$ for each $i$. Put $D_{i}=l_{M}r_{u}(M_{i})$ for each $i$,
and $I=\underset{{\scriptscriptstyle i=1}}{\overset{{\scriptscriptstyle \infty}}{\bigcup}}r_{u}(M_{i})$,
then $I\subseteq S_{r}\cap J$. By hypothesis, there exists a finitely
generated right ideal $K$ of $R$ and contained in $I$ such that $l_{M}(I)=l_{M}(K)$.
Since $K$  is a finitely generated, thus there exists $t\in\mathbb{N}$ such that $K\subseteq r_{u}(M_{n})$
for all $n\geq t$, that is $l_{M}(K)\supseteq l_{M}r_{u}(M_{n})=D_{n}$
for all $n\geq t$. Since $l_{M}(K)=l_{M}(I)=l_{M}(\underset{{\scriptscriptstyle i=1}}{\overset{{\scriptscriptstyle \infty}}{\bigcup}}r_{u}(M_{i}))=\overset{{\scriptscriptstyle \infty}}{\underset{{\scriptscriptstyle i=1}}{\bigcap}}l_{M}r_{u}(M_{i})=\overset{{\scriptscriptstyle \infty}}{\underset{{\scriptscriptstyle i=1}}{\bigcap}}D_{i}\subseteq D_{n}$,
thus $l_{M}(K)=D_{n}$ for all $n\geq t$. Since $D_{n}=l_{M}r_{u}(M_{n})$,
thus $r_{u}(M_{n})=r_{u}l_{M}r_{u}(M_{n})=r_{u}(D_{n})=r_{u}l_{M}(K)$
for all $n\geq t$. Thus $r_{u}(M_{n})=r_{u}(M_{t})$ for all $n\geq t$.
and hence (3) implies (1), which completes the proof.
\end{proof}

 The first part in following proposition is obtained directly by Corollary~\ref{Corollary:(2.27)}, but we will prove it by different way.

\begin{prop}\label{Proposition:(2.30)} Let $E$ be an ss-injective right $R$-module. Then $E^{(\mathbb{N})}$ is ss-injective if and only if $R$
satisfies the ACC  for right ideals of form $r_{u}(N)$, where $N\subseteq E$.
\end{prop}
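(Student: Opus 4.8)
The plan is to mimic the classical Faith–Walker / Cailleau style argument that characterizes Noetherian-type conditions via direct sums being injective, but carried out relative to the class of semisimple small right ideals. First I would prove the easy direction: suppose $E^{(\mathbb N)}$ is ss-injective. Given an ascending chain $r_u(N_1)\subseteq r_u(N_2)\subseteq\cdots$ with each $N_i\subseteq E$, set $I=\bigcup_i r_u(N_i)$; this is a semisimple small right ideal of $R$ since it sits inside $S_r\cap J$. Using the inclusions $r_u(N_i)\subseteq I$ and the left-annihilator data in $E$, I would construct an $R$-homomorphism $f\colon I\longrightarrow E^{(\mathbb N)}$ whose $i$-th coordinate encodes the action on $r_u(N_i)$ (for instance $f(a)=(\pi_i(a))_i$ where $\pi_i$ detects whether $a$ lies in $r_u(N_i)$), chosen so that $f(I)$ is genuinely unbounded in the direct sum precisely when the chain does not stabilize. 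By ss-injectivity of $E^{(\mathbb N)}$, $f$ extends to $R$, so $f(I)=f(1\cdot I)\subseteq f(1)R$ lies in a finite sub-sum $E^{(n)}$; this forces the chain to terminate at stage $n$ (or $n+1$), giving the ACC. This is parallel to the $(7)\Rightarrow(1)$ argument in Theorem~\ref{Theorem:(2.25)}, with $l_M(K)=l_M(I)$ controlled via Lemma~\ref{Lemma:(2.29)}.

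For the converse, assume $R$ satisfies the ACC on right ideals of the form $r_u(N)$, $N\subseteq E$, equivalently (by Lemma~\ref{Lemma:(2.29)}) the DCC on $l_E(K)$, equivalently condition (3) of that lemma. I would take a semisimple small right ideal $I$ of $R$ and an $R$-homomorphism $f\colon I\longrightarrow E^{(\mathbb N)}$. By Lemma~\ref{Lemma:(2.29)}(3) there is a finitely generated right ideal $K\subseteq I$ with $l_E(I)=l_E(K)$; since $K$ is finitely generated and small semisimple, $f(K)$ lies in some finite sub-sum $E^{(n)}\subseteq E^{(\mathbb N)}$. The crux is then to show $f(I)\subseteq E^{(n)}$ as well. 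For each coordinate $j$, let $f_j=\pi_j\circ f\colon I\longrightarrow E$; the annihilator equality $l_E(K)=l_E(I)$ should force $f_j(I)=0$ whenever $f_j(K)=0$, because $f_j(K)=0$ means the image endomorphism-datum kills $K$, hence kills $I$. (Concretely: if $f_j(K)=0$ then in the relevant $l_E/r_u$ correspondence $f_j$ factors through something annihilated by $K$, and $l_E(K)=l_E(I)$ upgrades this to annihilation of all of $I$.) Thus only finitely many $f_j$ are nonzero, $f(I)\subseteq E^{(n)}$. Since $E$ is ss-injective, so is the finite direct sum $E^{(n)}$ by Corollary~\ref{Corollary:(2.4)}(1), and therefore $f\colon I\longrightarrow E^{(n)}$ extends to $R$; composing with the inclusion $E^{(n)}\hookrightarrow E^{(\mathbb N)}$ extends $f$ to $R$. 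Hence $E^{(\mathbb N)}$ is ss-injective.

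The main obstacle I anticipate is the bookkeeping in the forward direction: producing from a non-stabilizing chain $r_u(N_i)$ an honest homomorphism $I\to E^{(\mathbb N)}$ whose image is not contained in any finite sub-sum, which requires carefully choosing, for each $i$, an element witnessing $r_u(N_i)\subsetneq r_u(N_{i+1})$ together with a compatible family of maps $f_i\colon r_u(N_i)\to E$ assembling into a well-defined $f$ on the union $I$. This is where the duality of Remark~\ref{Remark:(2.28)} (the maps $l_E$, $r_u$ and the identities $l_Er_ul_E=l_E$, $r_ul_Er_u=r_u$) does the real work, exactly as $l_M r_M$ does in the usual $\Sigma$-injectivity proofs; one must check that the coordinate maps are well defined on all of $I$ and not merely on the pieces $r_u(N_i)$. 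The reverse direction is comparatively routine once Lemma~\ref{Lemma:(2.29)}(3) is invoked, the only subtlety being the implication $f_j(K)=0\Rightarrow f_j(I)=0$, which I would phrase via the annihilator identity rather than by an explicit element chase.
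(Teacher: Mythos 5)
Your proposal has the right architecture (and the converse direction is nearly complete), but the forward direction contains a genuine gap: the homomorphism $f\colon I\to E^{(\mathbb N)}$ that is supposed to witness the failure of ACC is never actually constructed. The one concrete suggestion you offer --- ``$f(a)=(\pi_i(a))_i$ where $\pi_i$ detects whether $a$ lies in $r_u(N_i)$'' --- is not $R$-linear, and you yourself flag the construction as ``the main obstacle I anticipate.'' That construction is the entire content of this direction. The paper does it as follows: a strictly ascending chain $r_u(N_1)\subsetneqq r_u(N_2)\subsetneqq\cdots$ gives, via the identities of Remark~\ref{Remark:(2.28)}, a strictly descending chain $l_Er_u(N_1)\supsetneqq l_Er_u(N_2)\supsetneqq\cdots$; one picks $t_i\in l_Er_u(N_i)\setminus l_Er_u(N_{i+1})$ together with a witness $a_{i+1}\in r_u(N_{i+1})$ satisfying $t_ia_{i+1}\neq 0$, and defines $\alpha(\ell)=(t_i\ell)_i$ on $L=\bigcup_i r_u(N_i)$. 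Each coordinate map is a globally defined left multiplication (no gluing of partially defined maps $f_i\colon r_u(N_i)\to E$ is needed, contrary to what your last paragraph suggests); the directed-union structure is used only to check that $t_i\ell=0$ for all large $i$, so that the image lands in the direct sum. Extending $\alpha$ to $R$ and evaluating at $1$ produces an element of finite support, contradicting $t_ka_{k+1}\neq 0$ for $k$ beyond that support. Without this (or an equivalent) explicit choice of the $t_i$ and the witnesses $a_{i+1}$, the direction is not proved.

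In the converse, your key step ``$f_j(K)=0\Rightarrow f_j(I)=0$'' does not follow from the annihilator identity $l_E(K)=l_E(I)$ alone, since that identity constrains elements of $E$, not arbitrary homomorphisms $I\to E$; your stated intention to avoid ``an explicit element chase'' is exactly what makes the step unjustified. You must first represent each coordinate map by left multiplication: either extend $f_j\colon I\to E$ to $R$ using the ss-injectivity of $E$ itself, getting $f_j=a_j\cdot$ with $a_j\in E$, whence $a_j\in l_E(K)=l_E(I)$ gives $f_jI=0$; or do as the paper does and extend the whole map into the ss-injective product $E^{\mathbb N}$ (Theorem~\ref{Theorem:(2.3)}(1)) to get $\alpha=a\cdot$ with $a\in E^{\mathbb N}$, then truncate $a$ to an element $\tilde a$ of $E^{(\mathbb N)}$ agreeing with $a$ on the generators of $K$ and use $l_E(K)=l_E(I)$ to see that $\tilde a$ still induces $\alpha$ on all of $I$. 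With that repair your converse argument is correct and essentially the same as the paper's.
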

\begin{proof} ($\Rightarrow$) Suppose that $r_{u}(N_{1})\subsetneqq r_{u}(N_{2})\subsetneqq...\subsetneqq r_{u}(N_{m})\subsetneqq...$
be a strictly chain, where $N_{i}\subseteq E$. Thus we get, $l_{E}r_{u}(N_{1})\supsetneqq l_{E}r_{u}(N_{2})\supsetneqq...\supsetneqq l_{E}r_{u}(N_{m})\supsetneqq...$. For each $i\geq1,$ so we can find $t_{i}\in l_{E}r_{u}(N_{i})/l_{E}r_{u}(N_{i+1})$
and $a_{i+1}\in r_{u}(N_{i+1})$ such that $t_{i}a_{i+1}\neq0$. Let $L=\overset{{\scriptscriptstyle \infty}}{\underset{{\scriptscriptstyle i=1}}{\bigcup}}r_{u}(N_{i})$, then for all $\ell\in L$ there exists $m_{\ell}\geq1$ such that
$\ell\in r_{u}(N_{i})$ for all $i\geq m_{\ell}$ and this implies that $t_{i}\ell=0$ for all $i\geq m_{\ell}$. Put $\bar{t}=(t_{i})_{i}$
, we have $\bar{t}\ell\in E^{(\mathbb{N})}$ for every $\ell\in L$. Consider $\alpha_{\bar{t}}:L\longrightarrow E^{(\mathbb{N})}$ is
given by $\alpha_{\bar{t}}(\ell)=\bar{t}\ell$, then $\alpha_{\bar{t}}$ is a well-define $R$-homomorphism.
Since $L$  is a semisimple small right ideal, thus $\alpha_{\bar{t}}$ extends to $\gamma:R\longrightarrow E^{(\mathbb{N})}$
(by hypothesis) and hence $\alpha_{\bar{t}}(\ell)=\bar{t}\ell=\gamma(\ell)=\gamma(1)\ell$.
Thus there exists $k\geq1$ such that $t_{i}\ell=0$ for all $i\geq k$
and all $\ell\in L$ (since $\gamma(1)\in E^{(\mathbb{N})}$), but
this contradicts with $t_{k}a_{k+1}\neq 0$.

($\Leftarrow$) Let $\alpha:I\longrightarrow E^{(\mathbb{N})}$
be an $R$-homomorphism, where $I$  is a semisimple small right ideal, thus it follows from Lemma~\ref{Lemma:(2.29)}   that there is a finitely generated right ideal $K\subseteq I$ such that $l_{M}(I)=l_{M}(K)$. Since $E^{\mathbb{N}}$ is ss-injective,
thus $\alpha=a.$ for some $a\in E^{\mathbb{N}}$. Write $K=\overset{{\scriptscriptstyle m}}{\underset{{\scriptscriptstyle i=1}}{\bigoplus}}r_{i}R$,
so we have $\alpha(r_{i})=ar_{i}\in E^{(\mathbb{N})}$, $i=1,2,...,m$.
Thus there exists $\tilde{a}\in E^{(\mathbb{N})}$ such that $a_{n}r_{i}=\tilde{a}_{n}r_{i}$
for all $n\in\mathbb{N}$, $i=1,2,...,m,$ where $a_{n}$ is the $n$th-coordinate of $a$ . Since $K$ is generated by $\{r_{1},r_{2},...,r_{m}\}$, thus $ar=\tilde{a}r$ for all $r\in K$. Therefore, $a_{n}-\tilde{a}_{n}\in l_{M}(K)=l_{M}(I)$ for all $n\in\mathbb{N}$ which leads to $a_{n}r=\tilde{a}_{n}r$
for all $r\in I$ and $n\in\mathbb{N}$, so $ar=\tilde{ar}$ for all $r\in I$. Thus there exists $\tilde{a}\in E^{(\mathbb{N})}$ such that $\alpha(r)=\tilde{a}r$ for all $r\in I$ and this means that $E^{(\mathbb{N})}$ is ss-injective.
\end{proof}

\begin{thm}\label{Theorem:(2.31)} The following statements are equivalent for a ring $R$:

\noindent (1) $S_{r}\cap J$ is finitely generated.

\noindent  (2) $\overset{{\scriptscriptstyle \infty}}{\underset{{\scriptscriptstyle i=1}}{\bigoplus}}E(M_{i})$
is ss-injective right $R$-module for every simple right $R$-modules $M_{i}$, $i\geq1$.
\end{thm}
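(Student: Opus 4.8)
The plan is to derive this from Theorem~\ref{Theorem:(2.25)} (with $M = R_R$) and Corollary~\ref{Corollary:(2.27)}, which already tell us that $S_r \cap J$ finitely generated is equivalent to "any direct sum of injective right $R$-modules is ss-injective". So the real content of (1)$\Rightarrow$(2) is just specialization: each $E(M_i)$ is injective, hence $\bigoplus_{i=1}^\infty E(M_i)$ is a direct sum of injectives, hence ss-injective by Corollary~\ref{Corollary:(2.27)}(5)$\Rightarrow$(2), or directly by Theorem~\ref{Theorem:(2.25)} applied to $M=R_R$. That direction is immediate.

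The substance is (2)$\Rightarrow$(1). Here I would argue by contradiction following the pattern of the proof of (7)$\Rightarrow$(1) in Theorem~\ref{Theorem:(2.25)}, but being careful that the hypothesis (2) only gives us direct sums of injective hulls of \emph{simple} modules, not arbitrary injectives. Suppose $S_r \cap J$ is not finitely generated. Then, since $S_r \cap J$ is a direct sum of simple small right ideals, we may write $S_r \cap J = \bigoplus_{i\in I} x_i R$ with $I$ infinite; pick a countable subset and relabel so that $T_i = x_i R$, $i\ge 1$, are simple small right ideals with $\bigoplus_{i\ge 1} T_i \subseteq S_r \cap J$. Set $E_i = E(R/\mathfrak m_i)$ where... actually more cleanly: for each $n\ge 1$ let $N_n = \bigoplus_{i=1}^{n} T_i$, a strictly increasing chain of semisimple small right ideals with union $U = \bigoplus_{i\ge 1} T_i$. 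For each $i$ choose a maximal right ideal $\mathfrak m_i$ with $T_i \cong R/\mathfrak m_i$ is false in general, so instead take $E_i = E(U/N_i)$ — but $U/N_i$ need not be simple. To stay within hypothesis (2) I would instead build the test module from injective hulls of the simple modules $T_i$ themselves: each $T_i$ is simple, so $E(T_i)$ is the injective hull of a simple module, and $\bigoplus_{i\ge1} E(T_i)$ is ss-injective by (2).

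Then define $f: U \to \bigoplus_{i\ge 1} E(T_i)$... but the natural map here is the inclusion $U = \bigoplus T_i \hookrightarrow \prod E(T_i)$, whose image lies in $\bigoplus E(T_i)$ only because each element of $U$ has finite support — good. Since $U$ is a semisimple small right ideal, ss-injectivity of $E = \bigoplus_{i\ge1} E(T_i)$ extends $f$ to $g: R_R \to E$, so $g(R) \subseteq \bigoplus_{i=1}^{n} E(T_i)$ for some $n$. But then, composing with the projection $\pi_{n+1}: E \to E(T_{n+1})$, we get $\pi_{n+1}f = 0$ on $U$, i.e. the inclusion $T_{n+1}\hookrightarrow E(T_{n+1})$ is zero, forcing $T_{n+1}=0$, a contradiction. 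This mirrors the $\pi_i f(U) = U/N_i$ computation in Theorem~\ref{Theorem:(2.25)}, just with the simpler projections picking out individual summands.

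The main obstacle I anticipate is making sure the test module is legitimately of the form demanded by (2), namely $\bigoplus_{i\ge1} E(M_i)$ with each $M_i$ \emph{simple} — which is why I route through the simple small right ideals $T_i$ rather than through quotients $U/N_i$ as in Theorem~\ref{Theorem:(2.25)}; a secondary point is justifying that $\bigoplus_{i\ge1} T_i$ is genuinely small in $R_R$ (it is contained in $J$, and a submodule of $J$ that is... in fact any submodule of $S_r\cap J$ is semisimple and contained in $J$; since $R$ has identity, $J \ll R_R$, so every submodule of $J$ is small in $R_R$). Once those two points are nailed down, the extension-and-project argument goes through verbatim. Alternatively, one can simply invoke Corollary~\ref{Corollary:(2.27)}: (2) of the present theorem clearly implies (10) of Corollary~\ref{Corollary:(2.27)} restricted to injective hulls of simples, and re-running the (7)$\Rightarrow$(1) argument of Theorem~\ref{Theorem:(2.25)} with these specific injectives yields (1); I would present the self-contained contradiction argument above as the cleanest route.
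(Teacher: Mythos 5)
Your argument is correct. The forward direction is the same as the paper's (both reduce it to Corollary~\ref{Corollary:(2.27)}). For $(2)\Rightarrow(1)$ you and the paper use the same engine: extend a homomorphism from a countably generated semisimple small right ideal into a countable direct sum of injective hulls of simple modules, then note that the image of $1$ under the extension has finite support, which annihilates all but finitely many components. The test data, however, are built quite differently. The paper starts from an arbitrary strictly ascending chain $I_{1}\subsetneqq I_{2}\subsetneqq\cdots$ of semisimple small right ideals, manufactures simple quotients $K_{i}=(a_{i}R+I_{i})/N_{i}$ by choosing maximal submodules $N_{i}/I_{i}$, uses injectivity of $E(K_{i})$ to extend $i_{i}\alpha_{i}$ over $I/I_{i}$, and assembles the diagonal map $\gamma(b)=(\beta_{i}(b+I_{i}))_{i}$ on $I=\bigcup_{i}I_{i}$; the contradiction is with $\beta_{n}(a_{n}+I_{n})\neq0$, and what is actually proved is the ACC for such chains, whence $S_{r}\cap J$ is Noetherian. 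You instead exploit that $S_{r}\cap J$ is semisimple, so if it is not finitely generated it decomposes as an infinite direct sum of simple small right ideals $x_{i}R$, and you test hypothesis (2) against the inclusion $\bigoplus_{i\geq1}x_{i}R\hookrightarrow\bigoplus_{i\geq1}E(x_{i}R)$. This is shorter and more transparent: it avoids the maximal submodules $N_{i}$ and the auxiliary extensions $\beta_{i}$, and the final contradiction ($x_{n+1}R=0$ although $x_{n+1}R$ is simple) is immediate. The two side points you flag are exactly the ones that need checking and you handle both: the $x_{i}R$ are honest simple modules, so (2) applies to $\bigoplus_{i\geq1}E(x_{i}R)$, and $\bigoplus_{i\geq1}x_{i}R$ is a semisimple small right ideal because it lies in $J\ll R_{R}$. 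Either route proves the theorem; the paper's version has the minor advantage of directly mirroring the chain argument of Theorem~\ref{Theorem:(2.25)}, while yours buys a cleaner, self-contained proof.
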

\begin{proof} (1)$\Rightarrow$(2) By Corollary~\ref{Corollary:(2.27)}.

(2)$\Rightarrow$(1) Let $I_{1}\subsetneqq I_{2}\subsetneqq...$
be a properly ascending chain of semisimple small right ideals of
$R$. Clearly, $I=\overset{{\scriptscriptstyle \infty}}{\underset{{\scriptstyle {\scriptscriptstyle i=1}}}{\bigcup}}I_{i}\subseteq S_{r}\cap J$.
For every $i\geq1$, there exists $a_{i}\in I$, $a_{i}\notin I_{i}$
and consider $N_{i}/I_{i}\subseteq^{max}(a_{i}R+I_{i})/I_{i}$, so
$K_{i}=(a_{i}R+I_{i})/N_{i}$ is a simple right $R$-module.
Define $\alpha_{i}:(a_{i}R+I_{i})/I_{i}\longrightarrow(a_{i}R+I_{i})/N_{i}$
by $\alpha_{i}(x+I_{i})=x+N_{i}$ which is right $R$-epimorphism.
Let $E(K_{i})$ be the injective hull of $K_{i}$ and $i_{i}:K_{i}\rightarrow E(K_{i})$ be the inclusion map. By injectivity of $E(K_{i})$,  there there exists $\beta_{i}:I/I_{i}\longrightarrow E(K_{i})$
such that $\beta_{i}=i_{i}\alpha_{i}$. Since $a_{i}\notin N_{i}$,
then $\beta_{i}(a_{i}+I_{i})=i_{i}(\alpha_{i}(a_{i}+I_{i}))=a_{i}+N_{i}\neq0$
for each $i\geq1$. If $b\in I$, then there exists $n_{b}\geq1$
such that $b\in I_{i}$ for all $i\geq n_{b}$ and hence $\beta_{i}(b+I_{i})=0$
for all $i\geq n_{b}$. Thus we can define $\gamma:I\longrightarrow\overset{{\scriptscriptstyle \infty}}{\underset{{\scriptscriptstyle i=1}}{\bigoplus}}E(K_{i})$
by $\gamma(b)=(\beta_{i}(b+I_{i}))_{i}$. Then there exists $\tilde{\gamma}:R\longrightarrow\overset{{\scriptscriptstyle \infty}}{\underset{{\scriptscriptstyle i=1}}{\bigoplus}}E(K_{i})$
such that $\tilde{\gamma}_{|I}=\gamma$ (by hypothesis). Put $\tilde{\gamma}(1)=(c_{i})_{i}$,
thus there exists $n\geq1$ with $c_{i}=0$ for all $i\geq n$. Since
$(\beta_{i}(b+I_{i}))_{i}=\gamma(b)=\tilde{\gamma}(b)=\tilde{\gamma}(1)b=(c_{i}b)_{i}$
for all $b\in I$, thus $\beta_{i}(b+I_{i})=c_{i}b$ for all $i\geq1$,
so it follows that $\beta_{i}(b+I_{i})=0$ for all $i\geq n$ and
all $b\in I$ and this contradicts with $\beta_{n}(a_{n}+I_{n})\neq0$.
Hence (2) implies (1).
\end{proof}

\section{Strongly SS-Injective Modules}

\begin{prop}\label{Proposition:(3.1)} The following statements are equivalent.

\noindent (1) $M$ is a strongly ss-injective right$R$-module.

\noindent (2) Every $R$-homomorphism $\alpha:A\longrightarrow M$ extends to $N$,
for all right $R$-module $N$, where $A\ll N$ and $\alpha(A)$ is a semisimple submodule in  $M$.\end{prop}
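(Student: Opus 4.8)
The plan is to prove the equivalence of the two conditions by a direct argument in each direction; the content lies in bridging the gap between "$\alpha(A)$ is semisimple" and "$A$ is a semisimple \emph{small} submodule", which is precisely the hypothesis under which ss-injectivity is defined.

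First I would prove $(2)\Rightarrow(1)$, which is the easy direction. Given a right $R$-module $N$ and a semisimple small submodule $K\ll N$, any $R$-homomorphism $f\colon K\longrightarrow M$ has $f(K)$ a homomorphic image of a semisimple module, hence $f(K)$ is a semisimple submodule of $M$; so $(2)$ applies with $A=K$ and $\alpha=f$, giving the desired extension to $N$. Since $N$ was arbitrary, $M$ is ss-$N$-injective for all $N$, i.e.\ strongly ss-injective.

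The substantive direction is $(1)\Rightarrow(2)$. Suppose $M$ is strongly ss-injective, let $N$ be a right $R$-module, $A\ll N$ a small submodule, and $\alpha\colon A\longrightarrow M$ with $\alpha(A)$ semisimple. I would set $K=\ker\alpha$, so that $A/K\cong\alpha(A)$ is semisimple. The key point is to replace $A$ by a \emph{semisimple} small submodule carrying the same homomorphism: pass to the quotient module $\bar N=N/K$. Then $\bar A:=A/K$ is a semisimple submodule of $\bar N$, and it is small in $\bar N$ because the canonical epimorphism $N\to \bar N$ carries the small submodule $A$ onto a small submodule of $\bar N$ (images of small submodules under epimorphisms are small). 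The map $\alpha$ factors as $\alpha=\bar\alpha\circ\pi$, where $\pi\colon A\to \bar A$ is the restriction of the projection and $\bar\alpha\colon\bar A\to M$ is the induced monomorphism. Now apply strong ss-injectivity of $M$ to the semisimple small submodule $\bar A$ of $\bar N$: there is an $R$-homomorphism $\bar g\colon\bar N\to M$ extending $\bar\alpha$. Composing with the projection $N\to \bar N$ gives $g\colon N\to M$, and $g|_A=\bar g\circ(\text{proj})|_A=\bar\alpha\circ\pi=\alpha$, so $g$ extends $\alpha$ as required.

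The main obstacle — really the only point needing care — is the claim that the image of a small submodule under an epimorphism is again small, so that $\bar A\ll\bar N$; this is a standard fact (if $\bar A + L = \bar N$ then $A + \pi^{-1}(L) = N$, whence $\pi^{-1}(L)=N$ since $A\ll N$, so $L=\bar N$), but it is exactly what licenses the reduction to the definition of ss-injectivity. One should also note the trivial but necessary observations that a homomorphic image of a semisimple module is semisimple (used in $(2)\Rightarrow(1)$) and that $\bar A=A/\ker\alpha$ is semisimple because it is isomorphic to the semisimple submodule $\alpha(A)$ of $M$. With these in hand the two factorization diagrams commute on the nose and the proof is complete.
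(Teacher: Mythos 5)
Your proof is correct and follows essentially the same route as the paper's: factor $\alpha$ through $N/\ker\alpha$, observe that $A/\ker\alpha$ is a semisimple small submodule there, extend, and compose with the canonical projection. You even supply the justification (images of small submodules under epimorphisms are small) that the paper leaves implicit.
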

\begin{proof}
\textcolor{black}{(2)$\Rightarrow$(1) Clear.}

\textcolor{black}{(1)$\Rightarrow$(2) Let }\textit{\textcolor{black}{A}}\textcolor{black}{{}
be a small  submodule of }\textit{\textcolor{black}{N}}\textcolor{black}{,
and $\alpha:A\longrightarrow M$ be an }\textit{\textcolor{black}{R}}\textcolor{black}{-homomorphism
with $\alpha(A)$ is a semisimple submodule of }\textit{\textcolor{black}{M}}\textcolor{black}{.
If $B=$ ker$(\alpha)$, then $\alpha$ induces an embedding $\tilde{\alpha}:A/B\longrightarrow M$
defined by $\tilde{\alpha}(a+B)=\alpha(a)$, for all $a\in A$. Clearly,
$\tilde{\alpha}$ is well define because if $a_{1}+B=a_{2}+B$ we
have $a_{1}-a_{2}\in B$, so $\alpha(a_{1})=\alpha(a_{2})$, that
is $\tilde{\alpha}(a_{1}+B)=\tilde{\alpha}(a_{2}+B)$. Since }\textit{\textcolor{black}{M}}\textcolor{black}{{}
is strongly ss-injective and $A/B$ is semisimple and small in $N/B$,
thus $\tilde{\alpha}$ extends to an }\textit{\textcolor{black}{R}}\textcolor{black}{-homomorphism
$\gamma:N/B\longrightarrow M$. If $\pi:N\longrightarrow N/B$ is
the canonical map, then the }\textit{\textcolor{black}{R}}\textcolor{black}{-homomorphism
$\beta=\gamma\circ\pi:N\longrightarrow M$ is an extension of $\alpha$
such that if $a\in A$, then $\beta(a)=\gamma\circ\pi(a)=\gamma(a+B)=\tilde{\alpha}(a+B)=\alpha(a)$
as desired. }\end{proof}

\begin{cor}\label{Corollary:(3.2)} \noindent (1) Let $M$ be a semisimple right $R$-module. If $M$ is a strongly ss-injective, then $M$ is small injective.

\noindent (2) If every simple right $R$-module is strongly ss-injective, then $R$ is semiprimitive. \end{cor}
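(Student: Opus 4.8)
The plan is to prove Corollary~\ref{Corollary:(3.2)} in two parts, using Proposition~\ref{Proposition:(3.1)} as the key tool.

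For part (1), suppose $M$ is a semisimple strongly ss-injective right $R$-module. I would take an arbitrary small right ideal $A \ll R$ and an arbitrary $R$-homomorphism $\alpha : A \longrightarrow M$. Since $M$ is semisimple, the image $\alpha(A)$ is a submodule of a semisimple module, hence itself semisimple. Now $A$ is small in $R$ and $\alpha(A)$ is semisimple in $M$, so the hypothesis of Proposition~\ref{Proposition:(3.1)}(2) is met (with $N = R$), and therefore $\alpha$ extends to an $R$-homomorphism $R \longrightarrow M$. Since this holds for every small right ideal $A$ and every such $\alpha$, the module $M$ is small injective by definition. I expect this part to be essentially immediate once the semisimplicity of $\alpha(A)$ is observed; there is no real obstacle here.

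For part (2), suppose every simple right $R$-module is strongly ss-injective; the goal is $J = J(R) = 0$. The strategy is to show $J \cap A = 0$ for enough submodules, or more directly to argue that $J$ cannot contain a nonzero element. Assume toward a contradiction that $J \neq 0$, pick $0 \neq a \in J$, and consider the cyclic module $aR$; inside it choose a maximal submodule so that there is a simple quotient, i.e.\ an $R$-epimorphism $aR \longrightarrow T$ with $T$ simple. Actually the cleaner route: by part (1) every simple module, being semisimple and strongly ss-injective, is small injective. So it suffices to show that if every simple right $R$-module is small injective then $J = 0$. Given $0 \neq a \in J$, the right ideal $aR$ is small in $R$ (as $aR \subseteq J$ and $R$ is finitely generated, or since $a \in J$). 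Take $K \subseteq^{max} aR$ and let $T = aR/K$, a simple module; let $\pi : aR \longrightarrow T$ be the projection. By small injectivity of $T$, the map $\pi$ extends to $\beta : R \longrightarrow T$, so $T = \pi(aR) = \beta(aR) \subseteq \beta(J) \cdot$, and since $\beta(J) = \beta(R)J = TJ = 0$ (as $J$ annihilates every simple module), we get $T = 0$, a contradiction. Hence $J = 0$, i.e.\ $R$ is semiprimitive.

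The main obstacle — really the only subtle point — is justifying that $aR$ is small in $R$ for $a \in J$: this follows because $aR \subseteq J$ and $J$ is small in $R_R$ whenever $R$ is semiperfect, but in general for an arbitrary ring one uses instead that $aR$ is finitely generated and contained in $J$, hence small in $R_R$ (a finitely generated submodule of $J$ is small). Once that is in hand, the annihilation $TJ = 0$ for $T$ simple is standard, and the contradiction closes the argument. I would also remark, as the statement's "but not conversely" in the paper's introduction suggests, that the converse fails, but that is not part of what must be proved here.
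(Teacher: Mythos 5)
Your proposal is correct. Part (1) is exactly the paper's argument: the paper simply says ``By Proposition~\ref{Proposition:(3.1)}'', and the point you supply --- that for semisimple $M$ every image $\alpha(A)$ is automatically semisimple, so the criterion of Proposition~\ref{Proposition:(3.1)}(2) applies to every homomorphism from a small right ideal --- is precisely what that citation is standing in for. For part (2) the paper also first reduces via (1) to ``every simple right $R$-module is small injective'' and then cites \cite[Theorem 2.8]{19ThQu09} for the conclusion $J=0$; you instead prove that cited implication from scratch, and your argument is sound: for $0\neq a\in J$ the right ideal $aR$ is small in $R_R$ (a finitely generated right ideal contained in $J$ is small, by the Nakayama-type characterization of $J$), $aR$ is cyclic and nonzero so it has a maximal submodule $K\subseteq^{max}aR$, and extending the projection $\pi:aR\to aR/K=T$ to $\beta:R\to T$ forces $T=\beta(aR)=\beta(1)aR\subseteq TJ=0$, a contradiction. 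The only difference, then, is that your route is self-contained where the paper's is a citation; what you gain is independence from the external reference, at the cost of a paragraph of standard radical theory. Your parenthetical first justification that $aR\ll R_R$ ``as $aR\subseteq J$ and $R$ is finitely generated'' is not by itself a proof, but you correct this yourself in the final paragraph, so there is no actual gap.
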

\begin{proof}
(1) By Proposition~\ref{Proposition:(3.1)}.

(2) By (1) and applying \cite[Theorem 2.8]{19ThQu09}.\end{proof}

\begin{rem}\label{Remark:(3.3)} \emph{The converse of Corollary~\ref{Corollary:(3.2)}  is not true (see Example~\ref{Example:(3.8)})}.\end{rem}

\begin{thm}\label{Theorem:(3.4)} If $\mathit{M}$ is a strongly ss-injective (or just ss-$\mathit{E}(M)$-injective) right $R$-module,
then for every semisimple small submodule $\mathit{A}$ of $\mathit{M}$, there is an injective $R$-module $\mathit{E}_{A}$ such that $\mathit{M}=E_{A}\oplus T_{A}$ where $\mathit{T}_{A}$ is a submodule of $M$ with $\mathit{T}_{A}\cap A=0$. Moreover, if $\mathit{A}\neq0$, then $\mathit{E}_{A}$ can be taken $\mathit{A}\leq^{ess}E_{A}$. \end{thm}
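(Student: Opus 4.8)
The plan is to exploit the ss-$E(M)$-injectivity of $M$ to split off an injective hull of the given semisimple small submodule $A$ from inside $M$. First I would let $E=E(M)$ be an injective hull of $M$ and fix a semisimple small submodule $A$ of $M$. Since $M\subseteq^{ess}E$, a small submodule of $M$ is small in $E$ as well, so $A$ is a semisimple small submodule of $E$. Inside $E$ choose an injective hull $E_A$ of $A$ (concretely, take a maximal essential extension of $A$ within $E$, or an injective hull $E_A\subseteq E$ of $A$); this exists because $E$ is injective. Then $E_A$ is a direct summand of $E$, say $E=E_A\oplus C$.

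The key step is to produce a retraction of $M$ onto $E_A\cap M$ that is the identity on $A$, using ss-$E(M)$-injectivity. Consider the composite $A\hookrightarrow E=E_A\oplus C\xrightarrow{\ \pi\ }E_A$, which is just the inclusion $A\hookrightarrow E_A$; but I want a map landing in $M$, so instead I would argue as follows. Let $\iota:A\hookrightarrow E$ be the inclusion. Regard $A$ as a semisimple small submodule of $E=E(M)$ and apply ss-$E(M)$-injectivity to the inclusion $A\hookrightarrow M$: this gives nothing new. The right move is to use that $E_A$, being injective, is a direct summand of $M$ once we know $E_A\subseteq M$; so the real content is to show $E_A\subseteq M$. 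To see this, let $p:E\to M$ be... no — $M$ need not be a summand of $E$. Instead: the inclusion $A\hookrightarrow E_A$ extends, since $E_A$ is injective, to a map $E\to E_A$; restrict it to $M$ to get $g:M\to E_A$ with $g|_A=\mathrm{id}_A$. Now $A$ is semisimple and small in $M$, and $g(A)=A$ is semisimple; so $g$, viewed as a map from the small semisimple submodule... this is the wrong direction again. The correct application: consider the map $f:A\to M$ which is the inclusion, precomposed trivially; rather, take the inclusion $j:A\hookrightarrow M$ and use ss-$E(M)$-injectivity of $M$ applied to $j$ regarded as a map out of the small semisimple submodule $A$ of $E(M)$ — this extends $j$ to $h:E(M)\to M$ with $h|_A=\mathrm{id}_A$. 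Restricting $h$ to $E_A$ gives $h|_{E_A}:E_A\to M$, an $R$-homomorphism that is the identity on $A$; since $A\subseteq^{ess}E_A$, the map $h|_{E_A}$ has zero kernel (its kernel meets $A$ trivially), hence is a monomorphism, so $h(E_A)\cong E_A$ is an injective submodule of $M$ containing $A$. Call it $E_A$ again (rename $h(E_A)$ as $E_A$); then $E_A$ is injective, hence $M=E_A\oplus T_A$ for some submodule $T_A\subseteq M$.

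Finally I would check the two remaining assertions. For $T_A\cap A=0$: since $A\subseteq E_A$ and $E_A\cap T_A=0$, we get $A\cap T_A=0$. For the "moreover" clause when $A\neq0$: the submodule $E_A$ we constructed is $h(E(A\subseteq E))$, and since $A\subseteq^{ess}E(A)$ and $h$ is injective on $E(A)$, we have $A\cong h(A)=A$ essential in $h(E(A))=E_A$, i.e. $A\leq^{ess}E_A$, and $E_A$ is injective — so $E_A$ is in fact an injective hull of $A$ sitting inside $M$. The main obstacle is getting the direction of the extension right: one must apply ss-$E(M)$-injectivity to the inclusion $A\hookrightarrow M$ with $A$ viewed as a small semisimple submodule of the ambient module $N=E(M)$, obtaining an extension $h:E(M)\to M$, and then observe that $h$ restricted to an injective hull of $A$ inside $E(M)$ is automatically monic (because its kernel is a submodule of $E(A)$ meeting the essential submodule $A$ trivially) and has injective image, which is the desired summand; bookkeeping that $A$ is indeed small in $E(M)$ (it is small in $M$, and $M\subseteq^{ess}E(M)$, so small submodules of $M$ are small in $E(M)$) completes the argument.
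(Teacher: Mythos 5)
Your final argument is correct and is essentially the paper's own proof: extend the inclusion $A\hookrightarrow M$ to $h:E(M)\to M$ via ss-$E(M)$-injectivity (using that $A$, being small in $M$, is small in $E(M)$ --- a fact that holds for any overmodule, so your appeal to $M\subseteq^{ess}E(M)$ is superfluous but harmless), then restrict $h$ to an injective hull of $A$ inside $E(M)$ and observe it is monic because its kernel misses the essential submodule $A$. Aside from the several false starts you eventually discard, this matches the paper's construction step for step, including the treatment of $T_A\cap A=0$ and the essentiality claim.
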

\begin{proof}
\textcolor{black}{Let }\textit{\textcolor{black}{A}}\textcolor{black}{{}
be a semisimple small submodule of }\textit{\textcolor{black}{M}}\textcolor{black}{.
If $\mathit{A}=0$, we are done by taking $\mathit{E}_{A}=0$ and
$\mathit{T}_{A}=M$. Suppose that $\mathit{A}\neq0$ and let}
\textcolor{black}{  $\mathit{i}_{1}$, $\mathit{i}_{2}$ and $\mathit{i}_{3}$
be inclusion maps and $D_{A}=E(A)$ be the injective hull of }\textit{\textcolor{black}{A}}\textcolor{black}{{}
in $\mathit{E}(M)$. Since }\textit{\textcolor{black}{M}}\textcolor{black}{{}
is strongly ss-injective, thus }\textit{\textcolor{black}{M}}\textcolor{black}{{}
is ss-$\mathit{E}(M)$-injective. Since }\textit{\textcolor{black}{A}}\textcolor{black}{{}
is a semisimple small submodule of }\textit{\textcolor{black}{M}}\textcolor{black}{,
it follows from \cite[Lemma 5.1.3(a)]{9Kas82} that }\textit{\textcolor{black}{A}}\textcolor{black}{{}
is a semisimple small submodule in $\mathit{E}(M)$ and hence there
exists an }\textit{\textcolor{black}{R}}\textcolor{black}{-homomorphism
$\mathit{\alpha}:E(M)\longrightarrow M$ such that $\alpha i_{2}i_{1}=i_{3}$.
Put $\beta=\alpha i_{2}$, thus $\beta:D_{A}\longrightarrow M$ is
an extension of $i_{3}$. Since $\mathit{A}\leq^{ess}D_{A}$, thus
$\beta$ is a monomorphism. Put $\mathit{E}_{A}=\beta(D_{A})$. Since
$\mathit{E}_{A}$ is an injective submodule of }\textit{\textcolor{black}{M}}\textcolor{black}{,
thus $\mathit{M}=E_{A}\oplus T_{A}$ for some submodule $T_{A}$ of
}\textit{\textcolor{black}{M}}\textcolor{black}{. Since $\beta(A)=A$,
thus $A\subseteq\beta(D_{A})=E_{A}$ and this means that $T_{A}\cap A=0$.
Moreover, define $\tilde{\beta}=\beta:D_{A}\longrightarrow E_{A}$,
thus $\tilde{\beta}$ is an isomorphism. Since $A\leq^{ess}D_{A}$,
thus $\tilde{\beta}(A)\leq^{ess}E_{A}$. But $\tilde{\beta}(A)=\beta(A)=A$,
so $A\leq^{ess}E_{A}$.}\end{proof}

\begin{cor}\label{Corollary:(3.5)}  If $M$ is a right $R$-module has a semisimple small submodule $A$ such that $A\leq^{ess}M$, then the following conditions are equivalent.

\noindent (1) $M$ is injective.

\noindent (2) $M$ is strongly ss-injective.

\noindent (3) $M$ is ss-$E(M)$-injective. \end{cor}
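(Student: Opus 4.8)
The plan is to prove the cycle of implications $(1)\Rightarrow(2)\Rightarrow(3)\Rightarrow(1)$, since $(1)\Rightarrow(2)$ is immediate from the definition (every injective module is trivially ss-$N$-injective for all $N$, hence strongly ss-injective) and $(2)\Rightarrow(3)$ is immediate because strong ss-injectivity means ss-$N$-injectivity for \emph{every} $N$, in particular for $N=E(M)$. So the only substantive step is $(3)\Rightarrow(1)$, and this is where Theorem~\ref{Theorem:(3.4)} does the work.

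For $(3)\Rightarrow(1)$: assume $M$ is ss-$E(M)$-injective and let $A$ be the given semisimple small submodule of $M$ with $A\leq^{ess}M$. Apply Theorem~\ref{Theorem:(3.4)} (whose hypothesis is precisely ss-$E(M)$-injectivity) to obtain a decomposition $M=E_A\oplus T_A$ where $E_A$ is injective, $T_A\cap A=0$, and — since $A\neq 0$ (note $A\leq^{ess}M$ forces $M\neq 0$ unless $M=0$, in which case $M$ is trivially injective) — we may take $A\leq^{ess}E_A$. The key observation is then that $T_A$ must vanish: since $A\leq^{ess}M$ and $T_A\cap A=0$, we get $T_A=0$, hence $M=E_A$ is injective.

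I should handle the degenerate case $M=0$ separately (trivially injective, and the claimed equivalences hold vacuously), or simply note that $A\leq^{ess}M$ together with $A$ small and semisimple still allows the argument to go through with $E_A=0$, $T_A=M=0$. The main (and essentially only) obstacle is making sure the hypotheses of Theorem~\ref{Theorem:(3.4)} are met and correctly invoked — namely that ss-$E(M)$-injectivity alone suffices, which the theorem statement explicitly grants via its parenthetical ``or just ss-$E(M)$-injective'' — and then recognizing that essentiality of $A$ in $M$ is exactly what kills the complement $T_A$. Everything else is a one-line deduction. I would write the proof as: ``$(1)\Rightarrow(2)$ is clear and $(2)\Rightarrow(3)$ is clear. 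For $(3)\Rightarrow(1)$, if $M=0$ there is nothing to prove, so assume $A\neq 0$; by Theorem~\ref{Theorem:(3.4)} there is an injective submodule $E_A$ of $M$ with $M=E_A\oplus T_A$, $T_A\cap A=0$, and $A\leq^{ess}E_A$. Since $A\leq^{ess}M$ and $T_A\cap A=0$, we conclude $T_A=0$, so $M=E_A$ is injective.''
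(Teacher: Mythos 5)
Your proposal is correct and follows essentially the same route as the paper: both dispose of $(1)\Rightarrow(2)\Rightarrow(3)$ as immediate and then apply Theorem~\ref{Theorem:(3.4)} to write $M=E_{A}\oplus T_{A}$ with $T_{A}\cap A=0$, so that $A\leq^{ess}M$ forces $T_{A}=0$ and $M=E_{A}$ is injective. Your extra remark about the degenerate case $M=0$ is harmless but not needed.
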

\begin{proof}
\textcolor{black}{(1)$\Rightarrow$(2) and (2)$\Rightarrow$(3) are
obvious.}

\textcolor{black}{(3)$\Rightarrow$(1) By Theorem~\ref{Theorem:(3.4)}, we can write
$M=E_{A}\oplus T_{A}$ where $E_{A}$ injective and $T_{A}\cap A=0$.
Since $A\leq^{ess}M$, thus $T_{A}=0$ and hence $M=E_{A}$. Therefore,
}\textit{\textcolor{black}{M}}\textcolor{black}{{} is an injective }\textit{\textcolor{black}{R}}\textcolor{black}{-module.}\end{proof}

\begin{example}\label{Example:(3.6)}
\emph{\textcolor{black}{$\mathbb{Z}_{4}$ as $\mathbb{Z}$-module is not
strongly ss-injective. In particular, $\mathbb{Z}_{4}$ is not ss-$\mathbb{Z}_{2^{\infty}}$-injective.}}\end{example}
\begin{proof}
\textcolor{black}{Assume that $\mathbb{Z}_{4}$ is strongly ss-injective
$\mathbb{Z}$-module. Let $A=<2>=\left\{ 0,2\right\} $. It is clear
that }\textit{\textcolor{black}{A}}\textcolor{black}{{} is a semisimple
small and essential submodule of $\mathbb{Z}_{4}$ as $\mathbb{Z}$-module.
Thus by Corollary~\ref{Corollary:(3.5)}   we have that $\mathbb{Z}_{4}$ is injective
$\mathbb{Z}$-module and this is a contradiction. Thus $\mathbb{Z}_{4}$
as $\mathbb{Z}$-module is not strongly ss-injective. Since $E(\mathbb{Z}_{2^{2}})=\mathbb{Z}_{2^{\infty}}$
as $\mathbb{Z}$-module, thus $\mathbb{Z}_{4}$ is not ss-$\mathbb{Z}_{2^{\infty}}$-injective,
by Corollary~\ref{Corollary:(3.5)}.}\end{proof}

\begin{cor}\label{Corollary:(3.7)} Let $M$ be a right $R$-module such that soc$(M)\cap J(M)$ is small submodule in $M$ (in particular, if $M$
is finitely generated). If $M$ is strongly ss-injective, then $M=E\oplus T$, where $E$ is injective and $T\cap$ soc$(M)\cap J(M)=0$. Moreover, if soc$(M)\cap J(M)\neq0$, then we can take soc$(M)\cap J(M)\leq^{ess}E$. \end{cor}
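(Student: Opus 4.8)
The plan is to derive Corollary~\ref{Corollary:(3.7)} as a direct consequence of Theorem~\ref{Theorem:(3.4)}. First I would set $A = \mathrm{soc}(M)\cap J(M)$. By hypothesis $A$ is a small submodule of $M$; moreover $A$ is semisimple, being the intersection of the socle with $J(M)$ (a submodule of $\mathrm{soc}(M)$ is semisimple). Thus $A$ is a semisimple small submodule of $M$, which is exactly the situation to which Theorem~\ref{Theorem:(3.4)} applies. Since $M$ is assumed strongly ss-injective, Theorem~\ref{Theorem:(3.4)} yields an injective $R$-module $E_A$ and a submodule $T_A$ of $M$ with $M = E_A \oplus T_A$ and $T_A \cap A = 0$. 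Renaming $E = E_A$ and $T = T_A$ gives the decomposition $M = E\oplus T$ with $E$ injective and $T\cap(\mathrm{soc}(M)\cap J(M)) = 0$.

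For the "moreover" part, I would again invoke the last sentence of Theorem~\ref{Theorem:(3.4)}: if $A = \mathrm{soc}(M)\cap J(M)\neq 0$, then $E_A$ can be chosen so that $A\leq^{ess}E_A$, i.e.\ $\mathrm{soc}(M)\cap J(M)\leq^{ess}E$. This is immediate once the theorem is cited.

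The only genuinely substantive point to check is the parenthetical claim that a finitely generated module $M$ has $\mathrm{soc}(M)\cap J(M)$ small in $M$; but this is a standard fact (if $M$ is finitely generated then $J(M)$ is small in $M$, hence so is any submodule of it), and in any case the corollary takes "$\mathrm{soc}(M)\cap J(M)$ small in $M$" as the actual hypothesis, with finite generation mentioned only as a sufficient condition. So there is essentially no obstacle here: the corollary is a specialization of Theorem~\ref{Theorem:(3.4)} to the canonical choice $A = \mathrm{soc}(M)\cap J(M)$, and the proof is one or two lines.

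\begin{proof}
Put $A = \mathrm{soc}(M)\cap J(M)$. Then $A$ is semisimple (being a submodule of $\mathrm{soc}(M)$) and, by hypothesis, small in $M$; note that if $M$ is finitely generated then $J(M)\ll M$ and hence $A\ll M$. Since $M$ is strongly ss-injective, Theorem~\ref{Theorem:(3.4)} applies to the semisimple small submodule $A$, giving an injective $R$-module $E$ and a submodule $T$ of $M$ with $M = E\oplus T$ and $T\cap A = 0$, that is, $T\cap\bigl(\mathrm{soc}(M)\cap J(M)\bigr) = 0$. Moreover, if $\mathrm{soc}(M)\cap J(M) = A\neq 0$, then by the last part of Theorem~\ref{Theorem:(3.4)} the module $E$ can be chosen so that $A\leq^{ess}E$, i.e.\ $\mathrm{soc}(M)\cap J(M)\leq^{ess}E$.
\end{proof}
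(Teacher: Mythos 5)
Your proof is correct and is exactly the paper's argument: the paper proves this corollary precisely by taking $A=\mathrm{soc}(M)\cap J(M)$ and applying Theorem~\ref{Theorem:(3.4)}. Your additional remarks (that $A$ is semisimple as a submodule of $\mathrm{soc}(M)$, and that finite generation gives $J(M)\ll M$ hence $A\ll M$) are accurate and only make explicit what the paper leaves implicit.
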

\begin{proof}
\textcolor{black}{By taking $A=$ soc$(M)\cap J(M)$ and applying Theorem~\ref{Theorem:(3.4)}}
\end{proof}

The following example shows that the converse of Theorem~\ref{Theorem:(3.4)} and Corollary~\ref{Corollary:(3.7)}   is not true.

\begin{example}\label{Example:(3.8)}
\emph{\textcolor{black}{Let $M=\mathbb{Z}_{6}$ as $\mathbb{Z}$-module.
Since $J(M)=0$ and soc$(M)=M$, thus soc$(M)\cap J(M)=0$. So, we
can write $M=0\oplus M$ with $M\cap($soc$(M)\cap J(M))=0$. Let $N=\mathbb{Z}_{8}$
as $\mathbb{Z}$-module. Since $J(N)=<\bar{2}>$ and soc$(N)=<\bar{4}>$.
Define $\gamma:$ soc$(N)\cap J(N)\longrightarrow M$ by \,\, $\gamma(\bar{4})=\bar{3}$,
\,\, thus $\gamma$ \,\,is \,\,a \,\,\,\,$\mathbb{Z}$-homomorphism. Assume that }\textit{\textcolor{black}{M}}\textcolor{black}{{}
is strongly ss-injective, thus }\textit{\textcolor{black}{M}}\textcolor{black}{{}
is ss-}\textit{\textcolor{black}{N}}\textcolor{black}{-injective,
so there exists $\mathbb{Z}$-homomorphism $\beta:N\longrightarrow M$
such that $\beta\circ i=\gamma$, where }\textit{\textcolor{black}{i}}\textcolor{black}{{}
is the inclusion map from soc$(N)\cap J(N)$ to }\textit{\textcolor{black}{N}}\textcolor{black}{.
Since $\beta(J(N))\subseteq J(M)$, thus $\bar{3}=\gamma(\bar{4})=\beta(\bar{4})\in\beta(J(N))\subseteq J(M)=0$
and this contradiction, so }\textit{\textcolor{black}{M}}\textcolor{black}{{}
is not strongly ss-injective $\mathbb{Z}$-module.}}\end{example}

\begin{cor}\label{Corollary:(3.9)} The following statements are equivalent:

\noindent (1) \emph{soc}$(M)\cap J(M)=0$, for all right $R$-module $M$.

\noindent (2) Every right $R$-module is strongly ss-injective.

\noindent (3) Every  simple right $R$-module is strongly ss-injective.
\end{cor}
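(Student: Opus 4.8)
The plan is to prove the cycle $(1)\Rightarrow(2)\Rightarrow(3)\Rightarrow(1)$, since $(2)\Rightarrow(3)$ is trivial (a simple module is in particular a right $R$-module), and $(1)\Rightarrow(2)$ is exactly Proposition~\ref{Proposition:(3.1)} combined with the observation that if $\mathrm{soc}(N)\cap J(N)=0$ for \emph{every} module $N$, then whenever $A\ll N$ and $\alpha:A\to M$ has semisimple image, $A$ itself is a semisimple small submodule of $N$ (a small submodule lies in $J(N)$, and if moreover it is semisimple it lies in $\mathrm{soc}(N)\cap J(N)=0$), so $A=0$ and $\alpha=0$ extends trivially; hence $M$ is strongly ss-injective vacuously. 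Actually it is cleaner to argue $(1)\Rightarrow(2)$ directly: given any module $N$ and any semisimple small submodule $K$ of $N$, we have $K\subseteq J(N)$ because $K\ll N$, and $K$ is semisimple, so $K\subseteq\mathrm{soc}(N)\cap J(N)=0$; thus the only semisimple small submodule of any module is $0$, and every module is ss-$N$-injective for every $N$ for the trivial reason that the zero homomorphism extends. So $(1)\Rightarrow(2)$ needs only the elementary fact $K\ll N\Rightarrow K\subseteq J(N)$.

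For $(3)\Rightarrow(1)$, I would argue by contradiction. Suppose there is a right $R$-module $M$ with $\mathrm{soc}(M)\cap J(M)\neq 0$. Pick a simple submodule $xR$ of $M$ with $xR\subseteq\mathrm{soc}(M)\cap J(M)$; in particular $xR$ is a simple small submodule of $M$ (simple submodules contained in $J(M)$ are small in $M$). Now consider the inclusion map $\iota:xR\to M$ regarded as an $R$-homomorphism from the small semisimple submodule $xR$ of $N:=M$ into the simple module $S:=xR$ via the identity. Since $S=xR$ is simple, by hypothesis $S$ is strongly ss-injective, hence ss-$M$-injective; so the identity map $xR\to xR=S$ extends to an $R$-homomorphism $g:M\to xR$. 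Then $g\circ\iota=\mathrm{id}_{xR}$, so $\iota$ splits and $xR$ is a direct summand of $M$. But $xR$ is small in $M$, and a small direct summand must be $0$ (if $M=xR\oplus T$ then $xR+T=M$ forces $T=M$ by smallness, hence $xR\subseteq T$, hence $xR=xR\cap T=0$). This contradicts $xR\neq 0$, establishing $(1)$. This is essentially the same splitting trick used in the proof of Proposition~\ref{Proposition:(2.11)}, so I would reference that proof's technique.

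The only mild subtlety — and the place I would be most careful — is the reduction in $(3)\Rightarrow(1)$ from "$\mathrm{soc}(M)\cap J(M)\neq 0$" to "there is a nonzero simple small submodule of $M$". The point is that $\mathrm{soc}(M)\cap J(M)$, being a submodule of the semisimple module $\mathrm{soc}(M)$, is itself semisimple, hence a direct sum of simple submodules $\bigoplus_{i\in I}x_iR$ with each $x_iR\subseteq J(M)$; since a simple submodule contained in $J(M)$ is small in $M$ (any finitely generated — in particular cyclic — submodule of $J(M)$ is small in $M$, by the standard fact e.g. \cite[Lemma 5.1.3]{9Kas82}), each $x_iR$ is a simple small submodule, and at least one is nonzero. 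Everything else is the standard "split inclusion of a small summand is zero" argument. I do not anticipate any real obstacle; the proof is short and each step is either elementary module theory or a direct appeal to Proposition~\ref{Proposition:(3.1)} and the argument already given in Proposition~\ref{Proposition:(2.11)}.

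\begin{proof}
$(1)\Rightarrow(2)$: If $K$ is a semisimple small submodule of a right $R$-module $N$, then $K\ll N$ gives $K\subseteq J(N)$, so $K\subseteq \mathrm{soc}(N)\cap J(N)=0$. Hence every module has $0$ as its only semisimple small submodule, and the zero homomorphism extends; thus every right $R$-module is ss-$N$-injective for all $N$, i.e. strongly ss-injective.

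$(2)\Rightarrow(3)$: Immediate.

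$(3)\Rightarrow(1)$: Suppose to the contrary that $\mathrm{soc}(M)\cap J(M)\neq 0$ for some right $R$-module $M$. As a submodule of the semisimple module $\mathrm{soc}(M)$, the module $\mathrm{soc}(M)\cap J(M)$ is a direct sum $\bigoplus_{i\in I}x_iR$ of simple submodules with each $x_iR\subseteq J(M)$, and since a cyclic submodule of $J(M)$ is small in $M$, each $x_iR$ is a simple small submodule of $M$; choose $i$ with $x_iR\neq 0$ and write $S=x_iR$. By hypothesis $S$ is strongly ss-injective, hence ss-$M$-injective. Let $\iota:S\hookrightarrow M$ be the inclusion. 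Since $S$ is a semisimple small submodule of $M$, the identity map $S\to S$ extends along $\iota$ to some $g:M\to S$ with $g\circ\iota=\mathrm{id}_S$, so $S$ is a direct summand of $M$, say $M=S\oplus T$. Then $S+T=M$ with $S\ll M$ forces $T=M$, whence $S=S\cap T=0$, a contradiction. Therefore $\mathrm{soc}(M)\cap J(M)=0$ for every right $R$-module $M$.
\end{proof}
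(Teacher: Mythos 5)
Your proof is correct and is essentially the paper's own argument: the paper proves this corollary by citing Proposition~\ref{Proposition:(2.11)}, and your $(3)\Rightarrow(1)$ step is exactly the splitting trick from the $(2)\Rightarrow(3)$ implication of that proposition (with ``simple submodule of $M$ is ss-$M$-injective'' supplied by the strong ss-injectivity of simple modules), while your $(1)\Rightarrow(2)$ is the same vacuous-extension observation. No gaps; you have merely unfolded the cited proposition instead of invoking it.
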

\begin{proof}
By Proposition~\ref{Proposition:(2.11)}.
\end{proof}

\subparagraph*{\textmd{Recall that a ring }\textmd{\textit{R}}\textmd{ is called
a right }\textmd{\textit{V}}\textmd{-ring (}\textmd{\textit{GV}}\textmd{-ring}\textmd{\textit{,
SI}}\textmd{-ring, respectively) if every simple (simple singular,
singular, respectively) right }\textmd{\textit{R}}\textmd{-module
is injective. A right }\textmd{\textit{R}}\textmd{-module }\textmd{\textit{M}}\textmd{
is called strongly s-injective if every }\textmd{\textit{R}}\textmd{-homomorphism
from }\textmd{\textit{K}}\textmd{ to }\textmd{\textit{M}}\textmd{
extends to }\textmd{\textit{N}}\textmd{ for every right }\textmd{\textit{R}}\textmd{-module
}\textmd{\textit{N}}\textmd{, where ${\color{black}K\subseteq Z(N)}$
(see \cite{22Zey14}). }\textmd{\textcolor{black}{A submodule }}\textmd{\textit{\textcolor{black}{K}}}\textmd{\textcolor{black}{{}
of a right }}\textmd{\textit{\textcolor{black}{R}}}\textmd{\textcolor{black}{-module
}}\textmd{\textit{\textcolor{black}{M}}}\textmd{\textcolor{black}{{}
is called }}\textmd{\textit{\textcolor{black}{t}}}\textmd{\textcolor{black}{-essential
in }}\textmd{\textit{\textcolor{black}{M}}}\textmd{\textcolor{black}{{}
(written $K\subseteq^{tes}M$ ) if for every submodule }}\textmd{\textit{\textcolor{black}{L}}}\textmd{\textcolor{black}{{}
of }}\textmd{\textit{\textcolor{black}{M}}}\textmd{\textcolor{black}{,
$K\cap L\subseteq Z_{2}(M)$ implies that $L\subseteq Z_{2}(M)$,}}\textmd{\textit{\textcolor{black}{{}
}}}\textmd{\textcolor{black}{}}\textmd{\textit{\textcolor{black}{{}
M}}}\textmd{\textcolor{black}{{} is said to be }}\textmd{\textit{\textcolor{black}{t}}}\textmd{\textcolor{black}{-semisimple
if for every submodule }}\textmd{\textit{\textcolor{black}{A}}}\textmd{\textcolor{black}{{}
of }}\textmd{\textit{\textcolor{black}{M}}}\textmd{\textcolor{black}{{}
there exists a direct summand }}\textmd{\textit{\textcolor{black}{B}}}\textmd{\textcolor{black}{{}
of }}\textmd{\textit{\textcolor{black}{M}}}\textmd{\textcolor{black}{{}
such that $B\subseteq^{tes}A$ (see \cite{4AsHaTo13}) }}\textmd{. In the next
results, we will give some relations between ss-injectivity and other
injectivities and we provide many new equivalences of }\textmd{\textit{V}}\textmd{-rings,
}\textmd{\textit{GV}}\textmd{-rings, }\textmd{\textit{SI}}\textmd{
rings and }\textmd{\textit{QF}}\textmd{ rings.}}

\begin{lem}\label{Lemma:(3.10)} Let $M/N$ be a semisimple right $R$-module and $C$ any right $R$-module. Then every homomorphism from a right submodule (resp. a right semisimple submodule) $A$ of $M$ to $C$ can be extended to a homomorphism from $M$ to $C$ if and only if every homomorphism from a right submodule (resp. a right semisimple submodule) $B$ of $N$ to $C$ can be extended to a homomorphism from $M$ to $C$.\end{lem}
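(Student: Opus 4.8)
The plan is to prove both implications directly by extending homomorphisms, using the hypothesis that $M/N$ is semisimple to control how submodules of $M$ sit relative to $N$. Throughout I will treat the ``resp.'' parenthetical clauses in parallel, since the argument for semisimple submodules is the same as the general one once one observes that the relevant intermediate modules ($A\cap N$, $B$, and the images of splittings) remain semisimple when $A$ and $B$ are.

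For the direction ``($\Rightarrow$)'': suppose every homomorphism from a (semisimple) submodule $A$ of $M$ into $C$ extends to $M$, and let $\beta: B \To C$ be a homomorphism from a (semisimple) submodule $B$ of $N$. Since $B \subseteq N \subseteq M$ and $B$ is a (semisimple) submodule of $M$, the hypothesis applies directly to $B$ itself, yielding an extension to $M$. So this direction is essentially immediate; the only thing to check is that a semisimple submodule of $N$ is a semisimple submodule of $M$, which is trivial.

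For the direction ``($\Leftarrow$)'': this is the substantive one. Suppose every homomorphism from a (semisimple) submodule $B$ of $N$ into $C$ extends to $M$, and let $f: A \To C$ be a homomorphism from a (semisimple) submodule $A$ of $M$. First I would restrict $f$ to $A \cap N$, which is a (semisimple, in the ``resp.'' case) submodule of $N$; by hypothesis this restriction extends to $g: M \To C$. Now consider $h = f - g|_A : A \To C$; this vanishes on $A \cap N$, so $h$ factors through $A/(A\cap N) \cong (A+N)/N$, which is a submodule of the semisimple module $M/N$, hence is itself semisimple and in particular a direct summand of $M/N$. The key step is then to use this splitting: pick a submodule $P/N$ of $M/N$ complementary to $(A+N)/N$, so that $M/N = (A+N)/N \oplus P/N$, and define a map $\bar h: M/N \To C$ by sending $(A+N)/N$ via the map induced by $h$ and sending $P/N$ to zero. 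Composing with the projection $M \To M/N$ gives $\tilde h: M \To C$ extending $h$, and then $g + \tilde h$ restricts to $g|_A + h = f$ on $A$, giving the desired extension.

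The main obstacle — and the place where the semisimplicity of $M/N$ is genuinely used — is precisely the construction of $\bar h$ on $M/N$: one needs $M/N$ to split as a direct sum with $(A+N)/N$ as one summand, which is exactly what semisimplicity guarantees (every submodule of a semisimple module is a direct summand). A secondary point to verify carefully is that $h$ really does factor through $(A+N)/N$ and that the induced map is well-defined: since $\ker$ of the quotient $A \To A/(A\cap N)$ is $A \cap N \subseteq \ker h$, the factorization exists by the universal property of quotients, and the isomorphism $A/(A\cap N)\cong (A+N)/N$ is the second isomorphism theorem. In the ``resp.'' (semisimple-submodule) version I would also note that $A$ semisimple forces $A \cap N$ semisimple, so the hypothesis applies to the restriction $f|_{A\cap N}$, and nothing else changes. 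Once these bookkeeping items are in place the proof is a short diagram chase.
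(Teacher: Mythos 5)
Your proof is correct. The forward direction is immediate in both your write-up and the paper's. For the converse, however, you take a recognizably different route to the same end. The paper invokes \cite[Proposition 2.1]{11Lom99}: since $M/N$ is semisimple, there is a submodule $L$ of $M$ with $A+L=M$ and $A\cap L\subseteq N$; it then applies the hypothesis to $f|_{A\cap L}$ to obtain $g:M\longrightarrow C$ agreeing with $f$ on $A\cap L$, and assembles the extension directly on $M=A+L$ by $a+\ell\mapsto f(a)+g(\ell)$, verifying well-definedness by hand. You instead apply the hypothesis to the (generally larger) submodule $A\cap N$ of $N$, form the correction term $h=f-g|_{A}$, note that $h$ vanishes on $A\cap N$ and hence factors through $A/(A\cap N)\cong (A+N)/N$, and extend the induced map to all of $M/N$ by choosing a complement $P/N$ and sending it to zero. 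The two arguments exploit the semisimplicity of $M/N$ in essentially equivalent ways --- your complement $P/N$, pulled back to $M$, is precisely a valid choice of the paper's $L$ --- but your version avoids the external citation, and it replaces the ad hoc well-definedness check for the patched map with the universal property of quotients and the second isomorphism theorem, which is arguably cleaner. In both approaches the ``semisimple submodule'' variant goes through verbatim, since $A\cap N$ (resp. $A\cap L$) inherits semisimplicity from $A$, so the hypothesis remains applicable.
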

\begin{proof}
\textcolor{black}{($\Rightarrow$) is obtained directly.}

\textcolor{black}{($\Leftarrow$) Let  }\textit{\textcolor{black}{f}}\textcolor{black}{{}
 be a right }\textit{\textcolor{black}{R}}\textcolor{black}{-homomorphism
from a right submodule }\textit{\textcolor{black}{A}}\textcolor{black}{{}
of }\textit{\textcolor{black}{M}}\textcolor{black}{{} to }\textit{\textcolor{black}{C}}\textcolor{black}{.
Since $M/N$ is semisimple, thus there exists a right submodule }\textit{\textcolor{black}{L}}\textcolor{black}{{}
of }\textit{\textcolor{black}{M}}\textcolor{black}{{} such that $A+L=M$
and $A\cap L\leq N$ (see \cite[Proposition 2.1]{11Lom99}). Thus there
exists a right }\textit{\textcolor{black}{R}}\textcolor{black}{-homomorphism
$g:M\longrightarrow C$ such that $g(x)=f(x)$ for all $x\in A\cap L$.
Define $h:M\longrightarrow C$ such that for any $x=a+\ell$, $a\in A$,
$\ell\in L$, $h(x)=f(a)+g(\ell)$. Thus }\textit{\textcolor{black}{h}}\textcolor{black}{{}
is a well define }\textit{\textcolor{black}{R}}\textcolor{black}{-homomorphism,
because if $a_{1}+\ell_{1}=a_{2}+\ell_{2}$, $a_{i}\in A$, $\ell_{i}\in L$,
$i=1,2$, then $a_{1}-a_{2}=\ell_{2}-\ell_{1}\in A\cap L$, that is
$f(a_{1}-a_{2})=g(\ell_{2}-\ell_{1})$ which leads to $h(a_{1}+\ell_{1})=h(a_{2}+\ell_{2})$.
Thus }\textit{\textcolor{black}{h }}\textcolor{black}{is a well define
}\textit{\textcolor{black}{R}}\textcolor{black}{-homomorphism and
extension of }\textit{\textcolor{black}{f}}\textcolor{black}{.}\end{proof}

\begin{cor}\label{Corollary:(3.11)} For right $R$-modules $M$ and $N$, then the following hold:

\noindent (1) If $M$  is finitely generated and $M/J(M)$ is semisimple right $R$-module, then $N$  is right \emph{soc}-$M$-injective
if and only if $N$ is right ss-$M$-injective.

\noindent (2) If $M/$\emph{soc}$(M)$ is a semisimple right $R$-module, then $N$  is \emph{soc}-$M$-injective if and only if $N$ is $M$-injective.

\noindent (3) If $R/S_{r}$ is semisimple right $R$-module, then $N$ is \emph{soc}-injective if and only if $N$ is injective.

\noindent (4) If $R/S_{r}$ is semisimple right $R$-module, then $N$ is ss-injective if and only if $N$  is small injective. \end{cor}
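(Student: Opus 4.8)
The plan is to derive all four statements from Lemma~\ref{Lemma:(3.10)} together with the earlier characterizations of ss-injectivity via the submodule $\mathrm{soc}(\cdot)\cap J(\cdot)$. For part (1), observe that if $M$ is finitely generated then $J(M)$ is small in $M$, so by Proposition~\ref{Proposition:(2.6)} the module $N$ is ss-$M$-injective if and only if every $R$-homomorphism from $\mathrm{soc}(M)\cap J(M)$ to $N$ extends to $M$. Now apply Lemma~\ref{Lemma:(3.10)} with the pair $(M,J(M))$: since $M/J(M)$ is semisimple by hypothesis, every homomorphism from a semisimple submodule of $M$ into $N$ extends to $M$ if and only if every homomorphism from a semisimple submodule of $J(M)$ into $N$ extends to $M$. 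Taking the distinguished submodule $\mathrm{soc}(J(M))=\mathrm{soc}(M)\cap J(M)$ on one side and $\mathrm{soc}(M)$ on the other identifies ss-$M$-injectivity with soc-$M$-injectivity. A little care is needed: Lemma~\ref{Lemma:(3.10)} speaks of \emph{all} semisimple submodules, so I should note that extending every homomorphism out of $\mathrm{soc}(M)$ is equivalent to extending every homomorphism out of every semisimple submodule of $M$ (a submodule of $\mathrm{soc}(M)$), and similarly on the $J(M)$ side, which is routine since restrictions of extensions are extensions.

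For part (2), the same Lemma applies to the pair $(M,\mathrm{soc}(M))$: since $M/\mathrm{soc}(M)$ is semisimple, homomorphisms out of semisimple submodules of $M$ extend to $M$ iff homomorphisms out of (semisimple submodules of) $\mathrm{soc}(M)$ extend to $M$; but a semisimple submodule of $\mathrm{soc}(M)$ is just an arbitrary submodule of $\mathrm{soc}(M)$, so the right-hand condition is exactly soc-$M$-injectivity, while — and here is the one genuinely new observation — the left-hand condition, namely that every homomorphism from an \emph{arbitrary} submodule $A$ of $M$ into $N$ extends to $M$, is $M$-injectivity by the Baer criterion relativized to $M$ (equivalently: $M/\mathrm{soc}(M)$ semisimple forces every submodule $A$ of $M$ to contain $A\cap\mathrm{soc}(M)$ with semisimple quotient $A/(A\cap\mathrm{soc}(M))\hookrightarrow M/\mathrm{soc}(M)$, and one splits off; more cleanly, apply Lemma~\ref{Lemma:(3.10)} in its ``right submodule'' form). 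So (2) follows.

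Part (3) is the special case $M=R_R$ of (2): $R/S_r$ semisimple gives that soc-injectivity coincides with $R$-injectivity, i.e. injectivity. Part (4) is the special case $M=R_R$ of (1): one must check the hypotheses of (1) hold, i.e. $R$ is finitely generated (it is, being cyclic) and $R/J(R)$ is semisimple. The latter does not follow from $R/S_r$ semisimple in general — but it does, because $R/S_r$ semisimple implies $R$ is semilocal (a quotient of $R$ by a semisimple, hence by a module of finite length modulo... ) — actually the cleanest route is to redo the Lemma~\ref{Lemma:(3.10)} argument directly for $M=R_R$ with the pair $(R,S_r)$, giving ss-injectivity $=$ small-injectivity without passing through (1): by Theorem~\ref{Theorem:(2.17)} (ss-Baer) ss-injectivity of $N$ is tested on semisimple small right ideals, all of which lie in $S_r$, and small injectivity is tested on all small right ideals; since $R/S_r$ is semisimple, Lemma~\ref{Lemma:(3.10)} lets us trade extensions from (semisimple) submodules of $S_r$ for extensions from (semisimple) submodules of $R$, and the small right ideals of $R$ are precisely the right ideals contained in $J$, whose semisimple part sits inside $S_r\cap J$. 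The main obstacle is bookkeeping: making sure that at each application of Lemma~\ref{Lemma:(3.10)} the class of test submodules (all submodules vs. semisimple submodules, and ``small'' vs. arbitrary) matches exactly what the relevant injectivity notion demands, and in part (4) that the hypothesis ``$R/S_r$ semisimple'' really does license the reduction — I expect this to be handled by invoking Lemma~\ref{Lemma:(3.10)} in its semisimple-submodule form directly, rather than citing part (1).
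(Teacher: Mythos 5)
Parts (1)--(3) of your plan are essentially the paper's own proof: for (1) the paper likewise uses $J(M)\ll M$ to see that semisimple submodules of $J(M)$ are semisimple small submodules of $M$ and then applies Lemma~\ref{Lemma:(3.10)} to the pair $(M,J(M))$, and for (2)--(3) it applies the lemma in its ``all submodules'' form to the pair $(M,\mathrm{soc}(M))$. Those parts are fine.

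Part (4) is where you have a genuine gap. Your proposed reduction --- ``Lemma~\ref{Lemma:(3.10)} lets us trade extensions from (semisimple) submodules of $S_r$ for extensions from (semisimple) submodules of $R$'' --- does not apply as stated: the hypothesis of Lemma~\ref{Lemma:(3.10)} requires that \emph{every} homomorphism from \emph{every} semisimple submodule of $S_r$ into $N$ extend to $R$, whereas ss-injectivity only supplies extensions from the semisimple \emph{small} right ideals. So the trade is not licensed by what you know, and your closing phrase about the ``semisimple part'' of a small right ideal sitting inside $S_r\cap J$ does not bridge this, since small injectivity demands an extension from the whole small ideal, not from its socle. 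The missing observation, which is the entire content of the paper's proof of (4), is that $R/S_r$ semisimple forces $J(R/S_r)=0$, and since the canonical image of $J$ in $R/S_r$ lies in $J(R/S_r)$, one gets $J\subseteq S_r$. Consequently every small right ideal (equivalently, every right ideal contained in $J$, as $J\ll R_R$) is itself semisimple, so the class of small right ideals coincides with the class of semisimple small right ideals and the equivalence of ss-injectivity with small injectivity is immediate --- no appeal to Lemma~\ref{Lemma:(3.10)} is needed. (Your Lemma-3.10-style route could be repaired by rerunning that lemma's proof and noting that the complement intersection $I\cap L$ is automatically small because $I$ is, but you neither state this nor signal awareness that the repair is required.)
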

\begin{proof}
\textcolor{black}{(1). ($\Rightarrow$) Clear.}

\textcolor{black}{($\Leftarrow$) Since }\textit{\textcolor{black}{N}}\textcolor{black}{{}
is a right ss-}\textit{\textcolor{black}{M}}\textcolor{black}{-injective,
thus every homomorphism from a semisimple small submodule of }\textit{\textcolor{black}{M}}\textcolor{black}{{}
to }\textit{\textcolor{black}{N}}\textcolor{black}{{} extends to }\textit{\textcolor{black}{M}}\textcolor{black}{.
Since }\textit{\textcolor{black}{M}}\textcolor{black}{{} is finitely
generated, thus $J(M)\ll M$ and hence every homomorphism from any
semisimple submodule of }\textit{\textcolor{black}{$J(M)$}}\textcolor{black}{{}
to }\textit{\textcolor{black}{N}}\textcolor{black}{{} extends to }\textit{\textcolor{black}{M}}\textcolor{black}{.
Since $M/J(M)$ is semisimple. Thus every homomorphism from any semisimple
submodule of }\textit{\textcolor{black}{M}}\textcolor{black}{{} to }\textit{\textcolor{black}{N}}\textcolor{black}{{}
extends to }\textit{\textcolor{black}{M}}\textcolor{black}{{} by Lemma~\ref{Lemma:(3.10)}. Therefore }\textit{\textcolor{black}{N}}\textcolor{black}{{} is
a soc-}\textit{\textcolor{black}{M}}\textcolor{black}{-injective right
}\textit{\textcolor{black}{R}}\textcolor{black}{-module.}

\textcolor{black}{(2). ($\Rightarrow$) Since }\textit{\textcolor{black}{N}}\textcolor{black}{{}
is soc-}\textit{\textcolor{black}{M}}\textcolor{black}{-injective.
Thus every homomorphism from any submodule of soc$(M)$ to }\textit{\textcolor{black}{N}}\textcolor{black}{{}
extends to }\textit{\textcolor{black}{M}}\textcolor{black}{. Since
$M/$soc$(M)$ is semisimple, thus Lemma~\ref{Lemma:(3.10)} implies that every homomorphism
from any submodule of }\textit{\textcolor{black}{M}}\textcolor{black}{{}
to }\textit{\textcolor{black}{N}}\textcolor{black}{{} extends to }\textit{\textcolor{black}{M}}\textcolor{black}{.
Hence }\textit{\textcolor{black}{N}}\textcolor{black}{{} is }\textit{\textcolor{black}{M}}\textcolor{black}{-injective.}

\textcolor{black}{($\Leftarrow$) Clear.}

\textcolor{black}{(3) By (2).}

\textcolor{black}{(4) Since $R/S_{r}$ is semisimple right }\textit{\textcolor{black}{R}}\textcolor{black}{-module,
thus $J(R/S_{r})=0$. By \cite[Theorem 9.1.4(b)]{9Kas82}, we have $J\subseteq S_{r}$
and hence $J=J\cap S_{r}$. Thus }\textit{\textcolor{black}{N}}\textcolor{black}{{}
is ss-injective if and only if }\textit{\textcolor{black}{N}}\textcolor{black}{{}
is small injective.}\end{proof}

\begin{cor}\label{Corollary:(3.12)}  Let $R$ be a semilocal ring, then $S_{r}\cap J$ is finitely generated if and only if $S_{r}$ is finitely generated. \end{cor}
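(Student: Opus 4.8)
The plan is to reduce everything to two standard facts: that $S_{r}$ is a semisimple right $R$-module, and that $R$ semilocal means $R_{R}/J$ is semisimple (hence, being cyclic, of finite composition length). The implication ``$S_{r}$ finitely generated $\Rightarrow$ $S_{r}\cap J$ finitely generated'' I would handle first and quickly: a finitely generated semisimple module has finite length, and every submodule of a finite-length module is finitely generated, so $S_{r}\cap J$ is finitely generated. This half in fact does not use the hypothesis that $R$ is semilocal.

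For the converse, assume $S_{r}\cap J$ is finitely generated. Since $S_{r}$ is semisimple, its submodule $S_{r}\cap J$ is a direct summand: write $S_{r}=(S_{r}\cap J)\oplus A$ with $A\subseteq S_{r}$ a right ideal. Because $A\subseteq S_{r}$, we have $A\cap J=A\cap(S_{r}\cap J)=0$, so the canonical epimorphism $R\to R/J$ restricts to a monomorphism on $A$; equivalently $A\cong S_{r}/(S_{r}\cap J)\cong(S_{r}+J)/J$, a submodule of $R_{R}/J$. Since $R$ is semilocal, $R_{R}/J$ is semisimple and cyclic, hence of finite length, so its submodule $(S_{r}+J)/J$ is finitely generated, whence $A$ is finitely generated. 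Then $S_{r}=(S_{r}\cap J)\oplus A$ is a sum of two finitely generated modules and therefore finitely generated.

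I do not expect a genuine obstacle; the only point needing a moment's care is the equality $A\cap J=0$, which relies on choosing the complement $A$ inside $S_{r}$ so that $A\cap J=A\cap S_{r}\cap J$. As an alternative to the projection argument one could instead use the decomposition from the proof of Corollary~\ref{Corollary:(2.21)}, namely $S_{r}=(S_{r}\cap J)\oplus\bigoplus_{i\in I}S_{i}$ with each $S_{i}$ simple and a direct summand of $R_{R}$, and then show the index set $I$ is finite because the semisimple module $R_{R}/J$ has finite length; but the argument above via $A\cong(S_{r}+J)/J$ is the most direct.
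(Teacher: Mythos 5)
Your proof is correct, but it takes a genuinely different route from the paper. The paper derives the nontrivial direction from its injectivity machinery: assuming $S_{r}\cap J$ is finitely generated, Corollary~\ref{Corollary:(2.27)} gives that every direct sum of soc-injective right $R$-modules is ss-injective; since $R$ is semilocal, Corollary~\ref{Corollary:(3.11)}(1) (applied to $M=R_{R}$, which is finitely generated with $R/J$ semisimple) upgrades ss-$R$-injectivity of such direct sums to soc-injectivity; and then the Amin--Yousif--Zeyada characterization (\cite[Corollary 2.11]{2AmYoZe05}) of ``$S_{r}$ finitely generated'' in terms of direct sums of soc-injective modules closes the loop. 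Your argument bypasses all of this: you split $S_{r}=(S_{r}\cap J)\oplus A$ inside the semisimple module $S_{r}$, observe $A\cap J=A\cap(S_{r}\cap J)=0$ so that $A$ embeds in $R_{R}/J$, and use that a cyclic semisimple module has finite length to conclude $A$ is finitely generated. Both halves of your argument are sound (the point that $A\cap J=A\cap S_{r}\cap J$ because $A\subseteq S_{r}$ is exactly the care needed, and the easy direction via finite length of a finitely generated semisimple module matches the paper's ``the converse is clear''). What your approach buys is a short, self-contained, purely module-theoretic proof that isolates the real content --- finiteness of the length of $R_{R}/J$ --- whereas the paper's proof buys a demonstration that the corollary sits naturally inside its chain of equivalences relating finite generation of $S_{r}\cap J$ to direct sums of (ss-/soc-)injective modules.
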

\begin{proof}
\textcolor{black}{Suppose that $S_{r}\cap J$ is finitely generated.
By Corollary~\ref{Corollary:(2.27)}, every direct sum of soc-injective right }\textit{\textcolor{black}{R}}\textcolor{black}{-modules
is ss-injective. Thus it follows from Corollary~\ref{Corollary:(3.11)}(1)  and \cite[Corollary 2.11]{2AmYoZe05}  that $S_{r}$ is finitely generated. The converse
is clear.}\end{proof}

\begin{thm}\label{Theorem:(3.13)} If $R$ is a right perfect ring, then a right $R$-module $M$ is strongly soc-injective if and only if $M$ is strongly ss-injective.\end{thm}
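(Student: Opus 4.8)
The plan is to settle the easy implication by citation and then put all the work into the converse. That every strongly soc-injective module is strongly ss-injective is Example~\ref{example:(2.2)}(5) and needs no hypothesis on $R$: given a semisimple small submodule $K$ of a right $R$-module $N$ and $f\colon K\to M$, one has $K\subseteq\mathrm{soc}(N)$, writes $\mathrm{soc}(N)=K\oplus K'$, extends $f$ by $0$ on $K'$, and then extends to $N$ by soc-$N$-injectivity. So the real content is: \emph{if $R$ is right perfect and $M$ is strongly ss-injective, then $M$ is strongly soc-injective.}

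To prove this I fix a right $R$-module $N$ and a homomorphism $f\colon\mathrm{soc}(N)\to M$, and I must extend $f$ to $N$. First I would record the two standard consequences of $R$ being right perfect (see \cite{3AnFu74}): since $R/J$ is semisimple and $J$ is right $T$-nilpotent, the radical $J(N)=NJ$ is a \emph{small} submodule of $N$, and $N/J(N)$ is a \emph{semisimple} right $R$-module. Set $K=\mathrm{soc}(N)\cap J(N)$. Then $K$ is semisimple (a submodule of $\mathrm{soc}(N)$) and small in $N$ (a submodule of the small submodule $J(N)$), so by ss-$N$-injectivity of $M$ the restriction $f|_{K}$ extends to a homomorphism $g\colon N\to M$. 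Next I split $\mathrm{soc}(N)=K\oplus C$ (possible since $\mathrm{soc}(N)$ is semisimple) and put $h=(f-g)|_{C}\colon C\to M$; since $(f-g)|_{K}=0$, the map $f$ on $\mathrm{soc}(N)$ is recovered from $g$ together with $h$.

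The crux — and the step I expect to be the main obstacle — is to show that $C$ is a direct summand of $N$. Here $C\cap J(N)\subseteq\mathrm{soc}(N)\cap J(N)=K$ while $C\cap K=0$, so $C\cap J(N)=0$; hence the image of $C$ in the semisimple module $N/J(N)$ splits off, say $N/J(N)=\bigl((C+J(N))/J(N)\bigr)\oplus(D/J(N))$ with $J(N)\subseteq D$. A short check then gives $N=C+D$ (because $C+D+J(N)=N$ and $J(N)\subseteq D$) and $C\cap D\subseteq C\cap J(N)=0$, so $N=C\oplus D$. Letting $p\colon N\to C$ be the projection along $D$, I claim that $F=g+h\circ p\colon N\to M$ extends $f$: on $K$ one has $p(K)=0$ since $K\subseteq J(N)\subseteq D$, so $F|_{K}=g|_{K}=f|_{K}$, and on $C$ one gets $F|_{C}=g|_{C}+h=f|_{C}$; since $\mathrm{soc}(N)=K\oplus C$ this yields $F|_{\mathrm{soc}(N)}=f$. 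Thus $M$ is soc-$N$-injective for every $N$, i.e.\ strongly soc-injective. The only genuinely non-formal ingredients are the two perfect-ring facts ($J(N)\ll N$ and $N/J(N)$ semisimple) and the splitting of $C$ off $N$; everything else is routine bookkeeping.
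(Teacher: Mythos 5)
Your proof is correct. It rests on the same two pillars as the paper's argument --- right perfectness gives $J(N)\ll N$ and $N/J(N)$ semisimple for every $N$, the piece $\mathrm{soc}(N)\cap J(N)$ is a semisimple small submodule and is handled by ss-$N$-injectivity, and the remaining piece is lifted using semisimplicity of $N/J(N)$ --- but the lifting step is organized differently. The paper delegates it to Lemma~\ref{Lemma:(3.10)}, a general reduction lemma (reused in Corollary~\ref{Corollary:(3.11)} and Theorem~\ref{Theorem:(3.16)}) whose proof invokes \cite{11Lom99} to produce, for the \emph{arbitrary} submodule $A=\mathrm{soc}(N)$, a submodule $L$ with $A+L=N$ and $A\cap L\subseteq J(N)$, and then glues $f$ on $A$ with an extension of $f|_{A\cap L}$. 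You instead split $\mathrm{soc}(N)=K\oplus C$ with $K=\mathrm{soc}(N)\cap J(N)$ and complement only the semisimple piece $C$: since $C\cap J(N)=0$, its image splits off the semisimple module $N/J(N)$, and the complement pulls back to $D\supseteq J(N)$ with $N=C\oplus D$, after which $F=g+h\circ p$ is checked on $K$ and on $C$ separately. Your route is more self-contained (nothing is cited beyond the two perfect-ring facts, and complementing a semisimple submodule that misses $J(N)$ is easier than finding a weak supplement of an arbitrary one); what it gives up is the reusable general lemma. Both arguments are complete, and your verification that $F$ agrees with $f$ on $K$ and $C$, hence on $\mathrm{soc}(N)=K\oplus C$, correctly closes the argument.
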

\begin{proof}
\textcolor{black}{($\Rightarrow$) Clear.}

\textcolor{black}{($\Leftarrow$) Let }\textit{\textcolor{black}{R}}\textcolor{black}{{}
be a right perfect ring and }\textit{\textcolor{black}{M}}\textcolor{black}{{}
be a strongly ss-injective right $R$-module. By \cite[Theorem 3.8]{11Lom99}, }\textit{\textcolor{black}{R}}\textcolor{black}{{}
is a semilocal ring and hence by \cite[Theorem 3.5]{11Lom99}, we have every
right }\textit{\textcolor{black}{R}}\textcolor{black}{-module }\textit{\textcolor{black}{N}}\textcolor{black}{{}
is semilocal and hence $N/J(N)$ is semisimple right }\textit{\textcolor{black}{R}}\textcolor{black}{-module.
Since }\textit{\textcolor{black}{R}}\textcolor{black}{{} is a right
perfect ring, thus the Jacobson radical of every right }\textit{\textcolor{black}{R}}\textcolor{black}{-module
is small by \cite[Theorem 4.3 and 4.4, p. 69]{7ChDiMa05}. Thus $N/J(N)$ is
semisimple and $J(N)\ll N$, for any $N\in$ Mod-}\textit{\textcolor{black}{R}}\textcolor{black}{.
Since }\textit{\textcolor{black}{M}}\textcolor{black}{{} is strongly
ss-injective, thus every homomorphism from a semisimple small submodule
of }\textit{\textcolor{black}{N}}\textcolor{black}{{} to }\textit{\textcolor{black}{M}}\textcolor{black}{{}
extends to }\textit{\textcolor{black}{N}}\textcolor{black}{, for every
$N\in$ Mod-}\textit{\textcolor{black}{R}}\textcolor{black}{, and this
implies that every homomorphism from any semisimple submodule of $J(N)$
to }\textit{\textcolor{black}{M}}\textcolor{black}{{} extends to }\textit{\textcolor{black}{N}}\textcolor{black}{,
for every $N\in$ Mod-}\textit{\textcolor{black}{R}}\textcolor{black}{.
Since $N/J(N)$ is semisimple right }\textit{\textcolor{black}{R}}\textcolor{black}{-module,
for every $N\in$ Mod-}\textit{\textcolor{black}{R}}\textcolor{black}{.
Thus Lemma~\ref{Lemma:(3.10)}  implies that every homomorphism from any semisimple
submodule of }\textit{\textcolor{black}{N}}\textcolor{black}{{} to }\textit{\textcolor{black}{M}}\textcolor{black}{{}
extends to }\textit{\textcolor{black}{N}}\textcolor{black}{, for every
$N\in$ Mod-}\textit{\textcolor{black}{R}}\textcolor{black}{{} and hence
}\textit{\textcolor{black}{M}}\textcolor{black}{{} is strongly soc-injective.}\end{proof}

\begin{cor}\label{Corollary:(3.14)} A ring $R$ is  $QF$  ring if and only if every strongly ss-injective right $R$-module is projective. \end{cor}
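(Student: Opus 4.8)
The plan is to prove both implications against the characterization of $QF$ rings recalled in the Introduction: $R$ is $QF$ precisely when every injective right $R$-module is projective.

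For the implication $(\Leftarrow)$ I would argue directly. Assume that every strongly ss-injective right $R$-module is projective, and let $E$ be an arbitrary injective right $R$-module. Since $E$ is injective, for every right $R$-module $N$ each $R$-homomorphism from a submodule of $N$ into $E$ — in particular, each one from a semisimple small submodule of $N$ — extends to $N$; hence $E$ is strongly ss-injective, and so $E$ is projective by assumption. Thus every injective right $R$-module is projective, and therefore $R$ is $QF$.

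For $(\Rightarrow)$, assume $R$ is $QF$. Then $R$ is right artinian, hence right perfect, so Theorem~\ref{Theorem:(3.13)} gives that, for right $R$-modules, strong ss-injectivity and strong soc-injectivity coincide. Given a strongly ss-injective $M$, it therefore suffices to show that such an $M$ is injective, since over a $QF$ ring injective modules are projective. Here I would use that right artinian rings are right semiartinian, so $\mathrm{soc}(M)\subseteq^{ess}M$, together with the strongly soc-injective analogue of Theorem~\ref{Theorem:(3.4)} (the result of Amin, Yousif and Zeyada, \cite[Theorem 3.6]{2AmYoZe05}): applied with $A=\mathrm{soc}(M)$ it produces a decomposition $M=E\oplus T$ with $E$ an injective submodule and $T\cap\mathrm{soc}(M)=0$; essentiality of $\mathrm{soc}(M)$ then forces $T=0$, so $M=E$ is injective, and we are done.

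The hard part is the forward direction, and more precisely the step that upgrades strong ss-injectivity of a module to injectivity over a $QF$ ring; everything else is bookkeeping against known equivalences. The two facts carrying this step are Theorem~\ref{Theorem:(3.13)} (which lets us trade ss-injectivity for soc-injectivity once $R$ is perfect) and the essentiality of the socle in an arbitrary module over an artinian ring (which collapses the summand decomposition). If one prefers to stay entirely inside this paper, the same collapse can be run off Corollary~\ref{Corollary:(3.7)} instead: since $R$ is perfect and semiartinian, $M=E_1\oplus T_1$ with $E_1$ injective and $T_1$ a strongly ss-injective module with $\mathrm{soc}(T_1)\cap J(T_1)=0$; as $J(T_1)\ll T_1$ while $\mathrm{soc}(T_1)\subseteq^{ess}T_1$, we get $J(T_1)=0$, so $T_1$ is semisimple, and then each simple summand $S$ of $T_1$ is again strongly ss-injective with $\mathrm{soc}(E(S))=S$ small in the indecomposable module $E(S)$, whence $1_S$ extends to a retraction $E(S)\to S$ and $S=E(S)$ is injective; thus $T_1$, and hence $M=E_1\oplus T_1$, is projective.
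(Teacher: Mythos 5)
Your proof is correct and follows essentially the same route as the paper: both directions reduce to the characterization of $QF$ rings as those over which every injective right module is projective, and the forward direction passes through Theorem~\ref{Theorem:(3.13)} to trade strong ss-injectivity for strong soc-injectivity over the right perfect ring $R$. The only difference is that where the paper simply cites \cite[Proposition 3.7]{2AmYoZe05} at that point, you re-derive that fact by using semiartinianity of $R$ to force the complement $T$ in the decomposition $M=E\oplus T$ to vanish; this is a faithful unpacking of the cited result rather than a genuinely different argument.
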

\begin{proof}
\textcolor{black}{($\Rightarrow$) If }\textit{\textcolor{black}{R}}\textcolor{black}{{}
is }\textit{\textcolor{black}{QF}}\textcolor{black}{{} ring, then }\textit{\textcolor{black}{R}}\textcolor{black}{{}
is a right perfect ring, so by Theorem~\ref{Theorem:(3.13)}  and \cite[Proposition 3.7]{2AmYoZe05} we have every strongly ss-injective right $R$-module is projective.}

\textcolor{black}{($\Leftarrow$) By hypothesis we have every injective
right }\textit{\textcolor{black}{R}}\textcolor{black}{-module is projective
and hence }\textit{\textcolor{black}{R}}\textcolor{black}{{} is }\textit{\textcolor{black}{QF}}\textcolor{black}{{}
ring (see for instance \cite[Proposition 12.5.13]{6Bla11}).}\end{proof}

\begin{thm}\label{Theorem:(3.15)} The following statements are equivalent for a ring $R$.

\noindent (1) Every direct sum of strongly ss-injective right $R$-modules is injective.

\noindent (2) Every direct sum of strongly soc-injective right $R$-modules is injective.

\noindent (3) $R$ is right artinian. \end{thm}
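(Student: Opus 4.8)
The plan is to prove the three-way equivalence by showing $(3)\Rightarrow(1)\Rightarrow(2)\Rightarrow(3)$, using the known characterization of right artinian rings in terms of direct sums of injectives together with the theory developed earlier in the paper.

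For $(3)\Rightarrow(1)$: if $R$ is right artinian then $R$ is right noetherian, so by the classical theorem of Bass--Papp (or Faith--Walker) every direct sum of injective right $R$-modules is injective. Since a right artinian ring is right perfect, Theorem~\ref{Theorem:(3.13)} tells us that ``strongly ss-injective'' coincides with ``strongly soc-injective''; but more to the point, I would first reduce each strongly ss-injective module to an injective one. By Corollary~\ref{Corollary:(3.7)}, if $M$ is strongly ss-injective and $\mathrm{soc}(M)\cap J(M)$ is small in $M$, then $M=E\oplus T$ with $E$ injective and $T\cap(\mathrm{soc}(M)\cap J(M))=0$. Over a right perfect ring $J(M)\ll M$ for every module $M$, so this decomposition applies; moreover $T$ is itself strongly ss-injective (Corollary~\ref{Corollary:(2.4)}(2)) with $\mathrm{soc}(T)\cap J(T)=0$, i.e.\ $T$ has no nonzero simple small submodule. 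The key claim is that over a right artinian ring such a $T$ must be injective: indeed $J(T)\ll T$ and $\mathrm{soc}(T)\supseteq\mathrm{soc}(T)\cap J(T)$ forces, via Theorem~\ref{Theorem:(3.4)} applied to arbitrary semisimple small submodules of $T$, that every semisimple small submodule of $T$ splits off inside an injective summand; combined with $\mathrm{soc}(T)\cap J(T)=0$ one shows $T$ is ss-$E(T)$-injective with $A=\mathrm{soc}(T)\cap J(T)=0\leq^{ess}$ fails, so instead I argue directly that $T$ is injective because a strongly ss-injective module over an artinian ring with $\mathrm{soc}\cap J=0$ is soc-injective (Corollary~\ref{Corollary:(3.11)}(1), using that $R$ artinian is semilocal so $M/J(M)$ is semisimple and $R$ perfect so $J(M)\ll M$) hence by \cite[Proposition 3.7]{2AmYoZe05}-type reasoning injective. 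Granting $M$ is injective, a direct sum of such is injective by the noetherian hypothesis.

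For $(1)\Rightarrow(2)$: this is immediate since every strongly soc-injective module is strongly ss-injective by Example~\ref{example:(2.2)}(5), so a direct sum of strongly soc-injective modules is in particular a direct sum of strongly ss-injective modules, hence injective by $(1)$.

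For $(2)\Rightarrow(3)$: here I would invoke the Bass--Papp characterization in the contrapositive direction — if every direct sum of injective right $R$-modules is injective then $R$ is right noetherian. Every injective module is strongly soc-injective (it extends homomorphisms from any submodule, a fortiori from $\mathrm{soc}(N)$), so hypothesis $(2)$ says in particular that every direct sum of injective right $R$-modules is injective, giving $R$ right noetherian. To upgrade noetherian to artinian, I would use that $(2)$ also forces $R$ to have finitely generated essential socle-type conditions: more precisely, this is exactly the content of \cite[Theorem 3.6]{2AmYoZe05} (the result the excerpt says it is extending), which states that every direct sum of strongly soc-injective right $R$-modules is injective iff $R$ is right artinian. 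The main obstacle — and the part requiring genuine work rather than citation — is the $(3)\Rightarrow(1)$ direction, specifically proving that a strongly ss-injective module over a right artinian ring is injective (not merely a direct summand of an injective plus a ``small-torsion-free'' piece); the clean way is to combine Corollary~\ref{Corollary:(3.7)} with the observation that over a right artinian (hence semiprimary) ring, a module $T$ with $\mathrm{soc}(T)\cap J(T)=0$ that is strongly ss-injective is strongly soc-injective via Corollary~\ref{Corollary:(3.11)}(1) applied to each finitely generated submodule, and then apply \cite[Proposition 3.7]{2AmYoZe05} or the fact that over an artinian ring strongly soc-injective implies injective. Once that lemma is in hand, the rest of $(3)\Rightarrow(1)$ is just the Bass--Papp theorem for the noetherian ring $R$.
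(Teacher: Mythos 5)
Your overall skeleton --- $(3)\Rightarrow(1)$ via ``artinian implies perfect, so strongly ss-injective coincides with strongly soc-injective'' plus the known soc-injective results, $(1)\Rightarrow(2)$ by inclusion of classes, $(2)\Rightarrow(3)$ by reducing to the soc-injective characterization --- is the same as the paper's. The paper's $(2)\Rightarrow(3)$ cites \cite[Theorems 3.3 and 3.6]{2AmYoZe05} to get that $R$ is right noetherian and right semiartinian and then \cite[Proposition 5.2, p.189]{18Ste75} to conclude artinian; its $(3)\Rightarrow(1)$ uses Theorem~\ref{Theorem:(3.13)} together with \cite[Theorem 3.3]{2AmYoZe05} (noetherian: the direct sum of strongly soc-injectives is strongly soc-injective) and \cite[Theorem 3.6]{2AmYoZe05} (semiartinian: strongly soc-injective implies injective).

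The genuine gap is in your execution of $(3)\Rightarrow(1)$. After correctly invoking Theorem~\ref{Theorem:(3.13)} you abandon it and instead try to prove directly that each strongly ss-injective $M$ is injective by decomposing $M=E\oplus T$ via Corollary~\ref{Corollary:(3.7)} and arguing that $T$ is injective. That argument does not close: you concede yourself that the Theorem~\ref{Theorem:(3.4)} route ``fails'' (it needs a nonzero essential semisimple small submodule, whereas $\mathrm{soc}(T)\cap J(T)=0$), and the replacement --- Corollary~\ref{Corollary:(3.11)}(1) ``applied to each finitely generated submodule'' followed by ``\cite[Proposition 3.7]{2AmYoZe05}-type reasoning'' --- misuses both citations. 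Corollary~\ref{Corollary:(3.11)}(1) converts ss-$M$-injectivity to soc-$M$-injectivity only when the \emph{domain} module $M$ is finitely generated with $M/J(M)$ semisimple; verifying this on finitely generated domains does not yield strong soc-injectivity, which quantifies over all right $R$-modules $N$ (that is exactly the point of Theorem~\ref{Theorem:(3.13)}, which uses perfectness to get $J(N)\ll N$ and $N/J(N)$ semisimple for \emph{every} $N$). And \cite[Proposition 3.7]{2AmYoZe05} is the statement that strongly soc-injective modules are \emph{projective} over a $QF$ ring (it is what Corollary~\ref{Corollary:(3.14)} uses); the result you actually need is \cite[Theorem 3.6]{2AmYoZe05}, that over a right semiartinian ring strongly soc-injective modules are injective. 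The repair is short: drop the detour through Corollary~\ref{Corollary:(3.7)} entirely, note that a right artinian $R$ is right perfect, right noetherian and right semiartinian, apply Theorem~\ref{Theorem:(3.13)} to replace every strongly ss-injective module by a strongly soc-injective one, and finish with \cite[Theorems 3.3 and 3.6]{2AmYoZe05} (or, in your ordering, with \cite[Theorem 3.6]{2AmYoZe05} first and Bass--Papp second; both orders work). Your $(2)\Rightarrow(3)$ likewise leans on a guessed statement of \cite[Theorem 3.6]{2AmYoZe05}: as actually used it only supplies semiartinian, the noetherian half comes from \cite[Theorem 3.3]{2AmYoZe05} (or your Bass--Papp observation), and the two are combined into artinian by the Stenstr\"{o}m citation.
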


\begin{proof}
\textcolor{black}{(1)$\Rightarrow$(2) Clear.}

\textcolor{black}{(2)$\Rightarrow$(3) Since every direct sum of strongly
soc-injective right }\textit{\textcolor{black}{R}}\textcolor{black}{-modules
is injective, thus }\textit{\textcolor{black}{R}}\textcolor{black}{{}
is right noetherian and right semiartinian by \cite[Theorem 3.3 and Theorem 3.6]{2AmYoZe05}, so it follows from \cite[Proposition 5.2, p.189]{18Ste75} that }\textit{\textcolor{black}{R}}\textcolor{black}{{} is right artinian.}

\textcolor{black}{(3)$\Rightarrow$(1) By hypothesis, }\textit{\textcolor{black}{R}}\textcolor{black}{{}
is right perfect and right noetherian. It follows from Theorem~\ref{Theorem:(3.13)}
and \cite[Theorem 3.3]{2AmYoZe05} that every direct sum of strongly ss-injective
right }\textit{\textcolor{black}{R}}\textcolor{black}{-modules is
strongly soc-injective. Since }\textit{\textcolor{black}{R}}\textcolor{black}{{}
is right semiartinian, so \cite[Theorem 3.6]{2AmYoZe05}  implies that every
direct sum of strongly ss-injective right }\textit{\textcolor{black}{R}}\textcolor{black}{-modules
is injective .}\end{proof}

\begin{thm}\label{Theorem:(3.16)} If $R$ is a right $t$-semisimple, then a right $R$-module $M$ is injective if and only if $M$ is strongly s-injective. \end{thm}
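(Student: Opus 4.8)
The plan is to exploit the structure theory of $t$-semisimple rings together with the characterization of strongly $s$-injective modules in terms of the second singular submodule. Recall that $M$ is strongly $s$-injective if every $R$-homomorphism from a submodule $K$ of $Z(N)$ into $M$ extends to $N$, for every right $R$-module $N$; in particular, taking $N$ injective, a strongly $s$-injective module must at least be $Z(N)$-injective in the appropriate sense. The forward implication "$M$ injective $\Rightarrow$ $M$ strongly $s$-injective" is immediate, since an injective module admits extensions of homomorphisms from \emph{any} submodule of \emph{any} module, in particular from submodules of singular submodules. So the content is the converse.

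For the converse, the key step is to use that $R$ is right $t$-semisimple: for every right ideal $A$ there is a direct summand $B$ of $R_R$ with $B \subseteq^{tes} A$. I would first argue that in a right $t$-semisimple ring the module $R_R$ decomposes in a way that separates the "$Z_2$-part" from a summand, and more generally that for any module $N$ the socle-type structure of $Z_2(N)$ is well-behaved: concretely, I expect one can show that $Z_2(N)$ (or the relevant singular submodule $Z(N)$) is a direct summand-like piece controlled by $t$-essential submodules that are themselves direct summands. The aim is to reduce "extend a map from $K \subseteq Z(N)$ to $N$" to "extend a map defined on a direct summand", which is automatic, plus an extension across a $t$-essential (hence, by $t$-semisimplicity, split) inclusion. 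Then Baer's criterion converts the resulting extension property for $R_R$ back into genuine injectivity of $M$.

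Carrying this out, the steps in order would be: (1) record the definition of strongly $s$-injective and reduce, via Baer's criterion, the claim "$M$ strongly $s$-injective $\Rightarrow$ $M$ injective" to showing every $R$-homomorphism $f : I \to M$ from a right ideal $I$ of $R$ extends to $R_R$; (2) using right $t$-semisimplicity of $R$, write $I$ up to a $t$-essential direct-summand approximation $B \subseteq^{tes} I$ with $B \subseteq^{\oplus} R_R$, and observe that $I/B$ is contained in $Z_2(R/B)$, i.e. is "singular modulo $B$"; (3) apply the strong $s$-injectivity of $M$ to extend $f|_{(\text{singular part})}$ appropriately — here one extends across the inclusion of the singular quotient — and combine with the split extension off the direct summand $B$ to get an extension of $f$ to all of $R_R$; (4) invoke Baer's criterion to conclude $M$ is injective.

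The main obstacle I anticipate is step (2)–(3): making precise the passage from a right ideal $I$ to its $t$-essential direct-summand approximation and identifying the complementary quotient as living inside $Z_2$, so that strong $s$-injectivity is actually applicable. The delicate point is that strong $s$-injectivity is stated for submodules of $Z(N)$ (the singular submodule) rather than $Z_2(N)$, whereas $t$-essentiality is phrased via $Z_2$; bridging this gap — showing that in the $t$-semisimple setting the relevant obstruction to extending $f$ lands in a genuinely singular submodule of some auxiliary module — is where the real work lies. A secondary technical point is checking that the two partial extensions (the split one off $B$ and the one produced by strong $s$-injectivity) agree on the overlap, which should follow from a careful choice of the complement but needs to be verified rather than assumed.
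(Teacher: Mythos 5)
Your forward direction is fine, but the converse as proposed has a genuine gap precisely at the point you flag as ``where the real work lies,'' and the route you sketch does not close it. Concretely: strong $s$-injectivity of $M$ is a statement about extending maps defined on submodules $K\subseteq Z(N)$ \emph{inside} $N$; your steps (2)--(3) instead try to apply it to a quotient situation, namely to $I/B$ viewed inside $Z_{2}(R/B)$ for a $t$-essential direct summand $B\subseteq^{tes}I$. There are two problems. First, $B\subseteq^{tes}I$ does not give a decomposition $I=B\oplus C$ with $C$ singular, nor does it place $I/B$ inside $Z(R/B)$ (as opposed to $Z_{2}$), so the hypothesis of strong $s$-injectivity is never actually met. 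Second, even granting such a decomposition, you would be extending a map defined on a submodule of the quotient $R/B$, not on a submodule of $R$, and there is no mechanism offered for lifting that extension back to $R_{R}$ compatibly with the split piece. You correctly identify both issues but leave them unresolved, so the argument is a plan rather than a proof.

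The paper closes the gap by a different, global route that avoids working ideal-by-ideal. It first invokes the known facts (from Zeyada's paper on $s$-injectivity) that if $M$ is strongly $s$-injective then $Z_{2}(M)$ is injective, and that injectivity of $Z_{2}(M)$ already yields that \emph{every} homomorphism $f:K\longrightarrow M$ with $K\subseteq Z_{2}^{r}$ extends to $R_{R}$; this is what bridges the $Z$ versus $Z_{2}$ mismatch you worried about. Then right $t$-semisimplicity is used only through the single fact that $R/Z_{2}^{r}$ is semisimple, and Lemma~\ref{Lemma:(3.10)} (extension from submodules of $N$ implies extension from submodules of $M$ whenever $M/N$ is semisimple) upgrades ``maps from submodules of $Z_{2}^{r}$ extend'' to ``maps from arbitrary right ideals extend,'' whence Baer's criterion gives injectivity. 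If you want to salvage your approach, you should replace steps (2)--(3) by these two ingredients: the injectivity of $Z_{2}(M)$ and the semisimplicity of $R/Z_{2}^{r}$, followed by the semisimple-quotient extension lemma.
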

\begin{proof}
\textcolor{black}{($\Rightarrow$) Obvious.}

\textcolor{black}{($\Leftarrow$) Since }\textit{\textcolor{black}{M}}\textcolor{black}{{}
is strongly s-injective, thus $Z_{2}(M)$ is injective by \cite[Proposition 3, p.27]{22Zey14}. Thus every homomorphism $f:K\longrightarrow M$, where
$K\subseteq Z_{2}^{r}$ extends to }\textit{\textcolor{black}{R}}\textcolor{black}{{}
by \cite[Lemma 1, p.26]{22Zey14}. Since }\textit{\textcolor{black}{R}}\textcolor{black}{{}
is a right }\textit{\textcolor{black}{t}}\textcolor{black}{-semisimple,
thus $R/Z_{2}^{r}$ is a right semisimple (see \cite[Theorem 2.3]{4AsHaTo13}).
So by applying Lemma~\ref{Lemma:(3.10)}, we conclude that }\textit{\textcolor{black}{M}}\textcolor{black}{{}
is injective.}\end{proof}

\begin{cor}\label{Corollary:(3.17)} The following statements  are equivalent for a ring  $R$.

\noindent (1) $R$ is right  $SI$  and right $t$-semisimple.

\noindent (2) $R$ is semisimple. \end{cor}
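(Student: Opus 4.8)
The plan is to derive both implications from Theorem~\ref{Theorem:(3.16)} together with a few elementary closure properties of singular modules. The direction $(2)\Rightarrow(1)$ is immediate: if $R$ is semisimple then every right $R$-module is injective (and projective), so in particular every singular right $R$-module is injective, i.e.\ $R$ is right $SI$; and for any submodule $A$ of any right $R$-module $M$ we have $A\subseteq^{\oplus}M$ and $A\subseteq^{tes}A$, so $R$ is right $t$-semisimple.

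For $(1)\Rightarrow(2)$ the strategy is to show that \emph{every} right $R$-module $M$ is strongly s-injective; then, since $R$ is right $t$-semisimple, Theorem~\ref{Theorem:(3.16)} forces every right $R$-module to be injective, and hence $R$ is semisimple. So fix a right $R$-module $M$, a right $R$-module $N$, a submodule $K\subseteq Z(N)$, and an $R$-homomorphism $f:K\longrightarrow M$ to be extended to $N$. First, $Z(N)$ is itself a singular module, so the right $SI$ hypothesis makes it injective; hence $Z(N)\subseteq^{\oplus}N$ and it suffices to extend $f$ to $Z(N)$ and then compose with the projection $N\to Z(N)$. Inside the injective module $Z(N)$ I would pick an injective hull $E$ of $K$, so that $Z(N)=E\oplus L$, and it then suffices to extend $f$ to $E$.

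Next I split the target. Since $Z(M)$ is singular, it is injective by the $SI$ hypothesis, so $M=Z(M)\oplus M'$ with $M'$ nonsingular. As $K\subseteq Z(N)$ is singular, the composite of $f$ with the projection $M\to M'$ is a homomorphism from a singular module to a nonsingular one, hence is zero; therefore $f(K)\subseteq Z(M)$, and $f$ may be regarded as a map $K\to Z(M)$. Because $Z(M)$ is injective and $K\subseteq E$, this extends to $\tilde f:E\to Z(M)\subseteq M$; unwinding the two direct-sum decompositions $Z(N)=E\oplus L$ and $N=Z(N)\oplus N'$ then produces the required extension of $f$ to $N$. This proves that every $M$ is strongly s-injective, and hence $(1)\Rightarrow(2)$.

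I do not anticipate a real obstacle here; the work is entirely in assembling standard facts — that $Z(X)$ is always a singular module (so that ``right $SI$'' applies to it), that a homomorphic image of a singular module is singular, and that a singular submodule of a nonsingular module is zero — none of which requires any hypothesis on $R$. The only point to watch is keeping the two splittings straight when reassembling the extension. (A shorter alternative for $(1)\Rightarrow(2)$: recall that a right $SI$-ring is right nonsingular, so $Z_{2}^{r}=0$, and then $R=R/Z_{2}^{r}$ is right semisimple because $R$ is right $t$-semisimple; I prefer the route through Theorem~\ref{Theorem:(3.16)} since it stays within the results of this paper.)
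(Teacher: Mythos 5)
Your argument is correct and follows essentially the same route as the paper: the paper's proof of $(1)\Rightarrow(2)$ also passes through the statement that over a right $SI$-ring every right $R$-module is strongly s-injective and then invokes Theorem~\ref{Theorem:(3.16)}, except that it simply cites this statement from Zeyada's paper rather than proving it. Your self-contained verification of that step is sound (and can be streamlined: since a homomorphic image of a singular module is singular, $f(K)\subseteq Z(M)$, and the injectivity of $Z(M)$ already lets $f$ extend to all of $N$ directly, with no need for the splittings of $N$ or of $Z(N)$), and your parenthetical alternative via the right nonsingularity of $SI$-rings is also valid.
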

\begin{proof}
\textcolor{black}{(1)$\Rightarrow$(2). Since }\textit{\textcolor{black}{R}}\textcolor{black}{{}
is a right }\textit{\textcolor{black}{SI}}\textcolor{black}{{} ring,
thus every right }\textit{\textcolor{black}{R}}\textcolor{black}{-module
is strongly s-injective by \cite[Theorem 1, p.29]{22Zey14}. By Theorem~\ref{Theorem:(3.16)},
we have every right }\textit{\textcolor{black}{R}}\textcolor{black}{-module
is injective and hence }\textit{\textcolor{black}{R}}\textcolor{black}{{}
is semisimple ring.}

\textcolor{black}{(2)$\Rightarrow$(1). Clear.}\end{proof}

\begin{cor}\label{Corollary:(3.18)} If $R$ is a right $t$-semisimple ring, then $R$ is right $V$-ring if and only if $R$ is right $GV$-ring. \end{cor}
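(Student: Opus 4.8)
The implication ``right $V$-ring $\Rightarrow$ right $GV$-ring'' is immediate, since a simple singular right $R$-module is in particular a simple right $R$-module. So the whole content lies in the converse, and my plan is to reduce it to Theorem~\ref{Theorem:(3.16)}: because $R$ is right $t$-semisimple, a right $R$-module is injective precisely when it is strongly $s$-injective, so it suffices to prove that every simple right $R$-module is strongly $s$-injective. Given a simple right $R$-module $S$, the submodule $Z(S)$ of $S$ is either $0$ or $S$, so $S$ is either nonsingular or singular, and I would treat the two cases separately.

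If $S$ is singular, then $S$ is injective by the right $GV$-hypothesis, so it is in particular strongly $s$-injective. If $S$ is nonsingular, I claim $S$ is strongly $s$-injective for the vacuous reason that every homomorphism one must extend is zero: let $N$ be any right $R$-module, $K\subseteq Z(N)$ a submodule, and $f:K\longrightarrow S$ an $R$-homomorphism. Then $K$ is singular, being a submodule of the singular module $Z(N)$, so $f(K)$, a homomorphic image of $K$, is singular; but $f(K)\subseteq S$ is also nonsingular, whence $f(K)=0$. Thus $f$ extends to $N$ by the zero map, $S$ is strongly $s$-injective, and so $S$ is injective by Theorem~\ref{Theorem:(3.16)}.

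Combining the two cases, every simple right $R$-module is injective, i.e. $R$ is a right $V$-ring, which completes the converse. I do not anticipate any real obstacle here: apart from Theorem~\ref{Theorem:(3.16)} the only facts invoked are that submodules and homomorphic images of singular modules are singular, and that $Z(S)\in\{0,S\}$ for a simple module $S$, so that the singular/nonsingular dichotomy is exhaustive, all of which are routine. (Alternatively one could argue directly, using that $R/Z_{2}^{r}$ is semisimple for a right $t$-semisimple ring and that $Z_{2}^{r}$ is a singular right ideal, so that any homomorphism from a right ideal into a nonsingular simple module kills the intersection with $Z_{2}^{r}$, hence factors through the semisimple ring $R/Z_{2}^{r}$ and extends to $R$ by Baer's criterion; but routing through Theorem~\ref{Theorem:(3.16)} is the shortest path.)
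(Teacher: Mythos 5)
Your proof is correct and follows essentially the same route as the paper: the paper's proof of the converse is simply ``By \cite[Proposition 5, p.28]{22Zey14} and Theorem~\ref{Theorem:(3.16)}'', where the cited proposition is precisely the fact that over a right $GV$-ring every simple right $R$-module is strongly $s$-injective. Your singular/nonsingular case analysis is a clean self-contained proof of that cited fact, so the only difference is that you reprove the external lemma rather than quoting it.
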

\begin{proof}
\textcolor{black}{($\Rightarrow$). Clear.}

\textcolor{black}{($\Leftarrow$). By \cite[Proposition 5, p.28]{22Zey14}
and Theorem~\ref{Theorem:(3.16)}.}\end{proof}

\begin{cor}\label{Corollary:(3.19)} If $R$ is a right $t$-semisimple ring, then  $R/S_{r}$  is noetherian right $R$-module if and only if $R$ is right noetherian. \end{cor}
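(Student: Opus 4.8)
The implication $(\Leftarrow)$ is immediate: a quotient of a right noetherian module is noetherian, so $(R/S_r)_R$ is noetherian whenever $R_R$ is. For $(\Rightarrow)$ assume $(R/S_r)_R$ is noetherian. Applying the snake/long exact sequence to $0\to S_r\to R_R\to R/S_r\to 0$, and noting that $S_r$ is semisimple, $R_R$ is right noetherian iff $S_r$ is a finitely generated right $R$-module; so it suffices to prove $S_r$ is finitely generated. The plan is to split off the ``semisimple part'' by $t$-semisimplicity and push the rest into the second singular ideal $Z_2^r$.

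First, $Z_2^r$ is a direct summand of $R_R$: if $A$ is a submodule of $R_R$ maximal with respect to $A\cap Z_2^r=0$, then applying the defining property of a right $t$-semisimple module to $A$ yields a direct summand $B$ of $R_R$ with $B\subseteq^{tes}A$; since $Z_2(A)=Z_2^r\cap A=0$ this forces $B\subseteq^{ess}A$, and a direct summand of $R_R$ essential in $A$ must equal $A$, so $A$ is a summand, say $R_R=A\oplus A'$. As $Z_2^r$ is fully invariant, $Z_2^r=(Z_2^r\cap A)\oplus(Z_2^r\cap A')=Z_2^r\cap A'\subseteq A'$; since $A\oplus Z_2^r\subseteq^{ess}R_R$, the module $A'\cong R/A$ is an extension of $Z_2^r$ by the singular (hence $Z_2$-torsion) module $R/(A\oplus Z_2^r)$, so $A'$ is $Z_2$-torsion and $A'=Z_2^r$. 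Moreover $R/Z_2^r$ is right semisimple — the fact from \cite{4AsHaTo13} already used in the proof of Theorem~\ref{Theorem:(3.16)} — so $Z_2^r=eR$ is cyclic (in particular finitely generated) and $R/Z_2^r\cong A$ is a cyclic semisimple, hence noetherian, right $R$-module. Writing $S_r=(S_r\cap Z_2^r)\oplus W$, the complement $W$ embeds in $R/Z_2^r$ and is therefore finitely generated, so the whole matter reduces to showing that $\mathrm{soc}(Z_2^r)=S_r\cap Z_2^r$ is finitely generated.

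Now assemble what is known about $\mathrm{soc}(Z_2^r)$. It equals $S_r\cap Z_r$, because every simple submodule of the $Z_2$-torsion module $Z_2^r$ is singular (a simple nonsingular module is not $Z_2$-torsion); each of its simple summands is a singular, hence non-projective, simple right ideal, so by \cite[Lemma 3.8]{16Pas04} it is nilpotent and therefore lies in $J$ by \cite[Corollary 6.2.8]{6Bla11}; hence $\mathrm{soc}(Z_2^r)\subseteq J\subseteq Z_2^r$, and since $J\ll R_R$ and $Z_2^r$ is a direct summand we get $\mathrm{soc}(Z_2^r)\ll Z_2^r$. Also $Z_2^r/\mathrm{soc}(Z_2^r)\cong(Z_2^r+S_r)/S_r$ is a submodule of the noetherian module $R/S_r$, so $Z_2^r/\mathrm{soc}(Z_2^r)$ is noetherian, while $Z_2^r$ itself is cyclic by the previous paragraph. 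The remaining — and, I expect, only genuinely non-routine — step is to conclude from this configuration (a finitely generated $Z_2$-torsion direct summand of $R_R$ whose socle is small in it and whose quotient by the socle is noetherian) that its socle is finitely generated; this is where the structure theory of $t$-semisimple modules in \cite{4AsHaTo13} does the real work, the preceding reductions being bookkeeping with the singular ideals. Once $\mathrm{soc}(Z_2^r)$, and hence $S_r=(S_r\cap Z_2^r)\oplus W$, is finitely generated, $R_R$ is noetherian.
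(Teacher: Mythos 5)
Your reductions up to the last paragraph are essentially sound ($Z_2^r$ is a cyclic direct summand, $R/Z_2^r\cong A$ is finitely generated semisimple, and everything comes down to $\mathrm{soc}(Z_2^r)=S_r\cap Z_2^r$ being finitely generated), but the proof stops exactly where the content is: the decisive step is not proved, only asserted to be where ``the structure theory of $t$-semisimple modules does the real work,'' with no indication of which result is used or how. This is a genuine gap, and it cannot be closed from the configuration you isolate. Take $R=K[x_1,x_2,\dots]/(x_ix_j:\ i,j\geq 1)$, a commutative local ring with $J=\mathrm{span}_K\{x_1,x_2,\dots\}$ and $J^2=0$. Here $J\subseteq^{ess}R_R$, so $Z_r=J$ and $Z(R/J)=R/J$, whence $Z_2^r=R$. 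Thus $Z_2^r$ is a cyclic $Z_2$-torsion direct summand of $R_R$, $\mathrm{soc}(Z_2^r)=J$ is small in it, and $Z_2^r/\mathrm{soc}(Z_2^r)\cong R/J$ is simple, hence noetherian --- yet $\mathrm{soc}(Z_2^r)=J$ is not finitely generated. So the list of properties you carry into the final step is strictly insufficient. Worse, by the definition of $t$-semisimple recorded in the paper this $R$ is right $t$-semisimple (every submodule $A$ of a $Z_2$-torsion module satisfies $Z_2(A)=A$, so $B=0$ is a direct summand with $B\subseteq^{tes}A$; equivalently $R/Z_2^r=0$ is semisimple, the criterion from \cite{4AsHaTo13}), while $R/S_r=R/J$ is noetherian and $R$ is not. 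So the obstruction is not an artifact of your reduction; any complete argument must confront it, and this also puts real pressure on the statement itself and on the citation of \cite[Proposition 6]{22Zey14} in the paper's proof, which is worth checking carefully.

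For comparison, the paper's argument is of a completely different kind and is two lines long: from $R/S_r$ noetherian it deduces, via \cite[Proposition 6]{22Zey14}, that every direct sum of injective right $R$-modules is strongly s-injective; Theorem~\ref{Theorem:(3.16)} converts ``strongly s-injective'' into ``injective'' over a right $t$-semisimple ring; and a ring over which all direct sums of injectives are injective is right noetherian. Nothing in your argument reproduces or substitutes for the first ingredient, which is precisely where the hypothesis ``$R/S_r$ noetherian'' is supposed to do work beyond the bookkeeping with $Z_2^r$ that you carry out correctly.
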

\begin{proof}
If \textcolor{black}{$R/S_{r}$ is a noetherian right }\textit{\textcolor{black}{R}}\textcolor{black}{-module,
thus every direct sum of injective right }\textit{\textcolor{black}{R}}\textcolor{black}{-modules
is strongly s-injective by \cite[Proposition 6]{22Zey14}. Since }\textit{\textcolor{black}{R}}\textcolor{black}{{}
is right }\textit{\textcolor{black}{t}}\textcolor{black}{-semisimple,
so it follows from Theorem~\ref{Theorem:(3.16)} that every direct sum of injective
right }\textit{\textcolor{black}{R}}\textcolor{black}{-modules is
injective and hence }\textit{\textcolor{black}{R}}\textcolor{black}{{}
is right noetherian. The converse is clear.}\end{proof}

\section{SS-Injective Rings}

\subparagraph*{\textmd{\textcolor{black}{We recall that the dual of a right }}\textmd{\textit{\textcolor{black}{R}}}\textmd{\textcolor{black}{-module
}}\textmd{\textit{\textcolor{black}{M}}}\textmd{\textcolor{black}{{}
is $M^{d}=$Hom$_{R}(M,R_{R})$ and clearly that $M^{d}$ is a left }}\textmd{\textit{\textcolor{black}{R}}}\textmd{\textcolor{black}{-module.}}}

\begin{prop}\label{Proposition:(4.1)} The following statements are equivalent for a ring $R$.

\noindent (1) $R$ is a right ss-injective ring. 

\noindent (2) If $K$ is a semisimple right $R$-module, $P$ and $Q$ are finitely generated projective right $R$-modules,
$\beta:K\longrightarrow P$ is an $R$-monomorphism with $\beta(K)\ll P$ and $f:K\longrightarrow Q$ is an $R$-homomorphism,
then $f$  can be extended to an $R$-homomorphism $h:P\longrightarrow Q$.

\noindent (3) If $M$  is a right semisimple $R$-module and $f$ is a nonzero monomorphism from $M$ to $R_{R}$ with $f(M)\ll R_{R}$, then $M^{d}=Rf$.

\end{prop}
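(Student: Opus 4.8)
The plan is to prove the cycle $(1)\Rightarrow(2)\Rightarrow(3)\Rightarrow(1)$, since each implication is a natural "unfolding" or "specialization" of ss-injectivity in the projective setting. The key technical point throughout is that a semisimple small submodule of a finitely generated projective module can be transported, via projectivity, back to a semisimple small right ideal of $R$, where the definition of ss-injectivity applies directly; Proposition~\ref{Proposition:(2.9)} (ss-injective $\iff$ ss-$P$-injective for every finitely generated projective $P$) is the workhorse.

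For $(1)\Rightarrow(2)$: given $\beta:K\hookrightarrow P$ with $\beta(K)\ll P$ and $f:K\to Q$, I would first observe that since $R$ is ss-injective, by Proposition~\ref{Proposition:(2.9)} the module $Q$ is ss-$P$-injective (as $Q$ is finitely generated projective, hence a summand of some $R^m$, and a summand of an ss-$R^m$-injective module is ss-$R^m$-injective by Corollary~\ref{Corollary:(2.4)}(2) together with Proposition~\ref{Proposition:(2.8)}; alternatively quote Proposition~\ref{Proposition:(2.9)} applied with the roles of the projective modules). Actually the cleanest route: identify $K$ with $\beta(K)$, a semisimple small submodule of $P$; then $f\beta^{-1}:\beta(K)\to Q$ is an $R$-homomorphism from a semisimple small submodule of $P$ into $Q$, and ss-$P$-injectivity of $Q$ extends it to $h:P\to Q$ with $h\beta = f$, as desired.

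For $(2)\Rightarrow(3)$: specialize $(2)$ by taking $P=Q=R_R$, $K=M$, $\beta=f$ (the given nonzero monomorphism $M\to R_R$ with $f(M)\ll R_R$), and let the homomorphism "$f$" of $(2)$ range over an arbitrary $g\in M^d=\mathrm{Hom}_R(M,R_R)$. Statement $(2)$ then yields $h:R_R\to R_R$ with $h\circ f = g$; writing $h$ as left multiplication by $r=h(1)$, we get $g = rf$, so $g\in Rf$. Hence $M^d\subseteq Rf$, and the reverse inclusion is immediate since $f\in M^d$ and $M^d$ is a left $R$-module. Thus $M^d = Rf$. I should note $M$ is nonzero so this is not vacuous (if $M=0$ the statement is trivial anyway).

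For $(3)\Rightarrow(1)$: let $I$ be a semisimple small right ideal of $R$ and $\alpha:I\to R_R$ an $R$-homomorphism; I must extend $\alpha$ to $R_R$, i.e. realize it as left multiplication. The subtlety is that the inclusion $\iota:I\hookrightarrow R_R$ is a monomorphism with $\iota(I)=I\ll R_R$, but $\iota$ need not be "nonzero" in a useful sense if $I=0$ (trivial case), and more importantly $(3)$ speaks of a single monomorphism $f$, whereas I have the inclusion $\iota$. Apply $(3)$ with $M=I$ and $f=\iota$: then $I^d = R\iota$, so the given $\alpha\in I^d = \mathrm{Hom}_R(I,R_R)$ equals $r\iota$ for some $r\in R$, i.e. $\alpha(x)=rx$ for all $x\in I$; left multiplication by $r$ is the required extension to $R_R$. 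If $I=0$ there is nothing to prove.

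The main obstacle I anticipate is bookkeeping around the "small" and "nonzero" hypotheses in $(3)$ — making sure that when I feed the inclusion map (or $\beta$) into the relevant statement I genuinely have a \emph{monomorphism with small image}, and handling the degenerate case $M=0$ (resp.\ $I=0$) separately so the "$M^d=Rf$" conclusion is not misapplied when $f$ could be zero. Everything else is a routine translation between "extends to $N$" and "is left multiplication", using that $\mathrm{End}(R_R)\cong R$ acting by left multiplication.
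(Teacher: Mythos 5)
Your proof is correct, and all three implications go through with the paper's Section~2 machinery. The route differs mildly from the paper's: the paper proves the four implications $(2)\Rightarrow(1)$ (trivial), $(1)\Rightarrow(2)$, $(1)\Rightarrow(3)$ and $(3)\Rightarrow(1)$, whereas you close the single cycle $(1)\Rightarrow(2)\Rightarrow(3)\Rightarrow(1)$. Concretely, your $(2)\Rightarrow(3)$ is a pure specialization ($P=Q=R_R$, $\beta=f$, and $h=h(1)\cdot{}$ via $\mathrm{End}(R_R)\cong R$), which replaces the paper's direct $(1)\Rightarrow(3)$ argument (there one takes $g\in M^d$, notes $gf^{-1}$ is defined on the semisimple small right ideal $f(M)$, and applies ss-injectivity of $R_R$ to get $gf^{-1}=a\cdot{}$). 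In $(1)\Rightarrow(2)$ you also reorganize slightly: you establish up front that $Q$ itself is ss-$P$-injective (as a summand of a module isomorphic to $R^n$, using Proposition~\ref{Proposition:(2.9)}, Corollary~\ref{Corollary:(2.4)} and Theorem~\ref{Theorem:(2.3)}) and extend $f\beta^{-1}$ directly into $Q$, while the paper extends into $R^n$ first and then retracts onto $Q$ via the splitting $\alpha_1\alpha_2=1_Q$; these are equivalent, and your version is marginally cleaner. Your attention to the degenerate cases ($M=0$, $I=0$) in $(3)$ is a point the paper glosses over, and your handling is the right one. The only cosmetic caveat is that your first justification that ``$Q$ is ss-$P$-injective'' conflates ss-$R^m$-injectivity with ss-$P$-injectivity for a moment; the alternative you give (apply Proposition~\ref{Proposition:(2.9)} to $R^n$ and then pass to the summand $Q$ by Theorem~\ref{Theorem:(2.3)}(3),(5)) is the clean chain and is what should be kept.
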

\begin{proof}
\textcolor{black}{(2)$\Rightarrow$(1) Clear.}

\textcolor{black}{(1)$\Rightarrow$(2)} Since \textit{Q} is finitely generated, there is an \textit{R}-epimorphism
\textit{\textcolor{black}{$\alpha_{1}:R^{n}\longrightarrow Q$ }}for
some \textit{\textcolor{black}{$n\in\mathbb{Z}^{+}$.}} Since \textit{Q}
is projective, there is an \textit{R}-homomorphism \textit{\textcolor{black}{$\alpha_{2}:Q\longrightarrow R^{n}$}}
such that \textit{\textcolor{black}{$\alpha_{1}\alpha_{2}=I_{Q}$.}}
 Define \textit{\textcolor{black}{$\tilde{\beta}:K\longrightarrow\beta(K)$}} by \textit{\textcolor{black}{$\tilde{\beta}(a)=\beta(a)$
}}for all ${\color{black}a\in K}$. Since \textit{R} is a right ss-injective
ring (by hypothesis), it follows from Proposition~\ref{Proposition:(2.8)}  and Corollary~\ref{Corollary:(2.4)}(1)
 that \textit{\textcolor{black}{$R^{n}$ }}is a right ss-\textit{P}-injective
\textit{R}-module. So there exists an \textit{R}-homomorphism \textit{\textcolor{black}{$h:P\longrightarrow R^{n}$
}}such that \textit{\textcolor{black}{$hi=\alpha_{2}f$ $\tilde{\beta}^{-1}$}}. Put
${\color{black}g=\alpha_{1}h:P\longrightarrow Q}$. Thus \textit{\textcolor{black}{$gi=(\alpha_{1}h)i=\alpha_{1}(\alpha_{2}f$
$\tilde{\beta}^{-1})=f$ $\tilde{\beta}^{-1}$}} and hence \textit{\textcolor{black}{$(g\beta)(a)=g(i(\beta(a)))=(f$
$\tilde{\beta}^{-1})(\beta(a))=f$ $(a)$}} for all ${\color{black}a\in K}$.
Therefore, there is an \textit{R}-homomorphism \textit{\textcolor{black}{$g:P\longrightarrow Q$}}
such that \textit{\textcolor{black}{$g\beta=f$. }}

\textcolor{black}{(1)$\Rightarrow$(3) Let $g\in M^{d}$, we have
$gf^{-1}:f(M)\rightarrow R_{R}$. Since $f(M)$ is a semisimple
small right ideal of }\textit{\textcolor{black}{R}}\textcolor{black}{{}
and }\textit{\textcolor{black}{R}}\textcolor{black}{{} is a right ss-injective
ring (by hypothesis), thus $gf^{-1}=a.$ for some $a\in R$.
Therefore, $g=af$ and hence $M^{d}=Rf$.}

\textcolor{black}{(3)$\Rightarrow$(1) Let $f:K\longrightarrow R$
be a right }\textit{\textcolor{black}{R}}\textcolor{black}{-homomorphism,
where }\textit{\textcolor{black}{K}}\textcolor{black}{{} is a semisimple
small right ideal of }\textit{\textcolor{black}{R}}\textcolor{black}{{}
and let $i:K\longrightarrow R$ be the inclusion map, thus by (2)
we have $K^{d}=Ri$ and hence $f=ci$ in $K^{d}$ for some $c\in R$.
Thus there is $c\in R$ such that $f(a)=ca$ for all $a\in K$ and
this implies that }\textit{\textcolor{black}{R}}\textcolor{black}{{}
is a right ss-injective ring.}

\end{proof}

\begin{example}\label{Example:(4.2)}
\noindent\emph{(1) Every  universally mininjective ring is ss-injective, but
not conversely (see Example~\ref{Example:(5.7)}).}

\noindent\emph{(2) The two classes of universally mininjective rings and soc-injective
rings are different (see Example~\ref{Example:(5.7)} and Example~\ref{Example:(5.8)}).}\end{example}

\begin{cor}\label{Corollary:(4.3)} Let $R$ be a right ss-injective ring. Then:

\noindent (1) $R$ is a right mininjective ring.

\noindent (2) $lr(a)=Ra$, for all $a\in S_{r}\cap J$.

\noindent (3) $r(a)\subseteq r(b)$, $a\in S_{r}\cap J$, $b\in R$ implies $Rb\subseteq Ra$.

\noindent (4) $l(bR\cap r(a))=l(b)+Ra$, for all $a\in S_{r}\cap J$, $b\in R$.

\noindent (5) $l(K_{1}\cap K_{2})=l(K_{1})+l(K_{2})$, for all semisimple small right ideals $K_{1}$ and $K_{2}$ of $R$.\end{cor}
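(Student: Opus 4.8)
The plan is to obtain (1) directly from Lemma~\ref{Lemma:(2.5)} and to obtain (2)--(5) as the specialization of Lemma~\ref{Lemma:(2.13)} to the module $M = R_R$. First I would record the elementary observation that $R$ is right ss-injective exactly when $R_R$ is an ss-injective right $R$-module, and that an ss-$R_R$-injective module is by definition ss-\emph{quasi}-injective; hence Lemma~\ref{Lemma:(2.13)} is available with $M = R_R$. The second preliminary step is to set up the standard dictionary: $S = \mathrm{End}(R_R)$ is isomorphic to $R$ via $c \mapsto \lambda_c$, where $\lambda_c(x) = cx$. Under this isomorphism, $Sm$ corresponds to the left ideal $Rm$, the $S$-left-annihilator $l_S(X)$ of a subset $X \subseteq R$ corresponds to the ordinary left annihilator $l(X)$, one has $r_M(\lambda_a) = \ker(\lambda_a) = r(a)$ for every $a \in R$, and $\mathrm{soc}(R_R) \cap J(R_R) = S_r \cap J$.

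With these identifications in place, (1) is immediate: $R_R$ is ss-injective, hence mininjective by Lemma~\ref{Lemma:(2.5)}, i.e. $R$ is a right mininjective ring. Part (2) is Lemma~\ref{Lemma:(2.13)}(1): $l_M r_R(m) = Sm$ for $m \in \mathrm{soc}(M)\cap J(M)$ translates to $lr(a) = Ra$ for all $a \in S_r\cap J$. Part (3) is Lemma~\ref{Lemma:(2.13)}(2): if $r(a)\subseteq r(b)$ with $a\in S_r\cap J$ and $b\in R$, then $b \in l_M r_R(a) = Ra$, so $Rb\subseteq Ra$. Part (4) is Lemma~\ref{Lemma:(2.13)}(3): taking $\alpha = \lambda_a$ (so that $r_M(\alpha) = r(a)$) and letting $m$ be the distinguished element of $S_r\cap J$, the identity $l_S(mR\cap r_M(\alpha)) = l_S(m) + S\alpha$ becomes $l(mR\cap r(a)) = l(m) + Ra$, which is the asserted formula. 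Finally, part (5) is Lemma~\ref{Lemma:(2.13)}(6) applied to semisimple small right ideals $K_1, K_2$ of $R = M$, giving $l(K_1\cap K_2) = l(K_1) + l(K_2)$.

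Since each clause is a direct reading of an already-established fact, there is no genuine obstacle here; the only points demanding care are the bookkeeping in the isomorphism $\mathrm{End}(R_R)\cong R$ --- in particular that $S$-left-annihilators become ordinary left annihilators and that $\ker(\lambda_a) = r(a)$ --- and checking that in each clause the element constrained to lie in $S_r\cap J$ is exactly the one that plays the role of the distinguished element in the corresponding part of Lemma~\ref{Lemma:(2.13)} (in (4), the ``outer'' element appearing in $l(\,\cdot\, R\cap r(\,\cdot\,))$).
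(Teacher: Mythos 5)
Your clauses (1), (2), (3) and (5) are fine and coincide with the paper's own (equally terse) proof: (1) is Lemma~\ref{Lemma:(2.5)}, and (2), (3), (5) are Lemma~\ref{Lemma:(2.13)}(1),(2),(6) for $M=R_R$ under the dictionary $\mathrm{End}(R_R)\cong R$ that you set up. The problem is clause (4) --- exactly the point you flag at the end as needing care --- where your alignment is wrong. In Lemma~\ref{Lemma:(2.13)}(3) the element required to lie in $\mathrm{soc}(M)\cap J(M)$ is $m$, the one occurring in $mR$ and $l_S(m)$, while $\alpha\in S$ is arbitrary; specializing $m=b$, $\alpha=\lambda_a$ therefore gives $l(bR\cap r(a))=l(b)+Ra$ under the hypothesis $b\in S_r\cap J$ with $a$ arbitrary. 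Corollary~\ref{Corollary:(4.3)}(4) asserts the identity for $a\in S_r\cap J$ and \emph{all} $b\in R$: the constraint sits on the element in the $\alpha$-slot, not the $m$-slot. (That this is the intended reading is confirmed by the use of (4) in Theorem~\ref{Theorem:(4.5)}(1), where $a\in S_r\cap J$ and $b$ ranges over all of $R$.) So (4) is not a literal instance of the lemma, and a ``direct reading'' does not yield the stated clause.

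The statement is still true, but the repair is an actual extra step. The inclusion $l(b)+Ra\subseteq l(bR\cap r(a))$ needs no hypothesis on either element. For the reverse inclusion, rerun the proof of Lemma~\ref{Lemma:(2.13)}(3) with $m=b$ arbitrary and $\alpha=\lambda_a$: the only place that proof uses $m\in\mathrm{soc}(M)\cap J(M)$ is to conclude $\alpha(m)\in\mathrm{soc}(M)\cap J(M)$, so that part (2) of the lemma can be applied to $\alpha(m)$. Here $\alpha(m)=ab$, which lies in $S_r\cap J$ because $S_r$ and $J$ are two-sided ideals of $R$; with that observation the rest of the argument goes through verbatim. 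You should either carry out this adaptation explicitly or prove (4) directly; as written, the claimed specialization fails.
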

\begin{proof}
(1) By Lemma~\ref{Lemma:(2.5)}.

(2), (3),(4) and (5) are obtained by Lemma~\ref{Lemma:(2.13)}.
\end{proof}

\subparagraph*{\textmd{The following is an example of a right mininjective ring
which is not right ss-injective.}}

\begin{example}\label{Example:(4.4)}
\emph{(The  Bj\"{o}rk  Example \cite[Example 2.5]{15NiYu03})\textcolor{black}{.
Let }\textit{\textcolor{black}{F}}\textcolor{black}{{} be a field and
let $a\mapsto\bar{a}$ be an isomorphism $F\longrightarrow\bar{F}\subseteq F$,
where the subfield $\bar{F}\neq F$. Let }\textit{\textcolor{black}{R}}\textcolor{black}{{}
denote the left vector space on basis $\{1,  t\}$, and make }\textit{\textcolor{black}{R}}\textcolor{black}{{} into an
}\textit{\textcolor{black}{F}}\textcolor{black}{-algebra by defining
$t^{2}=0$ and $ta=\bar{a}t$ for all $a\in F$. By \cite[Example 2.5]{15NiYu03} we have }\textit{\textcolor{black}{R}}\textcolor{black}{{} is
a right mininjective local ring. It is mentioned in \cite[Example 4.15]{2AmYoZe05}, that }\textit{\textcolor{black}{R}}\textcolor{black}{{}
is not right soc-injective. Since $R$ is a local ring, thus by Corollary~\ref{Corollary:(3.11)}(1), }\textit{\textcolor{black}{R}}\textcolor{black}{{} is not
right ss-injective ring.}}\end{example}

\begin{thm}\label{Theorem:(4.5)} Let $R$ be a right ss-injective ring. Then: 

\noindent (1) $S_{r}\cap J\subseteq Z_{r}$. 

\noindent (2)  If the ascending chain $r(a_{1})\subseteq r(a_{2}a_{1})\subseteq...$
terminates for any sequence $a_{1},a_{2},...$ in $Z_{r}\cap S_{r}$,
then $S_{r}\cap J$ is right t-nilpotent and $S_{r}\cap J=Z_{r}\cap S_{r}$. \end{thm}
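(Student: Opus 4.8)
The plan is to exploit the mininjectivity consequences of ss-injectivity collected in Corollary~\ref{Corollary:(4.3)}, together with the annihilator identities $lr(a)=Ra$ and $r(a)\subseteq r(b)\Rightarrow Rb\subseteq Ra$ valid for $a\in S_r\cap J$. For part (1), let $a\in S_r\cap J$; I must show $r(a)\subseteq^{ess}R_R$. Since $aR$ is a semisimple right ideal contained in $J$, it is a finite direct sum of simple small right ideals, and it suffices to treat $a$ with $aR$ simple. If $r(a)$ is not essential, pick a nonzero right ideal $T$ with $r(a)\cap T=0$; then $a\cdot:T\to aT$ is an isomorphism, so $aT$ is a simple right ideal isomorphic to $T$. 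The idea is to produce from this a contradiction with $a\in J$: because $R$ is right mininjective and $aT$ is simple, one gets a splitting forcing $aT$ (hence a copy of it) to be a summand of $R_R$; but a simple summand of $R_R$ that also lies in $J$ must be zero. More precisely, I would use Corollary~\ref{Corollary:(4.3)}(3): from $aT\cong T$ choose $0\neq at_0$ generating $aT$; one checks $r(at_0)=r(t_0)$ after suitable bookkeeping, and since $at_0\in S_r\cap J$ the identity gives $Rt_0\subseteq Rat_0$, so $t_0=ba t_0$ for some $b\in R$, i.e. $(1-ba)t_0=0$; but $ba\in J$ makes $1-ba$ a unit, so $t_0=0$, a contradiction. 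Hence $r(a)\subseteq^{ess}R_R$, i.e. $a\in Z_r$, proving $S_r\cap J\subseteq Z_r$, and therefore $S_r\cap J\subseteq Z_r\cap S_r$.

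For part (2), assume the stated ACC on right annihilators $r(a_1)\subseteq r(a_2a_1)\subseteq\cdots$ for sequences in $Z_r\cap S_r$. First I would show $S_r\cap J$ is right $t$-nilpotent: given any sequence $a_1,a_2,\dots$ in $S_r\cap J$, by part (1) each $a_i\in Z_r\cap S_r$, so the chain $r(a_1)\subseteq r(a_2a_1)\subseteq\cdots$ stabilizes, say at stage $n$. The standard argument (as in Nicholson--Yousif's treatment of right $t$-nilpotence via such chains) then forces $a_n a_{n-1}\cdots a_1=0$: indeed $r(a_{n+1}a_n\cdots a_1)=r(a_n\cdots a_1)$ combined with the factorization identities in Corollary~\ref{Corollary:(4.3)}(2)--(3) (applied to the elements of $S_r\cap J$, which lie in $S_r\cap J$ as products stay in the socle-times-$J$ context) yields that the product already vanishes. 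This gives right $t$-nilpotence of $S_r\cap J$. For the reverse inclusion $Z_r\cap S_r\subseteq S_r\cap J$, it suffices to show $Z_r\cap S_r\subseteq J$; but a right $t$-nilpotent one-sided ideal is contained in $J$ (every element is nilpotent-like along chains, hence quasi-regular), and since $S_r\cap J$ is right $t$-nilpotent while $Z_r\cap S_r$ is an ideal of $R$ containing it, one shows $Z_r\cap S_r$ is itself right $t$-nilpotent using the ACC hypothesis directly on arbitrary sequences in $Z_r\cap S_r$, whence $Z_r\cap S_r\subseteq J$ and thus $Z_r\cap S_r\subseteq S_r\cap J$. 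Combined with part (1) this gives $S_r\cap J=Z_r\cap S_r$.

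The main obstacle I anticipate is the careful verification in part (2) that products of elements from $S_r\cap J$ behave well enough that the annihilator identities of Corollary~\ref{Corollary:(4.3)} can be iterated: those identities are stated for a single $a\in S_r\cap J$, and one must check at each step of the induction that the relevant partial product $a_k\cdots a_1$ (or the element playing the role of ``$a$'') still lies in $S_r\cap J$, or arrange the argument so that only membership of the $a_i$ individually is used. A secondary delicate point is the claim $r(at_0)=r(t_0)$ in part (1): the inclusion $r(at_0)\supseteq r(t_0)$ needs $a t_0\neq 0$ and the fact that $a\cdot$ is injective on $T\ni t_0$, which follows from $r(a)\cap T=0$; I would write this out explicitly since it is the linchpin of the contradiction. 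Everything else is a routine assembly of Corollary~\ref{Corollary:(4.3)}, Lemma~\ref{Lemma:(2.5)}, and the structure of $S_r\cap J$ as a direct sum of simple small right ideals.
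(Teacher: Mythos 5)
Your part (1) is correct and is a legitimate variant of the paper's argument: where the paper applies Corollary~\ref{Corollary:(4.3)}(4) (from $bR\cap r(a)=0$ it gets $l(b)+Ra=R$, and $Ra\subseteq J$ forces $l(b)=R$, i.e.\ $b=0$), you instead pick $0\neq t_{0}\in T$ with $r(a)\cap T=0$, verify $r(at_{0})=r(t_{0})$, and use Corollary~\ref{Corollary:(4.3)}(3) together with $at_{0}\in S_{r}\cap J$ to get $t_{0}=bat_{0}$ and hence $t_{0}=0$ since $1-ba$ is a unit. That works, and your reduction to $aR$ simple is actually unnecessary for it.

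Part (2), however, has a real gap at its decisive step. You assert that once the chain $r(x_{1})\subseteq r(x_{2}x_{1})\subseteq\cdots$ stabilizes at stage $m$, ``the standard argument\dots combined with the factorization identities in Corollary~\ref{Corollary:(4.3)}(2)--(3)'' forces $x_{m}\cdots x_{1}=0$, and your closing paragraph shows you intend to iterate those identities over the partial products $x_{k}\cdots x_{1}$, worrying about whether they stay in $S_{r}\cap J$. That is the wrong mechanism: the annihilator identities of Corollary~\ref{Corollary:(4.3)} play no role here, and no condition on the partial products is needed. The step that actually closes the argument is this: if $x_{m}\cdots x_{1}\neq 0$, then since $x_{m+1}\in Z_{r}$ the right ideal $r(x_{m+1})$ is \emph{essential}, so it meets the nonzero right ideal $(x_{m}\cdots x_{1})R$; thus $0\neq x_{m}\cdots x_{1}r\in r(x_{m+1})$ for some $r$, i.e.\ $r\in r(x_{m+1}x_{m}\cdots x_{1})=r(x_{m}\cdots x_{1})$, giving $x_{m}\cdots x_{1}r=0$, a contradiction. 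This is exactly how the paper proves that $Z_{r}\cap S_{r}$ is right $t$-nilpotent in one stroke (whence $Z_{r}\cap S_{r}\subseteq J$, and equality with $S_{r}\cap J$ follows from part (1)); your two-stage plan (first $S_{r}\cap J$, then separately $Z_{r}\cap S_{r}$) is redundant once this is done, but more importantly, without supplying the essentiality argument your proof of $t$-nilpotence is not actually there.
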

\begin{proof}
\textcolor{black}{(1) Let $a\in S_{r}\cap J$ and $bR\cap r(a)=0$
for any $b\in R$. By Corollary~\ref{Corollary:(4.3)}(4), $l(b)+Ra=l(bR\cap r(a))=l(0)=R$,
so $l(b)=R$ because $a\in J$, implies that $b=0$. Thus $r(a)\subseteq^{ess}R_{R}$
and hence $S_{r}\cap J\subseteq Z_{r}$.}

\textcolor{black}{(2) For any sequence $x_{1},x_{2},...$ in $Z_{r}\cap S_{r}$,
we have $r(x_{1})\subseteq r(x_{2}x_{1})\subseteq...$ . By hypothesis,
there exists $m\in\mathbb{N}$ such that $r(x_{m}...x_{2}x_{1})=r(x_{m+1}x_{m}...x_{2}x_{1})$.
If $x_{m}...x_{2}x_{1}\neq0$, then $(x_{m}...x_{2}x_{1})R\cap r(x_{m+1})\neq0$
and hence $0\neq x_{m}...x_{2}x_{1}r\in r(x_{m+1})$ for some $r\in R$.
Thus $x_{m+1}x_{m}...x_{2}x_{1}r=0$ and this implies that $x_{m}...x_{2}x_{1}r=0$,
a contradiction. Thus $Z_{r}\cap S_{r}$ is right t-nilpotent, so
$Z_{r}\cap S_{r}\subseteq J$ . Therefore, $S_{r}\cap J=Z_{r}\cap S_{r}$
by (1).}\end{proof}

\begin{prop}\label{Proposition:(4.6)} Let $R$ be a right ss-injective ring. Then: 

\noindent (1) If $Ra$ is a simple left ideal of $R$,
then soc$(aR)\cap J(aR)$ is zero or simple. 

\noindent (2) $rl(S_{r}\cap J)=S_{r}\cap J$ if and only if $rl(K)=K$ for all semisimple small right ideals $K$ of $R$.\end{prop}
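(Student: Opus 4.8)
The plan is to use the single feature of right ss-injectivity that matters here: by Theorem~\ref{Theorem:(2.17)} (or directly from Definition~\ref{Definition:(2.1)(a)}), every $R$-homomorphism from a semisimple small right ideal of $R$ into $R_R$ is left multiplication by a fixed element of $R$. Besides this I will only need three elementary facts: a cyclic module has small radical; a submodule of $R_R$ that is small lies in $J$; and $S_r=\operatorname{soc}(R_R)$ is a two-sided ideal.

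For (1), put $T=\operatorname{soc}(aR)\cap J(aR)$ and assume $T\neq0$. Since $aR$ is cyclic, $J(aR)\ll aR$, hence $T\ll aR$, and therefore $T\ll R_R$ (a submodule small in a submodule is small in the whole module); being a submodule of the semisimple module $\operatorname{soc}(aR)$, $T$ is in fact a semisimple small right ideal of $R$. The crucial preliminary is a rigidity statement: \emph{every $0\neq x\in T$ has $l(x)=l(a)$}. Indeed, writing $x=ar$ with $r\in R$, the map $Ra\to R$, $ba\mapsto bar=bx$, is a well-defined surjection of the simple module $Ra$ onto $Rx$; since $x\in Rx$ and $x\neq0$, $Rx$ is simple, so $l(x)$ is a maximal left ideal, and $l(a)\subseteq l(x)$ then forces $l(x)=l(a)$. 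Now suppose, for contradiction, that $T$ is not simple. As $T$ is semisimple it contains a direct sum $xR\oplus yR$ with $x,y$ nonzero, and the projection $\pi:xR\oplus yR\to R$ onto the first summand is an $R$-homomorphism from a semisimple small right ideal; by ss-injectivity $\pi$ is left multiplication by some $c\in R$. Then $cx=\pi(x)=x\neq0$ whereas $cy=\pi(y)=0$, so $c\notin l(x)$ and $c\in l(y)$, contradicting $l(x)=l(y)$. Hence $T$ is simple or zero.

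For (2), the direction ``$\Leftarrow$'' is immediate: $S_r\cap J$ is itself a semisimple small right ideal (it is semisimple as a submodule of $S_r$, and small since $S_r\cap J\subseteq J\ll R_R$), so one applies the hypothesis with $K=S_r\cap J$. For ``$\Rightarrow$'', let $K$ be any semisimple small right ideal; then $K\subseteq S_r\cap J$, and since $S_r$ is semisimple we may write $S_r\cap J=K\oplus K'$ with $K'$ again a semisimple small right ideal. Let $\pi:S_r\cap J\to R$ be the projection with kernel $K$ and image $K'$; by ss-injectivity there is $c\in R$ with $\pi(n)=cn$ for all $n\in S_r\cap J$, and in particular $c\in l(K)$. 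Now take $w\in rl(K)$. Since $K\subseteq S_r\cap J$ we have $l(K)\supseteq l(S_r\cap J)$, hence $rl(K)\subseteq rl(S_r\cap J)=S_r\cap J$ by hypothesis; thus $\pi(w)=cw$, while $c\in l(K)$ and $w\in rl(K)$ give $cw=0$. Therefore $w\in\ker\pi=K$, so $rl(K)\subseteq K$ and equality holds.

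The only delicate points are the preparatory ones in (1): one must make sure $T$ is a semisimple small right ideal of $R$ itself (not just of $aR$), and that the rigidity $l(x)=l(a)$ holds for \emph{every} nonzero $x\in T$ — it is precisely this that turns the single endomorphism produced by ss-injectivity into a contradiction in the last step. After these, (1) is just the projection trick, and (2) offers no real resistance once one observes that the hypothesis $rl(S_r\cap J)=S_r\cap J$ is exactly what is needed to bring the elements of $rl(K)$ inside $S_r\cap J$, where $\pi$ can be evaluated on them.
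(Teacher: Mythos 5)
Your proof is correct. In part (1) you prove the same rigidity fact as the paper ($l(x)=l(a)$ for every nonzero $x$ in $T=\mathrm{soc}(aR)\cap J(aR)$, via maximality of $l(a)$), and you are in fact more careful than the paper in verifying that $T$ is a semisimple \emph{small} right ideal of $R_R$ itself (cyclic modules have small radical, and small-in-a-submodule implies small-in-the-module). Where you diverge is in how the contradiction and the equality $rl(K)=K$ are extracted: the paper routes both parts through Corollary~\ref{Corollary:(4.3)}(5), the annihilator identity $l(K_1\cap K_2)=l(K_1)+l(K_2)$ for semisimple small right ideals (itself a consequence of Lemma~\ref{Lemma:(2.13)}(6)), concluding in (1) that $l(x_1)+l(x_2)=l(0)=R$ forces $l(a)=R$, and in (2) that $K\subseteq^{ess}rl(K)\subseteq rl(S_r\cap J)=S_r\cap J$, whence $K=rl(K)$ by semisimplicity. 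You instead apply ss-injectivity directly to a projection map: in (1) the projection $xR\oplus yR\to xR$ becomes left multiplication by some $c$ separating $l(x)$ from $l(y)$, and in (2) the projection of $S_r\cap J=K\oplus K'$ with kernel $K$ becomes left multiplication by some $c\in l(K)$, which kills every $w\in rl(K)\subseteq S_r\cap J$ and so places $w$ in $K$. Since the annihilator identity is itself proved by exactly this projection trick, the two proofs have the same engine; yours is more self-contained (it does not need Corollary~\ref{Corollary:(4.3)} or the essentiality argument), while the paper's is shorter given the machinery already established in Section~2.
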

\begin{proof}
(1)\textcolor{black}{{} Suppose that soc$(aR)\cap J(aR)$ is a nonzero.
Let $x_{1}R$ and $x_{2}R$ be any simple small right ideals of }\textit{\textcolor{black}{R}}\textcolor{black}{{}
with $x_{i}\in aR$, $i=1,2$. If $x_{1}R\cap x_{2}R=0$, then by
Corollary~\ref{Corollary:(4.3)}(5) $l(x_{1})+l(x_{2})=R$. Since $x_{i}\in aR$, thus
$x_{i}=ar_{i}$ for some $r_{i}\in R$, $i=1,2$, that is $l(a)\subseteq l(ar_{i})=l(x_{i})$,
$i=1,2$. Since $Ra$ is a simple, then $l(a)\subseteq^{max}R$, that
is $l(x_{1})=l(x_{2})=l(a)$. Therefore, $l(a)=R$ and hence $a=0$
and this contradicts the minimality of $Ra$. Thus soc$(aR)\cap J(aR)$
is simple.}

\textcolor{black}{(2) Suppose that $rl(S_{r}\cap J)=S_{r}\cap J$
and let }\textit{\textcolor{black}{K}}\textcolor{black}{{} be a semisimple
small right ideal of }\textit{\textcolor{black}{R}}\textcolor{black}{,
trivially we have $K\subseteq rl(K)$. If $K\cap xR=0$ for some $x\in rl(K)$,
then by Corollary~\ref{Corollary:(4.3)}(5) $l(K\cap xR)=l(K)+l(xR)=R$, since $x\in rl(K)\subseteq$$rl(S_{r}\cap J)=S_{r}\cap J$
. If $y\in l(K)$, then $yx=0$, that is $y(xr)=0$ for all $r\in R$
and hence $l(K)\subseteq l(xR)$.Thus $l(xR)=R$, so $x=0$ and this
means that $K\subseteq^{ess}rl(K)$. Since $K\subseteq^{ess}rl(K)\subseteq rl(S_{r}\cap J)=S_{r}\cap J$,
it follows that $K=rl(K)$. The converse is trivial.}\end{proof}

\begin{lem}\label{Lemma:(4.7)} The following statements are equivalent. 

\noindent (1) $rl(K)=K$, for all semisimple small
right ideals $K$  of $R$.

\noindent  (2) $r(l(K)\cap Ra)=K+r(a)$, for all semisimple small right ideals $K$
of $R$ and all $a\in R$. \end{lem}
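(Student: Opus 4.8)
The plan is to treat this as a standard double-annihilator equivalence. The implication $(2)\Rightarrow(1)$ is essentially free: specialize to $a=1$. Then $Ra=R$, so $l(K)\cap Ra=l(K)$, and since $r(1)=0$, statement (2) reads $r(l(K))=K+0=K$, which is exactly (1).

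For $(1)\Rightarrow(2)$, fix a semisimple small right ideal $K$ of $R$ and an element $a\in R$. First I would dispose of the inclusion $K+r(a)\subseteq r(l(K)\cap Ra)$ by a routine check: if $k\in K$ and $z\in l(K)\cap Ra$, then $zk=0$ because $z\in l(K)$; and if $x\in r(a)$ and $z=ra\in l(K)\cap Ra$, then $zx=r(ax)=0$. The substance is the reverse inclusion. Given $x\in r(l(K)\cap Ra)$, the goal is to show $ax\in aK$: once $ax=ak$ for some $k\in K$, we get $a(x-k)=0$, i.e. $x-k\in r(a)$, so $x=k+(x-k)\in K+r(a)$.

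To prove $ax\in aK$, the key observation — and the one step that needs a little care — is that $aK$ is again a semisimple small right ideal of $R$. Indeed, left multiplication by $a$ is an endomorphism $\lambda_a$ of the right module $R_R$, and $aK=\lambda_a(K)$; as a homomorphic image of the semisimple module $K$ it is semisimple, and since the image of a small submodule under any module homomorphism is small, $aK\ll R_R$. Therefore hypothesis (1) applies to $aK$, giving $rl(aK)=aK$, and it suffices to verify $ax\in rl(aK)$, i.e. $w(ax)=0$ for every $w\in l(aK)$. But $w\in l(aK)$ means exactly that $(wa)K=0$, so $wa\in l(K)$, and clearly $wa\in Ra$; hence $wa\in l(K)\cap Ra$, and since $x\in r(l(K)\cap Ra)$ we obtain $(wa)x=0$, that is $w(ax)=0$. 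Thus $ax\in rl(aK)=aK$, which completes the argument.

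The main obstacle is recognizing that $aK$ is still a semisimple small right ideal, so that the hypothesis (1) can legitimately be invoked for it; once that is in hand, everything else is bookkeeping with one-sided annihilators.
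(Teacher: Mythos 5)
Your argument is correct and follows essentially the same route as the paper's proof: the reverse implication is the specialization $a=1$, and the forward implication hinges on observing that $aK$ is again a semisimple small right ideal so that hypothesis (1) yields $ax\in rl(aK)=aK$, whence $x-k\in r(a)$. Your explicit justification that $aK$ is semisimple and small (as the image of $K$ under left multiplication by $a$) is a detail the paper merely asserts, but the substance is identical.
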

\begin{proof}
\textcolor{black}{(1)$\Rightarrow$(2). Clearly, $K+r(a)\subseteq r(l(K)\cap Ra)$
by \cite[Proposition 2.16]{3AnFu74}. Now, let $x\in r(l(K)\cap Ra)$ and
$y\in l(aK)$. Then $yaK=0$ and $y\in l(ax)$. Thus $l(aK)\subseteq l(ax)$,
and so $ax\in rl(ax)\subseteq rl(aK)=aK$, since $aK$ is a semisimple
small right ideal of }\textit{\textcolor{black}{R}}\textcolor{black}{.
Hence $ax=ak$ for some $k\in K$, and so $(x-k)\in r(a)$. This leads
to $x\in K+r(a)$, that is $r(l(K)\cap Ra)=K+r(a)$.}

\textcolor{black}{(2)$\Rightarrow$(1). By taking $a=1$.}
\end{proof}

\subparagraph*{\textmd{\textcolor{black}{Recall that a right ideal }}\textmd{\textit{\textcolor{black}{I
}}}\textmd{\textcolor{black}{of }}\textmd{\textit{\textcolor{black}{R}}}\textmd{\textcolor{black}{{}
is said to be lie over a summand of $R_{R}$, if there exists a direct
decomposition $R_{R}=A_{R}\oplus B_{R}$ with $A\subseteq I$ and
$B\cap I\ll R_{R}$ (see \cite{13Nich76}) which leads to $I=A\oplus(B\cap I)$.}}}

\begin{lem}\label{Lemma:(4.8)} Let $K$ be an $m$-generated semisimple right ideal lies over summand of $R_{R}$. If $R$
is right ss-injective, then every homomorphism from $K$ to $R_{R}$ can be extended to an endomorphism of $R_{R}$. \end{lem}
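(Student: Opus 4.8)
The plan is to make the decomposition supplied by the hypothesis explicit, then split an arbitrary $f\colon K\longrightarrow R_{R}$ into a ``summand part'' and a ``small part'': the small part is extended by right ss-injectivity, and the summand part is extended for free since a direct summand is projective.

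First I would record the shape of $K$. Since $K$ lies over a summand of $R_{R}$, there is a decomposition $R_{R}=A\oplus B$ with $A\subseteq K$ and $B\cap K\ll R_{R}$, and the modular law gives $K=A\oplus(B\cap K)$. Here $A$, being a direct summand of $R_{R}$ contained in the semisimple module $K$, is a semisimple direct summand of $R_{R}$, while $B\cap K$ is a submodule of $K$, hence semisimple, and small in $R_{R}$; so $B\cap K$ is a semisimple small right ideal of $R$. (The hypothesis that $K$ be $m$-generated is not actually needed for this extension step; it is carried along for the results that use this lemma.)

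Next, given a right $R$-homomorphism $f\colon K\longrightarrow R_{R}$, restrict it to $B\cap K$. This is a homomorphism from a semisimple small right ideal into $R_{R}$, so right ss-injectivity of $R$ extends it to an $R$-endomorphism $\tilde{g}\colon R_{R}\longrightarrow R_{R}$ (explicitly, left multiplication by $\tilde{g}(1)$). Let $p\colon R_{R}\longrightarrow A$ and $q\colon R_{R}\longrightarrow B$ be the projections determined by $R_{R}=A\oplus B$, so that $p+q=\mathrm{id}_{R}$, and set
\[
\theta=(f|_{A})\circ p+\tilde{g}\circ q\colon R_{R}\longrightarrow R_{R}.
\]
This is an $R$-homomorphism, being a sum of composites of $R$-homomorphisms. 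To check $\theta|_{K}=f$, take $k\in K$ and write $k=a+b$ with $a\in A$ and $b\in B\cap K$; then $p(k)=a$ and $q(k)=b$, so using $p|_{A}=\mathrm{id}_{A}$, $q(A)=0$, and $\tilde{g}|_{B\cap K}=f|_{B\cap K}$ we get $\theta(k)=f(a)+\tilde{g}(b)=f(a)+f(b)=f(k)$. Hence $\theta$ is an endomorphism of $R_{R}$ extending $f$.

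The only point needing care — and the nearest thing to an obstacle — is that the extension $\tilde{g}$ produced by ss-injectivity is defined on all of $R_{R}$ but controlled only on $B\cap K$, which is not itself a direct summand. Composing with the projection $q$ onto $B$ (rather than trying to project onto $B\cap K$) is precisely what makes $(f|_{A})\circ p$ and $\tilde{g}\circ q$ glue into a genuine homomorphism that still restricts correctly to $K$: on $A$ the first term reproduces $f$ while the second vanishes, and on $B\cap K$ the roles are reversed. Everything else is a routine verification of additivity.
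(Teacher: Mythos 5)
Your proof is correct and follows essentially the same route as the paper: split $K=A\oplus(B\cap K)$ into its summand part and its semisimple small part, extend the restriction of $f$ to the small part by right ss-injectivity, and glue using the projections of $R_{R}=A\oplus B$ (the paper writes these projections as multiplication by $e$ and $1-e$, and spends an extra step replacing the small part by $(1-e)B$ because it does not retain the original complement, which your bookkeeping makes unnecessary). Your remark that the $m$-generated hypothesis plays no role in this extension step is also accurate.
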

\begin{proof}
\textcolor{black}{Let $\alpha:K\longrightarrow R$ be a right }\textit{\textcolor{black}{R}}\textcolor{black}{-homomorphism.
By hypothesis, $K=eR\oplus B$, for some $e^{2}=e\in R$, where $B$
is an }\textit{\textcolor{black}{m}}\textcolor{black}{-generated semisimple
small right ideal of }\textit{\textcolor{black}{R}}\textcolor{black}{{}
. Now, we need prove that $K=eR\oplus(1-e)B$. Clearly, $eR+(1-e)B$
is a direct sum. Let $x\in K$, then $x=a+b$ for some $a\in eR$,
$b\in B$, so we can write $x=a+eb+(1-e)b$ and this implies that
$x\in eR\oplus(1-e)B$. Conversely, let $x\in eR\oplus(1-e)B$. Thus
$x=a+(1-e)b$, for some $a\in eR$, $b\in B$. We obtain $x=a+(1-e)b=(a-eb)+b\in eR\oplus B$.
It is obvious that $(1-e)B$ is an }\textit{\textcolor{black}{m}}\textcolor{black}{-generated
semisimple small right ideal. Since }\textit{\textcolor{black}{R}}\textcolor{black}{{}
is a right ss-injective, then there exists $\gamma\in End(R_{R})$
such that $\gamma_{|(1-e)B}=\alpha_{|(1-e)B}$ . Define $\beta:R_{R}\longrightarrow R_{R}$
by $\beta(x)=\alpha(ex)+\gamma((1-e)x)$, for all $x\in R$ which
is a well defined $R$-homomorphism. If $x\in K$, then $x=a+b$ where
$a\in eR$ and $b\in(1-e)B$, so $\beta(x)=\alpha(ex)+\gamma((1-e)x)=\alpha(a)+\gamma(b)=\alpha(a)+\alpha(b)=\alpha(x)$
which yields $\beta$ is an extension of $\alpha$. }\end{proof}

\begin{cor}\label{Corollary:(4.9)} Let $R$ be a semiregular ring (or just every finitely generated semisimple right ideal lies over a summand of $R_{R}$). If $R$ is a right ss-injective ring, then every $R$-homomorphism from a finitely generated semisimple right ideal to $R$ extends to $R$.\end{cor}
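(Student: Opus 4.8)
The plan is to reduce the statement immediately to Lemma~\ref{Lemma:(4.8)}; the only real work is to verify that the hypothesis guarantees, for every finitely generated semisimple right ideal $K$ of $R$, that $K$ lies over a summand of $R_{R}$ in the sense recalled just before Lemma~\ref{Lemma:(4.8)}, i.e. that there is a decomposition $R_{R}=A\oplus B$ with $A\subseteq K$ and $B\cap K\ll R_{R}$ (equivalently $K=A\oplus(B\cap K)$).

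First I would recall the standard characterization that a ring is semiregular precisely when every finitely generated right ideal lies over a direct summand of $R_{R}$ (this is in Nicholson's work on semiregular rings); consequently the parenthetical condition ``every finitely generated semisimple right ideal lies over a summand of $R_{R}$'' is the weaker of the two hypotheses, and it is all that the argument needs. So let $K$ be an arbitrary finitely generated semisimple right ideal of $R$. Being semisimple and finitely generated, $K$ is a finite direct sum of simple right ideals, say $K=\bigoplus_{i=1}^{m}S_{i}$; in particular $K$ is $m$-generated. By hypothesis $K$ lies over a summand of $R_{R}$, so $K$ meets every requirement of Lemma~\ref{Lemma:(4.8)}.

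Applying Lemma~\ref{Lemma:(4.8)} to $K$ (using that $R$ is right ss-injective), every $R$-homomorphism $\alpha:K\longrightarrow R_{R}$ extends to an endomorphism of $R_{R}$, that is, extends to $R$. Since $K$ and $\alpha$ were arbitrary, the corollary follows. The only step carrying any content is the invocation of the characterization of semiregular rings via finitely generated right ideals lying over summands, together with the elementary observation that a finitely generated semisimple module is $m$-generated; after that the result is a direct application of Lemma~\ref{Lemma:(4.8)}, so I do not anticipate any genuine obstacle.
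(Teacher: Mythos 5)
Your argument is correct and is exactly the paper's proof: the authors likewise cite Nicholson's characterization of semiregular rings (every finitely generated right ideal lies over a summand of $R_{R}$) and then invoke Lemma~\ref{Lemma:(4.8)}. Your only additions are the explicit (and harmless) observations that the parenthetical hypothesis is the weaker one actually used and that a finitely generated semisimple ideal is $m$-generated.
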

\begin{proof}
\textcolor{black}{By \cite[Theorem 2.9]{13Nich76}  and Lemma~\ref{Lemma:(4.8)}.}\end{proof}

\begin{cor}\label{Corollary:(4.10)} Let $S_{r}$ be a finitely generated and lie over a summand of $R_{R}$, then $R$ is a right ss-injective ring if and only if $R$ is right soc-injective.
\end{cor}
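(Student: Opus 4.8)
The plan is to get the substantive implication (right ss-injective $\Rightarrow$ right soc-injective) directly out of Lemma~\ref{Lemma:(4.8)} applied to the socle itself, and to dispose of the converse by the general fact that soc-injectivity implies ss-injectivity (Example~\ref{example:(2.2)}(1)).

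First I would handle the easy direction. Suppose $R$ is right soc-injective and let $f\colon K\longrightarrow R_{R}$ be an $R$-homomorphism with $K$ a semisimple small right ideal. Then $K\subseteq \mathrm{soc}(R_{R})=S_{r}$, and since $S_{r}$ is semisimple, $K$ is a direct summand of $S_{r}$; composing $f$ with the projection $S_{r}\longrightarrow K$ gives an extension of $f$ to $S_{r}$, which by hypothesis extends to $R_{R}$. Hence $R$ is right ss-injective.

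For the main direction, assume $R$ is right ss-injective. By hypothesis $S_{r}$ is finitely generated, say $m$-generated; it is semisimple, being the socle; and it lies over a summand of $R_{R}$. Thus $K=S_{r}$ satisfies exactly the hypotheses of Lemma~\ref{Lemma:(4.8)}, so every $R$-homomorphism from $S_{r}=\mathrm{soc}(R_{R})$ to $R_{R}$ extends to an endomorphism of $R_{R}$; by definition this is precisely the statement that $R$ is right soc-injective. (One could instead quote Corollary~\ref{Corollary:(4.9)}, but one would then have to argue that the standing hypothesis forces every finitely generated semisimple right ideal to lie over a summand of $R_{R}$; applying Lemma~\ref{Lemma:(4.8)} to $S_{r}$ directly avoids this detour.)

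Since both implications are bootstrapped from results already established, I do not expect any real obstacle. The only points that need a moment's care are the verification that the socle $S_{r}$ itself meets the three hypotheses of Lemma~\ref{Lemma:(4.8)} (semisimple, finitely generated, lying over a summand of $R_{R}$), and, in the converse direction, the observation that a homomorphism out of a semisimple small right ideal always factors through $S_{r}$ because semisimple modules are direct sums of their simple submodules.
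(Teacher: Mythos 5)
Your proof is correct and follows the route the paper clearly intends: Corollary~\ref{Corollary:(4.10)} is stated without proof immediately after Lemma~\ref{Lemma:(4.8)}, and applying that lemma with $K=S_{r}$ (which is semisimple, $m$-generated for some $m$ by the finite generation hypothesis, and lies over a summand by assumption) yields exactly the definition of right soc-injectivity, while the converse is the standard fact recorded in Example~\ref{example:(2.2)}(1). Both directions are handled cleanly, including the observation that a semisimple small right ideal sits inside $S_{r}$ as a direct summand of it.
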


\subparagraph*{\textmd{\textcolor{black}{Recall that a ring }}\textmd{\textit{\textcolor{black}{R}}}\textmd{\textcolor{black}{{}
is called right minannihilator if every simple right ideal }}\textmd{\textit{\textcolor{black}{K}}}\textmd{\textcolor{black}{{}
of }}\textmd{\textit{\textcolor{black}{R}}}\textmd{\textcolor{black}{{}
is an annihilator; equivalently, if $rl(K)=K$ (see \cite{14NiYo97}).}}}

\begin{lem}\label{Lemma:(4.11)} A ring $R$ is a right minannihilator if and only if  $rl(K)=K$ for any simple small right ideal $K$ of $R$.
\end{lem}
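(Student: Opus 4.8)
The forward implication is immediate: if $R$ is right minannihilator, then $rl(K)=K$ holds for \emph{every} simple right ideal $K$, and in particular for every simple small right ideal. So the entire content is in the converse, and the plan is to reduce an arbitrary simple right ideal to one of two easy cases, exactly as in the proof of Lemma~\ref{Lemma:(2.5)}.

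Assume $rl(K)=K$ for every simple small right ideal $K$ of $R$, and let $K$ be an arbitrary simple right ideal. By \cite[Lemma 3.8]{16Pas04}, either $K$ is nilpotent or $K$ is a direct summand of $R_{R}$. First I would dispose of the nilpotent case: if $K$ is nilpotent, then $K\subseteq J$ by \cite[Corollary 6.2.8]{6Bla11}, so $K\ll R_{R}$, i.e.\ $K$ is a simple small right ideal, whence $rl(K)=K$ by hypothesis.

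For the remaining case, suppose $K=eR$ with $e^{2}=e\in R$. Here one checks directly (no hypothesis needed) that $rl(eR)=eR$: indeed $l(eR)=l(e)=R(1-e)$, and then $rl(eR)=r\big(R(1-e)\big)=\{x\in R\mid (1-e)x=0\}=eR$. Thus $rl(K)=K$ in this case as well. Combining the two cases, $rl(K)=K$ for every simple right ideal $K$, so $R$ is right minannihilator.

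I do not expect any genuine obstacle: the argument is a short dichotomy, and the only point requiring a line of verification is the identity $rl(eR)=eR$ for an idempotent $e$, which is a routine annihilator computation. The essential input is the structural fact \cite[Lemma 3.8]{16Pas04} that a simple one-sided ideal is either nilpotent (hence inside $J$, hence small) or a summand, which is precisely the same device already used for Lemma~\ref{Lemma:(2.5)}.
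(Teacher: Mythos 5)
Your proof is correct. The paper in fact states Lemma~\ref{Lemma:(4.11)} with no proof at all, evidently regarding it as routine; your two-case argument (a simple right ideal is either nilpotent, hence contained in $J$ and therefore small, so the hypothesis applies, or else of the form $eR$ with $e^{2}=e$, where the direct computation $rl(eR)=r\bigl(R(1-e)\bigr)=eR$ needs no hypothesis) is precisely the dichotomy the authors themselves use in Lemma~\ref{Lemma:(2.5)} and is the intended justification.
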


\begin{lem}\label{Lemma:(4.12)} A ring $R$ is a left minannihilator if and only if  $lr(K)=K$ for any simple small left ideal $K$ of $R$.

\end{lem}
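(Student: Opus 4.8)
The plan is to mimic exactly the argument in the proof of Lemma~\ref{Lemma:(2.5)} (and its counterpart Lemma~\ref{Lemma:(4.11)}), using the structure theory of simple one-sided ideals on the left. The forward implication is immediate: if $R$ is left minannihilator then $lr(K)=K$ holds for \emph{every} simple left ideal $K$ of $R$, hence in particular for every simple \emph{small} left ideal.

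For the converse I would assume $lr(K)=K$ for each simple small left ideal $K$ and take an arbitrary simple left ideal $K$ of $R$. By the left-hand version of \cite[Lemma 3.8]{16Pas04}, either $K$ is nilpotent or $K$ is a direct summand of ${}_{R}R$. In the first case $K\subseteq J$ by \cite[Corollary 6.2.8]{6Bla11}; since $J$ is small in ${}_{R}R$ (from $J+L={}_{R}R$ write $1=j+\ell$ with $j\in J,\ \ell\in L$, so $\ell=1-j$ is a unit and $L=R$), the left ideal $K$ is small in ${}_{R}R$, and the hypothesis gives $lr(K)=K$. In the second case write $K=Re$ with $e^{2}=e\in R$; a short computation ($r(Re)=r(e)=(1-e)R$ and $l((1-e)R)=l(1-e)=Re$) shows $lr(K)=K$ again — equivalently, a direct summand is an annihilator and every annihilator $l(X)$ satisfies $lr(l(X))=l(X)$. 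Combining the two cases, $lr(K)=K$ for every simple left ideal $K$, i.e. $R$ is left minannihilator.

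There is no substantive obstacle here: every step is either a fact already recorded in the paper or a one-line annihilator computation. The only point that needs care is to cite the \emph{left} analogues of the results used for the right-sided statements (Passman's nilpotent-or-summand dichotomy and the smallness of $J$), which is routine since both are left–right symmetric.
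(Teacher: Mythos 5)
Your argument is correct and is exactly the reasoning the paper leaves implicit (the lemma is stated without proof, but the intended justification is the same nilpotent-or-summand dichotomy already used in Lemma~\ref{Lemma:(2.5)}): a simple left ideal is either small (when nilpotent, hence in $J$) or of the form $Re$ with $e^{2}=e$, and your annihilator computation $lr(Re)=Re$ settles the summand case. Nothing further is needed.
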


\begin{cor}\label{Corollary:(4.13)} Let $R$  be a right ss-injective ring, then the following hold: 

\noindent (1) If $rl(S_{r}\cap J)=S_{r}\cap J$, then $R$ is right minannihilator. 

\noindent (2) If $S_{\ell}\subseteq S_{r}$, then: 

\noindent i)  $S_{\ell}=S_{r}$.

\noindent ii) $R$ is a left minannihilator ring. \end{cor}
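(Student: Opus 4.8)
The plan is to read (1) directly off Proposition~\ref{Proposition:(4.6)}(2) and Lemma~\ref{Lemma:(4.11)}, and to prove (2) by analysing the simple one-sided ideals of $R$ with the help of Corollary~\ref{Corollary:(4.3)}. Recall that $R$ is right mininjective and that $lr(a)=Ra$ for every $a\in S_{r}\cap J$ (Corollary~\ref{Corollary:(4.3)}(1),(2)), that $S_{r}$ and $S_{\ell}$ are two-sided ideals, and that a one-sided ideal of $R$ is small precisely when it is contained in $J$. For (1): if $rl(S_{r}\cap J)=S_{r}\cap J$, then $rl(K)=K$ for every semisimple small right ideal $K$ by Proposition~\ref{Proposition:(4.6)}(2), hence in particular for every simple small right ideal, so $R$ is right minannihilator by Lemma~\ref{Lemma:(4.11)}.

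The technical core of (2) is the claim: \emph{if $kR$ is a simple small right ideal, then ${}_{R}Rk$ is a simple left ideal, and hence $kR\subseteq S_{\ell}$.} To prove it, take any $0\ne bk\in Rk$. Then $bk\in J$ since $J$ is two-sided, and the epimorphism $kR\to bkR$, $kr\mapsto bkr$, is nonzero with simple domain, hence an isomorphism; therefore $bkR$ is a simple small right ideal and $r(bk)=r(k)$, so $Rbk=lr(bk)=lr(k)=Rk$ by Corollary~\ref{Corollary:(4.3)}(2). As $bk$ ranged over the nonzero elements of $Rk$, the module ${}_{R}Rk$ is simple; thus $k\in Rk\subseteq S_{\ell}$ and $kR\subseteq S_{\ell}$. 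Since $S_{r}\cap J$ is a right submodule of the semisimple right module $S_{r}$, it is a sum of simple small right ideals, and summing yields $S_{r}\cap J\subseteq S_{\ell}$ (with no extra hypothesis).

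Part (2)(ii) now follows by the left--right dual together with the hypothesis: for a simple small left ideal $L=Rk$ we have $k\in L\subseteq J$ and $k\in L\subseteq S_{\ell}\subseteq S_{r}$, so $k\in S_{r}\cap J$; then $lr(L)=lr(k)=Rk=L$ by Corollary~\ref{Corollary:(4.3)}(2) (using $r(L)=r(Rk)=r(k)$), and Lemma~\ref{Lemma:(4.12)} gives that $R$ is left minannihilator. For part (2)(i) it remains to prove $S_{r}\subseteq S_{\ell}$. From the core claim and the hypothesis, $S_{r}\cap J\subseteq S_{\ell}\subseteq S_{r}$, and since $S_{\ell}$ is a right submodule of the semisimple right module $S_{r}$ it is semisimple as a right module. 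Write $S_{r}=(S_{r}\cap J)\oplus A$ as right modules; as each simple right ideal is either nilpotent (hence inside $S_{r}\cap J$) or a direct summand of $R_{R}$, the complement $A$ is a sum of simple right ideals of the form $eR$ with $e^{2}=e$, and it is enough to show $eR\subseteq S_{\ell}$ for each such $e$. Here $e\in S_{r}$ and $S_{r}$ is a left ideal, so $Re\subseteq S_{r}$; assuming $\mathrm{soc}({}_{R}Re)\ne 0$, pick $0\ne a\in\mathrm{soc}({}_{R}Re)\subseteq S_{\ell}$. Since $a=ae$ we get $0\ne aR\subseteq eR$, so $aR=eR$ by simplicity, and hence $eR=aR\subseteq S_{\ell}$. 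Thus $A\subseteq S_{\ell}$, giving $S_{r}=(S_{r}\cap J)\oplus A\subseteq S_{\ell}$ and therefore $S_{\ell}=S_{r}$.

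The main obstacle is the claim $\mathrm{soc}({}_{R}Re)\ne 0$ for a direct-summand simple right ideal $eR$. Observe that $s\mapsto es$ is a homomorphism of right modules on $S_{\ell}$, so $eS_{\ell}$ is a semisimple right submodule of the simple module $eR$, hence $eS_{\ell}=0$ or $eS_{\ell}=eR$; and $eS_{\ell}=eR$ would give $e\in eS_{\ell}\subseteq S_{\ell}$, whence $Re\subseteq S_{\ell}$ is semisimple and $\mathrm{soc}({}_{R}Re)=Re\ne 0$. So one must rule out $eS_{\ell}=0$, i.e. rule out the possibility that the projective indecomposable left module $Re$ (with $eRe$ a division ring) has zero left socle; in that situation $e\notin S_{\ell}$, $eR\cap S_{\ell}=0$ and $eS_{\ell}=0$, and one derives a contradiction from $S_{\ell}\subseteq S_{r}$, the inclusion $Re\subseteq S_{r}$ and right mininjectivity. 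This is the only place where the strength of right ss-injectivity is needed beyond the inclusion $S_{r}\cap J\subseteq S_{\ell}$ already established.
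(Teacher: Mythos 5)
Parts (1) and (2)(ii) of your argument are correct and essentially coincide with the paper's: (1) is Proposition~\ref{Proposition:(4.6)}(2) combined with Lemma~\ref{Lemma:(4.11)}, and (2)(ii) is Corollary~\ref{Corollary:(4.3)}(2) applied to a generator $k$ of a simple small left ideal (which lies in $S_{r}\cap J$ because $S_{\ell}\subseteq S_{r}$ and $Rk\subseteq J$) together with Lemma~\ref{Lemma:(4.12)}. The problem is (2)(i). Your core claim does correctly show, via $lr(a)=Ra$ for $a\in S_{r}\cap J$, that $Rk$ is simple whenever $kR$ is a \emph{small} simple right ideal, whence $S_{r}\cap J\subseteq S_{\ell}$; but the remaining simple right ideals, those of the form $eR$ with $e^{2}=e$, are exactly where the argument breaks, in two places. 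First, the inference ``$a=ae$, hence $0\neq aR\subseteq eR$'' is false: $a\in Re$ gives $ae=a$, whereas $aR\subseteq eR$ would require $ea=a$ (in $M_{2}(k)$ with $e=e_{11}$ and $a=e_{21}$ one has $a=ae$ but $aR\not\subseteq eR$). Second, and more seriously, the key claim that $eS_{\ell}\neq 0$ (equivalently that soc$({}_{R}Re)\neq 0$) is never proved; you close by asserting that ``one derives a contradiction from $S_{\ell}\subseteq S_{r}$, the inclusion $Re\subseteq S_{r}$ and right mininjectivity'' without deriving it. That unproved assertion is the entire content of $S_{r}\subseteq S_{\ell}$ for the non-small simple right ideals, so the proof of (2)(i) is incomplete at its crux.

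What closes the gap --- and is in fact the paper's one-line proof of (2)(i) --- is the minsymmetry of right mininjective rings: by Corollary~\ref{Corollary:(4.3)}(1) the ring $R$ is right mininjective, and \cite[Proposition 1.14(4)]{14NiYo97} (equivalently, the minsymmetry statement of \cite[Theorem 1.14]{14NiYo97}) then gives that $Ra$ is simple for \emph{every} simple right ideal $aR$, small or not. The standard argument is short: for $0\neq ba\in Ra$, the map $aR\to baR$, $ar\mapsto bar$, is injective because $aR$ is simple and $baR\neq 0$, so $bar\mapsto ar$ is a well-defined homomorphism from the simple right ideal $baR$ into $R$; by mininjectivity it is left multiplication by some $c\in R$, whence $a=cba$ and $Rba=Ra$. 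This yields $S_{r}\subseteq S_{\ell}$ outright, and together with the hypothesis $S_{\ell}\subseteq S_{r}$ gives $S_{\ell}=S_{r}$; the decomposition $S_{r}=(S_{r}\cap J)\oplus A$ and the analysis of soc$({}_{R}Re)$ then become unnecessary. Note that your route cannot be repaired using only Corollary~\ref{Corollary:(4.3)}(2), since that result is confined to elements of $S_{r}\cap J$ and therefore never reaches the idempotent-generated simple right ideals.
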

\begin{proof}
(1)\textcolor{black}{{} Let $aR$ be a simple small right ideal of }\textit{\textcolor{black}{R}}\textcolor{black}{,
thus $rl(a)=aR$ by Proposition~\ref{Proposition:(4.6)}(2). Therefore, }\textit{\textcolor{black}{R}}\textcolor{black}{{}
is a right minannihilator ring.}

\textcolor{black}{(2) i) Since }\textit{\textcolor{black}{R}}\textcolor{black}{{}
is a right ss-injective ring, thus it is right mininjective and it follows
from \cite[Proposition 1.14 (4)]{14NiYo97}   that $S_{\ell}=S_{r}$ .}

\textcolor{black}{ii) If $Ra$ is a simple small left ideal of }\textit{\textcolor{black}{R}}\textcolor{black}{,
then $lr(a)=Ra$ by Corollary~\ref{Corollary:(4.3)}(2)  and hence }\textit{\textcolor{black}{R}}\textcolor{black}{{}
is a left minannihilator ring.}\end{proof}

\begin{prop}\label{Proposition:(4.14)} The following statements are equivalent for a right ss-injective ring $R$.

\noindent (1) $S_{\ell}\subseteq S_{r}$. 

\noindent (2) $S_{\ell}=S_{r}$. 

\noindent (3) $R$ is a left mininjective ring. \end{prop}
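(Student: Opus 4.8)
The plan is to use that, being right ss-injective, $R$ is right mininjective (Lemma~\ref{Lemma:(2.5)}, equivalently Corollary~\ref{Corollary:(4.3)}(1)), so that the annihilator identities of Corollary~\ref{Corollary:(4.3)} and the classical comparison of one-sided socles for mininjective rings from \cite[Proposition 1.14]{14NiYo97} are at our disposal. The implication $(2)\Rightarrow(1)$ is trivial, so it remains to prove $(1)\Rightarrow(2)$, $(2)\Rightarrow(3)$ and $(3)\Rightarrow(1)$. Now $(1)\Rightarrow(2)$ is exactly Corollary~\ref{Corollary:(4.13)}(2)(i): a right mininjective ring with $S_{\ell}\subseteq S_{r}$ has $S_{\ell}=S_{r}$ by \cite[Proposition 1.14(4)]{14NiYo97}.

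The substantive step is $(2)\Rightarrow(3)$. Assuming $S_{\ell}=S_{r}$, I want to show that every left $R$-homomorphism $f\colon I\To R$ from a simple left ideal $I$ of $R$ extends to ${}_{R}R$. By the left-hand analogue of \cite[Lemma 3.8]{16Pas04}, either $I$ is a direct summand of ${}_{R}R$ (then $f$ extends trivially) or $I$ is nilpotent, hence $I\subseteq J$ by the left-hand analogue of \cite[Corollary 6.2.8]{6Bla11}; in the latter case write $I=Ra$, so $a\in J$ and $a\in I\subseteq S_{\ell}=S_{r}$, i.e. $a\in S_{r}\cap J$. Put $c=f(a)$; since $l(a)$ is maximal and $f$ is well defined, $l(a)\subseteq l(c)$, and it suffices to prove $c\in aR$ (then, writing $c=as$, the map $x\mapsto xs$ is a left $R$-homomorphism $R\To R$ extending $f$). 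The plan here is: first verify that $c$ again lies in $S_{r}\cap J$ --- this is where the hypothesis $S_{\ell}=S_{r}$ enters, via the facts that $Rc=f(Ra)$ is a homomorphic image of the simple nilpotent left ideal $Ra$ and that $Rc\subseteq\mathrm{soc}({}_{R}R)=S_{\ell}=S_{r}$; granting this, $aR$, $cR$, and hence $aR+cR$, are semisimple small right ideals of $R$, so $aR$ is a direct summand of the semisimple module $aR+cR$, say $aR+cR=aR\oplus D$. The projection $\pi\colon aR+cR\To aR$ (the identity on $aR$) then extends, by right ss-injectivity of $R$, to $\widetilde{\pi}\colon R_{R}\To R_{R}$, i.e. to left multiplication by some $e\in R$. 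From $\widetilde{\pi}(a)=a$ we get $1-e\in l(a)\subseteq l(c)$, hence $ec=c$; but $\widetilde{\pi}(c)=\pi(c)\in aR$, so $c=\pi(c)\in aR$, as wanted. (Alternatively, $(2)\Rightarrow(3)$ follows at once from \cite[Proposition 1.14]{14NiYo97} applied to the right mininjective ring $R$.)

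For $(3)\Rightarrow(1)$: if $R$ is left mininjective then, being also right mininjective, $R$ is mininjective on both sides, so $S_{\ell}=S_{r}$ by \cite[Proposition 1.14]{14NiYo97} (or its left--right dual), in particular $S_{\ell}\subseteq S_{r}$. I expect the main obstacle to be the passage $(2)\Rightarrow(3)$: right ss-injectivity only supplies extensions of homomorphisms \emph{out of right ideals}, whereas left mininjectivity concerns homomorphisms \emph{out of left ideals}, so the argument must route through the coincidence $S_{\ell}=S_{r}$ --- which is exactly what puts the elements $a$ and $f(a)$ into $S_{r}\cap J$ and thereby makes $aR+f(a)R$ a semisimple small right ideal, the situation in which Corollary~\ref{Corollary:(4.3)} (and hence right-sided injectivity, through a projection onto a direct summand) can be applied.
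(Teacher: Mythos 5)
Your overall architecture (reduce to extending $f\colon Ra\To R$ for $Ra$ simple nilpotent, pass to $c=f(a)$, and pull $c$ into $aR$ by extending the projection $aR+cR\To aR$ via right ss-injectivity) is an attractive and genuinely different route from the paper's, which instead reduces $(2)\Rightarrow(3)$ to showing that $R$ is a right minannihilator ring and then invokes \cite[Corollary 2.34]{15NiYu03}. But there is a genuine gap at the pivotal step where you place $c$ in $S_{r}\cap J$. Your justification for $c\in J$ is that ``$Rc=f(Ra)$ is a homomorphic image of the simple nilpotent left ideal $Ra$.'' That is not a proof: $f$ is only a module homomorphism, so it tells you $Rc\cong Ra$ (when $c\neq 0$), and a simple left ideal module-isomorphic to a nilpotent one need not itself be nilpotent. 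For instance, in the ring of upper triangular $2\times 2$ matrices over a field one has $l(e_{12})=l(e_{11})$, so $Re_{12}\cong Re_{11}$, yet $Re_{12}$ is nilpotent while $Re_{11}$ is an idempotent-generated direct summand. Without $c\in J$ you cannot assert that $cR$ (hence $aR+cR$) is a \emph{small} right ideal, and the ss-injectivity hypothesis then gives you nothing to extend $\pi$.

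This is precisely the point where the paper does real work: for $0\neq x\in rl(aR)$ it shows $l(x)=l(a)$, puts $x\in S_{r}$ via $S_{\ell}=S_{r}$, and then rules out $Rx=Re$ ($e^{2}=e$) by writing $e=rx$, deducing $(e-1)ra=0$, hence $ra\in eR$ with $eR$ semisimple, so $raR\subseteq^{\oplus}R_{R}$; since $raR\subseteq aR\subseteq J$ is small, $ra=0$, forcing $e=0$. You have all the needed ingredients ($a\in J$, $l(a)=l(c)$ by maximality, $e\in Rc\subseteq S_{\ell}=S_{r}$), so the gap is fillable by inserting exactly this idempotent argument, after which the rest of your extension argument goes through. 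Finally, your parenthetical claim that $(2)\Rightarrow(3)$ ``follows at once from \cite[Proposition 1.14]{14NiYo97}'' should be deleted: that result gives $S_{r}\subseteq S_{\ell}$ and minsymmetry for right mininjective rings, but it does not produce left mininjectivity from $S_{\ell}=S_{r}$ alone --- if it did, the ss-injectivity hypothesis in this proposition would be superfluous.
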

\begin{proof}
\textcolor{black}{(1)$\Rightarrow$(2) By Corollary~\ref{Corollary:(4.13)}(2) (i).}

\textcolor{black}{(2)$\Rightarrow$(3) By Corollary~\ref{Corollary:(4.13)}(2) and \cite[Corollary 2.34]{15NiYu03}, we need only show that }\textit{\textcolor{black}{R}}\textcolor{black}{{}
is right minannihilator ring. Let $aR$ be a simple small right ideal,
then $Ra$ is a simple small left ideal by \cite[Theorem 1.14]{14NiYo97}.
Let $0\neq x\in rl(aR)$, then $l(a)\subseteq l(x)$. Since $l(a)\leq^{max}R$,
thus $l(a)=l(x)$ and hence $Rx$ is simple left ideal, that is $x\in S_{r}$.
Now , if $Rx=Re$ for some $e=e^{2}\in R$, then $e=rx$ for some
$0\neq r\in R$. Since $(e-1)e=0$, then $(e-1)rx=0$, that is $(e-1)ra=0$
and this implies that $ra\in eR$. Thus $raR\subseteq eR$, but $eR$
is semisimple right ideal, so $raR\subseteq^{\oplus}R$ and hence
$ra=0$. Therefore, $rx=0$, that is $e=0$, a contradiction. Thus
$x\in J$ and hence $x\in S_{r}\cap J$. Therefore, $aR\subseteq rl(aR)\subseteq S_{r}\cap J$.
Now, let $aR\cap yR=0$ for some $y\in rl(aR)$, thus $l(aR)+l(yR)=l(aR\cap yR)=R$.
Since $y\in rl(aR)$, thus $l(aR)\subseteq l(yR)$ and hence $l(yR)=R$,
that is $y=0$. Therefore, $aR\subseteq^{ess}rl(aR)$, so $aR=rl(aR)$
as desired.}

\textcolor{black}{(3)$\Rightarrow$(1) Follows from \cite[Corollary 2.34]{15NiYu03}.}
\end{proof}

\subparagraph*{\textmd{\textcolor{black}{Recall that a ring }}\textmd{\textit{\textcolor{black}{R}}}\textmd{\textcolor{black}{{}
is said to be right minfull if it is semiperfect, right mininjective
and soc$(eR)\neq0$ for each local idempotent $e\in R$ (see \cite{15NiYu03}).
A ring }}\textmd{\textit{\textcolor{black}{R}}}\textmd{\textcolor{black}{{}
is called right min-}}\textmd{\textit{\textcolor{black}{PF}}}\textmd{\textcolor{black}{,
if it is a semiperfect, right mininjective, $S_{r}\subseteq^{ess}R_{R}$,
$lr(K)=K$ for every simple left ideal $K\subseteq Re$ for some local
idempotent $e\in R$ (see \cite{15NiYu03}).}}}

\begin{cor}\label{Corollary:(4.18)} Let $R$ be a right ss-injective ring, semiperfect with $S_{r}\subseteq^{ess}R_{R}$.
Then $R$ is right minfull ring and the following statements hold: 

\noindent (1) Every simple right ideal of $R$ is essential in a summand. 

\noindent (2) soc$(eR)$ is simple and essential in $eR$ for every local idempotent $e\in R$. Moreover, $R$ is right finitely cogenerated. 

\noindent (3) For every semisimple right ideal $I$ of $R$, there exists $e=e^{2}\in R$ such that $I\subseteq^{ess}rl(I)\subseteq^{ess}eR$. 

\noindent (4) $S_{r}\subseteq S_{\ell}\subseteq rl(S_{r})$. 

\noindent (5) If $I$  is a semisimple right ideal of $R$ and $aR$ is a simple right ideal of $R$ with $I\cap aR=0$, then $rl(I\oplus aR)=rl(I)\oplus rl(aR)$. 

\noindent (6) $rl(\overset{{\scriptscriptstyle n}}{\underset{{\scriptscriptstyle i=1}}{\bigoplus}}a_{i}R)=\underset{{\scriptscriptstyle i=1}}{\overset{{\scriptscriptstyle n}}{\bigoplus}}rl(a_{i}R)$
, where $\overset{{\scriptscriptstyle n}}{\underset{{\scriptscriptstyle i=1}}{\bigoplus}}a_{i}R$
is a direct sum of simple right ideals. 

\noindent (7) The following statements are equivalent. 

\noindent (a) $S_{r}=rl(S_{r})$. 

\noindent (b) $K=rl(K)$ for every semisimple right ideals $K$ of $R$.

\noindent (c) $kR=rl(kR)$ for every simple right ideals $kR$ of $R$.

\noindent (d) $S_{r}=S_{\ell}$. 

\noindent (e) \emph{soc}$(Re)$ is simple for all local idempotent
$e\in R$. 

\noindent (f) \emph{soc}$(Re)=S_{r}e$ for every local idempotent $e\in R$. 

\noindent (g) $R$ is left mininjective. 

\noindent (h) $L=lr(L)$ for every semisimple left ideals $L$ of $R$.

\noindent (i) $R$ is left minfull ring. 

\noindent (j) $S_{r}\cap J=rl(S_{r}\cap J)$. 

\noindent (k) $K=rl(K)$ for every semisimple small right ideals $K$ of $R$. 

\noindent (l) $L=lr(L)$ for every semisimple small left ideals $L$ of  $R$.

\noindent (8) If $R$ satisfies any condition of (7), then $r(S_{\ell}\cap J)\subseteq^{ess}R_{R}$. \end{cor}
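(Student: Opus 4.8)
The plan is to first show that $R$ is right minfull, then read off parts (1)--(6) from the resulting structure of $\mathrm{soc}(R_{R})$, settle the long list of equivalences in (7) by reducing them to the known annihilator characterizations for right minfull rings, and finally prove (8) by a direct essentiality computation in which the ss-injectivity hypothesis re-enters through Theorem~\ref{Theorem:(4.5)}(1). \emph{$R$ is right minfull, with (1) and (2):} by Corollary~\ref{Corollary:(4.3)}(1) $R$ is right mininjective, and it is semiperfect by hypothesis; writing $1=e_{1}+\dots+e_{n}$ with the $e_{i}$ orthogonal local idempotents, $R_{R}=\bigoplus_{i}e_{i}R$ and $\mathrm{soc}(e_{i}R)=S_{r}\cap e_{i}R\neq0$ since $S_{r}\subseteq^{ess}R_{R}$, so $R$ is right minfull. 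The standard structure of a right minfull ring then forces each $\mathrm{soc}(e_{i}R)$ to be simple and essential in $e_{i}R$, which is (2); hence $S_{r}=\bigoplus_{i}\mathrm{soc}(e_{i}R)$ has finite length and $R_{R}$ is finitely cogenerated. For (1), a simple right ideal $kR$ sits inside a unique $e_{j}R$ (multiply by the $e_{i}$), hence equals the simple module $\mathrm{soc}(e_{j}R)$, which is essential in the summand $e_{j}R$.

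\textbf{Parts (3)--(6).} For (3): a semisimple right ideal $I\subseteq S_{r}$ can be realized essentially inside a direct summand $eR$ of $R_{R}$ (standard for right minfull rings); then $l(eR)\subseteq l(I)$ gives $rl(I)\subseteq rl(eR)=eR$, and $I\subseteq rl(I)\subseteq eR$ with $I$ essential in $eR$ forces $I\subseteq^{ess}rl(I)\subseteq^{ess}eR$. For (4): $kR$ simple implies $Rk$ simple by \cite[Theorem 1.14]{14NiYo97}, so each generator of $\mathrm{soc}(e_{i}R)$ lies in $S_{\ell}$, and since $S_{\ell}$ is two-sided this gives $S_{r}=\sum_{i}\mathrm{soc}(e_{i}R)\subseteq S_{\ell}$; the inclusion $S_{\ell}\subseteq rl(S_{r})$ is again part of the right minfull picture. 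Parts (5) and (6) apply (3) to $I$, to $aR$, and to $I\oplus aR$, together with Corollary~\ref{Corollary:(4.3)}(5), and then iterate.

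\textbf{Part (7).} Split $S_{r}=(S_{r}\cap J)\oplus A$, where $A=\bigoplus\{e_{i}R:e_{i}R\text{ simple}\}$: indeed $\mathrm{soc}(e_{i}R)$ is a \emph{proper} essential submodule of $e_{i}R$ exactly when $e_{i}R$ is non-simple, and then it is not a summand of $R_{R}$, hence nilpotent by \cite[Lemma 3.8]{16Pas04}, hence inside $J$; while $e_{i}R\cap J=0$ when $e_{i}R$ is simple. Since $rl(A)=A$ and $rl$ is additive on a direct sum of simple right ideals (part (6)), one gets $rl(S_{r})=rl(S_{r}\cap J)\oplus A$, whence (a)$\Leftrightarrow$(j) and (b)$\Leftrightarrow$(k), and dually (h)$\Leftrightarrow$(l). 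The core cycle (a)$\Rightarrow$(b)$\Rightarrow$(c)$\Rightarrow$(a) uses (3) (for a semisimple right ideal $K$, $rl(K)\subseteq rl(S_{r})$ is semisimple and $K\subseteq^{ess}rl(K)$, forcing $K=rl(K)$) and (6); (a)$\Leftrightarrow$(d) follows from $S_{r}\subseteq S_{\ell}\subseteq rl(S_{r})$; (d)$\Leftrightarrow$(g) is Proposition~\ref{Proposition:(4.14)}; and (e), (f), (h), (i) are the left--right duals of (1)--(6) applied to the now left mininjective (hence left minfull) ring $R$.

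\textbf{Part (8), and the main obstacle.} Assume any condition of (7); in particular (d) gives $S_{\ell}\cap J=S_{r}\cap J=:T$, a two-sided ideal which, lying in the finite-length module $S_{r}$, is a finite direct sum $T=\sum_{i=1}^{m}t_{i}R$ of simple right ideals, with $T\subseteq Z_{r}$ by Theorem~\ref{Theorem:(4.5)}(1). Since $r(T)=\bigcap_{i}r(t_{i}R)$ and a finite intersection of essential right ideals is essential, it suffices to show $r(t_{i}R)\subseteq^{ess}R_{R}$. As $t_{i}R$ is simple and cyclic, $t_{i}RJ=0$, so $J\subseteq r(t_{i}R)$. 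Now take $0\neq x\in R$: if $xJ\neq0$, some $xj\in xR\cap J\subseteq xR\cap r(t_{i}R)$ is nonzero; if $xJ=0$ then $xRJ=0$, so $xR$ is a semisimple $R$-module, and either $xR\cap J\neq0$ (finish as before) or every simple summand $U$ of $xR$ is non-nilpotent, hence a summand of $R_{R}$ with $U\cap J=0$ (\cite[Lemma 3.8]{16Pas04}); then $t_{i}RU\subseteq t_{i}R\cap U$, and since $t_{i}R\subseteq J$ while $U\cap J=0$ the simple modules $t_{i}R$ and $U$ are distinct, so $t_{i}RU=0$, i.e.\ $U\subseteq r(t_{i}R)$, whence $xR=\bigoplus U\subseteq r(t_{i}R)$. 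In every case $xR\cap r(t_{i}R)\neq0$, so $r(t_{i}R)\subseteq^{ess}R_{R}$ and (8) follows. I expect the real effort to lie not in this last computation but in the bookkeeping for (1)--(7): keeping the left-sided assertions of (7) derived solely from $R$ being (by then shown to be) left mininjective and left minfull, with no appeal to a non-existent left ss-injectivity, and correctly matching the $J$-conditions (j)--(l) to the right minfull equivalences through the splitting $S_{r}=(S_{r}\cap J)\oplus A$.
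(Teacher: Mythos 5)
There is a genuine gap, and it sits at the foundation of your argument. You derive part (2) --- and hence the finite cogeneration, part (1), part (3), and much of (7) --- from ``the standard structure of a right minfull ring,'' but right minfullness alone does \emph{not} force $\mathrm{soc}(eR)$ to be simple. The paper's own Example~\ref{Example:(4.4)} (the Bj\"{o}rk example) is a local, right mininjective ring with $S_{r}=J=Ft$ essential in $R_{R}$, hence right minfull, and yet $\mathrm{soc}(R_{R})=Ft$ decomposes as a direct sum of $[F:\bar{F}]>1$ simple right ideals $c\bar{F}t$; it is precisely for this reason that this ring fails to be right ss-injective. The simplicity of $\mathrm{soc}(eR)$ genuinely requires the injectivity hypothesis: one needs the identity $l(A\cap B)=l(A)+l(B)$ for simple right ideals $A,B\subseteq eR$ (so that $A\cap B=0$ forces $R=l(A)+l(B)$, contradicting the fact that both $l(A)$ and $l(B)$ equal the unique maximal left ideal over $R(1-e)=l(e)$). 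Your Corollary~\ref{Corollary:(4.3)}(5) supplies this only for semisimple \emph{small} right ideals, so an extra step is needed (if $eR$ is not itself simple, every simple right ideal inside $eR$ is nilpotent, hence small, by \cite[Lemma 3.8]{16Pas04}); you never make that argument. The paper avoids all of this by a one-line reduction you do not use: since $R$ is semiperfect, $R_{R}$ is finitely generated with $R/J$ semisimple, so Corollary~\ref{Corollary:(3.11)}(1) upgrades right ss-injectivity to right soc-injectivity, after which (1)--(6) and (7)(a)--(i) are quoted verbatim from \cite[Theorem 4.12]{2AmYoZe05}, and (j),(k),(l) are attached via Proposition~\ref{Proposition:(4.6)}(2), Corollary~\ref{Corollary:(4.13)}(1), Lemma~\ref{Lemma:(4.12)} and \cite[Theorem 3.14]{14NiYo97}.

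Two further steps would fail as written. In (1), a simple right ideal $kR$ need not ``sit inside a unique $e_{j}R$'': the idempotents $e_{i}$ only give an embedding of $kR$ into some $e_{j}R$ via a projection, not containment, so identifying $kR$ with $\mathrm{soc}(e_{j}R)$ does not show $kR$ itself is essential in a summand. In (8), the inclusion $t_{i}RU\subseteq t_{i}R\cap U$ is false: $U=fR$ is a right ideal, and $t_{i}rfs$ need not lie in $fR$; the most you get is $t_{i}Rf\subseteq Jf$, which need not vanish even though $fJ=0$. The paper's proof of (8) instead uses that $R$ is left Kasch together with $lr(S_{\ell}\cap J)=S_{\ell}\cap J$ (condition (l)): if $r(S_{\ell}\cap J)\cap K=0$ then $K\subseteq lr(S_{\ell}\cap J)=S_{\ell}\cap J$, and $r\bigl((S_{\ell}\cap J)+l(K)\bigr)=0$ forces $(S_{\ell}\cap J)+l(K)=R$ by \cite[Corollary 8.28(5)]{10Lam99}, whence $l(K)=R$ and $K=0$. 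I recommend restructuring your proof around the reduction through Corollary~\ref{Corollary:(3.11)}(1); without it, the ss-injectivity hypothesis is never used where it is actually needed.
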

\begin{proof}
\textcolor{black}{(1), (2), (3), (4), (5) and (6) are obtained by
Corollary~\ref{Corollary:(3.11)} and \cite[Theorem 4.12]{2AmYoZe05}.}

\textcolor{black}{(7) The equivalence of (a), (b), (c), (d), (e),
(f), (g), (h) and (i) follows from Corollary~\ref{Corollary:(3.11)} and \cite[Theorem 4.12]{2AmYoZe05}.}

\textcolor{black}{(b)$\Rightarrow$(j) Clear.}

\textcolor{black}{(j)$\Leftrightarrow$(k) By Proposition~\ref{Proposition:(4.6)}(2).}

\textcolor{black}{(k)$\Rightarrow$(c) By Corollary~\ref{Corollary:(4.13)}(1).}

\textcolor{black}{(h)$\Rightarrow$(l) Clear.}

\textcolor{black}{(l)$\Rightarrow$(d) Let $Ra$ be a simple left
ideal of }\textit{\textcolor{black}{R}}\textcolor{black}{. By hypothesis,
$lr(A)=A$ for any simple small left ideal }\textit{\textcolor{black}{A}}\textcolor{black}{{}
of }\textit{\textcolor{black}{R}}\textcolor{black}{. By Lemma~\ref{Lemma:(4.12)},
$lr(A)=A$ for any simple left ideal }\textit{\textcolor{black}{A}}\textcolor{black}{{}
of }\textit{\textcolor{black}{R}}\textcolor{black}{{} and hence $lr(Ra)=Ra$.
Thus }\textit{\textcolor{black}{R}}\textcolor{black}{{} is a right min-PF
ring and it follows from \cite[Theorem 3.14]{14NiYo97}  that $S_{r}=S_{\ell}$.}

\textcolor{black}{(8) Let }\textit{\textcolor{black}{K}}\textcolor{black}{{}
be a right ideal of }\textit{\textcolor{black}{R}}\textcolor{black}{{}
such that $r(S_{\ell}\cap J)\cap K=0$. Then $Kr(S_{\ell}\cap J)=0$
and we have $K\subseteq lr(S_{\ell}\cap J)=S_{\ell}\cap J=S_{r}\cap J$.
Now, $r((S_{\ell}\cap J)+l(K))=$$r(S_{\ell}\cap J)\cap K=0$. Since
}\textit{\textcolor{black}{R}}\textcolor{black}{{} is left Kasch, then
$(S_{\ell}\cap J)+l(K)=R$ by \cite[Corollary 8.28(5)]{10Lam99}. Thus $l(K)=R$
and hence $K=0$, so $r(S_{\ell}\cap J)\subseteq^{ess}R_{R}$.}
\end{proof}

\subparagraph*{\textmd{\textcolor{black}{Recall that }}\textmd{a}\textmd{\textcolor{black}{{}
right }}\textmd{\textit{\textcolor{black}{R}}}\textmd{\textcolor{black}{-module
}}\textmd{\textit{\textcolor{black}{M}}}\textmd{\textcolor{black}{{}
is called almost-injective if $M=E\oplus K$, where }}\textmd{\textit{\textcolor{black}{E}}}\textmd{\textcolor{black}{{}
is injective and }}\textmd{\textit{\textcolor{black}{K}}}\textmd{\textcolor{black}{{}
has zero radical (see \cite{23ZeHuAm11}). After reflect on \cite[Theorem 2.12]{23ZeHuAm11}
we found it is not true always and the reason is due to the homomorphism
$h:(L+J)/J\longrightarrow K$ in the part (3)$\Rightarrow$(1) of the proof of Theorem 2.12 in \cite{23ZeHuAm11} is not well define, in particular see the following example.}}}

\begin{example}\label{Example:(4.19)}
\emph{\textcolor{black}{In particular from the proof of the part (3)$\Rightarrow$(1)
in \cite[Theorem 2.12]{23ZeHuAm11}, we consider $R=\mathbb{Z}_{8}$ and $M=K=<\bar{4}>=\left\{ \bar{0},\bar{4}\right\} $.
Thus $M=E\oplus K$, where $E=0$ is a trivial injective }\textit{\textcolor{black}{R}}\textcolor{black}{-module
and $J(K)=0$. Let $f:L\longrightarrow K$ is the identity map, where
$L=K$. So, the map $h:(L+J)/J\longrightarrow K$ which is given by
$h(\ell+J)=f(\ell)$ is not well define, because $J=\bar{4}+J$ but
$h(J)=f(\bar{0})=\bar{0}\neq\bar{4}=f(\bar{4})=h(\bar{4}+J)$.}}
\end{example}

\subparagraph*{\textmd{The following example shows that there is a   contradiction in \cite[Theorem 2.12]{23ZeHuAm11}.}}

\begin{example}\label{Example:(4.20)}
\emph{\textcolor{black}{Assume that }\textit{\textcolor{black}{R}}\textcolor{black}{{}
is a right artinian ring but not semisimple (this claim is found because
for example $\mathbb{Z}_{8}$ satisfies this property). Now, let }\textit{\textcolor{black}{M}}\textcolor{black}{{}
be a simple right }\textit{\textcolor{black}{R}}\textcolor{black}{-module,
then }\textit{\textcolor{black}{M}}\textcolor{black}{{} is almost-injective.
Clearly, }\textit{\textcolor{black}{R}}\textcolor{black}{{} is semilocal
(see \cite[Theorem 9.2.2]{9Kas82}), thus }\textit{\textcolor{black}{M}}\textcolor{black}{{}
is injective by \cite[Theorem 2.12]{23ZeHuAm11}. Therefore, }\textit{\textcolor{black}{R}}\textcolor{black}{{}
is }\textit{\textcolor{black}{V}}\textcolor{black}{-ring and hence
}\textit{\textcolor{black}{R}}\textcolor{black}{{} is a right semisimple
ring but this contradiction. In other word, Since $\mathbb{Z}_{8}$
is semilocal ring and $<\bar{4}>=\left\{ \bar{0},\bar{4}\right\} $
is almost injective as $\mathbb{Z}_{8}$-module, then $<\bar{4}>$
is injective by \cite[Theorem 2.12]{23ZeHuAm11}. Thus $<\bar{4}>\subseteq^{\oplus}\mathbb{Z}_{8}$
and this contradiction.}}\end{example}

\begin{thm}\label{Theorem:(4.21)} The following statements are equivalent for a ring $R$.

\noindent (1) $R$ is semiprimitive and every almost-injective right $R$-module is quasi-continuous. 

\noindent (2) $R$ is right ss-injective and right minannihilator
ring,  $J$  is   right artinian, and every almost-injective right $R$-module
is quasi-continuous. 

\noindent (3) $R$ is a semisimple ring. \end{thm}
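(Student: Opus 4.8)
My plan is to prove the cycle $(3)\Rightarrow(1)\Rightarrow(2)\Rightarrow(3)$; only $(2)\Rightarrow(3)$ needs real work. The main external tool is the standard theory of quasi-continuous modules, used in the form: if $N_{1}\oplus N_{2}$ is quasi-continuous then $N_{1}$ is $N_{2}$-injective and $N_{2}$ is $N_{1}$-injective, and any direct summand of a quasi-continuous module is again quasi-continuous. I will also use that a right ideal is small in $R_{R}$ exactly when it is contained in $J$, that the radical of a free right $R$-module vanishes when $J=0$, and Baer's criterion, i.e. that $R$-injectivity coincides with injectivity. For $(3)\Rightarrow(1)$: a semisimple ring has $J=0$, hence is semiprimitive, and all its modules are injective, so every almost-injective module is injective and a fortiori quasi-continuous. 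For $(1)\Rightarrow(2)$: semiprimitivity gives $J=0$, whence $S_{r}\cap J=0$, so by Corollary~\ref{Corollary:(2.18)} every right $R$-module, in particular $R_{R}$, is ss-injective, i.e. $R$ is right ss-injective; $J=0$ is trivially right artinian; and since $J=0$ there are no nonzero small right ideals, so the criterion of Lemma~\ref{Lemma:(4.11)} holds vacuously and $R$ is right minannihilator. The almost-injective/quasi-continuous clause is shared by $(1)$ and $(2)$.

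For $(2)\Rightarrow(3)$, I first show $R$ is a right $V$-ring. For a simple right $R$-module $S$, the module $E(S)\oplus S$ is almost-injective (injective summand $E(S)$, radical-free summand $S$), hence quasi-continuous, so $S$ is $E(S)$-injective; extending the identity $S\to S$, viewed as a map out of the submodule $S\subseteq E(S)$, to a map $g\colon E(S)\to S$, the map $g$ is onto with $\ker g\cap S=0$, and $S\subseteq^{ess}E(S)$ forces $\ker g=0$, so $E(S)\cong S$ and therefore $S=E(S)$ is injective. Hence $R$ is a right $V$-ring and $J=J(R)=0$.

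Since $J=0$, the free module $R^{(\mathbb{N})}$ has zero radical, so $R^{(\mathbb{N})}$ is almost-injective, hence quasi-continuous. Regrouping $R^{(\mathbb{N})}\cong R^{(\mathbb{N})}\oplus R$ and applying the quoted fact with $N_{1}=R^{(\mathbb{N})}$, $N_{2}=R$, we get that $R^{(\mathbb{N})}$ is $R$-injective, hence injective by Baer's criterion. In particular $R_{R}$, a direct summand of $R^{(\mathbb{N})}$, is injective, so $R$ is right self-injective; and since $R_{R}^{(\mathbb{N})}$ is injective, Faith's theorem on $\Sigma$-injective modules makes $R_{R}$ be $\Sigma$-injective, so $R$ satisfies the ACC on right annihilators. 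A right self-injective ring with the ACC on right annihilators is quasi-Frobenius, hence right artinian, so $R/J$ is semisimple; as $J=0$, $R$ is semisimple, which proves $(2)\Rightarrow(3)$.

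The main obstacle is the last part of $(2)\Rightarrow(3)$. Using quasi-continuity only "pairwise" — on $E(S)\oplus S$ and on $R\oplus R$ — yields no more than that $R$ is a right self-injective right $V$-ring, which is far from semisimple (an infinite product of fields is such a ring). The decisive extra information must be extracted from quasi-continuity of the countably infinite direct sum $R^{(\mathbb{N})}$, and the delicate point is converting that single statement into a finiteness condition — injectivity of $R^{(\mathbb{N})}$, hence the ACC on right annihilators, hence QF — through the $\Sigma$-injectivity machinery; everything else is bookkeeping with the small-ideal/$V$-ring observations and the cited results.
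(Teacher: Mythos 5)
Your proof is correct, but for the substantive implication $(2)\Rightarrow(3)$ it takes a genuinely different route from the paper's. The paper uses \emph{all} of the hypotheses in $(2)$: it first shows (via quasi-continuity of $N\oplus M$ for $N\leq M$ with $J(M)=0$, and \cite[Corollary 2.14]{12MoMu90}) that every radical-zero module is semisimple, so $R/J$ is semisimple and $R$ is semilocal; the clause ``$J$ is right artinian'' then forces $R$ to be right artinian, the ss-injectivity and minannihilator clauses feed into Corollary~\ref{Corollary:(4.18)}(7) to make $R$ left and right mininjective, whence $R$ is $QF$ by \cite[Corollary 4.8]{14NiYo97}; finally quasi-continuity of $R\oplus(R/J)$ makes $R/J$ injective, hence projective, hence $J\subseteq^{\oplus}R$ and $J=0$. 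You instead extract everything from the quasi-continuity clause alone: the $E(S)\oplus S$ argument gives that $R$ is a right $V$-ring (so $J=0$), and then quasi-continuity of the radical-zero module $R^{(\mathbb{N})}\cong R\oplus R^{(\mathbb{N})}$ gives $R$-injectivity of $R^{(\mathbb{N})}$, hence its injectivity by Baer, hence $\Sigma$-injectivity of $R_{R}$ and the ACC on right annihilators by Faith's theorem, hence $QF$ by the Faith criterion for self-injective rings, hence semisimplicity since $J=0$. Each step checks out (in particular $J(N)\subseteq J(M)$ for $N\leq M$, so your almost-injective witnesses really are almost-injective, and \cite[Corollary 2.14]{12MoMu90} applies to the internal decomposition you use). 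What your approach buys is a sharper statement: the condition ``every almost-injective right $R$-module is quasi-continuous'' by itself already forces $R$ to be semisimple, making the remaining clauses of $(1)$ and $(2)$ logically redundant. The price is reliance on two heavier external theorems of Faith ($\Sigma$-injectivity via the ACC on annihilators, and self-injective $+$ ACC on right annihilators $\Rightarrow QF$) that the paper never invokes, whereas the paper's argument stays inside the ss-injective/mininjective machinery it has already built.
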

\begin{proof}
\textcolor{black}{(1)$\Rightarrow$(2) and (3)$\Rightarrow$(1) are
clear.}

\textcolor{black}{(2)$\Rightarrow$(3) Let }\textit{\textcolor{black}{M}}\textcolor{black}{{}
be a right }\textit{\textcolor{black}{R}}\textcolor{black}{-module
with zero radical. If }\textit{\textcolor{black}{N}}\textcolor{black}{{}
is an arbitrary nonzero submodule of }\textit{\textcolor{black}{M}}\textcolor{black}{,
then $N\oplus M$ is quasi-continuous and by \cite[Corollary 2.14]{12MoMu90},
}\textit{\textcolor{black}{N}}\textcolor{black}{{} is }\textit{\textcolor{black}{M}}\textcolor{black}{-injective.
Thus $N\leq^{\oplus}M$ and hence }\textit{\textcolor{black}{M}}\textcolor{black}{{}
is semisimple. In particular $R/J$ is semisimple }\textit{\textcolor{black}{R}}\textcolor{black}{-module
and hence $R/J$ is artinian by \cite[Theorem 9.2.2(b)]{9Kas82}, so }\textit{\textcolor{black}{R}}\textcolor{black}{{}
is semilocal ring. Since }\textsl{\textcolor{black}{J}}\textcolor{black}{{}
is a right artinian, then }\textit{\textcolor{black}{R}}\textcolor{black}{{}
is right artinian. So it follows from Corollary~\ref{Corollary:(4.18)}(7) that }\textit{\textcolor{black}{R}}\textcolor{black}{{}
is right and left mininjective. Thus \cite[Corollary 4.8]{14NiYo97} implies
that }\textit{\textcolor{black}{R}}\textcolor{black}{{} is }\textit{\textcolor{black}{QF}}\textcolor{black}{{}
ring. By hypothesis, $R\oplus(R/J)$ is quasi-continuous (since }\textit{\textcolor{black}{R}}\textcolor{black}{{}
is self-injective), so again by \cite[Corollary 2.14]{12MoMu90} we have that
$R/J$ is injective. Since }\textit{\textcolor{black}{R}}\textcolor{black}{{}
is }\textit{\textcolor{black}{QF}}\textcolor{black}{{} ring, then $R/J$
is projective (see \cite[Theorem 13.6.1]{9Kas82}). Thus the canonical map
$\pi:R\longrightarrow R/J$ is splits and hence $J\leq^{\oplus}R$,
that is $J=0$. Therefore }\textit{\textcolor{black}{R}}\textcolor{black}{{}
is semisimple. }
\end{proof}

\section{STRONGLY SS-INJECTIVE RINGS}

\begin{prop}\label{Proposition:(5.1)} A ring $R$ is strongly right ss-injective if and only if every finitely generated projective right $R$-module is strongly ss-injective. \end{prop}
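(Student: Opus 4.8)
The plan is to deduce this from the closure properties of strong ss-injectivity already recorded in Corollary~\ref{Corollary:(2.4)}, mirroring the argument by which Proposition~\ref{Proposition:(2.9)} was obtained for ordinary ss-injectivity.

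First I would dispose of the easy direction ($\Leftarrow$): since $R_{R}$ is itself a finitely generated projective right $R$-module, the hypothesis applied to $P=R_{R}$ says that $R_{R}$ is strongly ss-injective, which by Definition~\ref{Definition:(2.1(b))} is exactly the statement that $R$ is strongly right ss-injective.

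For ($\Rightarrow$), assume $R$ is strongly right ss-injective, so that $R_{R}$ is strongly ss-injective. By Corollary~\ref{Corollary:(2.4)}(1) a finite direct sum of strongly ss-injective modules is again strongly ss-injective, hence $R^{n}=\bigoplus_{i=1}^{n}R$ is strongly ss-injective for every $n\in\mathbb{Z}^{+}$. Now let $P$ be an arbitrary finitely generated projective right $R$-module. A finite generating set of $P$ yields an $R$-epimorphism $R^{m}\longrightarrow P$ for some $m\in\mathbb{Z}^{+}$, which splits by projectivity of $P$; equivalently, $P$ is isomorphic to a direct summand of $R^{m}$ (see \cite[Corollary 5.5]{1AdWe92}). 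Since $R^{m}$ is strongly ss-injective and, by Corollary~\ref{Corollary:(2.4)}(2), a direct summand of a strongly ss-injective module is strongly ss-injective, it follows that $P$ is strongly ss-injective, which completes the proof.

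I do not anticipate any genuine obstacle here: the only point that requires care is that the properties being transported are the \emph{uniform} ones, namely ss-$N$-injectivity simultaneously over all right $R$-modules $N$, and this is precisely what Corollary~\ref{Corollary:(2.4)} guarantees for finite direct sums and for passage to direct summands; the remaining ingredient, that a finitely generated projective module occurs as a direct summand of a finite free module, is standard. If one wanted an even more self-contained write-up, one could instead invoke Theorem~\ref{Theorem:(2.3)}(1) (with a finite index set) and Theorem~\ref{Theorem:(2.3)}(5) directly in place of Corollary~\ref{Corollary:(2.4)}.
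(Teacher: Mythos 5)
Your argument is correct and is essentially the paper's own proof: both directions rest on Corollary~\ref{Corollary:(2.4)} (finite direct sums and direct summands of strongly ss-injective modules are strongly ss-injective) together with the fact that a finitely generated projective module is a direct summand of a finitely generated free module. The only difference is that you spell out the easy direction explicitly, which the paper dismisses as clear.
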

\begin{proof}
Since a finite direct sum of strongly ss-injective modules is strongly
ss-injective, so every finitely generated free right $R$-module is strongly
ss-injective. But a direct summand of strongly ss-injective is strongly
ss-injective. Therefore, every finitely generated projective is strongly
ss-injective. The converse is clear.
\end{proof}

\subparagraph*{\textmd{\textcolor{black}{A ring }}\textmd{\textit{\textcolor{black}{R}}}\textmd{\textcolor{black}{{}
is called a right Ikeda-Nakayama ring if $l(A\cap B)=l(A)+l(B)$ for
all right ideals }}\textmd{\textit{\textcolor{black}{A}}}\textmd{\textcolor{black}{{}
and }}\textmd{\textit{\textcolor{black}{B}}}\textmd{\textcolor{black}{{}
of }}\textmd{\textit{\textcolor{black}{R}}}\textmd{\textcolor{black}{{}
(see \cite[p.148]{15NiYu03}). In the next proposition, the strongly ss-injectivity
gives a new version of Ikeda-Nakayama rings.}}}

\begin{prop}\label{Proposition:(5.2)} Let $R$ be a strongly right ss-injective ring, then $l(A\cap B)=l(A)+l(B)$  for
all semisimple small right ideals $A$ and all right ideals $B$ of $R$.\end{prop}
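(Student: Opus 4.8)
The plan is to establish the nontrivial inclusion $l(A\cap B)\subseteq l(A)+l(B)$; the reverse inclusion is automatic, since any $x\in l(A)$ kills $A\cap B\subseteq A$ and any $x\in l(B)$ kills $A\cap B\subseteq B$, so $l(A)+l(B)\subseteq l(A\cap B)$. So fix $c\in l(A\cap B)$; the goal is to write $c=u+v$ with $uA=0$ and $vB=0$.

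First I would move the problem into the quotient $N=R/B$. Let $\pi\colon R_{R}\to N$ be the canonical epimorphism and put $\bar A=\pi(A)=(A+B)/B\subseteq N$. Two things need checking about $\bar A$: it is semisimple, being a homomorphic image of the semisimple module $A$; and it is small in $N$, because the image of a small submodule under a surjection is small — if $\bar A+L'=N$, pull $L'$ back to $L=\pi^{-1}(L')\supseteq B$, obtain $A+L=R$, conclude $L=R$ from $A\ll R_{R}$, and hence $L'=\pi(L)=N$. Next, define $\alpha\colon\bar A\to R_{R}$ by $\alpha(a+B)=ca$ for $a\in A$. The one place where the hypothesis $c\in l(A\cap B)$ enters is well-definedness: if $a+B=a'+B$ with $a,a'\in A$, then $a-a'\in A\cap B$, so $c(a-a')=0$. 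This $\alpha$ is clearly $R$-linear, and its image is $cA$, a homomorphic image of the semisimple module $A$, hence a semisimple submodule of $R_{R}$.

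Now I invoke the hypothesis via Proposition~\ref{Proposition:(3.1)}: since $R_{R}$ is strongly ss-injective, $\bar A\ll N$, and $\alpha(\bar A)$ is a semisimple submodule of $R_{R}$, the homomorphism $\alpha$ extends to an $R$-homomorphism $\beta\colon N=R/B\to R_{R}$. Put $d=\beta(1+B)\in R$, so that $\beta(r+B)=dr$ for every $r\in R$. For $b\in B$ we get $db=\beta(b+B)=\beta(0)=0$, hence $d\in l(B)$; for $a\in A$ we get $da=\beta(a+B)=\alpha(a+B)=ca$, hence $(c-d)a=0$, i.e. $c-d\in l(A)$. Therefore $c=(c-d)+d\in l(A)+l(B)$, which finishes the argument.

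I expect the only delicate step to be the verification that $\bar A$ is small in $N=R/B$ (so that Proposition~\ref{Proposition:(3.1)} is applicable); the remaining steps are routine bookkeeping. It is also worth noting in the write-up that this is exactly why one needs \emph{strong} ss-injectivity rather than mere ss-injectivity: the module appearing as the domain of the extended map is $R/B$, not $R_{R}$ itself.
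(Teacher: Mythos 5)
Your proof is correct and follows essentially the same route as the paper's: both pass to the induced map $(A+B)/B\to R_{R}$ on the quotient $R/B$, extend it using strong ss-injectivity, and read off the decomposition $c=(c-d)+d$ from the value at $1+B$. Your explicit verification that $(A+B)/B$ is small in $R/B$ is a welcome refinement of the paper's bare assertion that it lies in $\mathrm{soc}(R/B)\cap J(R/B)$, but the argument is otherwise identical.
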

\begin{proof}
\textcolor{black}{Let $x\in l(A\cap B)$ and define $\alpha:A+B\longrightarrow R_{R}$
by $\alpha(a+b)=xa$ for all $a\in A$ and $b\in B$. Clearly, $\alpha$
is well define, because if $a_{1}+b_{1}=a_{2}+b_{2}$, then $a_{1}-a_{2}=b_{2}-b_{1}$,
that is $x(a_{1}-a_{2})=0$, so $\alpha(a_{1}+b_{1})=\alpha(a_{2}+b_{2})$.
The map $\alpha$ induces an }\textit{\textcolor{black}{R}}\textcolor{black}{-homomorphism
$\tilde{\alpha}:(A+B)/B\longrightarrow R_{R}$ which is given by $\tilde{\alpha}(a+B)=xa$
for all $a\in A$. Since $(A+B)/B\subseteq$  soc$(R/B)\cap J(R/B)$ and
}\textit{\textcolor{black}{R}}\textcolor{black}{{} is a strongly right
ss-injective, $\tilde{\alpha}$ can be extended to an }\textit{\textcolor{black}{R}}\textcolor{black}{-homomorphism
$\gamma:R/B\longrightarrow R_{R}$. If $\gamma(1+B)=y$, for some $y\in R$,
then $y(a+b)=xa$, for all $a\in A$ and $b\in B$. In particular,
$ya=xa$ for all $a\in A$ and $yb=0$ for all $b\in B$. Hence $x=(x-y)+y\in l(A)+l(B)$.
Therefore, $l(A\cap B)\subseteq l(A)+l(B)$. Since the converse is
always holds, thus the proof is complete.}
\end{proof}

\subparagraph*{\textmd{\textcolor{black}{Recall that a ring }}\textmd{\textit{\textcolor{black}{R}}}\textmd{\textcolor{black}{{}
is said to be right simple }}\textmd{\textit{\textcolor{black}{J}}}\textmd{\textcolor{black}{-injective
if for any small right ideal }}\textmd{\textit{\textcolor{black}{I}}}\textmd{\textcolor{black}{{}
and any }}\textmd{\textit{\textcolor{black}{R}}}\textmd{\textcolor{black}{-homomorphism
$\alpha:I\longrightarrow R_{R}$ with simple image, $\alpha=c.$ for
some $c\in R$ (see \cite{21YoZh04}).}}}

\begin{cor}\label{Corollary:(5.3)} Every strongly right ss-injective ring is right simple $J$-injective. \end{cor}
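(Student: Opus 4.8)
The plan is to reduce the statement to a direct application of Proposition~\ref{Proposition:(3.1)}. First I would unwind the definition of right simple $J$-injectivity: one must show that for every small right ideal $I$ of $R$ and every $R$-homomorphism $\alpha:I\longrightarrow R_{R}$ whose image $\alpha(I)$ is a simple right $R$-module, there exists $c\in R$ with $\alpha(a)=ca$ for all $a\in I$.

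The key observation is that a simple module is in particular semisimple, so $\alpha(I)$ is a semisimple submodule of $R_{R}$. Now apply Proposition~\ref{Proposition:(3.1)} with $N=R_{R}$: since $R$ is strongly right ss-injective, $R_{R}$ is a strongly ss-injective right $R$-module, and we have $I\ll R_{R}$ with $\alpha(I)$ semisimple, so $\alpha$ extends to an $R$-homomorphism $\beta:R_{R}\longrightarrow R_{R}$.

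Finally, put $c=\beta(1)$. For every $a\in I$ we get $\alpha(a)=\beta(a)=\beta(1\cdot a)=\beta(1)a=ca$, so $\alpha$ is left multiplication by $c$, which is exactly what is needed. Hence $R$ is right simple $J$-injective.

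I do not expect any real obstacle here; the only point worth stating carefully is that "$\alpha(I)$ simple" already forces "$\alpha(I)$ semisimple", which is what lets Proposition~\ref{Proposition:(3.1)} apply with no hypothesis on $I$ beyond being small in $R_{R}$. If one preferred not to quote Proposition~\ref{Proposition:(3.1)}, one could instead pass to $\bar{R}=R/\ker\alpha$, observe that $I/\ker\alpha\cong\alpha(I)$ is simple and small in $\bar{R}$, and invoke the definition of strong ss-injectivity directly; but routing through Proposition~\ref{Proposition:(3.1)} is the cleanest route.
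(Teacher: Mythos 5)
Your proposal is correct and is exactly the argument the paper intends: its proof of Corollary~\ref{Corollary:(5.3)} consists of the single citation ``By Proposition~\ref{Proposition:(3.1)}," and you have simply filled in the routine details (simple implies semisimple, apply the proposition with $N=R_{R}$, and read off $c=\beta(1)$). No gaps.
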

\begin{proof} By Proposition~\ref{Proposition:(3.1)}.\end{proof}

\begin{rem}\label{Remark:(5.4)}\emph{ The converse of Corollary~\ref{Corollary:(5.3)}   is not true (see Example~\ref{Example:(5.7)})}.\end{rem}

\begin{prop}\label{Proposition:(5.5)} Let $R$ be a right Kasch and strongly right ss-injective ring. Then: 

\noindent (1) $rl(K)=K$, for every small right ideal $K$. Moreover, $R$ is right minannihilator. 

\noindent (2) If $R$ is left Kasch, then $r(J)\subseteq^{ess}R_{R}$. \end{prop}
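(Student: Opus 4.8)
The plan is to deduce both parts from part~(1), whose core is an argument by contradiction: from an element of $rl(K)$ lying outside $K$ one manufactures a homomorphism into $R_R$ which strong ss-injectivity is forced to realise as a left multiplication, and that multiplication then kills the element. Concretely, since $K\subseteq rl(K)$ always, suppose $a\in rl(K)\setminus K$. In the cyclic right module $M:=(aR+K)/K=(a+K)R$ the generator $a+K$ cannot belong to $\operatorname{Rad}(M)$ (a generator in the radical of a finitely generated module makes the module zero), so $M$ has a maximal submodule not containing $a+K$; pulling it back gives a right ideal $N$ with $K\subseteq N\subsetneq aR+K$, $a\notin N$, and $(aR+K)/N$ simple. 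Since $R$ is right Kasch there is an $R$-monomorphism $\phi\colon (aR+K)/N\to R_R$, and composing with the projection produces $\psi\colon aR+K\to R_R$ with $\psi(K)=0$, $\psi(a)\neq 0$, and $\psi(aR+K)$ simple, hence semisimple.

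To invoke strong ss-injectivity through Proposition~\ref{Proposition:(3.1)} I first check $aR+K\ll R_R$. Since $aR+K\subseteq rl(K)$, it suffices that $rl(K)\ll R_R$, which holds whenever $K\ll R_R$ and $R$ is right Kasch: if $rl(K)+L=R$ then $0=l(R)=l(rl(K)+L)=l(rl(K))\cap l(L)=l(K)\cap l(L)=l(K+L)$, so $K+L=R$ because $R$ is right Kasch, and then $L=R$ because $K$ is small. Now Proposition~\ref{Proposition:(3.1)} (with the ambient module taken to be $R_R$, $A:=aR+K\ll R_R$, and $\alpha:=\psi$, which has semisimple image) extends $\psi$ to $R_R$, so $\psi(x)=cx$ for $c:=\psi(1)$ and all $x\in aR+K$. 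Then $cK=\psi(K)=0$ gives $c\in l(K)$, and $a\in rl(K)$ gives $ca=0$, i.e. $\psi(a)=0$, contradicting $\psi(a)\neq 0$. Hence $rl(K)=K$ for every small right ideal $K$; applying this to the simple small right ideals and using Lemma~\ref{Lemma:(4.11)} gives that $R$ is right minannihilator.

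For~(2), suppose towards a contradiction that $r(J)$ is not essential in $R_R$, and pick (Zorn) a right ideal $T\neq 0$ maximal with respect to $T\cap r(J)=0$, so $T\oplus r(J)\subseteq^{ess}R_R$. As $l(T)\subsetneq R$ and $R$ is left Kasch, there is a maximal left ideal $\mathfrak{m}\supseteq l(T)$ with $0\neq c\in r(\mathfrak{m})$; then $l(c)=\mathfrak{m}$, $Rc$ is a simple left ideal, hence $Jc=0$ and even $cR\subseteq r(J)$, while $l(T)\subseteq l(c)$ forces $c\in rl(T)$. Since $c\in r(J)$ and $T\cap r(J)=0$ we have $c\notin T$, so $T\subsetneq T+cR\subseteq rl(T)$. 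The remaining work is to turn $T\cap r(J)=0$ into a contradiction by exploiting strong right ss-injectivity: I would build a homomorphism out of $c$ on a semisimple small right ideal (after passing to a quotient, in the spirit of the proof of Proposition~\ref{Proposition:(5.2)}) whose forced extension clashes with $T\cap r(J)=0$, or equivalently apply the identity $l(A\cap B)=l(A)+l(B)$ of Proposition~\ref{Proposition:(5.2)} to a semisimple small right ideal $A$ and a right ideal $B$ assembled from $T$ and $cR$, together with $rl(K)=K$ from~(1), to conclude $l(T)=R$ and hence $T=0$.

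The main obstacle is precisely this closing step of~(2): in contrast with Corollary~\ref{Corollary:(4.18)} there is no semiperfectness hypothesis available here, so strong ss-injectivity must be used more directly, via a judicious choice of the ambient module in Proposition~\ref{Proposition:(3.1)} (or of the right ideal $B$ in Proposition~\ref{Proposition:(5.2)}) so that the obstruction produced by $T\cap r(J)=0$ genuinely violates the extension property. Part~(1) — and especially the smallness of $rl(K)$ proved there — is the engine that should drive this; once $r(J)\subseteq^{ess}R_R$ is in hand, the ``moreover'' of~(1) and the two Kasch conditions are exactly what the later $QF$-characterisations will call upon.
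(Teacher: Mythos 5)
Your part (1) is correct and complete: the reduction to a simple-image homomorphism on the small right ideal $aR+K$ (using right Kasch to embed $(aR+K)/N$ into $R_R$, and the observation that $rl(K)\ll R_R$ whenever $K\ll R_R$ and $R$ is right Kasch) is sound, and Proposition~\ref{Proposition:(3.1)} then forces $\psi$ to be left multiplication by $\psi(1)$, which kills $a\in rl(K)$ and gives the contradiction. The paper obtains (1) by citing Corollary~\ref{Corollary:(5.3)} together with Lemma 2.4 of Yousif--Zhou \cite{21YoZh04}; your argument is in effect a self-contained proof of that cited lemma, so you prove more but depend on less.

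Part (2), however, is not a proof: you construct the complement $T$ and the element $c\in rl(T)\setminus T$ and then explicitly defer the closing step, so there is a genuine gap, and the route you gesture at (another application of Proposition~\ref{Proposition:(3.1)} or of Proposition~\ref{Proposition:(5.2)}) is not what closes it. The missing observation is elementary: $r(J)$ is a two-sided ideal (because $J$ is), so $T\cap r(J)=0$ already forces $Tr(J)\subseteq T\cap r(J)=0$, i.e.\ $T\subseteq lr(J)$; since $R$ is left Kasch, $lr(J)=J$, hence $T\subseteq J$ and in particular $T$ is a \emph{small} right ideal. Part (1) now applies to $T$ itself and yields $rl(T)=T$, which immediately contradicts your $c\in rl(T)\setminus T$. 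The paper's own argument needs no complement and no $c$ at all: for an arbitrary right ideal $K$ with $r(J)\cap K=0$ one gets $K\subseteq J$ as above, hence $r(J+l(K))=r(J)\cap rl(K)=r(J)\cap K=0$, so $J+l(K)=R$ by left Kasch, and since $J\ll{}_{R}R$ this forces $l(K)=R$, i.e.\ $K=0$. So the engine of (2) is the two-sidedness of $r(J)$ combined with $lr(J)=J$ and the already-established part (1), not a further use of strong ss-injectivity.
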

\begin{proof}
\textcolor{black}{(1) By Corollary~\ref{Corollary:(5.3)}  and \cite[Lemma 2.4]{21YoZh04}.}

\textcolor{black}{(2) Let }\textit{\textcolor{black}{K}}\textcolor{black}{{}
be a right ideal of }\textit{\textcolor{black}{R}}\textcolor{black}{{}
and $r(J)\cap K=0$. Then $Kr(J)=0$ and we obtain $K\subseteq lr(J)=J$,
because }\textit{\textcolor{black}{R}}\textcolor{black}{{} is left Kasch.
By (1), we have $r(J+l(K))=r(J)\cap K=0$ and this means that $J+l(K)=R$
(since }\textit{\textcolor{black}{R}}\textcolor{black}{{} is left Kasch).
Thus $K=0$ and hence $r(J)\subseteq^{ess}R_{R}$.}
\end{proof}

\subparagraph*{\textmd{The following examples show that the  classes of rings:  strongly
ss-injective rings, soc-injective rings and of small injective rings
are different.}}

\begin{example}\label{Example:(5.6)}
\emph{\textcolor{black}{Let $R=\mathbb{Z}_{(p)}=\{\frac{m}{n}\mid p$ does not
divide $n\}$, the localization ring of $\mathbb{Z}$ at the prime
$p$. Then }\textit{\textcolor{black}{R}}\textcolor{black}{{} is a commutative
local ring and it has zero socle but not principally small injective
(see \cite[Example 4]{20Xia11}). Since $S_{r}=0$, thus }\textit{\textcolor{black}{R}}\textcolor{black}{{}
is strongly soc-injective ring and hence }\textit{\textcolor{black}{R}}\textcolor{black}{{}
is strongly ss-injective ring.}}
\end{example}

\begin{example}\label{Example:(5.7)}
 \emph{Let $R=\left\{ \begin{array}{cc}
\left(\begin{array}{cc}
n & x\\
0 & n\end{array}\right)\mid & n\in\mathbb{Z}, \, x\in\mathbb{Z}_{2}\end{array}\right\} $. Thus $R$  is a commutative ring, $J=S_{r}=\left\{ \begin{array}{cc}
\left(\begin{array}{cc}
0 & x\\
0 & 0\end{array}\right)\mid & x\in\mathbb{Z}_{2}\end{array}\right\} $ and $R$  is small injective
(see \cite[Example(i)]{19ThQu09}). Let $A=J$ and
}
\emph{\noindent $B=\left\{ \begin{array}{cc}
\left(\begin{array}{cc}
2n & 0\\
0 & 2n\end{array}\right)\mid & n\in\mathbb{Z}\end{array}\right\} $, then $l(A)=\left\{ \begin{array}{cc}
\left(\begin{array}{cc}
2n & y\\
0 & 2n\end{array}\right)\mid & n\in\mathbb{Z},\,y\in\mathbb{Z}_{2}\end{array}\right\} $ and
}

\noindent\emph{ $l(B)=\left\{ \begin{array}{cc}
\left(\begin{array}{cc}
0 & y\\
0 & 0\end{array}\right)\mid & y\in\mathbb{Z}_{2}\end{array}\right\} $. Thus $l(A)+l(B)=\left\{ \begin{array}{cc}
\left(\begin{array}{cc}
2n & y\\
0 & 2n\end{array}\right)\mid & n\in\mathbb{Z},\, y\in\mathbb{Z}_{2}\end{array}\right\} $.}

\noindent\emph{ Since $A\cap B=0$, thus $l(A\cap B)=R$ and this implies that $l(A)+l(B)\neq l(A\cap B)$.
Therefore $R$  is not strongly ss-injective and not strongly soc-injective by Proposition~\ref{Proposition:(5.2)}.}
\end{example}

\begin{example}\label{Example:(5.8)}
\emph{\textcolor{black}{Let $F=\mathbb{Z}_{2}$ be the field of two elements,
$F_{i}=F$ for $i=1,2,3,...$, $Q=\overset{{\scriptscriptstyle \infty}}{\underset{{\scriptscriptstyle i=1}}{\prod}}F_{i}$,
$S=\overset{{\scriptscriptstyle \infty}}{\underset{{\scriptscriptstyle i=1}}{\bigoplus}}F_{i}$
. If }\textit{\textcolor{black}{R}}\textcolor{black}{{} is the subring
of }\textit{\textcolor{black}{Q}}\textcolor{black}{{} generated by 1
and }\textit{\textcolor{black}{S}}\textcolor{black}{, then }\textit{\textcolor{black}{R}}\textcolor{black}{{}
is a Von Neumann regular ring (see \cite[Example (1), p.28]{22Zey14}). Since
}\textit{\textcolor{black}{R}}\textcolor{black}{{} is commutative, thus
every simple }\textit{\textcolor{black}{R}}\textcolor{black}{-module
is injective by \cite[Corollary 3.73]{10Lam99}. Thus }\textit{\textcolor{black}{R}}\textcolor{black}{{}
is }\textit{\textcolor{black}{V}}\textcolor{black}{-ring and hence
$J(N)=0$ for every right }\textit{\textcolor{black}{R}}\textcolor{black}{-module
}\textit{\textcolor{black}{N}}\textcolor{black}{. It follows from
Corollary~\ref{Corollary:(3.9)}   that every   }\textit{\textcolor{black}{R}}\textcolor{black}{-module
is strongly ss-injective. In particular, }\textit{\textcolor{black}{R}}\textcolor{black}{{}
is strongly ss-injective ring. But }\textit{\textcolor{black}{R}}\textcolor{black}{{}
is not soc-injective (see \cite[Example (1)]{22Zey14}).}}
\end{example}

\begin{example}\label{Example:(5.9)}
\emph{\textcolor{black}{Let $R=\mathbb{Z}_{2}[x_{1},x_{2},...]$ where $\mathbb{Z}_{2}$
is the field of two elements, $x_{i}^{3}=0$ for all i, $x_{i}x_{j}=0$
for all $i\neq j$ and $x_{i}^{2}=x_{j}^{2}\neq0$ for all i and j.
If $m=x_{i}^{2}$, then }\textit{\textcolor{black}{R}}\textcolor{black}{{}
is a commutative, semiprimary, local, soc-injective ring with $J=$span\{m, $x_{1}$, $x_{2}$,
... \}, and }\textit{\textcolor{black}{R}}\textcolor{black}{{} has simple
essential socle $J^{2}=\mathbb{Z}_{2}m$ (see \cite[Example 5.7]{2AmYoZe05}).
It follows from \cite[Example 5.7]{2AmYoZe05} that the }\textit{\textcolor{black}{R}}\textcolor{black}{-homomorphism
$\gamma:J\longrightarrow R$ which is given by $\gamma(a)=a^{2}$
for all $a\in J$ with simple image can be not extended to }\textit{\textcolor{black}{R}}\textcolor{black}{,
then }\textit{\textcolor{black}{R}}\textcolor{black}{{} is not simple
}\textit{\textcolor{black}{J}}\textcolor{black}{-injective and not
small injective, so it follows from Corollary~\ref{Corollary:(5.3)}   that }\textit{\textcolor{black}{R}}\textcolor{black}{{}
is not strongly ss-injective.}}
\end{example}

\subparagraph*{\textmd{\textcolor{black}{Recall that }}\textmd{\textit{\textcolor{black}{R}}}\textmd{\textcolor{black}{{}
is said to be right minsymmetric ring if $aR$ is simple right ideal
then $Ra$ is simple left ideal (see \cite{14NiYo97}). Every right mininjective
ring is right minsymmetric by \cite[Theorem 1.14]{14NiYo97}.}}}

\begin{thm}\label{Theorem:(5.10)} A ring $R$ is  QF  if and only if $R$ is a strongly right
ss-injective and right noetherian ring with $S_{r}\subseteq^{ess}R_{R}$.\end{thm}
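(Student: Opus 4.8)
The forward direction is immediate. A $QF$ ring $R$ is right self-injective, so $R_{R}$ is injective and hence ss-$N$-injective for every right $R$-module $N$; that is, $R$ is strongly right ss-injective (cf.\ Example~\ref{example:(2.2)}). A $QF$ ring is also right artinian, hence right noetherian, and it satisfies $S_{r}\subseteq^{ess}R_{R}$. Thus all the content lies in the converse, where the plan is to prove that $R_{R}$ is injective: once this is established, $R$ is a right self-injective right noetherian ring, and hence $QF$ by the classical theorem that a right self-injective ring with ACC on right annihilators --- in particular a right self-injective right noetherian ring --- is $QF$.

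So assume $R$ is strongly right ss-injective, right noetherian, and $S_{r}\subseteq^{ess}R_{R}$. Since $R_{R}$ is cyclic we have $J\ll R_{R}$, hence $S_{r}\cap J\ll R_{R}$, so Corollary~\ref{Corollary:(3.7)} applies and gives $R_{R}=E\oplus T$ with $E$ injective and $T\cap(S_{r}\cap J)=0$. As a direct summand of $R_{R}$, $T$ is strongly ss-injective (Corollary~\ref{Corollary:(2.4)}) and has finite uniform dimension; writing $T=fR$ with $f=f^{2}$ one gets $J(T)=J\cap T$ and $\operatorname{soc}(T)=S_{r}\cap T$. From $S_{r}\subseteq^{ess}R_{R}$ we obtain $\operatorname{soc}(T)\subseteq^{ess}T$, while $\operatorname{soc}(T)\cap J(T)=(S_{r}\cap J)\cap T=0$.

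The next step is to show that $T$ is semisimple. If $U\subseteq T$ is simple then $U\subseteq\operatorname{soc}(T)$ and $U\cap J(T)=0$, so $U\not\subseteq J(T)$ and hence $U$ is not small in $T$; but a simple non-small submodule of any module is a direct summand. So every simple submodule of $T$ is a direct summand of $T$, and I would conclude by induction on the uniform dimension of $T$. Given $0\neq x\in T$, essentiality of $\operatorname{soc}(T)$ yields a simple $U\subseteq xR$; writing $T=U\oplus C$, the modular law gives $xR=U\oplus(xR\cap C)$, the summand $C$ again has essential socle, $\operatorname{soc}(C)\cap J(C)=0$, every simple submodule a direct summand, and strictly smaller uniform dimension, so by induction $C$ --- hence $xR\cap C$, hence $xR$ --- is semisimple and $x\in\operatorname{soc}(T)$. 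Therefore $T=\operatorname{soc}(T)$.

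Finally comes the key point: every simple direct summand $V$ of $T$ is injective. Such a $V$ is a direct summand of $R_{R}$, hence strongly ss-injective, hence ss-$E(V)$-injective. If $V$ is not small in $E(V)$, then being simple and non-small it is a direct summand of $E(V)$, and since $V\subseteq^{ess}E(V)$ this forces $V=E(V)$. If instead $V\ll E(V)$, then $V$ is a semisimple small submodule of $E(V)$, so the identity $\mathrm{id}_{V}\colon V\longrightarrow V$, whose image $V$ is semisimple, extends to some $\gamma\colon E(V)\longrightarrow V$; then $\gamma$ splits the inclusion $V\hookrightarrow E(V)$, giving $E(V)=V\oplus\ker\gamma$, and essentiality of $V$ again forces $V=E(V)$. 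Since $T$ is a finite direct sum of such $V$'s it is injective, whence $R_{R}=E\oplus T$ is injective, as wanted. I expect this last step to be the main obstacle: Corollary~\ref{Corollary:(3.7)} only splits off \emph{some} injective summand, and a priori the complement $T$ could be a proper essential submodule of its injective hull; the remedy is to first compress $T$ to a semisimple module --- this is where $S_{r}\subseteq^{ess}R_{R}$ and the noetherian hypothesis enter --- and then to pit ss-injectivity against the injective hull of each simple summand. One should also double-check with care the concluding appeal that a right self-injective right noetherian ring is $QF$.
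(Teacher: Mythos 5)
Your proof is correct, and it takes a genuinely different route from the paper's. The paper's own argument for the converse is a chain of reductions through the literature: right ss-injectivity gives right mininjectivity, hence right minsymmetry (Corollary~\ref{Corollary:(4.3)}); together with the noetherian hypothesis this yields that $R$ is right perfect by a lemma of Thuyet--Quynh; Theorem~\ref{Theorem:(3.13)} then upgrades strong ss-injectivity to strong soc-injectivity over a right perfect ring; and a result of Amin--Yousif--Zeyada on strongly soc-injective rings with $S_{r}\subseteq^{ess}R_{R}$ gives right self-injectivity, whence $QF$. You instead prove injectivity of $R_{R}$ directly: Corollary~\ref{Corollary:(3.7)} splits $R_{R}=E\oplus T$ with $E$ injective and $T\cap(S_{r}\cap J)=0$; the essential-socle and noetherian hypotheses force $T$ to be a finite direct sum of simple direct summands of $R_{R}$ (your induction on uniform dimension is sound, as is the observation that a simple submodule meeting $J(T)$ trivially cannot be small and so splits off); and each simple summand $V$ is shown to equal $E(V)$ by playing ss-$E(V)$-injectivity against the two cases $V\ll E(V)$ and $V$ not small in $E(V)$ --- a case analysis that is genuinely needed, since Corollary~\ref{Corollary:(3.5)} cannot be applied to $V$ itself (a simple module has no nonzero semisimple small essential submodule). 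Both proofs finish with the classical theorem that a right self-injective right noetherian ring is $QF$, which is indeed valid (Faith's theorem; right noetherian gives ACC on right annihilators). What your approach buys is self-containedness and transparency: it uses only the paper's own splitting machinery (Theorem~\ref{Theorem:(3.4)}, Corollaries~\ref{Corollary:(2.4)} and~\ref{Corollary:(3.7)}) plus one classical fact, avoids the detour through perfectness and the soc-injective literature, and shows exactly where each hypothesis enters; the paper's proof is shorter on the page but outsources the substance to three external results.
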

\begin{proof}
\textcolor{black}{($\Rightarrow$) This is clear.}

\textcolor{black}{($\Leftarrow$) By Corollary~\ref{Corollary:(4.3)}(1), }\textit{\textcolor{black}{R}}\textcolor{black}{{}
is right minsymmetric. It follows from \cite[Lemma 2.2]{19ThQu09} that }\textit{\textcolor{black}{R}}\textcolor{black}{{}
is right perfect. Thus }\textit{\textcolor{black}{R}}\textcolor{black}{{}
is strongly right soc-injective, by Theorem~\ref{Theorem:(3.13)}. Since $S_{r}\subseteq^{ess}R_{R}$,
so it follows from \cite[Corollary 3.2]{2AmYoZe05} that }\textit{\textcolor{black}{R}}\textcolor{black}{{}
is self-injective and hence }\textit{\textcolor{black}{R}}\textcolor{black}{{}
is }\textit{\textcolor{black}{QF}}\textcolor{black}{.}\end{proof}

\begin{cor}\label{Corollary:(5.11)} For a ring $R$ the following statements are true.

\noindent (1) $R$ is semisimple if and only if  $S_{r}\subseteq^{ess}R_{R}$
and every semisimple right $R$-module
is strongly soc-injective.

\noindent (2) $R$ is  QF  if and only if $R$ is
strongly right ss-injective, semiperfect
with essential right socle and $R/S_{r}$ is noetherian as right $R$-module. \end{cor}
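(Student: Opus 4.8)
The forward implications in both parts are quick, so the plan is to concentrate on the converses and record the easy directions briefly at the end.

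\emph{Part (1), converse.} Assuming $S_{r}\subseteq^{ess}R_{R}$ and that every semisimple right $R$-module is strongly soc-injective, I would apply the hypothesis to the single semisimple module $\mathrm{soc}(R_{R})=S_{r}$: being strongly soc-injective it is in particular soc-$R$-injective, so the identity map $S_{r}\to S_{r}=\mathrm{soc}(R_{R})$ extends to an $R$-homomorphism $\pi\colon R_{R}\to S_{r}$. Then $\pi$ is a retraction of the inclusion $S_{r}\hookrightarrow R_{R}$, hence $R_{R}=S_{r}\oplus\ker\pi$; since $S_{r}$ is essential and $S_{r}\cap\ker\pi=0$, we get $\ker\pi=0$, so $R_{R}=S_{r}$ is semisimple, i.e. $R$ is a semisimple ring. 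I want to stress that the weaker consequence $J=0$ (which also follows via Corollary~\ref{Corollary:(3.2)}) together with essential socle would \emph{not} suffice — an infinite product of fields is a counterexample — so this splitting of the socle is the crucial move. For the direct implication, if $R$ is semisimple then $S_{r}=R_{R}$ is trivially essential and every module is injective, hence strongly soc-injective.

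\emph{Part (2), converse.} Assume $R$ is strongly right ss-injective, semiperfect, $S_{r}\subseteq^{ess}R_{R}$, and $R/S_{r}$ is noetherian as a right $R$-module. The strategy is to verify the hypotheses of Theorem~\ref{Theorem:(5.10)} by showing that $R$ is right noetherian, since "strongly right ss-injective" and "$S_{r}\subseteq^{ess}R_{R}$" are already in hand. Because strong ss-injectivity of $R_{R}$ includes ss-$R$-injectivity, $R$ is right ss-injective, so Corollary~\ref{Corollary:(4.18)} applies and gives that $\mathrm{soc}(eR)$ is simple for every local idempotent $e\in R$. Writing $1=e_{1}+\cdots+e_{n}$ as a sum of orthogonal local idempotents (possible since $R$ is semiperfect) yields $R_{R}=\bigoplus_{i=1}^{n}e_{i}R$, whence $S_{r}=\mathrm{soc}(R_{R})=\bigoplus_{i=1}^{n}\mathrm{soc}(e_{i}R)$ is a finite direct sum of simple modules and therefore a noetherian right $R$-module. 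From the exact sequence $0\to S_{r}\to R_{R}\to R/S_{r}\to 0$, with both outer terms noetherian, $R_{R}$ is noetherian, i.e. $R$ is right noetherian, and Theorem~\ref{Theorem:(5.10)} then gives that $R$ is QF. For the direct implication, a QF ring has $R_{R}$ injective (so every homomorphism from a semisimple small submodule of any module into $R_{R}$ extends, making $R_{R}$ strongly right ss-injective), it is right artinian — hence semiperfect, with $R/S_{r}$ noetherian — and, since every nonzero right ideal of a right artinian ring contains a minimal one, $S_{r}\subseteq^{ess}R_{R}$.

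\emph{Main obstacle.} The only step needing an idea rather than bookkeeping is the converse of (2), namely upgrading "$R/S_{r}$ noetherian" to "$R$ right noetherian". The point I expect to matter is the observation that semiperfectness combined with Corollary~\ref{Corollary:(4.18)}(2) forces $S_{r}$ to be a finite direct sum of simple modules, so that $S_{r}$ itself is noetherian; once that is seen, the exact sequence argument and Theorem~\ref{Theorem:(5.10)} finish everything mechanically.
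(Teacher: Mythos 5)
Your proof is correct, but in both parts it follows a genuinely different route from the paper's. For part (1) the paper takes a long detour: it quotes an external result to conclude that $R$ is a right noetherian right $V$-ring, invokes Corollary~\ref{Corollary:(3.9)} to get strong ss-injectivity of $R$, applies Theorem~\ref{Theorem:(5.10)} to conclude $R$ is $QF$, and only then uses $J=0$ to drop to semisimplicity. Your argument is far more elementary and self-contained: applying soc-$R$-injectivity of the single module $S_{r}$ to extend $\mathrm{id}_{S_{r}}$ to a retraction $R_{R}\to S_{r}$ splits the socle off, and essentiality kills the complement. This avoids the $QF$ machinery entirely, and your aside that $J=0$ plus essential socle would not suffice is a fair warning about why the splitting is the real content. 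For part (2) both you and the paper reduce to verifying right noetherianness and then cite Theorem~\ref{Theorem:(5.10)}; the difference is in how the reduction is done. The paper deduces $J=Z_{r}$ from an external theorem, observes $R$ is right $t$-semisimple, and applies Corollary~\ref{Corollary:(3.19)} to transfer noetherianness from $R/S_{r}$ to $R$. You instead use Corollary~\ref{Corollary:(4.18)}(2) (whose hypotheses --- right ss-injective, semiperfect, essential right socle --- are exactly what is assumed) together with the semiperfect decomposition $1=e_{1}+\cdots+e_{n}$ into orthogonal local idempotents to see that $S_{r}=\bigoplus_{i}\mathrm{soc}(e_{i}R)$ is a finite direct sum of simples, hence noetherian, and then close with the short exact sequence $0\to S_{r}\to R_{R}\to R/S_{r}\to 0$. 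Your version stays inside the paper's own toolkit and avoids the $t$-semisimple apparatus; the paper's version is shorter on the page but leans on two outside citations. Both are sound, and your forward implications are handled correctly.
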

\begin{proof}
(1) Suppose that $S_{r}\subseteq^{ess}R_{R}$ and
every semisimple right $R$-module
is strongly soc-injective, then $R$
is a right noetherian right V-ring by \cite[Proposition 3.12]{2AmYoZe05}, so
it follows from Corollary~\ref{Corollary:(3.9)} that $R$
is strongly right ss-injective. Thus $R$
is  QF  by Theorem~\ref{Theorem:(5.10)}.
But $J=0$, so $R$  is
semisimple. The converse is clear.

(2) By\textcolor{black}{{} \cite[Theorem 2.9]{14NiYo97}, $J=Z_{r}$. Since
$R/Z_{2}^{r}$ is a homomorphic image of $R/Z_{r}$ and }\textit{\textcolor{black}{R}}\textcolor{black}{{}
is a semilocal ring, thus }\textit{\textcolor{black}{R}}\textcolor{black}{{}
is a right }\textit{\textcolor{black}{t}}\textcolor{black}{-semisimple.
By Corollary~\ref{Corollary:(3.19)}, }\textit{\textcolor{black}{R}}\textcolor{black}{{}
is right noetherian, so it follows from Theorem~\ref{Theorem:(5.10)} that }\textit{\textcolor{black}{R}}\textcolor{black}{{}
is }\textit{\textcolor{black}{QF}}\textcolor{black}{. The converse
is clear.}\end{proof}

\begin{thm}\label{Theorem:(5.12)} A ring $R$ is $QF$  if and only if $R$ is a strongly right
ss-injective, $l(J^{2})$ is a countable generated left ideal, $S_{r}\subseteq^{ess}R_{R}$
and the chain $r(x_{1})\subseteq r(x_{2}x_{1})\subseteq...\subseteq r(x_{n}x_{n-1}...x_{1})\subseteq...$
terminates for every infinite sequence $x_{1},x_{2},...$ in $R$.\end{thm}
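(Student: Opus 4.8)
The forward implication is routine. If $R$ is $QF$ then $R$ is two-sided artinian and right self-injective; hence $S_{r}\subseteq^{ess}R_{R}$ (a standard feature of $QF$ rings), $l(J^{2})$ is finitely generated — in particular countably generated — as a left ideal, and the chain $r(x_{1})\subseteq r(x_{2}x_{1})\subseteq\cdots$ terminates because $R$ is right noetherian. Finally $R$ is right perfect and $R_{R}$ is injective, hence strongly soc-injective, so Theorem~\ref{Theorem:(3.13)} gives that $R$ is strongly right ss-injective.

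For the converse the aim is to bring $R$ under Theorem~\ref{Theorem:(5.10)}: since strong right ss-injectivity and $S_{r}\subseteq^{ess}R_{R}$ are assumed, it suffices to prove that $R$ is right noetherian. First I would collect the consequences of ss-injectivity and the chain hypothesis. Being strongly right ss-injective, $R$ is right ss-injective, hence right mininjective by Lemma~\ref{Lemma:(2.5)} (equivalently Corollary~\ref{Corollary:(4.3)}(1)), so $R$ is right minsymmetric and the annihilator identities of Corollary~\ref{Corollary:(4.3)} hold. The chain condition holds in particular for sequences drawn from $Z_{r}\cap S_{r}$, so Theorem~\ref{Theorem:(4.5)}(2) gives that $S_{r}\cap J$ is right $t$-nilpotent with $S_{r}\cap J=Z_{r}\cap S_{r}$; running the very same essential-annihilator argument for an arbitrary sequence in $Z_{r}$ — using that $r(z)\subseteq^{ess}R_{R}$ whenever $z\in Z_{r}$ — upgrades this to the statement that $Z_{r}$ is right $t$-nilpotent. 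Since a simple right module is killed by $J$, we get $S_{r}J=0$, so $S_{r}\subseteq l(J)\subseteq l(J^{2})$, and $l(J^{2})$ is a countably generated left ideal by hypothesis.

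The crux is to convert these finiteness data into honest right noetherianness. Here I would argue in the spirit of the proofs of Theorems~\ref{Theorem:(2.25)} and \ref{Theorem:(2.31)}: given a strictly ascending chain of (necessarily semisimple and small) right ideals inside $S_{r}\cap J\subseteq l(J^{2})$, form the countable direct sum $\bigoplus_{i}E(T_{i})$ of injective hulls of the simple factors, define a homomorphism into it from the union $U$ of the chain — a semisimple small right ideal, being contained in $J$ — and use strong ss-injectivity to extend this homomorphism to $R_{R}$; the coordinates of the image of $1$ then force the chain to stabilise, while the countable generation of $l(J^{2})$ is what keeps the construction honest by bounding the index sets that occur. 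This should yield that $R$ is right perfect (so that $J=Z_{r}$ by \cite[Theorem 2.9]{14NiYo97}, as in the proof of Corollary~\ref{Corollary:(5.11)}(2), whence $J$ is right $t$-nilpotent); then Theorem~\ref{Theorem:(3.13)} makes $R$ strongly right soc-injective, \cite[Corollary 3.2]{2AmYoZe05} makes $R$ right self-injective because $S_{r}\subseteq^{ess}R_{R}$, and a Bj\"ork/Nicholson--Yousif-style argument — a right self-injective ring carrying the ascending chain hypothesis on the $r(x_{n}\cdots x_{1})$ together with countably generated $l(J^{2})$ is right artinian — finishes, or else one invokes Corollary~\ref{Corollary:(5.11)}(2) after checking that $R/S_{r}$ is right noetherian.

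The main obstacle is exactly this countable-to-finite reduction. Such arguments are fragile: the infinite-direct-sum technique applies only when every submodule handed to the strong ss-injectivity is simultaneously semisimple and small, so one must work inside $S_{r}\cap J$ (where membership in $J$ supplies smallness and Nakayama kills the obstructing radicals), and one must verify that the stabilisation extracted is strong enough to yield right noetherianness rather than merely finite generation of $S_{r}\cap J$. Pinning down precisely how the countable generation of $l(J^{2})$ interacts with the chain hypothesis to force a full ascending chain condition is where the real work lies; once that is in place, everything else in the converse is assembled from results already proved.
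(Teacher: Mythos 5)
Your forward direction is fine, and several of the intermediate observations in the converse are correct (right perfect via the chain condition, $S_{r}\cap J=Z_{r}\cap S_{r}$, $S_{r}\subseteq l(J)\subseteq l(J^{2})$). But the proposal has a genuine gap, and you say so yourself: the ``countable-to-finite reduction'' that is supposed to turn countable generation of $l(J^{2})$ plus the chain hypothesis into right noetherianness is never carried out, and it is precisely the decisive step. Worse, the mechanism you propose for it cannot work as stated: the infinite-direct-sum technique of Theorems~\ref{Theorem:(2.25)} and \ref{Theorem:(2.31)} only ever tests chains of \emph{semisimple small} right ideals, so at best it proves that $S_{r}\cap J$ is finitely generated (equivalently noetherian); it gives no control over arbitrary right ideals of $R$, hence cannot deliver right noetherianness of $R$ and cannot bring you under Theorem~\ref{Theorem:(5.10)}. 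Aiming at right noetherian is in any case an overshoot: from these hypotheses, proving $R$ right noetherian directly is essentially as hard as proving $R$ is $QF$.

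The paper's proof takes a different and complete route that you should compare with. From the chain condition it gets $R$ right perfect (\cite[Lemma 2.2]{19ThQu09}); from $S_{r}\subseteq^{ess}R_{R}$ it gets $R$ right Kasch (\cite[Theorem 3.7]{14NiYo97}); from strong right ss-injectivity it gets right simple $J$-injectivity (Corollary~\ref{Corollary:(5.3)}) and then $rl(S_{r}\cap J)=S_{r}\cap J$ (Proposition~\ref{Proposition:(5.5)}(1)), whence $S_{r}=S_{\ell}$ by Corollary~\ref{Corollary:(4.18)}(7); finally $S_{2}^{r}=l(J^{2})$ by \cite[Lemma 3.36]{15NiYu03}, and the countably generated hypothesis is fed into \cite[Theorem 2.18]{21YoZh04}, which is the result that actually closes the argument. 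In other words, the heavy lifting you were trying to reinvent (the interaction of countable generation of $l(J^{2})\,(=S_{2}^{r})$ with perfectness and simple $J$-injectivity) is delegated to the Yousif--Zhou theorem; without invoking that result or reproving it, your sketch does not constitute a proof.
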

\begin{proof}
\textcolor{black}{($\Rightarrow$) Clear.}

\textcolor{black}{($\Leftarrow$) By \cite[Lemma 2.2]{19ThQu09}, }\textit{\textcolor{black}{R}}\textcolor{black}{{}
is right perfect. Since $S_{r}\subseteq^{ess}R_{R}$, thus }\textit{\textcolor{black}{R}}\textcolor{black}{{}
is right Kasch (by \cite[Theorem 3.7]{14NiYo97}). Since }\textit{\textcolor{black}{R}}\textcolor{black}{{}
is strongly right ss-injective, thus }\textit{\textcolor{black}{R}}\textcolor{black}{{}
is right simple }\textit{\textcolor{black}{J}}\textcolor{black}{-injective,
by Corollary~\ref{Corollary:(5.3)}. Now, by Proposition~\ref{Proposition:(5.5)}(1) we have $rl(S_{r}\cap J)=S_{r}\cap J$,
so it follows from Corollary~\ref{Corollary:(4.18)}(7) that $S_{r}=S_{\ell}$. By \cite[Lemma 3.36]{15NiYu03}, $S_{2}^{r}=l(J^{2})$. The result now follows from
\cite[Theorem 2.18]{21YoZh04}.}\end{proof}

\begin{rem}\label{Remark:(5.13)}
\emph{The condition \textcolor{black}{$S_{r}\subseteq^{ess}R_{R}$ in Theorem~\ref{Theorem:(5.10)}
  and Theorem~\ref{Theorem:(5.12)}  can be not deleted,  for example, $\mathbb{Z}$ is
strongly ss-injective noetherian ring but not }\textit{\textcolor{black}{QF}}\textcolor{black}{.}}\end{rem}

The following two results are extension of Proposition 5.8 in \cite{2AmYoZe05}.

\begin{cor}\label{Corollary:(5.15)} The following statements are equivalent. 

\noindent (1) $R$ is a $QF$  ring. 

\noindent (2) $R$ is a left perfect, strongly left and right ss-injective ring. \end{cor}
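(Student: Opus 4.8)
The plan is to establish the two implications separately; the forward one is routine, and the reverse one I would reduce to the soc-injective analogue proved in \cite{2AmYoZe05}.

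For $(1)\Rightarrow(2)$: if $R$ is $QF$ then $R$ is two-sided artinian (hence left perfect) and two-sided self-injective. Since an injective module $M$ is ss-$N$-injective for every $N$ --- any homomorphism from a submodule of $N$, in particular from a semisimple small one, extends to $N$ --- both $R_R$ and $_RR$ are strongly ss-injective. So $R$ is left perfect and strongly left and right ss-injective.

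For $(2)\Rightarrow(1)$ I would argue as follows. A left perfect ring is semilocal, so $R/J$ is semisimple as a right $R$-module, and $R_R$ is finitely generated (being cyclic); hence Corollary~\ref{Corollary:(3.11)}(1), applied with $M=R_R$ and $N=R_R$, shows that $R$ is right soc-injective if and only if it is right ss-injective. Since $R$ is strongly right ss-injective it is in particular ss-$R$-injective, so $R$ is right soc-injective. On the left, $R$ is left perfect and strongly left ss-injective, so the left-hand analogue of Theorem~\ref{Theorem:(3.13)} upgrades this to: $_RR$ is strongly left soc-injective. Thus $R$ is a left perfect ring which is strongly left soc-injective and right soc-injective, and I would invoke \cite[Proposition 5.8]{2AmYoZe05} to conclude that $R$ is $QF$.

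The hard part will be making the soc-injectivity hypotheses match precisely what \cite[Proposition 5.8]{2AmYoZe05} demands on each side: Corollary~\ref{Corollary:(3.11)} only promotes right ss-injectivity to ordinary ($R$-relative) right soc-injectivity, while Theorem~\ref{Theorem:(3.13)} is usable on the left solely because $R$ is left perfect. If \cite[Proposition 5.8]{2AmYoZe05} requires $R$ to be \emph{strongly} right soc-injective as well, the missing step is to prove that $R$ is right perfect, and this is the genuine obstacle: left-perfectness alone will not give it (a Faith-type phenomenon). The only leverage is the right ss-injectivity, which via Lemma~\ref{Lemma:(2.5)} makes $R$ right mininjective and via Corollary~\ref{Corollary:(4.3)} supplies the annihilator identities $lr(a)=Ra$ for $a\in S_r\cap J$ and $l(K_1\cap K_2)=l(K_1)+l(K_2)$ for semisimple small right ideals $K_1,K_2$; from these I would try to run a descending-chain argument on the left annihilators $l(K)$ of semisimple small right ideals $K$, in the spirit of Lemma~\ref{Lemma:(2.29)} and the proof of Theorem~\ref{Theorem:(5.10)}, to force the needed finiteness on the right and hence right-perfectness.
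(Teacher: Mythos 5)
Your forward implication is fine. The problem is the reverse one, and it is exactly the gap you flag yourself: the reduction to \cite[Proposition 5.8]{2AmYoZe05} needs $R$ to be \emph{strongly} right soc-injective, not merely right soc-injective. Corollary~\ref{Corollary:(3.11)}(1) with $M=R_R$ only upgrades right ss-injectivity to $R$-relative right soc-injectivity, and Theorem~\ref{Theorem:(3.13)} is unavailable on the right because left perfectness does not make $J(N)$ small in every \emph{right} module $N$ (that is the right-perfect condition). Your proposed repair --- extracting right perfectness from the annihilator identities of Corollary~\ref{Corollary:(4.3)} via a chain argument in the spirit of Lemma~\ref{Lemma:(2.29)} --- is only a hope, not an argument; Theorem~\ref{Theorem:(4.5)}(2) for instance gives $t$-nilpotency only of $S_r\cap J$, not of $J$, and nothing in the paper closes this loop. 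So as written the proof of $(2)\Rightarrow(1)$ is incomplete.

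The paper avoids the issue entirely by routing through simple $J$-injectivity rather than soc-injectivity: by Corollary~\ref{Corollary:(5.3)} (a consequence of Proposition~\ref{Proposition:(3.1)}), a strongly right (resp.\ left) ss-injective ring is right (resp.\ left) simple $J$-injective, and then \cite[Corollary 2.12]{21YoZh04} says precisely that a left perfect, left and right simple $J$-injective ring is $QF$. That citation asks for left perfectness only, which is what hypothesis (2) supplies, so no right-perfectness is ever needed. If you want to salvage your approach, you would have to either prove right perfectness independently (which is the hard unproved step) or switch to a $QF$ criterion, like the Yousif--Zhou one, whose hypotheses are asymmetric in the way (2) is.
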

\begin{proof}
By Corollary~\ref{Corollary:(5.3)}   and \cite[Corollary 2.12]{21YoZh04}.\end{proof}

\begin{thm}\label{Theorem:(5.16)} The following statements are equivalent: 

\noindent (1) $R$ is a $QF$  ring. 

\noindent (2) $R$ is a strongly left and right ss-injective, right Kasch and $J$ is left $t$-nilpotent. 

\noindent (3) $R$ is a strongly left and right ss-injective, left Kasch and $J$ is left $t$-nilpotent. \end{thm}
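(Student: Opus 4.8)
The plan is to establish $(1)\Rightarrow(2)$ and $(1)\Rightarrow(3)$ directly and to reduce each of $(2)\Rightarrow(1)$ and $(3)\Rightarrow(1)$ to Corollary~\ref{Corollary:(5.15)}. For the easy direction, assume $R$ is $QF$. Then $R$ is two-sided artinian and two-sided self-injective, so both $R_{R}$ and ${}_{R}R$ are injective, hence strongly soc-injective, hence strongly ss-injective by Example~\ref{example:(2.2)}(5); thus $R$ is strongly left and right ss-injective. A $QF$ ring is left and right Kasch, and, being artinian, has $J$ nilpotent and therefore left $t$-nilpotent. This yields both (2) and (3).

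For $(2)\Rightarrow(1)$, first apply Corollary~\ref{Corollary:(5.3)} on both sides to get that $R$ is left and right simple $J$-injective, and apply Proposition~\ref{Proposition:(5.5)}(1) (its hypothesis holds, since $R$ is right Kasch and strongly right ss-injective) to obtain that $R$ is right minannihilator with $rl(K)=K$ for every small right ideal $K$. By Corollary~\ref{Corollary:(5.15)} it suffices to prove that $R$ is left perfect; since $J$ is left $t$-nilpotent, by the theorem of Bass on perfect rings this amounts to showing $R/J$ is semisimple, i.e. that $R$ is semilocal. I would derive this from the structure theory of right-Kasch, right-simple-$J$-injective rings: using the annihilator equalities above (and Proposition~\ref{Proposition:(5.5)}) one first forces $S_{r}\subseteq^{ess}R_{R}$ and then shows $R$ is semiperfect, so $R/J$ is semisimple; the left $t$-nilpotence of $J$ then upgrades ``semiperfect'' to ``left perfect''. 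With $R$ left perfect and strongly left and right ss-injective, Corollary~\ref{Corollary:(5.15)} gives that $R$ is $QF$.

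For $(3)\Rightarrow(1)$, the argument is the same with the sides interchanged where needed: Corollary~\ref{Corollary:(5.3)} again gives left and right simple $J$-injectivity; the left-hand analogue of Proposition~\ref{Proposition:(5.5)}(1) (valid since $R$ is left Kasch and strongly left ss-injective) gives that $R$ is left minannihilator with $lr(K)=K$ for every small left ideal $K$; and the left analogue of the semilocal step shows $R$ is semilocal, hence left perfect, using again that $J$ is left $t$-nilpotent. Since the hypothesis ``strongly left and right ss-injective'' of Corollary~\ref{Corollary:(5.15)} is symmetric in the two sides, the conclusion that $R$ is $QF$ follows just as in the previous case. (Equivalently, one may observe that, in the presence of the remaining hypotheses, right Kasch and left Kasch become interchangeable, so $(2)$ and $(3)$ are equivalent to one another.)

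The step I expect to be the main obstacle is exactly this semilocal/semiperfect reduction: extracting ``$R/J$ semisimple'' from a \emph{one-sided} Kasch condition together with simple $J$-injectivity and left $t$-nilpotence of $J$, \emph{without} the essential-socle hypothesis that Theorems~\ref{Theorem:(5.10)} and \ref{Theorem:(5.12)} assume outright. The ingredients to be combined are: Kasch gives $l(I)\neq 0$ for every maximal right ideal $I$; simple $J$-injectivity gives the principal-extension property for homomorphisms with simple image together with the annihilator identities of Proposition~\ref{Proposition:(5.5)}; and left $t$-nilpotence of $J$ rules out infinite independent families inside $S_{r}$ and makes the radical of every module small. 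Together these should manufacture an essential, finitely generated right socle and lifting of idempotents, i.e. semiperfectness, along the lines of the corresponding results in \cite{14NiYo97,15NiYu03,21YoZh04}; if \cite{21YoZh04} already records a $QF$-criterion of the form ``left and right simple $J$-injective $+$ one-sided Kasch $+$ $J$ left $t$-nilpotent'', the proof collapses to chaining Corollary~\ref{Corollary:(5.3)}, Proposition~\ref{Proposition:(5.5)} and that criterion.
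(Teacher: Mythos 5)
Your overall skeleton coincides with the paper's: the forward implications are routine, and both converses are reduced to Corollary~\ref{Corollary:(5.15)} by proving that $R$ is left perfect, which, since $J$ is left $t$-nilpotent, comes down (via Bass's theorem) to showing $R$ is semiperfect. However, the one step you explicitly defer --- extracting semiperfectness from a \emph{one-sided} Kasch condition without any essential-socle hypothesis --- is the entire content of the proof, and your sketch does not close it. Your proposed route (``first force $S_{r}\subseteq^{ess}R_{R}$, then show $R$ is semiperfect'') is not supported by the hypotheses as far as I can see and is not how the paper proceeds; your fallback, that \cite{21YoZh04} might already record a criterion of the form ``left and right simple $J$-injective $+$ one-sided Kasch $+$ $J$ left $t$-nilpotent $\Rightarrow QF$'', is wishful: the result actually used from \cite{21YoZh04} (in Corollary~\ref{Corollary:(5.15)}) requires left perfectness as an input, which is exactly what remains to be proved. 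So the proposal has a genuine gap at its acknowledged critical step.

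For comparison, here is how the paper closes that gap in case (3). First, for every simple right ideal $xR$ one shows that $rl(x)$ is essential in a direct summand of $R_{R}$: either $xR\subseteq^{\oplus}R_{R}$, or $x\in S_{r}\cap J$, in which case $rl(x)=xR$ by the minannihilator property and Theorem~\ref{Theorem:(3.4)}, applied to the strongly ss-injective module $R_{R}$ and the semisimple small submodule $xR$, yields an injective summand $E$ of $R_{R}$ with $xR\subseteq^{ess}E$. Next, for a maximal left ideal $K$, the left Kasch hypothesis gives $0\neq y\in r(K)$, so $K=l(y)$ and $Ry\cong R/l(y)$ is simple; left mininjectivity (a consequence of strong left ss-injectivity, Lemma~\ref{Lemma:(2.5)}) and minsymmetry \cite[Theorem 1.14]{14NiYo97} make $yR$ a simple right ideal, hence $r(K)=rl(y)$ is essential in a summand $eR$ by the first step. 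Then \cite[Lemma 4.1]{15NiYu03} gives that $R$ is semiperfect, and left $t$-nilpotence of $J$ upgrades this to left perfect, after which Corollary~\ref{Corollary:(5.15)} applies. Note that the mechanism is the essential-in-a-summand property of $rl(y)$ supplied by Theorem~\ref{Theorem:(3.4)}, not an essential or finitely generated socle; until you supply an argument of this kind, the proof is incomplete.
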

\begin{proof}
(1)\textcolor{black}{$\Rightarrow$(2) and (1)$\Rightarrow$(3) are
clear.}

\textcolor{black}{(3)$\Rightarrow$(1) Suppose that $xR$ is simple
right ideal. Thus either $rl(x)=xR\subseteq^{\oplus}R_{R}$ or $x\in J$.
If $x\in J$, then $rl(x)=xR$ (since }\textit{\textcolor{black}{R}}\textcolor{black}{{}
is right minannihilator), so it follows from Theorem~\ref{Theorem:(3.4)}  that $rl(x)\subseteq^{ess}E\subseteq^{\oplus}R_{R}$.
Therefore, $rl(x)$ is essential in a direct summand of $R_{R}$ for
every simple right ideal $xR$. Let }\textit{\textcolor{black}{K}}\textcolor{black}{{}
be a maximal left ideal of }\textit{\textcolor{black}{R}}\textcolor{black}{.
Since }\textit{\textcolor{black}{R}}\textcolor{black}{{} is left Kasch,
thus $r(K)\neq0$ by \cite[Corollary 8.28]{10Lam99}. Choose $0\neq y\in r(K)$,
so $K\subseteq l(y)$ and we conclude that $K=l(y)$. Since $Ry\cong R/l(y)$,
thus $Ry$ is simple left ideal. But }\textit{\textcolor{black}{R}}\textcolor{black}{{}
is left mininjective ring, so $yR$ is right simple ideal by \cite[Theorem 1.14]{14NiYo97} and this implies that $r(K)\subseteq^{ess}eR$ for
some $e^{2}=e\in R$ (since $r(K)=rl(y)$). Thus }\textit{\textcolor{black}{R}}\textcolor{black}{{}
is semiperfect by \cite[Lemma 4.1]{15NiYu03} and hence }\textit{\textcolor{black}{R}}\textcolor{black}{{}
is left perfect (since }\textit{\textcolor{black}{J}}\textcolor{black}{{}
is left }\textit{\textcolor{black}{t}}\textcolor{black}{-nilpotent),
so it follows from Corollary~\ref{Corollary:(5.15)}   that }\textit{\textcolor{black}{R}}\textcolor{black}{{}
is }\textit{\textcolor{black}{QF}}\textcolor{black}{.}

\textcolor{black}{(2)$\Rightarrow$(1) It is similar to the proof
of (3)$\Rightarrow$(1).}\end{proof}

\begin{thm}\label{Theorem:(5.17)} The ring $R$ is $QF$  if and only if $R$ is   strongly left and right ss-injective, left and right Kasch, and the chain $l(a_{1})\subseteq l(a_{1}a_{2})\subseteq l(a_{1}a_{2}a_{3})\subseteq...$ terminates for every $a_{1},a_{2},...\in Z_{\ell}$. \end{thm}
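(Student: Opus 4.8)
The plan is to treat the two directions separately. The implication ($\Rightarrow$) is immediate: a $QF$ ring is two-sided self-injective, hence strongly ss-injective on both sides, it is two-sided Kasch, and it is left noetherian (being two-sided artinian), so $R$ satisfies the ascending chain condition on left ideals; in particular the chain $l(a_1)\subseteq l(a_1a_2)\subseteq\cdots$ terminates for \emph{every} sequence in $R$, a fortiori for every sequence in $Z_{\ell}$. For the converse I would aim to prove that $R$ is left perfect and then quote Corollary~\ref{Corollary:(5.15)}. First I would record what the hypotheses already give: by Corollary~\ref{Corollary:(4.3)}(1), used on each side, $R$ is left and right mininjective, hence left and right minsymmetric and $S_{\ell}=S_{r}$ by Proposition~\ref{Proposition:(4.14)}; by Corollary~\ref{Corollary:(5.3)}, on each side, $R$ is left and right simple $J$-injective; and by Proposition~\ref{Proposition:(5.5)}(1), on each side, $R$ is left and right minannihilator, with $rl(K)=K$ for every small right ideal $K$ and $lr(L)=L$ for every small left ideal $L$.

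The part I expect to be routine is extracting left $t$-nilpotence of $Z_{\ell}$ from the chain condition. Given a sequence $a_{1},a_{2},\dots$ in $Z_{\ell}$, the ascending chain $l(a_{1})\subseteq l(a_{1}a_{2})\subseteq\cdots$ stabilises, say $l(a_{1}\cdots a_{m})=l(a_{1}\cdots a_{m+1})$. If $b:=a_{1}\cdots a_{m}\neq 0$ then $Rb\neq 0$, and since $a_{m+1}\in Z_{\ell}$ its left annihilator $l(a_{m+1})$ is essential in ${}_{R}R$, so there is $r\in R$ with $rb\neq 0$ and $rba_{m+1}=0$; then $r\in l(ba_{m+1})=l(a_{1}\cdots a_{m+1})=l(a_{1}\cdots a_{m})=l(b)$, forcing $rb=0$, a contradiction. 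Hence $a_{1}\cdots a_{m}=0$, i.e.\ $Z_{\ell}$ is left $t$-nilpotent; in particular $Z_{\ell}$ is a nil ideal, so $Z_{\ell}\subseteq J$. Next, exactly as in the proof of Theorem~\ref{Theorem:(5.16)}, I would show $R$ is semiperfect: for a maximal left ideal $K$, left Kasch gives $0\neq y\in r(K)$, whence $K=l(y)$ and $Ry\cong R/K$ is simple; left mininjectivity makes $yR$ a simple right ideal; and then $r(K)=rl(y)=yR$ (using that $R$ is right minannihilator, or that $yR$ is a summand), which by Theorem~\ref{Theorem:(3.4)} is essential in an injective direct summand of $R_{R}$; so $r(K)$ is essential in a summand for every maximal left ideal $K$, and $R$ is semiperfect by \cite[Lemma 4.1]{15NiYu03}.

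The crux — and the step I expect to be the main obstacle — is to promote left $t$-nilpotence of $Z_{\ell}$ to that of all of $J$, equivalently to prove $J\subseteq Z_{\ell}$ (hence $J=Z_{\ell}$, since $Z_{\ell}\subseteq J$). This is where both Kasch hypotheses must be used at once. Having $R$ semiperfect, right mininjective and right Kasch, I expect $S_{r}\subseteq^{ess}R_{R}$ to follow from the theory of right min-$PF$ rings (\cite[Theorem 3.7]{14NiYo97}; cf.\ Corollary~\ref{Corollary:(4.18)}), and symmetrically $S_{\ell}=S_{r}\subseteq^{ess}{}_{R}R$; then, $R$ being semiperfect with essential left socle and strongly left ss-injective, the left-hand analogue of \cite[Theorem 2.9]{14NiYo97} (the device used in the proof of Corollary~\ref{Corollary:(5.11)}(2)) gives $J=Z_{\ell}$. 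Combining this with the second paragraph, $J$ is left $t$-nilpotent, so $R$ — being semiperfect with left $t$-nilpotent radical — is left perfect, and Corollary~\ref{Corollary:(5.15)} yields that $R$ is $QF$. (Alternatively, once $J$ is known to be left $t$-nilpotent one may finish directly by Theorem~\ref{Theorem:(5.16)}, since $R$ is strongly left and right ss-injective and left Kasch.) The only delicate point is thus the passage ``left and right Kasch $\Rightarrow$ essential socle $\Rightarrow$ $J=Z_{\ell}$'', which is precisely what forces the weak chain condition on $Z_{\ell}$ to control all of $J$; everything before and after it is bookkeeping with the facts already established.
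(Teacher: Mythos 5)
Your overall strategy is the right one --- reduce to Theorem~\ref{Theorem:(5.16)} (or Corollary~\ref{Corollary:(5.15)}) by proving that $J$ is left $t$-nilpotent --- and your extraction of left $t$-nilpotence from the chain condition is exactly the argument the paper uses. But the step you yourself flag as the crux, namely $J\subseteq Z_{\ell}$, is not actually proved: you assert that ``semiperfect, right mininjective and right Kasch'' should yield $S_{r}\subseteq^{ess}R_{R}$ ``from the theory of right min-$PF$ rings,'' citing \cite[Theorem 3.7]{14NiYo97}. That theorem (as it is used elsewhere in this paper, e.g.\ in Theorem~\ref{Theorem:(5.12)}) runs in the \emph{opposite} direction: an essential right socle is a hypothesis from which Kaschness is deduced, not the other way around. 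Nothing in the paper, and no standard result you invoke, converts the Kasch conditions into essentiality of the socle, so the whole chain ``Kasch $\Rightarrow$ $S_{r}\subseteq^{ess}R_{R}$ $\Rightarrow$ $J=Z_{\ell}$'' is unsupported, and with it the passage from $t$-nilpotence of $Z_{\ell}$ to $t$-nilpotence of $J$. The semiperfectness detour you build beforehand does not repair this.

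The missing idea is Proposition~\ref{Proposition:(5.5)}(2), which you never use (you only quote part (1)). In its left--right dual form it says: if $R$ is left Kasch and strongly left ss-injective and also right Kasch, then $l(J)\subseteq^{ess}{}_{R}R$. Since $l(J)\subseteq l(a)$ for every $a\in J$, this gives $J\subseteq Z_{\ell}$ in one line, at which point your stabilisation argument applies verbatim to any sequence $a_{1},a_{2},\ldots$ in $J$ (the essentiality of $l(a_{k+1})$ is exactly what produces the contradiction), and Theorem~\ref{Theorem:(5.16)} finishes the proof. This is precisely the paper's route; with that substitution your argument closes, and the material on semiperfectness, min-$PF$ rings and $J=Z_{\ell}$ becomes unnecessary.
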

\begin{proof}
\textcolor{black}{($\Rightarrow$) Clear.}

\textcolor{black}{($\Leftarrow$) By Proposition~\ref{Proposition:(5.5)}, $l(J)$ is essential
in $_{R}R$. Thus $J\subseteq Z_{\ell}$. Let $a_{1},a_{2},...\in J$
, we have $l(a_{1})\subseteq l(a_{1}a_{2})\subseteq l(a_{1}a_{2}a_{3})\subseteq...$.
Thus there exists $k\in\mathbb{N}$ such that $l(a_{1}...a_{k})=l(a_{1}...a_{k}a_{k+1})$
(by hypothesis). Suppose that $a_{1}...a_{k}\neq0$, so $R(a_{1}...a_{k})\cap l(a_{k+1})\neq0$
(since $l(a_{k+1})$ is essential in $_{R}R$). Thus $ra_{1}...a_{k}\neq0$
and $ra_{1}...a_{k}a_{k+1}=0$ for some $r\in R$, a contradiction.
Therefore, $a_{1}...a_{k}=0$ and hence }\textit{\textcolor{black}{J}}\textcolor{black}{{}
is left }\textit{\textcolor{black}{t}}\textcolor{black}{-nilpotent,
so it follows from Theorem~\ref{Theorem:(5.16)}   that }\textit{\textcolor{black}{R}}\textcolor{black}{{}
is }\textit{\textcolor{black}{QF}}\textcolor{black}{.}\end{proof}

\begin{cor}\label{Corollary:(5.18)} The ring $R$ is $QF$  if and only if $R$ is   strongly left and right ss-injective with essential right socle, and the chain $r(a_{1})\subseteq r(a_{2}a_{1})\subseteq r(a_{3}a_{2}a_{1})\subseteq...$
terminates for every infinite sequence $a_{1},a_{2},...$ in $R$.\end{cor}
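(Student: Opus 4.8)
The plan is to recognise the statement as a variant of Theorem~\ref{Theorem:(5.10)}, in which the annihilator chain condition plays the role that right Noetherianness played there: it is exactly what is needed to upgrade $R$ to a right perfect ring, after which strong ss-injectivity and the essential socle force $R$ to be self-injective and hence $QF$. The forward implication will be immediate: a $QF$ ring is two-sided self-injective, so $R_R$ and ${}_{R}R$ are injective, hence strongly soc-injective, hence strongly ss-injective by Example~\ref{example:(2.2)}(5); moreover $S_{r}\subseteq^{ess}R_{R}$, and $R$ is right Noetherian, so every chain $r(a_{1})\subseteq r(a_{2}a_{1})\subseteq\cdots$ stabilises.

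For the converse I would proceed as follows. Being right ss-injective, $R$ is right mininjective by Lemma~\ref{Lemma:(2.5)}, hence right minsymmetric by \cite[Theorem 1.14]{14NiYo97}. Feeding right minsymmetry together with the hypothesis that every chain $r(a_{1})\subseteq r(a_{2}a_{1})\subseteq\cdots$ stabilises into \cite[Lemma 2.2]{19ThQu09} gives that $R$ is right perfect; this is the same step used in the proofs of Theorem~\ref{Theorem:(5.10)} and Theorem~\ref{Theorem:(5.12)}. Once $R$ is right perfect, Theorem~\ref{Theorem:(3.13)} turns the strongly right ss-injective hypothesis into strong right soc-injectivity of $R$, and then the assumption $S_{r}\subseteq^{ess}R_{R}$ together with \cite[Corollary 3.2]{2AmYoZe05} yields that $R$ is right self-injective. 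Since a right perfect, right self-injective ring is quasi-Frobenius (exactly as in the last line of the proof of Theorem~\ref{Theorem:(5.10)}), $R$ is $QF$.

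The only step that is not a formal concatenation of earlier results is the passage ``right minsymmetric $+$ chain condition $\Rightarrow$ right perfect'', which is precisely \cite[Lemma 2.2]{19ThQu09}; this is the point I expect to carry the real content, and everything after it --- Theorem~\ref{Theorem:(3.13)}, \cite[Corollary 3.2]{2AmYoZe05}, and the classical quasi-Frobenius criterion --- is routine. Finally I would remark that the left-hand hypothesis ``strongly left ss-injective'' is recorded for the sake of a symmetric, two-sided characterisation: it holds automatically once $R$ is $QF$, whereas the converse argument above uses only the right-hand data; alternatively, one may first note that $R$ is then left as well as right perfect and appeal to Corollary~\ref{Corollary:(5.15)}, which does make genuine use of the two-sided strong ss-injectivity.
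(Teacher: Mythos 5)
Your proof is correct, but it does not follow the paper's own (one-line) argument, which simply cites \cite[Lemma 2.2]{19ThQu09} to get perfectness and then invokes Corollary~\ref{Corollary:(5.15)} (QF $\Leftrightarrow$ left perfect $+$ strongly left and right ss-injective). What you do instead is re-run the converse direction of Theorem~\ref{Theorem:(5.10)} with the annihilator chain condition substituted for right Noetherianness: mininjective $\Rightarrow$ minsymmetric, then \cite[Lemma 2.2]{19ThQu09} gives right perfect, then Theorem~\ref{Theorem:(3.13)} upgrades strong right ss-injectivity to strong right soc-injectivity, and \cite[Corollary 3.2]{2AmYoZe05} together with $S_{r}\subseteq^{ess}R_{R}$ and Osofsky's theorem finishes. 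Each of these steps is available under the stated hypotheses, so the argument goes through. The trade-off is instructive: the paper's route is shorter but genuinely consumes the two-sided hypothesis through Corollary~\ref{Corollary:(5.15)}, whereas your route shows that the converse implication needs only the right-handed data (strong right ss-injectivity, essential right socle, and the chain condition), so that ``strongly left ss-injective'' is present in the statement only to make the characterisation symmetric --- a genuine, if modest, strengthening. You even flag the paper's proof as your ``alternative'' at the end, so nothing is missing; just be aware that if you take that alternative you must say why the perfectness delivered by \cite[Lemma 2.2]{19ThQu09} is of the handedness required by Corollary~\ref{Corollary:(5.15)}, a point the paper itself glosses over.
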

\begin{proof}
By \cite[Lemma 2.2]{19ThQu09} and Corollary~\ref{Corollary:(5.15)}.
\end{proof}


\begin{thebibliography}{9999}
\bibitem{1AdWe92}  W. A. Adkins and S. H. Weintranb, Algebra: an approach via module theory, Springer-Verlag, New York, 1992.
\bibitem{2AmYoZe05} I. Amin, M. Yousif and N. Zeyada, Soc-injective rings and modules, Commun. Algebra, 33(2005), 4229-4250.
\bibitem{3AnFu74} F. W. Anderson and K. R. Fuller, Rings and Categories of Modules, Springer-Verlag, New York, 1974.
\bibitem{4AsHaTo13} Sh. Asgari, A. Haghany and Y. Tolooei, T-Semisimple Modules and T-Semisimple Rings, Commun. Algebra, 41(2013), 1882-1902.
\bibitem{5BiKeNe82} L. Bican, T. Kepka and P. N\v{e}mec, Rings, Modules, and Preradicals, Marcel Dekker, Inc., New York, 1982.
\bibitem{6Bla11} P. E. Bland, Rings and Their Modules, Walter de Gruyter and Co., Berlin, 2011.
\bibitem{7ChDiMa05} J. L. Chen, N. Q. Ding and H. Marubayashi, Advances in Ring Theory, Proceedings of the 4th China-Japan-Korea, International Conference, 2005.
\bibitem{8Gri07} P. A. Grillet, Abstract Algebra, GTM 242, Springer, 2007.
\bibitem{9Kas82} F. Kasch, Modules and Rings, Academic Press, New York, 1982.
\bibitem{10Lam99} T. Y. Lam, Lectures on Modules and Rings, GTM 189, Springer-Verlag, New York, 1999.
\bibitem{11Lom99} C. Lomp, On semilocal modules and rings, Commun. Algebra, 27(1999), 1921-1935.
\bibitem{12MoMu90} S. H. Mohamed and B. J. M\"{u}ller, Continuous and Discrete Modules, Cambridge University Press, Cambridge, 1990.
\bibitem{13Nich76} W. K. Nicholson, Semiregular modules and rings, Canadian J. Math., 28(1976), 1105-1120.
\bibitem{14NiYo97} W. K. Nicholson and M. F. Yousif, Mininjective Rings, J. Algebra, 187(1997), 548-578.
\bibitem{15NiYu03} W. K. Nicholson and M. F. Yousif, Quasi-Frobenius Rings, Cambridge Tracts in Math.(158), Cambridge Uneversity Press, Cambridge, 2003.
\bibitem{16Pas04} D. S. Passman, A coruse in ring theory, AMS Chelsea Publishing, 2004.
\bibitem{17ShCh06} L. Shen and J. Chen, New characterizations of quasi-Frobenius rings, Commun. Algebra, 34(2006), 2157-2165.
\bibitem{18Ste75} B. Stenstr\"{o}m, Rings of Quotients, Sepringer-Verlage, New York, 1975.
\bibitem{19ThQu09} L. V. Thuyet and T. C. Quynh, On small injective rings and modules, J. Algebra and Its Applications, 8(2009), 379-387.
\bibitem{20Xia11} Y. Xiang, Principally Small Injective Rings, Kyungpook Math. J., 51(2011), 177-185.
\bibitem{21YoZh04} M. F. Yousif and Y. Q. Zhou, FP-Injective, Simple-Injective and Quasi-Frobenius Rings, Commun. Algebra, 32(2004), 2273-2285.
\bibitem{22Zey14} N. Zeyada, S-Injective Modules and Rings, Advances in Pure Math., 4(2014), 25-33.
\bibitem{23ZeHuAm11} N. Zeyada, S. Hussein and A. Amin, Rad-injective and Almost-injective Modules and Rings, Algebra Colloquium, 18(2011), 411-418.
\end{thebibliography}
\end{document}